\newtheorem{thm}{Theorem}[section]
\newtheorem{cor}[thm]{Corollary}
\newtheorem{cla}[thm]{Claim}
\newtheorem{lem}[thm]{Lemma}
\newtheorem{prop}[thm]{Proposition}
\theoremstyle{definition}
\newtheorem{defn}[thm]{Definition}
\newtheorem{ex}[thm]{Example}
\newtheorem{rem}[thm]{Remark}
\theoremstyle{definition}
\numberwithin{equation}{section}
\DeclareMathOperator{\Imm}{Im}
\DeclareMathOperator{\Ree}{Re}
\DeclareMathOperator{\Hom}{Hom}
\DeclareMathOperator{\Area}{Area}
\DeclareMathOperator{\Span}{Span}
\DeclareMathOperator{\vol}{vol}
\DeclareMathOperator{\Vol}{Vol}
\DeclareMathOperator{\stab}{stab}
\DeclareMathOperator{\Ad}{Ad}
\DeclareMathOperator{\Aut}{Aut}
\DeclareMathOperator{\Lie}{Lie}
\DeclareMathOperator{\Ind}{Ind}
\DeclareMathOperator{\Mod}{Mod}
\DeclareMathOperator{\Gr}{Gr}
\DeclareMathOperator{\Leb}{Leb}
\DeclareMathOperator{\htt}{ht}
\DeclareMathOperator{\supp}{supp}
\DeclareMathOperator{\op}{op}
\DeclareMathOperator{\ord}{ord}
\DeclareMathOperator{\Aff}{Aff}
\DeclareMathOperator{\SL}{SL}
\DeclareMathOperator{\Sp}{Sp}
\DeclareMathOperator{\GL}{GL}
\DeclareMathOperator{\SO}{SO}
\DeclareMathOperator{\disc}{disc}
\DeclareMathOperator{\inj}{inj}
\DeclareMathOperator{\rel}{rel}
\DeclareMathOperator{\hol}{hol}
\DeclareMathOperator{\Id}{Id}
\DeclareMathOperator{\Jac}{Jac}
\begin{document}


\title{Effective   Exponential Drifts on Strata of Abelian Differentials}
\author{Siyuan Tang}%
\address{B\MakeLowercase{eijing} I\MakeLowercase{nternational} C\MakeLowercase{enter} \MakeLowercase{for} M\MakeLowercase{athematical} R\MakeLowercase{esearch}, P\MakeLowercase{eking} U\MakeLowercase{niversity}, B\MakeLowercase{eijing}, 100871}
\email{1992.siyuan.tang@gmail.com,  siyuantang@pku.edu.cn}
\maketitle

\begin{abstract}
We study the dynamics of $\SL_{2}(\mathbb{R})$ on the stratum of translation surfaces $\mathcal{H}(2)$.  
In particular, we prove that 
  an orbit of the upper triangular subgroup of $\SL_{2}(\mathbb{R})$  has a discretized dimension of almost $1$ in a direction transverse to the $\SL_{2}(\mathbb{R})$-orbit.  
  
  The proof proceeds via an effective closing lemma, and the Margulis function technique, which  serves as an effective version of the exponential drift on     $\mathcal{H}(2)$. The idea is based on the use of     McMullen's  classification theorem, together with  Lindenstrauss-Mohammadi-Wang's effective equidistribution theorems in homogeneous dynamics.    
\end{abstract}

\tableofcontents

\section{Introduction}
\subsection{Main results}
Suppose $g\geq 1$, and let $\alpha=(\alpha_{1},\ldots,\alpha_{n})$ be a partition of $2g-2$. Let $\mathcal{H}(\alpha)$ be a stratum of Abelian differentials, i.e. the space of pairs $(M,\omega)$ where $M$ is a Riemann surface and $\omega$ is a holomorphic $1$-form on $M$ whose zeroes have multiplicities $\alpha_{1},\ldots,\alpha_{n}$. Let $\Sigma\subset M$ be the set of zeroes of $\omega$. Hence $|\Sigma|=n$.

Let  $\{\gamma_{1},\ldots,\gamma_{2g+|\Sigma|-1}\}$ be a symplectic $\mathbb{Z}$-basis for the relative homology group $H_{1}(M,\Sigma;\mathbb{Z})$.
Define the \textit{period coordinates}\index{period coordinates} $\Phi:\mathcal{H}(\alpha)\rightarrow H^{1}(M,\Sigma;\mathbb{C})\cong\mathbb{C}^{2g+|\Sigma|-1}$ by
  \[\Phi: x= (M,\omega)\mapsto\left(\int_{\gamma_{1}}\omega,\cdots,\int_{\gamma_{2g+|\Sigma|-1}}\omega\right).\]
The period coordinates imply a local coordinate on $\mathcal{H}(\alpha)$.

 The space $\mathcal{H}(\alpha)$  can be identified with the quotient of the Teichm\"{u}ller space $\mathcal{TH}(\alpha)$ and the mapping class group $\Mod$:
\[\mathcal{H}(\alpha)=\mathcal{TH}(\alpha)/\Mod.\]
 Let $\mathcal{H}_{A}(\alpha)$  be the subset of  translation surfaces of area $A$. 
 
 The space $\mathcal{H}_{1}(\alpha)$ admits an action of the group $\SL_{2}(\mathbb{R})$.  
Let $G=\SL_{2}(\mathbb{R})$, and
\[a_{t}=\left[
            \begin{array}{cccc}
   e^{\frac{1}{2}t} & 0  \\
   0 & e^{-\frac{1}{2}t}   \\
            \end{array}
          \right],  \ \ \ u_{r}=\left[
            \begin{array}{cccc}
   1 & r  \\
   0 & 1   \\
            \end{array}
          \right].\]
For any subset $I\subset\mathbb{R}$, denote $a_{I}=\{a_{t}:t\in I\}$, and  $u_{I}=\{u_{r}:r\in I\}$. Let $P=a_{\mathbb{R}}u_{\mathbb{R}}$. The seminal work of Eskin and Mirzakhani in \cite{eskin2018invariant} shows that every $P$-orbit closure in $\mathcal{H}_{1}(\alpha)$ is in fact $G$-invariant, which is analogous to Ratner's theorem in homogeneous dynamics. The main strategy is the \textit{exponential drift}\index{exponential drift} idea originated from Benoist and Quint \cite{benoist2011mesures}. In this paper, we provide an effective version of the exponential drift, at least on $\mathcal{H}_{1}(\alpha)$ for certain $\alpha$.

Also, by work of McMullen \cite{mcmullen2007dynamics}, the $G$-orbit closure in $\mathcal{H}_{1}(2)$ are either Teichm\"{u}ller curves or $\mathcal{H}_{1}(2)$ itself  (see Theorem \ref{closing2024.12.7}). In \cite{mcmullen2005teichmullerDiscriminant}, a much more detailed description for Teichm\"{u}ller curves in $\mathcal{H}_{1}(2)$ is available. In this paper, we are   interested in the Teichm\"{u}ller curve $\Omega W_{D}$ with   discriminant $D$ (see Section \ref{closing2024.12.8}).

 For $\tilde{x}\in\mathcal{TH}(\alpha)$, Let $\|\cdot\|_{\tilde{x}}$ be the \textit{Avila-Gou\"{e}zel-Yoccoz norm}\index{Avila-Gou\"{e}zel-Yoccoz norm} (or \textit{AGY norm}\index{AGY norm} for short) on $H^{1}(M,\Sigma;\mathbb{C})$,  (see Definition \ref{closing2024.12.9}). It induces a metric $d$ on $\mathcal{H}_{1}(\alpha)$. For $x\in\mathcal{H}_{1}(2)$, let $\ell(x)$ be the shortest length of a saddle connection (see (\ref{effective2024.08.12})). For $\eta>0$, define
 \[      \mathcal{H}^{(\eta)}_{1}(2)\coloneqq\{x\in\mathcal{H}_{1}(2):\ell(x)\geq\eta\}.\]

   Let $p:H^{1}(M,\Sigma;\mathbb{R})\rightarrow H^{1}(M;\mathbb{R})$ be the forgetful map. In what follows, for $x\in \mathcal{H}(\alpha)$, we write $H^{1}_{F}(x)=H^{1}(M_{x},\Sigma_{x};F)$ for $F=\mathbb{R},\mathbb{C}$.
       For $x\in\mathcal{H}(\alpha)$, let
  \[      H^{\perp}(x)\coloneqq\{v\in H^{1}_{\mathbb{R}}(x):p(\Ree x)\wedge p(v)=p(\Imm x)\wedge p(v)=0\}.\]
      Let  $W^{\pm}(x)\subset H^{1}(M,\Sigma;\mathbb{C})\cong H^{1}(M,\Sigma;\mathbb{R})\oplus iH^{1}(M,\Sigma;\mathbb{R})$ be defined by
    \begin{align}
W^{+}(x)&\coloneqq   \mathbb{R}(\Imm x)\oplus  H^{\perp}(x), \;\nonumber\\ 
W^{-}(x)&\coloneqq    i(\mathbb{R}(\Ree x)\oplus  H^{\perp}(x)).\; \nonumber
\end{align}  
  Then $W^{+}[x]$ and $W^{-}[x]$ play the role of the unstable and stable foliations for the action of $a_{t}$ on $\mathcal{H}(\alpha)$ for $t>0$, see e.g. \cite[Lemma 3.5]{eskin2018invariant}. Then we define 
  \[H^{\perp}_{\mathbb{C}}(x)\coloneqq   H^{\perp}(x)\oplus i H^{\perp}(x) \]
  and call it  the \textit{balance space}\index{balance space} at $x$. (See Section \ref{closing2024.12.12} for more details.)

 In this paper, we will show: 
 
\begin{thm}\label{margulis2024.12.1} 
    For $\alpha=(2g-2)$, there exist  $N_{0}=N_{0}(\alpha)$   that satisfy the following. For any
    $\epsilon\in(0,\frac{1}{10})$, \hypertarget{2024.12.k1} $\gamma\in(0,1)$, \hypertarget{2024.12.t1} $\eta\in(0, N_{0}^{-1})$, \hypertarget{2024.04.NN789} $N\geq  N_{0}$, $x_{0}\in \mathcal{H}_{1}(\alpha)$, there exist     
  $ \kappa_{1}=\kappa_{1}(N,\gamma,\alpha,\epsilon)>0$,   $\varkappa =\varkappa (N,\gamma,\alpha)>0$,
 $t_{1}=t_{1}(\gamma,\epsilon,\eta,\alpha,\ell(x_{0}))>0$,
  such that for $\kappa\in(0,\hyperlink{2024.12.k1}{\kappa_{1}})$ and $t\geq \hyperlink{2024.12.t1}{t_{1}}$, at least one of the following holds:  
\begin{enumerate}[\ \ \ (1)]
  \item  There exists $x_{1}\in \mathcal{H}_{1}^{(\eta)}(\alpha)$,   and a finite subset $F\subset B_{H^{\perp}_{\mathbb{C}}(x_{1})}(e^{-\kappa t})$ with 
      \[0\in F,\ \ \ |F|\geq  e^{ \frac{1}{2}t}  \]
   such that
   \begin{equation}\label{margulis2024.12.4}
     d(x_{1}+w,a_{\hyperlink{2024.12.NN}{\varkappa}t}u_{[0,2]}.x_{0})< e^{-\kappa t},\ \ \  
             \sum_{\substack{w^{\prime}\neq w\\ w^{\prime}\in F}} \frac{1}{\|w-w^{\prime}\|^{\gamma}_{x_{1}}}\leq |F|^{1+\epsilon}
   \end{equation} 
    for any $w\in F$.
  \item  There is $y\in \mathcal{H}_{1}(\alpha)$ such that   
          \begin{itemize}
            \item  $d(y,x)\leq e^{-Nt}$,
            \item The Veech group $\SL(y)\subset G$ is a non-elementary Fuchsian group.
          \end{itemize}  
          If we restrict our attention to $\alpha=(2)$, then we further  have
          \begin{itemize}
            \item   $y$ generates a Teichm\"{u}ller curve of discriminant $\leq  e^{N_{0}t}$.
          \end{itemize}
\end{enumerate}
\end{thm}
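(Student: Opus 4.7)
The plan is to treat the theorem as an effective Benoist-Quint dichotomy on $\mathcal{H}_1(\alpha)$, producing the discretized transverse set $F$ in case (1) and falling back on an effective closing lemma in case (2). The first step is to select a recurrence time $\varkappa t$ and a parameter $r_0 \in [0,2]$ so that the base point $x_1 := a_{\varkappa t} u_{r_0}.x_0$ lies in the thick part $\mathcal{H}_1^{(\eta)}(\alpha)$, and then use period coordinates to identify a neighborhood of $x_1$ with a neighborhood of $0$ in $H^{\perp}_{\mathbb{C}}(x_1) \oplus T_{x_1}(G.x_1)$. For each $r \in [0,2]$, the orbit point $a_{\varkappa t} u_r.x_0$ decomposes as $x_1 + w_r + \xi_r$ with $w_r \in H^{\perp}_{\mathbb{C}}(x_1)$ and $\xi_r$ tangent to $G.x_1$; the candidate $F$ will be the set of $w_r$ at a well-chosen sub-net of $r$-values. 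The requirement $|F|\geq e^{t/2}$ will be met by matching the discretization scale $e^{-\kappa t}$ to the $a_{\varkappa t}$-contraction rate on $H^{\perp}_{\mathbb{C}}$, which is controlled by the Kontsevich-Zorich spectral gap; the lower bound $t_1$ must be large enough to guarantee both this discretization and the initial recurrence, which is why it depends on $\gamma, \epsilon, \eta, \alpha$ and on $\ell(x_0)$ via a quantitative recurrence estimate.

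The non-concentration bound $\sum_{w' \neq w} \|w-w'\|_{x_1}^{-\gamma} \leq |F|^{1+\epsilon}$ is where the Margulis function technique enters. I would introduce a Margulis function $f: \mathcal{H}_1(\alpha) \to \mathbb{R}_{>0}$ of the form $f(x) = \sum_v \|v\|_x^{-\gamma}$, with the sum running over short transverse saddle connections, and prove a drift inequality $f(a_s x) \leq C e^{-\lambda s} f(x) + B$ on $\mathcal{H}_1^{(\eta)}(\alpha)$ using the contraction estimates for the AGY norm under $a_s$. Integrating this inequality against $r \in [0,2]$ and applying Chebyshev discards a sub-proportion of the orbit on which the transverse vectors $w_r - w_{r'}$ are too short; the complement furnishes the required non-concentration, provided $\kappa$ is below the explicit threshold $\kappa_1(N, \gamma, \alpha, \epsilon)$ determined by $\lambda$, $\gamma$ and $\epsilon$. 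The fact that the drift inequality is only valid in the thick part is what forces the $\mathcal{H}_1^{(\eta)}(\alpha)$ cut-off and the dependence of $N_0$ on $\alpha$.

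If for the given parameters the pigeonhole-and-drift construction fails, then the pushed-forward arc $a_{\varkappa t} u_{[0,2]}.x_0$ must collapse onto a strictly lower-complexity affine invariant submanifold. An effective closing lemma, the $\mathcal{H}(\alpha)$-analogue of the effective closing lemmas of Lindenstrauss-Mohammadi-Wang in homogeneous dynamics, then produces $y \in \mathcal{H}_1(\alpha)$ with $d(y, x) \leq e^{-Nt}$ whose $G$-orbit closure is such a proper affine submanifold. By the Eskin-Mirzakhani-Mohammadi stabiliser theorem the Veech group $\SL(y)$ is a lattice in $G$, hence non-elementary Fuchsian. When $\alpha = (2)$, McMullen's classification (Theorem~\ref{closing2024.12.7}) forces this submanifold to be a Teichm\"{u}ller curve $\Omega W_D$, and the height bound on $y$ coming from $d(y,x) \leq e^{-Nt}$, together with the discriminant estimates of \cite{mcmullen2005teichmullerDiscriminant}, yields $D \leq e^{N_0 t}$.

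The main obstacle I anticipate is the joint calibration of the three exponents $\kappa$, $\varkappa$ and $N$. The Margulis drift provides only polynomial-in-scale gains, whereas the closing lemma must be invoked at the exponential scale $e^{-Nt}$, and $F$ is constrained to sit inside the fixed ball $B(e^{-\kappa t})$. Matching these while preserving the cardinality $|F|\geq e^{t/2}$ and the non-concentration exponent $\gamma$ will require a delicate bootstrap across successive $a_{\varkappa t}$-time windows, during which the constants must remain uniform in $\eta$ and in the recurrence statistics of $x_0$ encoded by $\ell(x_0)$. Securing uniform control of the Kontsevich-Zorich Lyapunov spectrum on $H^{\perp}_{\mathbb{C}}$ over the thick part of the stratum, so as to feed both the drift inequality and the discretization estimate, is where I expect most of the technical effort to concentrate.
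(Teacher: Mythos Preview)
Your outline captures the coarse dichotomy and correctly anticipates that a Margulis-function iteration and an effective closing lemma are the two engines, but the actual mechanisms differ from what you describe in ways that matter.

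\textbf{Construction of $F$.} You propose building $F$ by discretizing the arc $r\mapsto a_{\varkappa t}u_r.x_0$ and projecting each sample to $H^{\perp}_{\mathbb{C}}(x_1)$. This is not how $F$ arises. The paper's $F$ (Proposition~\ref{margulis2024.3.07}) records the \emph{transverse returns} of the $G$-thickened horocycle $B_G(e^{-\kappa t})a_tu_{[0,1]}.x$ to a small $H^{\perp}_{\mathbb{C}}$-slice through $x_1$: one fixes $x_1$ in the thick part and collects all $w\in H^{\perp}_{\mathbb{C}}(x_1)$ with $x_1+w$ lying on the thickened orbit. The cardinality bound $|F|\gtrsim e^{(1-O(\kappa))t}$ comes from a volume/covering argument (Claim~\ref{margulis2024.4.09}), not from matching a discretization scale to a contraction rate.

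\textbf{The Margulis function.} The function is not $\sum_v\|v\|_x^{-\gamma}$ over short saddle connections; it is the local density $f_{\mathcal{E}}(g,z)=\sum_{w\in F_{g,z}(\mathcal{E})}\|w\|^{-\gamma}$ summed over the \emph{orbit returns} just described. Crucially, the drift inequality (Proposition~\ref{margulis2024.4.16}) is not $f(a_sx)\le Ce^{-\lambda s}f(x)+B$ under the geodesic flow alone: it is an \emph{averaged} inequality over the random walk $\nu^{(k)}$ supported on $\{a_{m_\gamma}u_r:r\in[0,1]\}$, and the contraction comes from Lemma~\ref{margulis2024.3.03}, which needs the $u_r$-average to rotate the $iH^{\perp}$-component into $H^{\perp}$ before $a_t$ can expand it. This is exactly the exponential-drift mechanism; a pure $a_s$-drift would fail for $w\in iH^{\perp}(x_1)$.

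\textbf{The closing branch.} Your fallback invokes an Eskin--Mirzakhani--Mohammadi stabiliser theorem to conclude $\SL(y)$ is a lattice. The paper does not do this and does not obtain a lattice in general. Instead, failure of case~(1) everywhere along $r$ produces, for each $r$, elements $s_r\in G$ and $\gamma_r\in\Gamma$ with $h_r^{-1}s_rh_r\tilde x_1\approx\tilde x_1\gamma_r$ (Section~\ref{margulis2024.12.3}); an effective closing (Proposition~\ref{closing2024.3.40}) then yields $y$ near $x$ with $\gtrsim e^{2\varpi t}$ genuine Veech-group elements in a ball of radius $e^{O(\varpi t)}$. That $\SL(y)$ is non-elementary is argued directly (Proposition~\ref{closing2023.12.17}) by ruling out the finite, parabolic-cyclic, and hyperbolic-cyclic cases via growth counts in $\GL_{2g-1}(\mathbb{Z})$, together with a matrix computation (Claim~\ref{closing2024.3.58}) excluding the unipotent case. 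Only for $\alpha=(2)$ does McMullen's criterion (Theorem~\ref{closing2024.3.19}) then give a Teichm\"uller curve, and the discriminant bound comes from Corollary~\ref{closing2024.12.5} via the height machinery of Section~\ref{closing2024.3.5}, not from discriminant estimates in \cite{mcmullen2005teichmullerDiscriminant} directly.
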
 
Roughly speaking, the estimation of the sum in (\ref{margulis2024.12.4}) implies that the density of $F$ is even. 
For $\alpha=(2)$, together with the projection theorem, Theorem \ref{margulis2024.12.1}(1)  indicates that if a surface $x$   is not too close to a  Teichm\"{u}ller curve with small discriminant, then we can (effectively) find a subset of $W^{+}(x)\cap H^{\perp}_{\mathbb{C}}(x)$ with dimension almost $1$ near the  $P$-orbit $P.x$.

 The main strategy of Theorem \ref{margulis2024.12.1}   is the \textit{Margulis function}\index{Margulis function} idea in the  significant papers by Lindenstrauss, Mohammadi   and Wang  \cite{lindenstrauss2023polynomial,lindenstrauss2022effective}.

 More precisely, in the first step, we deduce an effective closing lemma that has a similar spirit to Einsiedler,   Margulis, and   Venkatesh \cite{einsiedler2009effective}, as well as to the work of Lindenstrauss and Margulis \cite{lindenstrauss2014effective}.
\begin{prop}\label{margulis2024.12.2}
   Let  the notation be as above. 
     Suppose that Theorem   \ref{effective2024.6.01}(2) does not occur.  
  Then  there exists $t_{2}=t_{2}(\ell(x_{0}),\kappa,\eta,\alpha)>0$ such that for any   $t\geq  \hyperlink{2024.12.t2}{t_{2}}$,     there exists a point $y\in \mathcal{H}^{(2\eta)}_{1}(\alpha)$, and  a finite subset $F_{1}\subset B_{H^{\perp}_{\mathbb{C}}(y)}(e^{-\kappa t})$  with
 \[         0\in F_{1},\ \ \   e^{\frac{3}{4}t}\leq |F_{1}|,\] 
    such that   
\[  d(y+w,a_{\hyperlink{2024.12.NN}{\varkappa}t}u_{[0,1]}.x_{0})< e^{-\kappa t},\ \ \    \sum_{\substack{w^{\prime}\neq w\\ w^{\prime}\in F_{1}}} \frac{1}{\|w-w^{\prime}\|^{\gamma}_{y}}\ \ \leq  |F_{1}|^{N}.\]
\end{prop}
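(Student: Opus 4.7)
The plan is to prove Proposition \ref{margulis2024.12.2} via a pigeonhole argument on the $a_{\varkappa t}$-expansion of $u_{[0,1]}.x_{0}$, combined with the failure of alternative (2) of Theorem \ref{effective2024.6.01} and McMullen's classification (Theorem \ref{closing2024.12.7}) to guarantee that the returns genuinely populate the balance space rather than collapsing onto a Teichm\"{u}ller curve.

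First I would subdivide $u_{[0,1]}.x_{0}$ at spacing $e^{-(\varkappa/2+\kappa)t}$ so that after pushing by $a_{\varkappa t}$ the images are approximately $e^{-\kappa t}$-separated along $W^{+}$ in the AGY metric, giving $M\asymp e^{(\varkappa/2+\kappa)t}$ sample points. A Chebyshev/Birkhoff estimate against the Masur--Veech measure, valid once $t\geq t_{2}(\ell(x_{0}),\kappa,\eta,\alpha)$, guarantees that a definite proportion of these images lie in the thick part $\mathcal{H}_{1}^{(2\eta)}(\alpha)$. Covering this thick part by AGY-balls of radius $e^{-\kappa t}$ takes at most $e^{D\kappa t}$ balls, where $D=\dim_{\mathbb{R}}\mathcal{H}(\alpha)$; fixing $\varkappa=\varkappa(N,\gamma,\alpha)$ large enough that $\varkappa/2-D\kappa_{1}>3/4$ then forces some ball $B(y,e^{-\kappa t})$ with $y\in \mathcal{H}_{1}^{(2\eta)}(\alpha)$ to absorb at least $e^{3t/4}$ of the thick samples. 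Writing each sample as $y+v_{P}+w$ in the local product chart (with $v_{P}$ tangent to $P.y$ and $w\in H^{\perp}_{\mathbb{C}}(y)$) and absorbing $v_{P}$ into a small time reparameterization of the target $a_{\varkappa t}u_{[0,1]}.x_{0}$ yields a candidate set $F_{1}\subset B_{H^{\perp}_{\mathbb{C}}(y)}(e^{-\kappa t})$ with $0\in F_{1}$ and $|F_{1}|\geq e^{3t/4}$.

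The decisive point is that the $w$'s must span a genuinely transverse configuration in $H^{\perp}_{\mathbb{C}}(y)$. If instead they concentrated on a proper affine sublocus through the origin, then the $P$-orbit of $x_{0}$ would be shadowing a $P$-invariant set of dimension strictly less than $\dim\mathcal{H}(\alpha)$; McMullen's dichotomy in $\mathcal{H}_{1}(2)$ forces this set to be a Teichm\"{u}ller curve, and the effective equidistribution results of Lindenstrauss--Mohammadi--Wang \cite{lindenstrauss2023polynomial,lindenstrauss2022effective} then produce a nearby $y'$ generating a Teichm\"{u}ller curve of discriminant polynomial in $e^{t}$, contradicting the assumed negation of Theorem \ref{effective2024.6.01}(2). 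The energy bound is then soft: every pair $w,w'\in F_{1}$ is separated by at least the sampling scale $\gtrsim e^{-\varkappa t}$, so
\[\sum_{w'\neq w,\ w'\in F_{1}}\frac{1}{\|w-w'\|_{y}^{\gamma}}\leq |F_{1}|\cdot e^{\gamma\varkappa t}\leq |F_{1}|^{N}\]
as soon as $N$ is chosen sufficiently large in terms of $\gamma$ and $\varkappa$---this can be built into the specification of $\varkappa=\varkappa(N,\gamma,\alpha)$.

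The main obstacle I foresee is the quantitative transversality just described: the qualitative statement ``no nearby Teichm\"{u}ller curve of small discriminant'' must be promoted into the quantitative assertion that no hyperplane in $H^{\perp}_{\mathbb{C}}(y)$ through $0$ captures the entire set $F_{1}$. Making this precise requires the Lindenstrauss--Mohammadi--Wang machinery applied at a scale compatible with the discretization $e^{-\kappa t}$, and tracking the dependence of the effective constants on $N$ is what ultimately dictates the final choice of $\varkappa=\varkappa(N,\gamma,\alpha)$ and of the threshold $t_{2}$.
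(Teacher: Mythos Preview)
Your pigeonhole step is fine and close in spirit to the paper's Claim in Proposition~\ref{margulis2024.3.07}, but the two places where you try to extract information about $H^{\perp}_{\mathbb{C}}$ both have genuine gaps.

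\textbf{Cardinality of $F_{1}$.} Landing $e^{3t/4}$ horocycle samples in a single ball $B(y,e^{-\kappa t})$ does \emph{not} give $|F_{1}|\geq e^{3t/4}$. Writing each sample as $y+v_{P}+w$ and discarding $v_{P}$ is a many-to-one map: all of your samples lie on a single $P$-orbit, so a priori they could all have $w=0$ (or all have $w$'s clustering near one point). Your proposed remedy---if the $w$'s collapse, invoke McMullen plus LMW to find a nearby Teichm\"{u}ller curve---is the right intuition but is not what those references provide. LMW is an effective equidistribution statement on homogeneous spaces $G/\Gamma\times G/\Gamma$; what is needed here is an effective closing lemma \emph{on the stratum}, saying that if the $G$-thickened horocycle $\mathsf{E}_{t}.x$ self-intersects (equivalently, many samples share the same $w$), then $x$ is close to a point with large Veech group. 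This is exactly Theorem~\ref{effective2024.6.01}, whose proof occupies Section~\ref{margulis2024.12.3} and is the real content you are skipping. The paper does not bypass it: Proposition~\ref{margulis2024.3.07} opens by assuming Theorem~\ref{effective2024.6.01}(1), which gives both injectivity of $h\mapsto hx$ on $\mathsf{E}_{t}$ (so distinct samples give distinct $w$'s) and the density bound $f_{t,\gamma}(z)\leq e^{Nt}$.

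\textbf{Energy bound.} Your claim that any two $w,w'\in F_{1}$ are separated by $\gtrsim e^{-\varkappa t}$ is unjustified: separation of the horocycle parameters $r_{j}$ says nothing about separation of the $H^{\perp}_{\mathbb{C}}$-components. Two far-apart times $r_{j},r_{j'}$ can produce $w_{j},w_{j'}$ that are arbitrarily close (this is again the self-intersection phenomenon). The paper never proves a pairwise lower bound; instead it transfers the pointwise density bound $f_{t,\gamma}(z)\leq e^{Nt}$ from Theorem~\ref{effective2024.6.01}(1)(b) directly into $\sum_{w'\neq w}\|w-w'\|^{-\gamma}\leq e^{(N+\gamma)t}$ (Claim~\ref{margulis2024.3.14}). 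That density bound is itself part of the closing-lemma dichotomy: either it holds, or a nearby Teichm\"{u}ller curve exists.

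In short, both gaps reduce to the same missing ingredient: the stratum closing lemma (Theorem~\ref{effective2024.6.01}), which you cannot replace by an appeal to LMW.
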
  
 Proposition \ref{margulis2024.12.2}   gives us some dimension, say $\delta_{1}>0$, in the direction  $W^{+}(x)\cap H^{\perp}_{\mathbb{C}}(x)$. See 
   Proposition \ref{margulis2024.3.07} for more details.
   
   As a side effect, we deduce the following theorem that is occurred in  \cite[Proposition 13.1]{einsiedler2009effective} in the homogeneous dynamics.
 
      \begin{thm}\label{closing2023.12.1}   
 There \hypertarget{2024.12.C311} exist $\varepsilon_{0}>0$ and $N_{1}\geq 1$ such that for   $\varepsilon\in(0,\varepsilon_{0})$, $N\geq \hyperlink{2024.04.N6}{N_{1}}$, $x\in\mathcal{H}_{1}(2)$, there   \hypertarget{2024.04.N6} exist  $T_{0}=T_{0}(\ell(x))>0$, $\kappa>0$ with the following property.
         Let $T\geq T_{0}$. Suppose that $\{g_{1},\ldots,g_{l}\}\subset B_{G}(T)$ is $1$-separated, that $l\geq (\Vol B_{G}(T))^{1-\varepsilon}$, and that for any $1\leq i,j\leq l$,
          \[d(g_{i}x,g_{j}x)< T^{-N}.\]  
          Then there is $y\in \mathcal{H}_{1}(2)$, and $N_{1}>0$ such  that
          \begin{itemize}
            \item  $d(y,x)\leq  T^{\hyperlink{2024.04.N6}{N_{1}}-N}$,
            \item $y$ generates a Teichm\"{u}ller curve of discriminant $\leq  T^{\hyperlink{2024.04.N6}{N_{1}}}$.
          \end{itemize}  
\end{thm}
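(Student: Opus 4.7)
The plan is to deduce Theorem~\ref{closing2023.12.1} from Theorem~\ref{margulis2024.12.1} by a parameter match, showing that the recurrence hypothesis forces alternative~(2) of that theorem to hold. Specifically, I would set $t := C \log T$ for a constant $C \geq 1$ to be fixed later, choose the auxiliary parameters $\eta, \gamma, \epsilon$ of Theorem~\ref{margulis2024.12.1} suitably (with $\eta$ depending on $\ell(x)$ so that $x_0 = x \in \mathcal{H}_1^{(\eta)}(\alpha)$ satisfies the standing assumption), and apply Theorem~\ref{margulis2024.12.1} to $x_0 = x$ with $\alpha = (2)$ and its internal exponent ``$N$'' taken equal to the given $N$ after enlarging $C$. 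Under this choice, the precisions $e^{-Nt}$ and $e^{N_0 t}$ in alternative~(2) of Theorem~\ref{margulis2024.12.1} become $T^{-CN}$ and $T^{CN_0}$, and upon setting $N_1 := C N_0$ alternative~(2) is exactly the desired conclusion of Theorem~\ref{closing2023.12.1}. The entire task therefore reduces to excluding alternative~(1) under the recurrence hypothesis.

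To exclude alternative~(1), suppose it holds: there exist $x_1 \in \mathcal{H}_1^{(\eta)}(2)$ and a set $F \subset B_{H^\perp_\mathbb{C}(x_1)}(e^{-\kappa t})$ with $|F| \geq e^{t/2}$, sitting within $e^{-\kappa t}$ of the arc $a_{\varkappa t} u_{[0,2]}.x$ and satisfying the near-even spacing estimate (\ref{margulis2024.12.4}). On the other hand, the hypothesis supplies $l \geq (\Vol B_G(T))^{1-\varepsilon}$ $1$-separated elements $g_i \in B_G(T)$ whose images $g_i x$ are clustered inside a single ball of radius $T^{-N}$. I would form the symmetric differences $h_{ij} := g_j^{-1} g_i \in B_G(2T)$, so that $d(h_{ij} x, x) \leq 2 T^{-N}$ for all pairs, decompose each $h_{ij}$ under the local product structure $\mathcal{H}_1(2) \cong W^-_{\loc} \times G \times W^+_{\loc}$ near $x$ into its neutral $G$-part and its transverse displacement in $H^\perp_\mathbb{C}(x)$, push the picture forward by the renormalization $a_{\varkappa t}$, and transport to $x_1$ via the AGY chart. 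Once $C$ is chosen sufficiently large relative to $N_0, \gamma^{-1}, \epsilon^{-1}$, the resulting cluster of near-returns in $H^\perp_\mathbb{C}(x_1)$ has cardinality greatly exceeding $|F|$ while lying in a ball of radius far smaller than the minimum inter-point spacing forced by (\ref{margulis2024.12.4}); pigeonholing then produces $w \neq w' \in F$ with $\|w - w'\|_{x_1}$ so small that the sum $\sum_{w^{\prime} \neq w} \|w - w^{\prime}\|_{x_1}^{-\gamma}$ exceeds $|F|^{1+\epsilon}$, a contradiction.

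The principal obstacle in executing this plan is the quantitative bookkeeping of the projection from the $G$-direction recurrence to the transverse direction $H^\perp_\mathbb{C}(x_1)$ under the renormalization flow $a_{\varkappa t}$, measured in the AGY norm. One must carefully track the $W^+$-expansion, the $W^-$-contraction, and the neutral drift along $G$, keeping the accumulated error well below $e^{-\kappa t}$, while also ensuring that enough of the near-returns $h_{ij}$ survive a secondary pigeonhole onto the $P$-arc parameter $[0,2]$ so that the comparison with $F$ is non-vacuous. The $\eta$-thickness condition is used here to secure uniform constants for the local product chart, and the non-divergence input for $a_{\varkappa t} u_{[0,2]}.x$ ensures that most of the arc lies in $\mathcal{H}_1^{(\eta)}(2)$. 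Once this comparison closes, alternative~(1) is ruled out, alternative~(2) prevails, and Theorem~\ref{closing2023.12.1} follows.
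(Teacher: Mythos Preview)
Your plan tries to derive Theorem~\ref{closing2023.12.1} from Theorem~\ref{margulis2024.12.1}, but the paper does the opposite: Theorem~\ref{closing2023.12.1} is proved \emph{directly} from the effective closing lemma (Theorem~\ref{closing2024.12.2}) together with the effective lattice point counting (Corollary~\ref{closing2024.3.18}), and this machinery is then an input to the chain leading to Theorem~\ref{margulis2024.12.1}. Concretely, the paper applies Theorem~\ref{closing2024.12.2} to produce $y\in B(x,T^{\kappa_{11}-N})$ and $\tfrac12$-separated $g_i^\ast\in B_G(2T)$ with $g_i^\ast y=g_j^\ast y$; the lower bound $l\ge(\Vol B_G(T))^{1-\varepsilon}$ combined with Corollary~\ref{closing2024.3.18} forces $\SL(y)$ to be Zariski dense (Lemma~\ref{closing2024.12.3}); McMullen's criterion (Theorem~\ref{closing2024.3.19}) then gives the Teichm\"uller curve, and Corollary~\ref{closing2024.12.5} bounds the discriminant. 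No Margulis function or dichotomy from Theorem~\ref{margulis2024.12.1} is used.

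Your exclusion of alternative~(1) has a genuine gap. The set $F$ in alternative~(1) is a \emph{specific} subset of $H^\perp_{\mathbb C}(x_1)$ handed to you by the theorem, consisting of transverse displacements along which $x_1+w$ lies near the $P$-arc $a_{\varkappa t}u_{[0,2]}.x_0$. Your near-returns $h_{ij}x\approx x$ live on the $G$-orbit of $x$; after writing $h_{ij}x=g'_{ij}(x+w_{ij})$ and pushing forward by $a_{\varkappa t}u_r$, the resulting points $x_1+w'_{ij}$ have no reason to lie near the $P$-arc, hence no reason to be elements of $F$. Pigeonholing among the $w'_{ij}$ therefore cannot produce two \emph{elements of $F$} that are too close, and the spacing inequality $\sum_{w'\neq w}\|w-w'\|^{-\gamma}\le|F|^{1+\epsilon}$ is a statement about pairs \emph{inside} $F$ only. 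In short, you have built a clustered set disjoint from $F$ and then claimed it contradicts a separation property of $F$; that step fails. There is also a secondary issue: you take $\eta$ depending on $\ell(x)$, but in Theorem~\ref{margulis2024.12.1} $\eta$ is chosen before $x_0$, so $t_1$ would then depend on $\ell(x)$ twice over in an uncontrolled way.
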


Lately, Rached \cite{rached2024separation} obtained a similar closing lemma on $\mathcal{H}(2)$ using the effective closing lemma of Teichm\"{u}ller geodesics due to Eskin, Mirzakhani, Rafi \cite[Lemma 8.1]{eskin2019counting} and \cite{hamenstadt2010Bowen}.

 In the second step, we use a Margulis function technique to improve the dimension in the direction  $W^{+}(x)\cap H^{\perp}_{\mathbb{C}}(x)$.  Suppose that there are two nearby points $x_{1},x_{2}\in \mathcal{H}_{1}(\alpha)$ so that $x_{1}-x_{2}$ has nonzero projection to $H^{\perp}_{\mathbb{C}}(x_{2})$. Then under an appropriate unipotent action $u_{r}$, $u_{r}x_{1}-u_{r}x_{2}$ will have nonzero projection to $W^{+}(u_{r}x_{2})\cap H^{\perp}_{\mathbb{C}}(u_{r}x_{2})$. Moreover, by an exponentially small  adjustment of $ u_{r^{\prime}}$,    $a_{t}u_{r^{\prime}}x_{1}-a_{t}u_{r}x_{2}$ tends to the top Lyapunov in the space $W^{+}(a_{t}u_{r}x_{2})\cap H^{\perp}_{\mathbb{C}}(a_{t}u_{r}x_{2})$. This observation is the \textit{exponential drift}\index{exponential drift} technique.
 
 Now let   $w\in F_{1}\subset B_{H^{\perp}_{\mathbb{C}}(y)}(e^{-\kappa t})$ be as in Proposition \ref{margulis2024.12.2}.  Enlighten by the exponential drift idea,   one may choose an appropriate $r\in[0,1]$ so that 
 \[u_{r}(x+w)-u_{r}x=u_{r}w\]
  projects to a large scale in $W^{+}(x)\cap H^{\perp}_{\mathbb{C}}(x)$ for most of $w$. Then for these $w$,
  \[\|a_{t}u_{r}(x+w)-a_{t}u_{r}x\|_{a_{t}u_{r}x}=\|a_{t}u_{r}w\|_{a_{t}u_{r}x}\geq e\|u_{r}w\|_{u_{r}x}\]
  for a bounded time $t$. A detailed analysis then shows that 
  \[\sum_{\substack{w^{\prime}\neq w\\ w^{\prime}\in F_{1}}} \frac{1}{\|a_{t}u_{r}(w-w^{\prime})\|^{\gamma}_{y}}  \leq  \frac{1}{e} \sum_{\substack{w^{\prime}\neq w\\ w^{\prime}\in F_{1}}} \frac{1}{\|w-w^{\prime}\|^{\gamma}_{y}}.\]
 This leads to the relation of a Margulis function (Proposition \ref{margulis2024.4.16}). Also, we gain a finite set  $F_{2}\in B_{H^{\perp}_{\mathbb{C}}(a_{t}u_{r}y)}(e^{-\kappa t})$ near  $a_{t}u_{r}(F_{1})\subset a_{t}u_{r} a_{\hyperlink{2024.12.NN}{\varkappa}t}u_{[0,1]}.x_{0}$ with higher dimension $\delta_{2}>\delta_{1}$. One may consider this is an effective version of the exponential drift technique. See Section \ref{margulis2024.3.04} for more details.
  
  On the other hand, recall from \cite{eskin2018invariant} that after using the exponential drift technique, one may apply the symplectic nature of the Lyapunov exponents of the \textit{Kontsevich-Zorich cocycle}\index{Kontsevich-Zorich cocycle}. Then one may apply the exponential drift again, or conclude that the additional invariance  respects the symplectic nature of the Lyapunov exponents. It would be interesting to know if it is possible to quantify the symplectic nature of the Lyapunov exponents. Then combining with the Margulis function,   we can (effectively) find a subset of $W^{+}(x)\cap H^{\perp}_{\mathbb{C}}(x)$ with dimension almost $2$ near the  $P$-orbit $P.x$. Then apply the result of Sanchez \cite{sanchez2023effective} to get the  effective equidistribution of unstable foliation of $P$-orbits, similar to the proofs in \cite{lindenstrauss2023polynomial,lindenstrauss2022effective}.

\subsection{Outline of   Theorem \ref{margulis2024.12.1}}  
Let  
          \[H^{1}_{\mathbb{C}}(x)=H^{1}(M;\mathbb{R})\oplus i H^{1}(M;\mathbb{R}).\]
          Then for $w\in H(x)$, we write
          \[w=a+ib\]
          for $a,b\in H^{1}(M;\mathbb{R})$. Also, we consider $H^{1}(M;\mathbb{R})=W^{+}(x)\oplus\mathbb{R}(\Ree x)$ and $W^{+}(x)=\mathbb{R}(\Imm x)\oplus H^{\perp}(x)$. 
          
   In  case (1) of  Theorem \ref{margulis2024.12.1}, $\gamma$ implies the discretized fractal dimension of $F$ in $H_{\mathbb{C}}^{\perp}(x_{1})$. If we get an $F$ with dimension near $2$, then we may apply  \cite{sanchez2023effective} and obtain the effective density and equidistribution.   
    
The idea of the proof  goes as follows. We consider a long horocycle: 
\[a_{t}u_{[0,1]}x.\]
 First, let $B_{G}(\epsilon)$ be an $\epsilon$-ball of the identity in $G=\SL_{2}(\mathbb{R})$. Then we obtain a  small $G$-thickening of the long horocycle:
\[B_{G}(e^{-\kappa t})a_{t}u_{[0,1]}x.\]

 Then there are two possibilities:
 \begin{enumerate}[\ \ \ (i)]
   \item  either $B_{G}(e^{-\kappa t})a_{t}u_{[0,1]}x$  self intersects   somewhere in $\mathcal{H}_{1}(2)$, or
   \item $B_{G}(e^{-\kappa t})a_{t}u_{[0,1]}x$  can be locally observed in the $H_{\mathbb{C}}^{\perp}$-direction in the sense that there is some $x_{1}\in \mathcal{H}_{1}(2)$, and a finite set $F\subset B_{H_{\mathbb{C}}^{\perp}(x_{1})}(e^{-\kappa t})$ such that 
       \[x_{1}+F=B_{G}(e^{-\kappa t})a_{t}u_{[0,1]}x\cap [x_{1}+B_{H_{\mathbb{C}}^{\perp}(x_{1})}(e^{-\kappa t})]\]
       is a finite set with cardinality $|F|>1$. In fact, $F$ can be chosen so that $|F|\geq e^{t}$ for sufficiently large $t$.
       
\begin{figure}[H]
\centering

\tikzset{every picture/.style={line width=0.75pt}} 

\begin{tikzpicture}[x=0.75pt,y=0.75pt,yscale=-1,xscale=1]

\draw    (362.29,501.14) -- (407.24,501.14) ;
\draw    (264,382.29) -- (264,615.57) ;
\draw [color={rgb, 255:red, 255; green, 255; blue, 255 }  ,draw opacity=1 ][fill={rgb, 255:red, 255; green, 255; blue, 255 }  ,fill opacity=1 ]   (467.9,527.21) -- (317.24,527.14) -- (120.88,590.5) -- (271.55,590.57) -- cycle ;
\draw [color={rgb, 255:red, 0; green, 0; blue, 0 }  ,draw opacity=1 ][fill={rgb, 255:red, 255; green, 255; blue, 255 }  ,fill opacity=1 ]   (120.88,590.5) -- (271.55,590.57) ;
\draw [color={rgb, 255:red, 0; green, 0; blue, 0 }  ,draw opacity=1 ][fill={rgb, 255:red, 255; green, 255; blue, 255 }  ,fill opacity=1 ]   (120.88,590.5) .. controls (160.88,560.5) and (278.24,523.14) .. (317.24,527.14) ;
\draw [color={rgb, 255:red, 0; green, 0; blue, 0 }  ,draw opacity=1 ][fill={rgb, 255:red, 255; green, 255; blue, 255 }  ,fill opacity=1 ]   (271.55,590.57) .. controls (311.55,560.57) and (428.9,523.21) .. (467.9,527.21) ;
\draw [color={rgb, 255:red, 0; green, 0; blue, 0 }  ,draw opacity=1 ][fill={rgb, 255:red, 255; green, 255; blue, 255 }  ,fill opacity=1 ]   (317.24,527.14) -- (467.9,527.21) ;

\draw [color={rgb, 255:red, 0; green, 0; blue, 0 }  ,draw opacity=1 ][fill={rgb, 255:red, 255; green, 255; blue, 255 }  ,fill opacity=1 ]   (202.88,564.5) .. controls (242.88,534.5) and (360.24,497.14) .. (399.24,501.14) ;
\draw [color={rgb, 255:red, 255; green, 255; blue, 255 }  ,draw opacity=1 ][fill={rgb, 255:red, 255; green, 255; blue, 255 }  ,fill opacity=1 ]   (478.9,475.21) -- (328.24,475.14) -- (131.88,538.5) -- (282.55,538.57) -- cycle ;
\draw [color={rgb, 255:red, 0; green, 0; blue, 0 }  ,draw opacity=1 ][fill={rgb, 255:red, 255; green, 255; blue, 255 }  ,fill opacity=1 ]   (131.88,538.5) -- (282.55,538.57) ;
\draw [color={rgb, 255:red, 0; green, 0; blue, 0 }  ,draw opacity=1 ][fill={rgb, 255:red, 255; green, 255; blue, 255 }  ,fill opacity=1 ]   (131.88,538.5) .. controls (171.88,508.5) and (289.24,471.14) .. (328.24,475.14) ;
\draw [color={rgb, 255:red, 0; green, 0; blue, 0 }  ,draw opacity=1 ][fill={rgb, 255:red, 255; green, 255; blue, 255 }  ,fill opacity=1 ]   (282.55,538.57) .. controls (322.55,508.57) and (439.9,471.21) .. (478.9,475.21) ;
\draw [color={rgb, 255:red, 0; green, 0; blue, 0 }  ,draw opacity=1 ][fill={rgb, 255:red, 255; green, 255; blue, 255 }  ,fill opacity=1 ]   (328.24,475.14) -- (478.9,475.21) ;

\draw [color={rgb, 255:red, 255; green, 255; blue, 255 }  ,draw opacity=1 ][fill={rgb, 255:red, 255; green, 255; blue, 255 }  ,fill opacity=1 ]   (503.9,395.21) -- (353.24,395.14) -- (156.88,458.5) -- (307.55,458.57) -- cycle ;
\draw [color={rgb, 255:red, 0; green, 0; blue, 0 }  ,draw opacity=1 ][fill={rgb, 255:red, 255; green, 255; blue, 255 }  ,fill opacity=1 ]   (156.88,458.5) -- (307.55,458.57) ;
\draw [color={rgb, 255:red, 0; green, 0; blue, 0 }  ,draw opacity=1 ][fill={rgb, 255:red, 255; green, 255; blue, 255 }  ,fill opacity=1 ]   (156.88,458.5) .. controls (196.88,428.5) and (314.24,391.14) .. (353.24,395.14) ;
\draw [color={rgb, 255:red, 0; green, 0; blue, 0 }  ,draw opacity=1 ][fill={rgb, 255:red, 255; green, 255; blue, 255 }  ,fill opacity=1 ]   (307.55,458.57) .. controls (347.55,428.57) and (464.9,391.21) .. (503.9,395.21) ;
\draw [color={rgb, 255:red, 0; green, 0; blue, 0 }  ,draw opacity=1 ][fill={rgb, 255:red, 255; green, 255; blue, 255 }  ,fill opacity=1 ]   (353.24,395.14) -- (503.9,395.21) ;

\draw    (403,501.14) -- (361.5,501.14) ;
\draw [color={rgb, 255:red, 0; green, 0; blue, 0 }  ,draw opacity=1 ][fill={rgb, 255:red, 255; green, 255; blue, 255 }  ,fill opacity=1 ] [dash pattern={on 0.84pt off 2.51pt}]  (207.21,538.54) .. controls (247.21,508.54) and (364.57,471.18) .. (403.57,475.18) ;
\draw    (264,483.71) -- (264,514.14) ;
\draw [shift={(264,514.14)}, rotate = 90] [color={rgb, 255:red, 0; green, 0; blue, 0 }  ][fill={rgb, 255:red, 0; green, 0; blue, 0 }  ][line width=0.75]      (0, 0) circle [x radius= 3.35, y radius= 3.35]   ;
\draw [color={rgb, 255:red, 0; green, 0; blue, 0 }  ,draw opacity=1 ][fill={rgb, 255:red, 255; green, 255; blue, 255 }  ,fill opacity=1 ] [dash pattern={on 0.84pt off 2.51pt}]  (196.21,590.54) .. controls (236.21,560.54) and (353.57,523.18) .. (392.57,527.18) ;
\draw    (263.98,573.93) -- (263.98,564.93) ;
\draw [shift={(263.98,573.93)}, rotate = 270] [color={rgb, 255:red, 0; green, 0; blue, 0 }  ][fill={rgb, 255:red, 0; green, 0; blue, 0 }  ][line width=0.75]      (0, 0) circle [x radius= 3.35, y radius= 3.35]   ;
\draw [color={rgb, 255:red, 255; green, 255; blue, 255 }  ,draw opacity=1 ][fill={rgb, 255:red, 255; green, 255; blue, 255 }  ,fill opacity=1 ]   (549.9,501.21) -- (399.24,501.14) -- (202.88,564.5) -- (353.55,564.57) -- cycle ;
\draw [color={rgb, 255:red, 0; green, 0; blue, 0 }  ,draw opacity=1 ][fill={rgb, 255:red, 255; green, 255; blue, 255 }  ,fill opacity=1 ]   (393.24,501.14) -- (543.9,501.21) ;
\draw [color={rgb, 255:red, 0; green, 0; blue, 0 }  ,draw opacity=1 ][fill={rgb, 255:red, 255; green, 255; blue, 255 }  ,fill opacity=1 ]   (353.55,564.57) .. controls (393.55,534.57) and (510.9,497.21) .. (549.9,501.21) ;
\draw [color={rgb, 255:red, 0; green, 0; blue, 0 }  ,draw opacity=1 ][fill={rgb, 255:red, 255; green, 255; blue, 255 }  ,fill opacity=1 ]   (202.88,564.5) -- (353.55,564.57) ;
\draw [color={rgb, 255:red, 0; green, 0; blue, 0 }  ,draw opacity=1 ][fill={rgb, 255:red, 255; green, 255; blue, 255 }  ,fill opacity=1 ] [dash pattern={on 0.84pt off 2.51pt}]  (278.21,564.54) .. controls (318.21,534.54) and (435.57,497.18) .. (474.57,501.18) ;
\draw    (264,538.43) -- (264,545.43) ;
\draw [shift={(264,545.43)}, rotate = 90] [color={rgb, 255:red, 0; green, 0; blue, 0 }  ][fill={rgb, 255:red, 0; green, 0; blue, 0 }  ][line width=0.75]      (0, 0) circle [x radius= 3.35, y radius= 3.35]   ;
\draw [color={rgb, 255:red, 0; green, 0; blue, 0 }  ,draw opacity=1 ][fill={rgb, 255:red, 255; green, 255; blue, 255 }  ,fill opacity=1 ] [dash pattern={on 0.84pt off 2.51pt}]  (232.21,458.54) .. controls (272.21,428.54) and (389.57,391.18) .. (428.57,395.18) ;
\draw    (263.9,435.98) -- (263.9,447.05) ;
\draw [shift={(263.9,447.05)}, rotate = 90] [color={rgb, 255:red, 0; green, 0; blue, 0 }  ][fill={rgb, 255:red, 0; green, 0; blue, 0 }  ][line width=0.75]      (0, 0) circle [x radius= 3.35, y radius= 3.35]   ;
\draw [color={rgb, 255:red, 255; green, 255; blue, 255 }  ,draw opacity=1 ][fill={rgb, 255:red, 255; green, 255; blue, 255 }  ,fill opacity=1 ]   (588.4,372.71) -- (437.74,372.64) -- (241.38,436) -- (392.05,436.07) -- cycle ;
\draw [color={rgb, 255:red, 0; green, 0; blue, 0 }  ,draw opacity=1 ][fill={rgb, 255:red, 255; green, 255; blue, 255 }  ,fill opacity=1 ]   (241.38,436) -- (392.05,436.07) ;
\draw [color={rgb, 255:red, 0; green, 0; blue, 0 }  ,draw opacity=1 ][fill={rgb, 255:red, 255; green, 255; blue, 255 }  ,fill opacity=1 ]   (241.38,436) .. controls (281.38,406) and (398.74,368.64) .. (437.74,372.64) ;
\draw [color={rgb, 255:red, 0; green, 0; blue, 0 }  ,draw opacity=1 ][fill={rgb, 255:red, 255; green, 255; blue, 255 }  ,fill opacity=1 ]   (392.05,436.07) .. controls (432.05,406.07) and (549.4,368.71) .. (588.4,372.71) ;
\draw [color={rgb, 255:red, 0; green, 0; blue, 0 }  ,draw opacity=1 ][fill={rgb, 255:red, 255; green, 255; blue, 255 }  ,fill opacity=1 ]   (437.74,372.64) -- (588.4,372.71) ;

\draw    (263.9,410.98) -- (263.9,429.48) ;
\draw [shift={(263.9,429.48)}, rotate = 90] [color={rgb, 255:red, 0; green, 0; blue, 0 }  ][fill={rgb, 255:red, 0; green, 0; blue, 0 }  ][line width=0.75]      (0, 0) circle [x radius= 3.35, y radius= 3.35]   ;
\draw [color={rgb, 255:red, 0; green, 0; blue, 0 }  ,draw opacity=1 ][fill={rgb, 255:red, 255; green, 255; blue, 255 }  ,fill opacity=1 ] [dash pattern={on 0.84pt off 2.51pt}]  (316.71,436.04) .. controls (356.71,406.04) and (474.07,368.68) .. (513.07,372.68) ;

\draw (370,408) node [anchor=north west][inner sep=0.75pt]    {$a_{t}u_{[0,1]}x$};

\draw (200,355) node [anchor=north west][inner sep=0.75pt]    {$x_{1}+B_{H_{\mathbb{C}}^{\perp}(x_{1})}(e^{-\kappa t})$};
\draw (440,530) node [anchor=north west][inner sep=0.75pt]    {$B_{G}(e^{-\kappa t})a_{t}u_{[0,1]}x$};
\draw (275,510) node [anchor=north west][inner sep=0.75pt]    {$x_{1}$};

\end{tikzpicture}

  \caption{Case (ii).}
\label{margulis2025.1.3}
\end{figure}

 \end{enumerate}

 In case (i), we can apply the effective closing lemma (see Theorem \ref{effective2024.6.01}). It states that the $x$  is $e^{-Nt}$-close to a Teichm\"{u}ller curve with discriminant smaller than $e^{t}$.

 Now suppose that  case (i) does not occur. Then we obtain a finite set $F\subset H_{\mathbb{C}}^{\perp}(x_{1})$. For simplicity, we further assume that $F\subset H^{\perp}(x_{1})\subset W^{+}(x_{1})$. In the following, we briefly show that  the discretized fractal
dimension of $F$ can be improved via the Teichm\"{u}ller flow.  
To obtain this, we need to observe that the set $a_{t}F$ tends to be evenly distributed.  

Suppose  $y\in F$ is a highly concentrated point, i.e.
its local density
\[f_{\gamma}(y)=\sum_{w\in (x_{1}-y)+F}\|w\|_{y}^{-\gamma} \ \ \ (\gamma>0)\] 
is large. To decrease $f_{\gamma}(y)$, we apply the $a_{t}$-action. For simplicity, we assume for the moment that the expanding speed of the Teichm\"{u}ller flow is constant. Let $\lambda=\frac{1}{3}$ be the second Lyapunov exponent of the  Kontsevich-Zorich cocycle of $\mathcal{H}(2)$.
There is a sufficiently large $t>0$ such that 
      \[e^{(1-\lambda)t}\leq\frac{\|e^{t}w\|_{a_{t}y}}{\|w\|_{y}}\leq \frac{\|a_{t}w\|_{a_{t}y}}{\|w\|_{y}} \]
      for any $w  \in (x_{1}-y)+F$.
      Then  
      \[\|a_{t}w\|_{a_{t}y}^{-\gamma}\leq e^{-\gamma(1-\lambda)t}\|w\|_{y}^{-\gamma}\leq e^{-1}\|w\|_{y}^{-\gamma}\]
      for sufficiently large $t\geq t_{0}$.   Further, one may solve that $t_{0}$:
      \[t_{0}= \frac{1}{1-\lambda}.\]
       Then for points $z\in  F$ and $t\geq t_{0}$, we have 
      \[d(a_{t}y,a_{t}z)\geq e\cdot d(y,z).\]
      
   On the other hand, for points
   \[z_{c}\in B_{G}(\epsilon)a_{t}u_{[0,1]}x\cap B_{x_{1}}(e^{-\kappa t})^{c},\] we have 
   \begin{equation}\label{margulis2025.1.4}
     d(a_{t}y,a_{t}z_{c})\geq e^{-t_{0}}d(y,z_{c})=e^{-t_{0}}e^{-\kappa t}.
   \end{equation} 
      In other words, for points $z_{c}$ not coming from $F$, $a_{t}z_{c}$ is not too concentrated on $a_{t}y$ (e.g. a Cantor set). If it is the case for all points $y\in F$, then we see that $a_{t}F$ tends to be more ``evenly distributed" than $F$. Therefore, by replacing $F$ by $a_{t}F$, we may improve the dimension of $F$. See   Proposition \ref{margulis2024.4.16} for more details.
      
The general situation is much more complicated.
\begin{itemize}
        \item  The first difficulty is that $F$ belongs to $H_{\mathbb{C}}^{\perp}(x_{1})$ but not necessarily $H^{\perp}(x_{1})$. To fix this, we apply an exponential drift. That is, by replacing $F$ with $u_{r}F$ for some $r\in[0,1]$, we may assume that most points in $F$ have a big factor in $H_{\mathbb{C}}^{\perp}(x_{1})$. Then the previous argument is still applicable. However, we claim that
     \begin{cla}\label{margulis2025.1.5}
        The idea only works for the density function  
\[f_{\gamma}(y)=\sum_{w\in (x_{1}-y)+F}\|w\|_{y}^{-\gamma}\]
for any $\gamma\in(0,1)$. 
     \end{cla}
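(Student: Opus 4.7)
The plan is to trace through the Margulis function recursion and isolate where the restriction $\gamma \in (0,1)$ becomes essential. First I would fix $y$ and a pair $w \neq w' \in F_1$, set $v = w - w'$, and decompose it according to the splitting $H^{\perp}_{\mathbb{C}}(y) = (W^{+}(y) \cap H^{\perp}_{\mathbb{C}}(y)) \oplus (W^{-}(y) \cap H^{\perp}_{\mathbb{C}}(y))$ as $v = v_u + v_s$. Under $u_r$, the stable component shears into the unstable direction, producing an unstable part of the form $v_u + r\,\sigma(v_s)$ for an appropriate shear map $\sigma$. Consequently, $\|u_r v\|_{u_r y}$ dips to its minimum, of order $\|v_s\|$, at a critical parameter $r_*(v) \in \mathbb{R}$ that varies from pair to pair.

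Since no single $r$ is simultaneously good for all pairs, one must average over $r \in [0,1]$. Exchanging the outer sum with the integration reduces matters to estimating, for each pair, the integral
\[
\int_0^1 \|a_t u_r v\|^{-\gamma}_{a_t u_r y}\, dr.
\]
For $r$ away from $r_*(v)$, the $a_t$-expansion supplies the desired contraction factor $e^{-\gamma(1-\lambda)t}$ with $\lambda = \tfrac{1}{3}$ the second Kontsevich-Zorich exponent on $\mathcal{H}(2)$. Near $r_*(v)$, a change of variable $s = r - r_*(v)$ together with standard AGY-norm comparisons reduces the integrand to one controlled by $C\|v\|^{-\gamma}_{y}\cdot s^{-\gamma}$, and the resulting integral $\int_0^{\delta} s^{-\gamma}\, ds$ is finite with a bound uniform over all pairs precisely when $\gamma < 1$.

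The main obstacle, and the heart of the claim, is to verify that this dichotomy is sharp. For $\gamma \in (0,1)$ the factor $1/(1-\gamma)$ can be absorbed into a uniform additive constant, producing a recursion of the form $\int_0^1 f_\gamma(a_t u_r y)\, dr \leq e^{-1} f_\gamma(y) + C$, which is precisely what drives Proposition \ref{margulis2024.4.16}. For $\gamma \geq 1$, a single pair already contributes an infinite amount to the averaged Margulis function, so no such inequality can hold. I would close the argument by quantifying the $\gamma < 1$ bound via a careful accounting of constants (tracking the $1/(1-\gamma)$ blow-up and the dependence on $|F_1|$), and by exhibiting an explicit pair for which the $\gamma = 1$ integral diverges logarithmically near $r_*(v)$, confirming that the restriction is intrinsic to the method rather than a technical artifact.
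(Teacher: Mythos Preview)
Your sufficiency direction for $\gamma\in(0,1)$ is essentially the content of Lemma~\ref{margulis2024.3.03}, so that part is fine. Your necessity direction, however, has a genuine gap. You assert that for $\gamma\geq 1$ ``a single pair already contributes an infinite amount'' and that the $\gamma=1$ integral ``diverges logarithmically near $r_*(v)$''. But $a_t u_r v=e^t(a+rb)+ie^{-t}b$ always carries the stable component $ie^{-t}b$ as a floor, so $\|a_t u_r v\|_{a_t u_r y}\gtrsim e^{-(1+\lambda)t}\|b\|_y$ uniformly in $r$, and hence $\int_0^1\|a_t u_r v\|^{-\gamma}\,dr$ is finite for every $\gamma$ and every fixed $t$. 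What diverges for $\gamma\geq 1$ is only your \emph{upper bound} $\int_0^\delta s^{-\gamma}\,ds$; that shows one particular estimate fails, not that the method does. A single pair cannot witness the obstruction.

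The paper's sketch argues necessity quite differently, by an aggregate counterexample rather than a term-by-term one. It takes $F=\{y_n=-\tfrac{n}{|F|}b+ib:0\le n\le|F|\}$ with $b$ in the contracting KZ-eigenspace, so that for \emph{every} $r\in[0,1]$ there remain elements of $u_rF$ with nearly vanishing real part (those with $n\approx r|F|$); the shear never removes all bad points simultaneously. A direct lower bound on the full density $f_\gamma(a_t x_1)$ then reduces to the discrete sum $\sum_{k=1}^{|F|}(k^\gamma+1)^{-1}$, and the paper argues that for $\gamma\geq 1$ the time $t$ required for contraction must grow unboundedly with $|F|$, which breaks the fixed-step Margulis iteration. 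The obstruction is collective, coming from the accumulation of many moderately bad terms, not from the divergence of a single one.
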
 
\begin{proof}[Sketch proof of the claim] For $y\in F$, write $y=a+ib\in H^{\perp}(x_{1})\oplus i H^{\perp}(x_{1})$. Then $u_{r}y=(a+rb)+ib$ for $r\in[0,1]$.
  The worst case scenario is that $a$ is evenly distributed on $[0,1].(-b)$, e.g.
  \[y_{n}=-\frac{n}{|F|}b+ib,\ \ \ n=0,1,\ldots,|F|,\ \ \ \|b\|_{a_{t}x_{1}}\approx e^{-\lambda t}\|b\|_{x_{1}}.\]
  Then for any $r\in[0,1]$, the shift $u_{r}F$ always has ``bad points" $y=a+ib\in u_{r}F$, namely, $\frac{\|a\|}{\|y\|}$ is extremely small. It follows that in order to obtain
 \[\|a_{t}w\|_{a_{t}y}^{-\gamma}\leq  e^{-1}\|w\|_{y}^{-\gamma},\]
 the time $t_{0}$ becomes extremely large so that the lower bound in (\ref{margulis2025.1.4}) becomes useless. For example, if we choose $t=\frac{1}{2}\log|F|$, then  
   
\begin{align}
  f_{\gamma}(a_{t}x_{1})\geq& \sum_{a_{t}w\in a_{t}F}\|a_{t}w\|_{a_{t}x_{1}}^{-\gamma} \;\nonumber\\
\gtrsim  &   \frac{1}{b^{\gamma}}\sum_{k=1}^{|F|}\frac{1}{e^{(1-\lambda)\gamma t}(\frac{k}{|F|})^{\gamma}+e^{-(1+\lambda)\gamma t}}\;\nonumber\\
 =&\frac{|F|^{\frac{(1+\lambda)\gamma}{2}}}{b^{\gamma}}\sum_{k=1}^{|F|}\frac{1}{k^{\gamma}+1} .\;  \nonumber
\end{align}
Note that we want to choose a large $t$ so that the local density $f_{\gamma}(a_{t}x_{1})$ is smaller than $ f_{\gamma}(x_{1})$.
Then  for $\gamma\geq 1$, as $|F|\rightarrow\infty$,  $t$ has to be larger than $\frac{1}{2}\log|F|$ (for otherwise $f_{\gamma}(a_{t}x_{1})\rightarrow\infty$). But then $t\rightarrow\infty$ as well. 
 To avoid this, $\gamma$ has to be smaller than $1$. 
\end{proof}
\item The second difficulty is that $F$ the expanding speed of the Teichm\"{u}ller flow is not constant. To fix this, we need to apply the quantitative estimates of the Lyapunov exponents of the Teichm\"{u}ller flow.
      \end{itemize}

      Finally, due to  Claim \ref{margulis2025.1.5} (ultimately due to the $G$-action on $\mathcal{H}_{1}(2)$), this method can only show that the discretized fractal
dimension of $F$ is $\gamma \approx 1$. It would be interesting to know if there is a way to  quantify the symplectic structure of $H^{\perp}_{\mathbb{C}}$, and then bootstrap the exponential drift.

\subsection{Structure of the paper}
In Section \ref{closing2024.3.23} we recall basic definitions, including  some basic material on the translation surfaces and Teichm\"{u}ller curves (in Section \ref{closing2024.3.24}, Section \ref{closing2024.3.25}). In particular, we study the period map via triangulation (Section \ref{closing2024.3.17}) and gives quantitative estimates in terms of Avila-Gou\"{e}zel-Yoccoz norm (Section \ref{closing2024.3.26}).

In Section \ref{closing2024.3.27}, we review the effectiveness of the geodesic flows on the homogeneous dynamics. It provides    the Zariski density of  Veech groups via an effective lattice point counting, which  shall serve as a criterion of  Teichm\"{u}ller curves in $\mathcal{H}(2)$. 
 
In Sections  \ref{closing2024.3.28} and \ref{closing2024.3.5}, we review the dynamics over $\mathcal{H}(2)$. We first recall the McMullen's classification of Teichm\"{u}ller curves in $\mathcal{H}(2)$. From this, we deduce some   quantitative estimates of  Teichm\"{u}ller curves. In particular, in Corollary \ref{closing2024.3.15}, we deduce a quantitative discreteness of Teichm\"{u}ller curves with bounded discriminants.

In Sections \ref{closing2024.3.29} and \ref{margulis2024.12.3}, we provide effective closing lemmas  in Teichm\"{u}ller dynamics, which  shall serve as the initial dimension  in the direction  $W^{+}(x)\cap H^{\perp}_{\mathbb{C}}(x)$ (Proposition \ref{margulis2024.12.2}).   In particular, we prove Theorem \ref{closing2023.12.1}  by using the effective lattice point counting deduced in Section \ref{closing2024.3.27}.

  Finally,   we present in Section    \ref{margulis2024.3.04} the proof of Theorem \ref{margulis2024.12.1}. More precisely, we effectively improve the dimension of $P$-orbits in the direction  $W^{+}(x)\cap H^{\perp}_{\mathbb{C}}(x)$ via the Margulis function technique.

 \noindent
 \textbf{Acknowledgements.} 
 I would like to thank Pengyu Yang for the helpful discussion.  
I would also like to thank Giovanni Forni for sharing with me his insights. Last, I would like to express my appreciation for the support of Junyi Xie and Disheng Xu.

\section{Preliminaries}\label{closing2024.3.23}  
\subsection{Notation}
We will denote the metric on all relevant metric spaces by $d(\cdot,\cdot)$; where this may cause confusion, we will give the metric space as a subscript, e.g. $d_{X}(\cdot,\cdot)$ etc. Next, $B(x,r)$ denotes the open ball of radius $r$ in the metric space $x$ belongs to; where needed, the space we work in will be given as a subscript, e.g. $B_{T}(x,r)$. We will assume implicitly that for any $x\in X$ (as well as any other locally compact metric space we will consider) and $r>0$ the ball $B_{X}(x,r)$ is relatively compact.

We will use the asymptotic notation $A=O(B)$, $A\ll B$, or $A\gg B$,    for  positive quantities $A,B$ to mean  the estimate $|A|\leq CB$ for some constant $C$ independent of $B$. In some cases, we will need this constant $C$ to depend on a parameter (e.g. $d$), in which case we shall indicate this dependence by subscripts, e.g. $A=O_{d}(B)$ or  $A\ll_{d} B$. We also sometimes use $A\asymp B$ as a synonym for $A\ll B\ll A$.

Let $G=\SL_{2}(\mathbb{R})$, $\Gamma=\SL_{2}(\mathbb{Z})$.    Besides, we define 
\[B_{G}(T)\coloneqq\{g\in G:\|g-e\|\leq T\}\]
 where $e$ is the identity of $G$ and $\|\cdot\|$ is a fixed norm on the Euclidean space, e.g. $\|\cdot\|$ can be defined by
 \[\|g\|\coloneqq\max_{ij}\{|g_{ij}|,|g_{ij}^{-1}|\}\]
  where $g_{ij}$ is the $ij$-th entry of $g$. 
  Consider the diagonal $G$-action  on $G/\Gamma\times G/\Gamma$. Let $d_{G}(\cdot,\cdot)$ be a right-invariant metric on $G$.  We  abuse notation and use the same symbol $d_{G}$ to refer to the  metric on $G/\Gamma\times G/\Gamma$.

  We shall frequently use the following linear algebra lemma. Let
\begin{equation}\label{margulis2024.4.34}
  \mathsf{Q}_{G}(\delta,\tau)  \coloneqq \bar{u}_{[- \frac{\delta}{\tau}, \frac{\delta}{\tau}]}\cdot a_{[-\delta,\delta]}\cdot u_{[-\delta,\delta]}.
\end{equation}  
  \begin{lem}\label{margulis2024.4.35} Let $\delta,\epsilon\in(0,\frac{1}{100})$, $\tau\geq 1$, $r\in[0,2]$. Then   
\begin{alignat*}{8}
B_{G}(\delta)\cdot B_{G}(\epsilon)& \subset B_{G}(2(\delta+\epsilon)),  & \ \ \ &     & B_{G}(\delta\epsilon)\cdot B_{G}(\delta-2\delta\epsilon)& \subset B_{G}(\delta),  \\
\mathsf{Q}_{G}(\delta,\tau)^{\pm 1}\cdot \mathsf{Q}_{G}(\delta,\tau)^{\pm 1}& \subset \mathsf{Q}_{G}(10\delta,\tau), & & &    \mathsf{Q}_{G}(\delta,\tau)^{\pm 1}\cdot a_{\tau}u_{r}& \subset a_{\tau}u_{r} B_{G}(10\delta).
\end{alignat*}
  \end{lem}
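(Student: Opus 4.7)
The four inclusions are purely computational facts about $\SL_2(\mathbb{R})$ in coordinates, proved by direct matrix manipulation together with the Bruhat commutation relations; no deep structural input is needed, only a careful tracking of constants. My plan is to treat the four claims in order and reduce them all to the explicit forms $a_t = \mathrm{diag}(e^{t/2},e^{-t/2})$, $u_r = \bigl(\begin{smallmatrix}1&r\\0&1\end{smallmatrix}\bigr)$, and $\bar u_s = \bigl(\begin{smallmatrix}1&0\\s&1\end{smallmatrix}\bigr)$.

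For the first two inclusions I would use the expansion $gh-e = (g-e)+(h-e)+(g-e)(h-e)$, which is submultiplicative for the operator norm underlying the definition of $B_G(\cdot)$. If $g\in B_G(\delta)$ and $h\in B_G(\epsilon)$ then $\|gh-e\|\le \delta+\epsilon+\delta\epsilon$, and the hypothesis $\delta,\epsilon<\tfrac{1}{100}$ gives $\delta\epsilon\le\delta+\epsilon$, yielding the first inclusion. For the second, the same expansion gives
\[
\|gh-e\|\le \delta\epsilon+(\delta-2\delta\epsilon)+\delta\epsilon(\delta-2\delta\epsilon)=\delta-\delta\epsilon\bigl(1-\delta(1-2\epsilon)\bigr)\le \delta,
\]
since $\delta(1-2\epsilon)\le\tfrac{1}{100}<1$.

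For the third inclusion my plan is to take a general product $\bar u_{s_1}a_{t_1}u_{r_1}\cdot \bar u_{s_2}a_{t_2}u_{r_2}$ of two elements of $\mathsf{Q}_G(\delta,\tau)$ and reduce it to Bruhat form $\bar u_{s'}a_{t'}u_{r'}$. The only non-trivial step is to rewrite the ``middle'' product $u_{r_1}\bar u_{s_2}$; a direct computation gives
\[
u_r \bar u_s \;=\; \bar u_{s/(1+rs)}\, a_{2\log(1+rs)}\, u_{r/(1+rs)},
\]
which is valid on the big Bruhat cell $1+rs\neq 0$ (automatic since $|r|,|s|\le\delta<\tfrac{1}{100}$). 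Pushing the $a_{t_1}$ across using $a_t\bar u_s a_{-t}=\bar u_{e^{-t}s}$ and $a_t u_r a_{-t}=u_{e^t r}$, and Taylor-expanding $\log(1+rs)$, produces parameters $(s'',t'',r'')$ with explicit $O(\delta)$ bounds; the $1/\tau$ scaling of the $\bar u$-parameter is preserved because each manipulation multiplies $\bar u$-parameters by quantities of size $1+O(\delta)$. Tracking constants carefully shows $|s''|\le 10\delta/\tau$, $|t''|,|r''|\le 10\delta$, which gives the third inclusion. The cases with $\pm 1$ exponents follow by combining this with the formula $(\bar u_s a_t u_r)^{-1}=u_{-r}a_{-t}\bar u_{-s}$, which lies in $\mathsf{Q}_G(\delta,\tau)$ (again after the same rearrangement).

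The fourth inclusion is of the same flavor: given $q\in\mathsf{Q}_G(\delta,\tau)^{\pm 1}$ I would write $q\cdot a_\tau u_r=(a_\tau u_r)\cdot\bigl((a_\tau u_r)^{-1}q(a_\tau u_r)\bigr)$ and estimate the conjugate entry-by-entry. Conjugation by $u_r$ for $r\in[0,2]$ is a bounded linear operation whose perturbation is controlled by $|r|\cdot\|\cdot\|$, and conjugation by $a_\tau$ acts on the Bruhat pieces by the above formulas. The routine but slightly tedious step is to verify, by writing each factor of $q$ explicitly, that the resulting matrix lies in $B_G(10\delta)$; this is where I would be most careful with constants, and it is the part I expect to be the main obstacle — not because of any conceptual difficulty, but because four separate subcases (corresponding to $\pm 1$ and to each of the three factors $\bar u,a,u$ of $q$) must be checked and the absorption of lower-order terms into the $10\delta$ slack must be verified. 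Once done, these four inclusions together constitute the lemma.
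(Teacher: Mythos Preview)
Your proposal is correct and follows essentially the same route as the paper. The paper's proof consists of a single sentence pointing to the $\bar u\, a\, u$ (LDU/Bruhat) factorization
\[
\begin{bmatrix} a & b \\ c & d \end{bmatrix}
=\begin{bmatrix} 1 & 0 \\ c/a & 1 \end{bmatrix}
\begin{bmatrix} a & 0 \\ 0 & 1/a \end{bmatrix}
\begin{bmatrix} 1 & b/a \\ 0 & 1 \end{bmatrix},
\]
and declares the rest ``straightforward''; your plan is exactly the fleshed-out version of this, since your key commutation identity $u_r\bar u_s=\bar u_{s/(1+rs)}a_{2\log(1+rs)}u_{r/(1+rs)}$ is precisely this factorization applied to the product $u_r\bar u_s$. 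One cosmetic point: the norm used in the paper is the max-entry norm, which is only submultiplicative up to a factor of $2$ on $2\times 2$ matrices, so your expansions for the first two inclusions pick up a $2\delta\epsilon$ cross term rather than $\delta\epsilon$; this is harmless since the slack in the stated constants absorbs it.
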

  \begin{proof} The proof is straightforward with the following observation:
     For any $a,b,c,d\in\mathbb{R}$ with $ad-bc=1$ and $a\neq0$, we have
     \[\begin{bmatrix}
 a & b  \\
   c & d
\end{bmatrix}=\begin{bmatrix}
 1 & 0 \\
 c/a & 1
\end{bmatrix}\begin{bmatrix}
 a & 0  \\
   0 & 1/a
\end{bmatrix}\begin{bmatrix}
 1 & b/a  \\
   0 & 1
\end{bmatrix}.\]
The claim  follows from this identity.
  \end{proof}
  \subsection{Translation surfaces}\label{closing2024.3.24}
 Let $M$ be a compact oriented surface of genus $g$, and let $\Sigma\subset M$ be a nonempty finite set, called the set of zeroes. We make the convention that the points of $\Sigma$ are labeled. Let $\alpha=\{\alpha_{\sigma}:\sigma\in\Sigma\}$ be a partition of $2g-2$, so $\sum_{\sigma\in\Sigma} \alpha_{\sigma}=2g-2$.
 \begin{defn}[Translation surface]
   A surface $M$ is called a \textit{translation surface of type $\alpha$}\index{translation surface} if it has an affine atlas, i.e. a family of orientation preserving charts $\{(U_{a},z_{a})\}_{a}$ such that
   \begin{itemize}
     \item   the $U_{a}\subset M\setminus\Sigma$ are open and cover $M\setminus\Sigma$,
     \item the transition maps $z_{a}\circ z_{b}^{-1}$ have the form $z\mapsto z+c$.
   \end{itemize}  
   In addition, the planar structure of $M$ in a neighborhood of each $\sigma\in\Sigma$ completes to a cone angle singularity of total cone angle $2\pi(\alpha_{\sigma}+1)$. 
 \end{defn} 
 There are many equivalent definitions of a translation surface, and a convenient one is a pair $(M,\omega)$ consisting of a compact Riemann surface and  a holomorphic $1$-form $\omega$. We shall use these definitions interchangeably.
 
 There is a natural $\GL^{+}_{2}(\mathbb{R})$ action on the translation surfaces. Let $M$ be a translation surface with an atlas $\{(U_{a},z_{a})\}_{a}$.
  Since any matrix $h\in\GL^{+}_{2}(\mathbb{R})$ acts  on $\mathbb{C}=\mathbb{R}+i\mathbb{R}$, we obtain a new atlas $\{(U_{a},h\circ z_{a})\}_{a}$, which induces a new translation surface $hM$.

An \textit{affine isomorphism}\index{affine isomorphism} is an orientation preserving homeomorphism $f:M_{1}\rightarrow M_{2}$ which is affine  in each chart. If $M_{1}= M_{2}$, it is called an \textit{affine automorphism}\index{affine automorphism} instead. Let $\Aff(M)$ denote the set of affine automorphisms of $M$. 
 If an affine isomorphism whose linear part is $\pm\Id$ (for translation surfaces, $\Id$), it is called a \textit{translation equivalence}\index{translation equivalence}. Let $\mathcal{H}(\alpha)=\Omega \mathcal{M}_{g}(\alpha)$ denote the space of equivalence classes of translation surfaces of type $\alpha$. We refer to $\mathcal{H}(\alpha)$ as the \textit{moduli space of translation surfaces of type $\alpha$}\index{moduli space}.
 
  A \textit{saddle connection}\index{saddle connection} of $M$ is a geodesic segment joining two zeroes in $\Sigma$  or a zero to itself which has no zeroes in its interior.

 We fix a compact surface $(S,\Sigma)$ and  refer to it  as the model surface. 
   A \textit{marking map}\index{marking map} of a   surface $M$ is a homeomorphism $\varphi:(S,\Sigma)\rightarrow(M,\Sigma_{M})$ which preserves labels on $\Sigma$.  (We sometimes drop the subscript and use the same symbol $\Sigma$ to denote finite subsets of $S$ and of $M$, if no confusion  arises.) Two marking maps $\varphi_{1}:(S,\Sigma)\rightarrow (M_{1},\Sigma_{M_{1}})$ and  $\varphi_{2}:(S,\Sigma)\rightarrow (M_{2},\Sigma_{M_{2}})$ are said to be \textit{equivalent}\index{equivalent marking maps} if there is a translation equivalence $f:M_{1}\rightarrow M_{2}$ such that
   \begin{itemize}
     \item   $f\circ \varphi_{1}$ is isotopic to $\varphi_{2}$,
     \item $f$ maps $\Sigma_{M_{1}}\rightarrow \Sigma_{M_{2}}$ respecting the labels.
   \end{itemize}  
   An equivalent class of translation surfaces with marking maps is a \textit{marked translation surface}\index{marked translation surface}. The space of marked translation surfaces of type $\alpha$ is denoted by $\mathcal{TH}(\alpha)=\Omega \mathcal{T}_{g}(\alpha)$. We refer to $\mathcal{TH}(\alpha)$ as the \textit{Teichm\"{u}ller space of marked translation surfaces of type $\alpha$}\index{Teichm\"{u}ller space}. By forgetting the marking maps, we get a natural map  $\pi:\mathcal{TH}(\alpha)\rightarrow \mathcal{H}(\alpha)$.

  We can locally identify $\mathcal{TH}(\alpha)$ (and so $\mathcal{H}(\alpha)$) with $H^{1}(M,\Sigma;\mathbb{C})$. Let $\tilde{x}\in\mathcal{TH}(\alpha)$ be a marked translation surface with the marking $\varphi:(S,\Sigma)\rightarrow(M,\Sigma)$. 
   Suppose $M$ is equipped with a holomorphic $1$-form $\omega$.  Then the \textit{period map}\index{period map} 
   \[\omega^{\prime}\mapsto \left(\gamma\mapsto\int_{\gamma}\omega^{\prime}\right)\]
   from a neighborhood of $\omega$ to $H^{1}(M,\Sigma;\mathbb{C})$ gives a local homeomorphism.  
   Let $\tilde{x}\in\mathcal{TH}(\alpha)$ be a marked translation surface with the marking $\varphi:(S,\Sigma)\rightarrow(M,\Sigma)$. Suppose $M$ is equipped with a holomorphic $1$-form $\omega$.  
   Then after using the marking map $\varphi$ to pullback $\omega$, we get a distinguished element $\hol_{\tilde{x}}=\varphi^{\ast}(\omega)\in H^{1}(S,\Sigma;\mathbb{R}^{2})\cong H^{1}(S,\Sigma;\mathbb{C})$. Thus, if     $\gamma\in H_{1}(S,\Sigma;\mathbb{Z})$ is an oriented curve in $S$ with  endpoints in $\Sigma$, then 
  \[\hol_{\tilde{x}}(\gamma)=\tilde{x}(\gamma)\coloneqq\omega(\varphi(\gamma)).\]   
  We also refer to the map $\hol:\mathcal{TH}(\alpha)\rightarrow H^{1}(S,\Sigma;\mathbb{C})$ as the \textit{developing map}\index{developing map} or \textit{period map}\index{period map}. It is a local homeomorphism (see  Lemma \ref{effective2023.9.20}).
  If we fix  $2g+|\Sigma|-1$ curves $\gamma_{1},\ldots, \gamma_{2g+|\Sigma|-1}$ that form a basis for $H_{1}(S,\Sigma;\mathbb{Z})$, then  it defines the  \textit{period coordinates}\index{period coordinates} $\phi:\mathcal{TH}(\alpha)\rightarrow  \mathbb{C}^{2g+|\Sigma|-1}$ by
   \[\phi:\tilde{x}\mapsto\left(  \hol_{\tilde{x}}(\gamma_{i})\right)_{i=1}^{2g+|\Sigma|-1}.\]
    It is convenient to assume that the basis is obtained by fixing a triangulation $\tau$ of the surface  by saddle connections of $x$ (see Definition \ref{closing2024.1.3}).  Via the \textit{Gauss-Manin connection}\index{Gauss-Manin connection}, period coordinates endow $\mathcal{TH}(\alpha)$ with a canonical complex affine structure.

  Let $\Gamma=\Mod(M,\Sigma)$ be the group of isotopy classes of homeomorphisms $M$ which fix $\Sigma$ pointwise for a representative $(M,\Sigma)$ of the stratum $\alpha$. We will call this group the \textit{mapping class group}\index{mapping class group}. It acts on the right on $\mathcal{TH}(\alpha)$: letting $\tilde{x}\in \mathcal{TH}(\alpha)$ with a marking map $\varphi: (S,\Sigma)\rightarrow(M,\Sigma)$, $\gamma\in\Gamma$, we have the action \[\gamma.\varphi=\varphi\circ\gamma.\]
    The $\Gamma$-action on $\mathcal{TH}(\alpha)$ is properly discontinuous (e.g. \cite[Theorem 12.2]{farb2011primer}). Hence, $\mathcal{H}(\alpha)=\mathcal{TH}(\alpha)/\Gamma$ has an orbifold structure. 
   We   choose a fundamental domain $\mathcal{D}$ on $\mathcal{TH}(\alpha)$ for the action of $\Gamma$. Note that $\Gamma$ also acts on the right by linear automorphisms on $H^{1}(S,\Sigma;\mathbb{R})$. Let $R:\Gamma\rightarrow\Aut(H^{1}(S,\Sigma;\mathbb{R}))\cong\GL(2g+|\Sigma|-1,\mathbb{R})$. Since each element of $\Gamma$ is represented by an orientation-preserving homeomorphism of $S$, it follows that the image of $R$ lies in $\SL(2g+|\Sigma|-1,\mathbb{R})$.

    Let $\tilde{x}\in \mathcal{D}\subset \mathcal{TH}(\alpha)$ and $h\in\GL^{+}_{2}(\mathbb{R})$. Then there is a unique element $\gamma\in\Gamma$ so that $h\tilde{x}\gamma\in \mathcal{D}$. The   \textit{Kontsevich-Zorich cocycle}\index{Kontsevich-Zorich cocycle} is then defined by
   \[\Gamma(h,\tilde{x})\coloneqq \Gamma(\gamma).\] 
    Then for $\tilde{x}\in \mathcal{D}\subset \mathcal{TH}(\alpha)$,   the $G$-action becomes 
\begin{equation}\label{closing2023.08.6}
   h_{\tilde{x}}:\begin{bmatrix}
x_{1} & \cdots & x_{2g+|\Sigma|-1}\\
y_{1} & \cdots & y_{2g+|\Sigma|-1}
\end{bmatrix}\mapsto h\begin{bmatrix}
x_{1} & \cdots & x_{2g+|\Sigma|-1}\\
y_{1} & \cdots & y_{2g+|\Sigma|-1}
\end{bmatrix}\Gamma(h,\tilde{x}).
\end{equation} 

We say that a cocycle  is \textit{reductive}\index{reductive cocycle} under a representation if the representation  is semisimple, i.e. any invariant subspace has a complement.  
It is well known that the algebraic hull of Kontsevich-Zorich cocycle for the $\SL_{2}(\mathbb{R})$-action is reductive (see \cite[Appendix A]{eskin2018invariant}, \cite[Theorem 1.5]{avila2017symplectic}; 
   see also \cite{eskin2018algebraic} for the more precise algebraic hull of Kontsevich-Zorich cocycle). 
It leads to the fact that any $\SL_{2}(\mathbb{R})$-invariant subbundle has an invariant complement: 
\begin{thm}[Semisimplicity of $\SL_{2}(\mathbb{R})$-invariant subbundles, {\cite[Theorem 1.4]{filip2016semisimplicity}}]\label{closing2024.11.1} Let $E$ be a $\SL_{2}(\mathbb{R})$-invariant subbundle of any tensor power of the Hodge bundle over an affine invariant submanifold $\mathcal{M}$. Then it has a complement. 
\end{thm}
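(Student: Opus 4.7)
The plan is to construct an $\SL_{2}(\mathbb{R})$-invariant complement to $E$ by taking orthogonal complements with respect to the symplectic polarization carried by the Hodge bundle. The Hodge bundle on $\mathcal{M}$ has the intersection form $\langle\cdot,\cdot\rangle$ as a flat symplectic polarization preserved by the Kontsevich--Zorich cocycle, and together with the Hodge decomposition $H^{1,0}\oplus H^{0,1}$ it produces a positive definite Hermitian form, the \emph{Hodge inner product}. Both structures extend functorially to every tensor power $V$. The Hodge form itself is not cocycle-invariant, but it controls the cocycle quantitatively through the Kontsevich--Forni formula for the Lyapunov spectrum, and that is ultimately what lets us promote measurable invariance into rigid holomorphic invariance.

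The first step is the crucial regularity upgrade: one shows that any $\SL_{2}(\mathbb{R})$-invariant subbundle $E\subset V$ is automatically a polarized sub-variation of Hodge structure (sub-VHS). A priori $E$ is only measurable, but invariance under the Teichm\"uller geodesic flow $a_{t}$ forces its fibres to align with the Oseledets filtration almost everywhere, and the Hodge-theoretic nature of that filtration, captured by the curvature formula for the Hodge norm and by Filip's analysis of the degeneration behaviour at the boundary of $\mathcal{M}$, forces $E$ to be real-analytic and compatible with the Hodge filtration. This is the main obstacle and is where the bulk of the work lies; the key phenomenon is that $a_{t}$-invariance plus Hodge-norm boundedness (which comes for free from the invariant volume finiteness) rules out the wild measurable splittings that are the generic obstruction to semisimplicity in a purely dynamical setting.

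Once $E$ is known to underlie a sub-VHS, the Hodge--Riemann bilinear relations imply that the restriction of the symplectic polarization to $E$ is non-degenerate, so its symplectic orthogonal $E^{\perp}\subset V$ is a complementary sub-VHS with $E\cap E^{\perp}=0$. Since the polarization is $\SL_{2}(\mathbb{R})$-invariant, so is $E^{\perp}$, and we obtain the desired splitting $V=E\oplus E^{\perp}$. This final step is Deligne's classical semisimplicity theorem for polarizable variations of Hodge structure over a quasi-projective base, which applies because $\mathcal{M}$ is itself quasi-projective by Filip's algebraicity theorem for affine invariant submanifolds. The hard part is thus not the orthogonal-complement construction, which is formal once regularity is known, but the Hodge-theoretic rigidity at the start; in a purely measurable or smooth category the statement is false, and it is precisely the VHS structure that makes the theorem go through.
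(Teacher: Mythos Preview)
The paper does not give its own proof of this theorem: it is stated as a citation of \cite[Theorem 1.4]{filip2016semisimplicity}, with the remark ``See \cite{filip2016semisimplicity} for the more precise discussion of Kontsevich-Zorich cocycle.'' So there is no in-paper argument to compare against.

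Your outline is a fair high-level summary of Filip's actual strategy: the content is the rigidity step promoting an $\SL_{2}(\mathbb{R})$-invariant (a priori only measurable) subbundle to a polarized sub-variation of Hodge structure, after which Deligne's semisimplicity for polarized VHS over a quasi-projective base gives the complement. You correctly identify where the difficulty lies and that the orthogonal-complement step is formal once the sub-VHS structure is known. That said, what you have written is a roadmap rather than a proof: the ``regularity upgrade'' paragraph gestures at the right ingredients (Oseledets filtration, Hodge norm curvature, boundary behaviour) but does not actually carry out the argument, which in Filip's paper requires substantial input from Schmid's nilpotent orbit theorem and a careful analysis of how the monodromy filtration interacts with the flat and Hodge structures. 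If the intent is to record why the theorem is true, your sketch is adequate and matches the cited source; if the intent is to supply an independent proof, the central step is not done.
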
 
   See \cite{filip2016semisimplicity} for the more precise discussion of   Kontsevich-Zorich cocycle. 
   
   In the literature, we sometimes refer to $\mathcal{TH}(\alpha)$ and $\mathcal{H}(\alpha)$ as a stratum of $\mathcal{TH}^{g}$ and $\mathcal{H}^{g}$, namely the Teichm\"{u}ller and  moduli spaces of translation surfaces of genus $g$, respectively. This is because we have the stratification
   \[\mathcal{TH}^{g}=\bigsqcup_{\alpha_{1}+\cdots+\alpha_{\sigma}=2g-2}\mathcal{TH}(\alpha),\ \ \ \ \ \ \mathcal{H}^{g}=\bigsqcup_{\alpha_{1}+\cdots+\alpha_{\sigma}=2g-2}\mathcal{H}(\alpha).\]
  
  On the other hand, let $\mathcal{T}_{g}$, $\mathcal{M}_{g}$ denote the Teichm\"{u}ller and moduli spaces  of Riemann surfaces of genus $g$ respectively. Let   $\Omega(M)$ denote the $g$-dimensional vector space of all holomorphic $1$-forms of $M$. Then we may consider $\mathcal{TH}^{g}$ and $\mathcal{H}^{g}$ as vector bundles over  $\mathcal{T}_{g}$, $\mathcal{M}_{g}$:
  \[\mathcal{TH}^{g}=\Omega \mathcal{T}_{g}\rightarrow \mathcal{T}_{g},\ \ \ \mathcal{H}^{g}=\Omega \mathcal{M}_{g}\rightarrow \mathcal{M}_{g}\] 
   whose fiber over $M$ is $\Omega(M)\setminus\{0\}$.
  
  Suppose that  $x=(M,\omega)\in\mathcal{H}(\alpha)$ is a translation surface of type $\alpha$.   Let $\Area(M,\omega)$ be the area of translation surface given by
  \[\Area(M,\omega)\coloneqq\frac{i}{2}\int_{M}\omega\wedge\bar{\omega}=\frac{i}{2}\sum_{j=1}^{g}(A_{j}(\omega)\bar{B}_{j}(\omega)-B_{j}(\omega)\bar{A}_{j}(\omega))\]
where $A_{j}(\omega), B_{j}(\omega)$ form a canonical basis of absolute periods of $\omega$, i.e.
\[A_{j}(\omega)=\int_{\alpha_{j}}\omega,\ \ \ B_{j}(\omega)=\int_{\beta_{j}}\omega \]
and $\{\alpha_{j},\beta_{j}\}_{j=1}^{g}$ is a symplectic basis of $H_{1}(M;\mathbb{R})$ (with respect to the intersection form). Let 
\[\mathcal{H}_{1}(\alpha)\coloneqq\{(M,\omega)\in\mathcal{H}(\alpha):\Area(M,\omega)=1\}.\] We see that  the normalized stratum $\mathcal{H}_{1}(\alpha)$ resembles more a ``unit hyperboloid".  Note that  $\mathcal{H}_{1}(\alpha)$ is a codimension one sub-orbifold of $\mathcal{H}(\alpha)$ but it is \textbf{not} an affine sub-orbifold.  Let $\pi_{1}:\mathcal{H}(\alpha)\rightarrow \mathcal{H}_{1}(\alpha)$ be the normalization of the area.  We  abuse notation and use the same symbol $\pi_{1}:\mathcal{TH}(\alpha)\rightarrow \mathcal{H}_{1}(\alpha)$ to refer to the composition of the projection and normalization. 
   
  Let $\mu$ be the measure on $\mathcal{H}(\alpha)$ which is given by the pullback of the Lebesgue measure on $H^{1}(S,\Sigma;\mathbb{C})\cong \mathbb{C}^{2g+|\Sigma|-1}$.  We refer to $\mu$  as the \textit{Lebesgue} or the \textit{Masur-Veech measure}\index{Masur-Veech measure} on $\mathcal{H}(\alpha)$. Let $\mu_{(1)}$ be the $\SL(2,\mathbb{R})$-invariant Lebesgue (probability) measure on the ``hyperboloid" $\mathcal{H}_{1}(\alpha)$ defined by the disintegration of the Lebesgue measure $\mu$ on  $\mathcal{H}_{1}(\alpha)$, namely
      \[d\mu=r^{2g+|\Sigma|-2} d r\cdot d\mu_{(1)}.\] 
 
 \subsection{Dynamics of Teichm\"{u}ller flow}\label{closing2024.12.12}
   For $\tilde{x}\in \mathcal{TH}(\alpha)$, let $V(\tilde{x})$ be a subspace of $H^{1}(M,\Sigma;\mathbb{R}^{2})$.
   Let $V[\tilde{x}]$ be the image of $V(\tilde{x})$ under the     \textit{affine exponential map}\index{affine exponential map}, i.e.
    \[V[\tilde{x}]\coloneqq\{\tilde{y}\in \mathcal{TH}(\alpha):\tilde{y}-\tilde{x}\in V(\tilde{x})\}.\] 
  Depending on the context, we sometimes consider $V[x]$ to be a subset of $\mathcal{H}(\alpha)$.
   
      Let $p:H^{1}(M,\Sigma;\mathbb{R})\rightarrow H^{1}(M;\mathbb{R})$ be the forgetful map. In what follows, for $x\in \mathcal{H}(\alpha)$, we write $H^{1}_{F}(x)=H^{1}(M_{x},\Sigma_{x};F)$ for $F=\mathbb{R},\mathbb{C}$.
       For $x\in\mathcal{H}(\alpha)$, let
    \begin{equation}\label{invariant2022.6.8}
      H^{\perp}(x)\coloneqq\{v\in H^{1}_{\mathbb{R}}(x):p(\Ree \tilde{x})\wedge p(v)=p(\Imm \tilde{x})\wedge p(v)=0\}.
    \end{equation}
      Let  $W^{\pm}(x)\subset H^{1}(M,\Sigma;\mathbb{C})\cong H^{1}(M,\Sigma;\mathbb{R})\oplus iH^{1}(M,\Sigma;\mathbb{R})$ be defined by
    \begin{align}
W^{+}(x)\coloneqq&   \mathbb{R}(\Imm x)\oplus  H^{\perp}(x)=\{v\in H^{1}_{\mathbb{C}}(x): p(\Imm x)\wedge p(v)=0\}, \;\nonumber\\ 
W^{-}(x)\coloneqq &   i(\mathbb{R}(\Ree x)\oplus  H^{\perp}(x))= \{iv\in H^{1}_{\mathbb{C}}(x): p(\Ree x)\wedge p(v)=0\}.\; \nonumber
\end{align} 
  Then $W^{+}[x]$ and $W^{-}[x]$ have a global affine structure on $\mathcal{H}_{1}(\alpha)$ by the affine exponential map, and play the role of the unstable and stable foliations for the Teichm\"{u}ller flow $a_{t}$ on $\mathcal{H}_{1}(\alpha)$ for $t>0$ (e.g. \cite[\S3]{eskin2018invariant}, \cite[\S4]{avila2013small}). We also abuse notation and consider $H^{\perp}(x)$ as a subspace of the unstable leaf $W^{+}(x)$. Also, we define 
  \[H^{\perp}_{\mathbb{C}}(x)\coloneqq   H^{\perp}(x)\oplus i H^{\perp}(x) \]
  and call it  the \textit{balance space}\index{balance space} at $x$.
      
     Moreover, the Lyapunov spectrum of Teichm\"{u}ller flow $a_{t}$ with respect to an ergodic probability measure $\nu$ supported on a stratum $\mathcal{H}_{1}(\alpha)$ (with $\alpha=(\alpha_{1},\ldots,\alpha_{\sigma})$) has the form (\cite[Section 7]{kontsevich1997lyapunov}, \cite[Section 5]{zorich1994asymptotic})
\begin{multline}
  2\geq 1+\lambda_{2}^{\nu}\geq\cdots\geq 1+\lambda_{g}^{\nu}\geq \overbrace{1=\cdots=1}^{\sigma-1}\geq  1-\lambda_{g}^{\nu}\geq\cdots\geq 1-\lambda_{2}^{\nu}\geq 0=0\\
  \geq -1+\lambda_{2}^{\nu}\geq\cdots\geq -1+\lambda_{g}^{\nu}\geq \overbrace{-1=\cdots=-1}^{\sigma-1}\geq  -1-\lambda_{g}^{\nu}\geq\cdots\geq -1-\lambda_{2}^{\nu}\geq -2\label{teichmuller2022.6.3}
\end{multline}
where $1\geq \lambda_{2}^{\nu}\geq \cdots\geq \lambda_{g}^{\nu}\geq0$ are the nonnegative exponents of the KZ cocycle    with respect to the probability measure $\mu$ on $\mathcal{H}_{1}(\alpha)$.
      
      Then for instance, $H^{\perp}$ is direct sum of the Lyapunov subspaces corresponding to $1+\lambda_{2}^{\nu},1+\lambda_{3}^{\nu},\ldots, 1-\lambda_{2}^{\nu}$, and $iH^{\perp}$ is direct sum of the Lyapunov subspaces corresponding to $-1+\lambda_{2}^{\nu},-1+\lambda_{3}^{\nu},\ldots, -1-\lambda_{2}^{\nu}$. 
      
      In \cite{forni2002deviation}, Forni developed an effective control of $\lambda_{2}^{\nu}$:
      \begin{thm}[{\cite[Corollary 2.2]{forni2002deviation}}]\label{closing2024.12.6} There is a function $\Lambda^{+}:\mathcal{H}_{1}(\alpha)\rightarrow[0,1)$ such that the following property holds. 
      Let $\nu$ be any $a_{t}$-invariant ergodic probability measure on $\mathcal{H}_{1}(\alpha)$. Then  
      \[\lambda_{2}^{\nu}(x)\leq \int_{\mathcal{H}_{1}(\alpha)}\Lambda^{+}(\omega)d\nu(\omega)<1\]
      for $\nu$-a.e. $x\in \mathcal{H}_{1}(\alpha)$.
      \end{thm}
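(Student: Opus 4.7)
The plan is to follow Forni's Hodge-theoretic approach. First, equip the real cohomology bundle over $\mathcal{H}_1(\alpha)$ with the Hodge inner product: for each $\omega\in\mathcal{H}_1(\alpha)$, every class $c\in H^1(M;\mathbb{R})$ has a unique harmonic representative on the Riemann surface $M_\omega$, and $\|c\|_\omega$ denotes its $L^2$-norm. Since the Gauss-Manin connection preserves the real cohomology class of $c$ along the Teichm\"uller orbit, the growth of the KZ cocycle is entirely encoded in the variation of the Hodge norm $\|c\|_{a_t\omega}$ in $t$.

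Second, derive the variational formula. The Teichm\"uller flow is generated by the Beltrami differential $\bar\omega/\omega$, and a direct Kodaira-Spencer computation produces a Hermitian form $H_\omega$ on $H^{1,0}(M_\omega)$ such that
\[\frac{d}{dt}\|c\|^2_{a_t\omega} = -2\Ree\bigl\langle H_{a_t\omega}\phi_t,\phi_t\bigr\rangle,\]
where $\phi_t$ is the holomorphic part of the harmonic representative of $c$ at time $t$. The key estimate is that the Cauchy-Schwarz inequality applied to the defining integral yields $|H_\omega(\phi,\psi)|\le \|\phi\|_\omega\|\psi\|_\omega$, so the self-adjoint operator $|H_\omega|$ has spectrum in $[0,1]$. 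Moreover, the value $1$ is attained exactly on the line $\mathbb{C}\omega\subset H^{1,0}(M_\omega)$, which corresponds under the harmonic representation to the tautological plane in $H^1(M;\mathbb{R})$ and accounts for the top Lyapunov exponent $\lambda_1=1$.

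Third, define $\Lambda^+(\omega)$ to be the operator norm of $H_\omega$ restricted to the Hodge-orthogonal complement of $\mathbb{C}\omega$. Granted the pointwise bound $\Lambda^+(\omega)<1$, integrating the variational estimate along the orbit gives
\[\|c\|_{a_t\omega}\le \|c\|_\omega \exp\!\left(\int_0^t \Lambda^+(a_s\omega)\,ds\right)\]
for every class $c$ transverse to the tautological plane. Applying the Oseledets multiplicative ergodic theorem to the KZ cocycle, and then Birkhoff's ergodic theorem for the $a_t$-ergodic measure $\nu$, yields $\lambda_2^\nu(x)\le\int_{\mathcal{H}_1(\alpha)}\Lambda^+\,d\nu$ for $\nu$-a.e.\ $x$. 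The strict bound $\int\Lambda^+\,d\nu<1$ follows tautologically from pointwise strictness: if the integral equaled $1=\int 1\,d\nu$, then $\Lambda^+=1$ on a $\nu$-full set, contradicting $\Lambda^+<1$ everywhere.

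The main obstacle is establishing the pointwise strict inequality $\Lambda^+(\omega)<1$, which reduces to the equality case of Cauchy-Schwarz in the formula for $H_\omega$. Equality $|H_\omega(\phi,\psi)|=\|\phi\|_\omega\|\psi\|_\omega$ forces a ratio of the form $\phi/\omega$ to be proportional almost everywhere on $M$ to another such ratio. Since $\phi,\psi,\omega$ are holomorphic and $\omega$ has only finitely many zeros, this almost-everywhere identity propagates by analytic continuation to a genuine constant ratio, forcing $\phi\in\mathbb{C}\omega$. Hence the maximal eigenvalue of $|H_\omega|$ is simple and lies precisely in the tautological direction, so $\Lambda^+(\omega)<1$ at every point of $\mathcal{H}_1(\alpha)$.
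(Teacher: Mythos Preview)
The paper does not supply a proof of this theorem; it is quoted verbatim as \cite[Corollary 2.2]{forni2002deviation} and used as a black box. Your proposal is a faithful sketch of Forni's original argument (the Hodge-norm variational formula, the second-variation form $H_\omega$, and the Cauchy--Schwarz equality analysis), so there is nothing to compare against in the present paper. One small point worth making explicit in your write-up: the reason the bound $\|c\|_{a_t\omega}\le\|c\|_\omega\exp\bigl(\int_0^t\Lambda^+(a_s\omega)\,ds\bigr)$ persists along the orbit is that the symplectic-orthogonal complement of the tautological plane is KZ-invariant and coincides at each point with the Hodge-orthogonal complement of $\mathbb{C}\omega$; without that invariance, restricting $H_\omega$ to the complement would not control the cocycle on a fixed class $c$.
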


  \subsection{Teichm\"{u}ller curves}\label{closing2024.3.25} As we have seen, there is a natural
  $G=\SL_{2}(\mathbb{R})$ action on $\mathcal{H}_{1}(\alpha)$. We are then interested in its smallest $G$-orbit closure:
  \begin{defn}[Teichm\"{u}ller curve]
      A \textit{Teichm\"{u}ller curve}\index{Teichm\"{u}ller curve} $f:V\rightarrow\mathcal{M}_{g}$ is a finite volume hyperbolic Riemann surface $V$ equipped with a holomorphic, totally geodesic, generically 1-1 immersion into moduli space.  
  \end{defn}
 Let $(M,\omega)\in\mathcal{H}^{g}$ be a translation surface.  Recall that $\Aff(M)$ denotes the set of affine automorphisms. Consider the map $D:\Aff(M)\rightarrow G$ which assigns to an affine automorphism its linear part. It  has a finite kernel $\Gamma_{M}$, consisting of translation equivalences of $M$. The image $\SL(M,\omega)\coloneqq D(\Aff(M))$ is called the \textit{Veech group}\index{Veech group} of $M$. Then we have the short exact sequence:
 \begin{equation}\label{closing2024.3.20}
   0\rightarrow \Gamma_{M}\rightarrow\Aff(M)\rightarrow \SL(M,\omega)\rightarrow0.
 \end{equation} 
      The equivalent conditions for the lattice property of $\SL(M,\omega)$ has been studied by a vast literature (e.g. \cite{smillie2010characterizations} and references therein):  
  \begin{thm}\label{effective2023.10.3}
     For $x\in\Omega\mathcal{M}_{g}$, the following are equivalent:
     \begin{itemize}
       \item  The group $\SL(x)$ is a lattice in $G=\SL_{2}(\mathbb{R})$.
       \item The orbit $G.x$ is closed in $\Omega\mathcal{M}_{g}$.
       \item The projection of the orbit to $\mathcal{M}_{g}$ is a Teichm\"{u}ller curve.
     \end{itemize}
  \end{thm}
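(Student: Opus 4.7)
The plan is to establish the cycle $(1) \Rightarrow (3) \Rightarrow (2) \Rightarrow (1)$. The starting observation is that $\SO(2) \subset G$ rotates the form $\omega$ without altering the underlying Riemann surface, so the composition $G \to G\cdot x \hookrightarrow \Omega\mathcal{M}_g \to \mathcal{M}_g$ factors through $\SL(x)\backslash G/\SO(2) = \SL(x)\backslash \mathbb{H}$; here we use the short exact sequence \eqref{closing2024.3.20} to identify $\SL(x)$ with the stabilizer of $x$ in $G$ modulo the finite kernel $\Gamma_M$ of translation equivalences. Teichm\"{u}ller disks $\mathbb{H} \to \mathcal{T}_g$ are known to be holomorphic isometric embeddings with respect to the Teichm\"{u}ller metric, hence descend to holomorphic, totally geodesic maps into $\mathcal{M}_g$.

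For $(1) \Rightarrow (3)$, if $\SL(x)$ is a lattice in $G$, then $V := \SL(x)\backslash \mathbb{H}$ is a finite-volume hyperbolic Riemann surface, and the induced map $V \to \mathcal{M}_g$ inherits the holomorphic and totally geodesic properties from the Teichm\"{u}ller disk; generic $1$-$1$-ness follows because $\SL(x)$ already accounts for all affine identifications along the Teichm\"{u}ller disk through $x$, so two generic points of $V$ produce non-isomorphic Riemann surfaces. For $(3) \Rightarrow (2)$, I would pull the Teichm\"{u}ller curve back to $\Omega\mathcal{M}_g$: after intersecting with $\mathcal{H}_1(\alpha)$, the orbit $G\cdot x$ is an $\SO(2)$-bundle over the image curve in $\mathcal{M}_g$, and properness of the Teichm\"{u}ller curve immersion (which comes packaged with the definition of a finite-volume hyperbolic Riemann surface mapping into $\mathcal{M}_g$) upgrades to properness of the orbit map $G/\SL(x) \to \Omega\mathcal{M}_g$, whence $G\cdot x$ is closed.

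The main obstacle is $(2) \Rightarrow (1)$. My strategy is to show that a closed $G$-orbit carries a finite $G$-invariant measure; since $G\cdot x \cong G/\SL(x)$ up to the finite kernel $\Gamma_M$, and $G = \SL_2(\mathbb{R})$ is unimodular with no nontrivial characters, this forces $\SL(x)$ to be a lattice. To produce this finite measure, I would exploit Masur's nondivergence criterion for the Teichm\"{u}ller flow: on a closed $G$-orbit the $a_t$-flow cannot escape to infinity, so Eskin--Masur type polynomial volume estimates for balls along Teichm\"{u}ller geodesics bound $\Vol(G/\SL(x))$ from above. This direction is essentially Smillie's theorem, and a complete proof along these lines is carried out in \cite{smillie2010characterizations}; since the present paper only uses the equivalence as a black box, the full justification ultimately consists of quoting that result for $(2) \Rightarrow (1)$, while $(1) \Rightarrow (3) \Rightarrow (2)$ are established directly as above.
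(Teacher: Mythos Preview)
The paper does not prove this theorem at all: it is stated as background, with the preceding sentence ``The equivalent conditions for the lattice property of $\SL(M,\omega)$ has been studied by a vast literature (e.g.\ \cite{smillie2010characterizations} and references therein)'' serving as the entire justification. So there is no proof in the paper to compare against; your sketch is already more than the paper offers, and in that sense it is aligned with the paper's treatment, since you too ultimately defer to \cite{smillie2010characterizations} for the hard direction.

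A couple of small points on the sketch itself. First, your use of the short exact sequence \eqref{closing2024.3.20} to identify $\SL(x)$ with the stabilizer ``modulo the finite kernel $\Gamma_M$'' is slightly off: the stabilizer of $x$ in $G$ is exactly $\SL(x)$, since translation equivalences have trivial linear part and contribute nothing to the stabilizer in $G$; the kernel $\Gamma_M$ plays no role here. Second, in $(3)\Rightarrow(2)$ you assert that properness of the immersion ``comes packaged with the definition,'' but the paper's definition only says finite-volume, holomorphic, totally geodesic, generically $1$--$1$ immersion; properness is an extra step (it follows, but from the totally geodesic condition together with the structure of the thin part of $\mathcal{M}_g$, not for free). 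Neither issue is fatal, and both are subsumed by the reference you already give.
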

  In this case, we say $x$ \textit{generates}\index{generates the Teichm\"{u}ller curve} the Teichm\"{u}ller curve $V=\mathbb{H}/\SL(x)\rightarrow\mathcal{M}_{g}$. It follows that 
  \[\Omega V=\GL_{2}^{+}(\mathbb{R}).x\cong \GL_{2}^{+}(\mathbb{R})/\SL(x)\]
  can be regarded as a bundle over $V$. Thus, we also abuse the notion and refer to $f(V)$, or the circle bundle $\Omega V$ (and $\Omega_{1} V$) as a Teichm\"{u}ller curve, if no confusion arise.
  
  \subsection{Nondivergence}\label{closing2024.3.65}
  
      Define the systole function $\ell:\mathcal{TH}(\alpha)\rightarrow\mathbb{R}^{+}$ by the shortest length of a  saddle connection. Note that for all $\epsilon>0$, the set 
         \begin{equation}\label{effective2024.08.12}
      \mathcal{H}^{(\epsilon)}_{1}(\alpha)\coloneqq\{x\in\mathcal{H}_{1}(\alpha):\ell(x)\geq\epsilon\}
     \end{equation}  
          is compact. To say it differently, a sequence $x_{n}\in \mathcal{H}_{1}(\alpha)$ diverges to infinity iff $\ell(x_{n})\rightarrow0$. In addition, by the Siegel-Veech formula (see e.g. \cite{eskin2001asymptotic,avila2006exponential}), we have
       \begin{equation}\label{closing2023.07.13}
        \mu_{(1)}(\mathcal{H}_{1}(\alpha)\setminus \mathcal{H}^{(\epsilon)}_{1}(\alpha))=\mu_{(1)}\{x\in\mathcal{H}_{1}(\alpha):\ell(x)<\epsilon\}\asymp O(\epsilon^{2}).
       \end{equation}

      In this section, we follow the idea in \cite{eskin2001asymptotic} and \cite{athreya2006quantitative} to  discuss the non-divergence results. See also \cite[\S6]{avila2013small} and \cite[\S2]{eskin2022effective}.
      
      \begin{thm}[{\cite{eskin2001asymptotic,athreya2006quantitative}}]\label{effective2023.11.2} \hypertarget{2024.12.k2}  
        There exist a continuous function $V:\mathcal{H}_{1}(\alpha)\rightarrow [2,\infty)$, a compact subset $K^{\prime}_{\alpha}\subset \mathcal{H}_{1}(\alpha)$ and some $\kappa_{2}>0$ with the following property. For every $t^{\prime}$ and every $x\in\mathcal{H}_{1}(\alpha)$, there exist 
        \[s\in[0,1/2],\ \ \ t^{\prime}\leq t\leq \max\{2t^{\prime},\hyperlink{2024.12.k2}{\kappa_{2}}\log V(x)\}\]
        such that $a_{t}u_{s}x\in K^{\prime}_{\alpha}$. Further, there exists a constant \hypertarget{2023.11.C1}  $C_{1}>1$ such that 
        \[ \hyperlink{2023.11.C1}{C_{1}}^{-1}\leq \frac{V(x)}{\max\{\ell(x)^{-5/4},1\}}\leq \hyperlink{2023.11.C1}{C_{1}}.\]
        where $\ell$ denotes the systole function.
      \end{thm}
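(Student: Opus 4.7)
The plan is to construct the function $V$ as a Margulis-type function on $\mathcal{H}_1(\alpha)$ built out of short saddle connections, and then extract the recurrence from a quantitative averaging inequality along the unipotent direction $u_s$. Concretely, I would define
\[
  V(x) = \max\Bigl\{2,\; \sum_{\gamma \in \Lambda(x)} |\omega_x(\gamma)|^{-5/4}\Bigr\}
\]
where $\Lambda(x)$ is the Vorobets collection of saddle connections on $x$ shorter than a threshold $\epsilon_0$ depending only on $g$ and $|\Sigma|$. Since connections in $\Lambda(x)$ are pairwise disjoint and their number is topologically bounded, the shortest term dominates the sum and one gets $V(x) \asymp \max\{\ell(x)^{-5/4}, 1\}$. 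The exponent $5/4$ is forced: strictly larger than $1$ so the sum converges, and strictly less than $2$ so the averaging in the next step contracts.

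Next I would establish the key averaging inequality: there exist $C_3,C_4>0$, $\kappa_3>0$, and $t_0\geq 1$ such that for every $x\in\mathcal{H}_1(\alpha)$ and every $t\geq t_0$,
\[
  \int_0^{1/2} V(a_t u_s x)\, ds \;\leq\; C_3 e^{-\kappa_3 t} V(x) + C_4.
\]
The pointwise computation for a single saddle connection $\gamma$ with $\omega_x(\gamma)=x_0+iy_0$ is elementary: $|a_t u_s \omega_x(\gamma)|^2 = e^{t}(x_0+sy_0)^2 + e^{-t}y_0^2$, and direct integration over $s\in[0,1/2]$ yields a bound of the shape $c\,e^{-\kappa_3 t}|\omega_x(\gamma)|^{-5/4}$ with $\kappa_3 = \tfrac{1}{2}(2-5/4) = \tfrac{3}{8}$. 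To promote this to a bound on the full sum $V$, one invokes the Eskin–Masur/Vorobets dichotomy: any saddle connection that becomes short on $a_t u_s x$ is either traceable to one already in $\Lambda(x)$ (contributing to the first term) or has length bounded below by a topological constant (contributing to $C_4$).

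Finally, I would deduce the recurrence. Given $t'$ and $x$, set $t^{\ast} = \max\{t',\,\kappa_3^{-1}\log V(x)\}$ and apply the averaging inequality with $t=t^{\ast}$: the right-hand side is then bounded by $C_3 + C_4$, so by Chebyshev there exists $s\in[0,1/2]$ with $V(a_{t^{\ast}}u_s x) \leq 2(C_3+C_4)$, placing $a_{t^{\ast}}u_s x$ in the compact sublevel set $K'_\alpha := \{V \leq 2(C_3+C_4)\}$. Taking $\kappa_2 := \kappa_3^{-1}$ and noting $t^{\ast} \leq \max\{2t',\,\kappa_2\log V(x)\}$ gives exactly the range of $t$ in the statement, while the built-in comparison $V\asymp\max\{\ell^{-5/4},1\}$ supplies the constant $C_1$.

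The main obstacle is proving the averaging inequality in a form that is uniform over all of $\mathcal{H}_1(\alpha)$ and remains clean as $t\to\infty$: the difficulty is handling the regime where many short saddle connections of comparable length coexist at $x$, and tracking which ones can become short on $a_t u_s x$ for $s$ in a non-negligible subset of $[0,1/2]$. Vorobets's bounded-disjointness lemma is what keeps the combinatorics finite and lets a single pointwise-in-$\gamma$ estimate assemble into the clean averaged contraction on $V$.
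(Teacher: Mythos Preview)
The paper does not prove this theorem: it is stated with a citation to \cite{eskin2001asymptotic,athreya2006quantitative} and no proof is given. So there is no ``paper's own proof'' to compare your proposal against.

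That said, your sketch is a faithful outline of the Eskin--Masur/Athreya machinery behind the cited result: a Margulis function built from negative powers of short saddle-connection holonomies, an averaged contraction inequality under $a_t u_s$, and a Chebyshev step to land in a sublevel set. Two points of caution if you intend to flesh this out. First, the single-$\gamma$ pointwise estimate does not by itself assemble into the clean inequality you wrote; in Eskin--Masur the argument runs through a \emph{system} of functions $\alpha_i$ (indexed by the number of independent short saddle connections in a complex), and the contraction is obtained only after an inductive comparison among them. Your appeal to ``Vorobets's bounded-disjointness lemma'' is gesturing at the right combinatorial input, but the passage from pointwise-in-$\gamma$ to the full $V$ is where the real work lies and is not as direct as your paragraph suggests. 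Second, the specific comparison $V \asymp \max\{\ell^{-5/4},1\}$ with exponent exactly $5/4$ is a normalization choice in this paper's statement; in the original sources the exponent is a free parameter in $(1,2)$, so be sure your construction matches the stated asymptotic rather than just some exponent in that range.
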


      We also need the following averaging nondivergence of horocyclic flows. 
      \begin{thm}[{\cite[Theorem 6.3]{minsky2002nondivergence}}]\label{closing2024.3.60} \hypertarget{2024.12.k3} 
         There are positive constants $C_{5},\kappa_{3},\rho_{0}$, depending only on $\alpha$, such that if $\tilde{x}\in\mathcal{TH}_{1}(\alpha)$, an interval  \hypertarget{2024.3.C5} $I\subset\mathbb{R}$, and $\rho\in(0,\rho_{0}]$ satisfy: 
         \[\sup_{s\in I}\ell(u_{s}\tilde{x})\geq\rho,\] 
         then  for any $\epsilon\in(0,\rho)$, we have 
         \begin{equation}\label{closing2024.3.61}
           |\{s\in I:\ell(u_{s}\tilde{x})<\epsilon\}|\leq \hyperlink{2024.3.C5}{C_{5}}\left(\frac{\epsilon}{\rho}\right)^{\hyperlink{2024.12.k3}{\kappa_{3}}}|I|.
         \end{equation} 
      \end{thm}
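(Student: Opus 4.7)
The plan is to prove this by reducing to the quantitative nondivergence framework of Kleinbock--Margulis, adapted to strata via saddle connection data. The key observation is that periods evolve polynomially along $u_s$: for any saddle connection $\gamma$, the holonomy $\hol_{u_s\tilde{x}}(\gamma) = u_s \cdot \hol_{\tilde{x}}(\gamma)$ is linear in $s$, so $|\hol_{u_s\tilde{x}}(\gamma)|^2$ is a quadratic polynomial in $s$. In particular, every such function is $(C,1/2)$-good on $\mathbb{R}$ in the sense of Kleinbock--Margulis: for every interval $J$ and every $\delta>0$,
\[
|\{s\in J : |\hol_{u_s\tilde{x}}(\gamma)|<\delta\}|\leq C\left(\frac{\delta}{\sup_{s\in J}|\hol_{u_s\tilde{x}}(\gamma)|}\right)^{1/2}|J|.
\]

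Next, I would implement the covering argument. The systole $\ell(u_s\tilde{x})$ is the infimum of $|\hol_{u_s\tilde{x}}(\gamma)|$ over all saddle connections $\gamma$, but the set of $\gamma$ that can be shorter than the Margulis constant $\rho_0$ at any single time $s\in I$ is highly constrained. For $\rho_0$ sufficiently small (depending only on $\alpha$), short saddle connections at any given time form a configuration whose combinatorial type is one of finitely many (up to the action of $\Mod$). Following Minsky--Weiss, I would partition $I$ according to which subset of saddle connections is short and, on each subinterval, enumerate the finitely many ``candidate'' saddle connections $\gamma_1,\dots,\gamma_k$ that could witness $\ell(u_s\tilde{x})<\epsilon$. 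The hypothesis $\sup_{s\in I}\ell(u_s\tilde{x})\geq \rho$ guarantees that for each such $\gamma_j$, $\sup_{s\in I}|\hol_{u_s\tilde{x}}(\gamma_j)|\geq \rho$ (after passing to a slightly enlarged interval, absorbed into $C_5$).

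Applying the $(C,1/2)$-good property to each $\gamma_j$ gives
\[
|\{s\in I : |\hol_{u_s\tilde{x}}(\gamma_j)|<\epsilon\}|\leq C\left(\frac{\epsilon}{\rho}\right)^{1/2}|I|,
\]
and summing over the uniformly bounded collection of relevant $\gamma_j$ yields the desired bound with $\kappa_3=1/2$ and $C_5$ depending on the combinatorial bound and on $\alpha$. The final constant $\kappa_3$ may shrink slightly if one needs to iterate the argument to handle chains of short saddle connections.

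The main obstacle is controlling the combinatorics: one must show that the number of saddle connections that could simultaneously drop below $\epsilon$ somewhere in $I$, yet also achieve size $\geq \rho$ somewhere in $I$, is bounded independently of $\tilde{x}$ and $I$. This is where the structure of the thick-thin decomposition on $\mathcal{TH}(\alpha)$ enters: short saddle connections organize into a simplex whose dimension is bounded by the stratum, and the quadratic growth of periods under $u_s$ prevents a single $\gamma$ from being short on too large a fraction of $I$. Once this combinatorial bound is in place, the estimate follows by summing the $(C,1/2)$-good estimates and absorbing the multiplicity into $C_5$.
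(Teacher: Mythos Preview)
The paper does not prove this theorem; it is quoted verbatim as \cite[Theorem 6.3]{minsky2002nondivergence} and used as a black box, so there is no ``paper's own proof'' to compare against. Your outline is indeed the Minsky--Weiss strategy: linear evolution of holonomies under $u_s$ gives $(C,\alpha)$-good functions, and a combinatorial bound on the number of relevant saddle connections lets one sum the individual estimates.

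Two remarks on your sketch. First, since $|\hol_{u_s\tilde{x}}(\gamma)|^2=(x+sy)^2+y^2$ is a quadratic polynomial, it is $(C,1/2)$-good; taking the square root makes $|\hol_{u_s\tilde{x}}(\gamma)|$ itself $(C,1)$-good, not $(C,1/2)$-good, so your exponent is actually better than stated. Second, the genuinely hard part of Minsky--Weiss is not the summation but the \emph{protection} argument: one cannot simply list finitely many candidate saddle connections and sum, because infinitely many $\gamma$ may become short somewhere on $I$. Their proof instead builds a system of ``protecting'' intervals via an inductive scheme on complexes of saddle connections, which is what ultimately yields the uniform constants $C_5,\kappa_3$. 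Your sketch gestures at this (``chains of short saddle connections'') but does not supply it; as written, the combinatorial step is the gap.
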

      
      \begin{cor}\label{closing2024.3.63} \hypertarget{2024.12.k4}
          There are positive constants $C_{6},\kappa_{4}$, depending only on $\alpha$, with the following property. Let $\epsilon>0$, $\eta>0$, and  $\tilde{x}\in\mathcal{TH}_{1}(\alpha)$. Let $I\subset[-10,10]$ be  an interval \hypertarget{2024.3.C6} with $|I|\geq \eta$. Then  we have
         \[|\{r\in I:\ell(a_{t}u_{r}\tilde{x})<\epsilon\}|\leq \hyperlink{2024.3.C6}{C_{6}}\epsilon^{\hyperlink{2024.12.k3}{\kappa_{3}}}|I|\]
         whenever $t\geq \hyperlink{2024.12.k4}{\kappa_{4}}|\log \ell(x)|+|\log \eta|+\hyperlink{2024.3.C6}{C_{6}}$.
      \end{cor}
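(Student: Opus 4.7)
The plan is to reduce Corollary \ref{closing2024.3.63} to Theorem \ref{closing2024.3.60} (Minsky--Weiss) via the commutation relation $a_t u_r = u_{e^t r} a_t$, with the required nondivergence hypothesis verified by Theorem \ref{effective2023.11.2} (Eskin--Masur--Athreya).

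I would begin by rescaling. Since $\ell(a_t u_r \tilde{x}) = \ell(u_{e^t r}(a_t \tilde{x}))$, the change of variable $s = e^t r$ and the choice $J := e^t I$ give
\[|\{r \in I : \ell(a_t u_r \tilde{x}) < \epsilon\}| \;=\; e^{-t}\,|\{s \in J : \ell(u_s(a_t \tilde{x})) < \epsilon\}|,\]
with $|J| = e^t|I| \geq e^t\eta$. Provided one exhibits some $s^* \in J$ with $\ell(u_{s^*}(a_t \tilde{x})) \geq \rho_0$, Theorem \ref{closing2024.3.60} applied to $a_t \tilde{x}$ on $J$ with $\rho = \rho_0$ yields $|\{s \in J : \dots\}| \leq C_5 \rho_0^{-\kappa_3} \epsilon^{\kappa_3}|J|$, and multiplying by $e^{-t}$ delivers the corollary with $C_6 = C_5 \rho_0^{-\kappa_3}$ and $\kappa_4 = \kappa_3$ in the exponent.

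The main obstacle is therefore the nondivergence input, namely producing some $r^* \in I$ with $a_t u_{r^*}\tilde{x} \in K'_\alpha$. I would invoke Theorem \ref{effective2023.11.2} applied to a shifted starting point $u_{r_1}\tilde{x}$ for some $r_1 \in [-10, 10]$: the bounded distortion of $u_\cdot$ on $[-10, 10]$ gives $V(u_{r_1}\tilde{x}) \asymp V(\tilde{x}) \asymp \ell(\tilde{x})^{-5/4}$, so taking $\kappa_4 > \tfrac{5}{4}\kappa_2 C_1$ makes the hypothesis $t \geq \kappa_4 |\log \ell(\tilde{x})| + |\log\eta| + C_6$ imply $\kappa_2\log V(u_{r_1}\tilde{x}) \leq t$. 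Theorem \ref{effective2023.11.2} then delivers $r_0\in[0,\tfrac12]$ and $t^* \in [t, 2t]$ with $a_{t^*}u_{r_0+r_1}\tilde{x} \in K'_\alpha$.

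Two residual technicalities remain, which I would handle using the $|\log \eta|$ slack in the hypothesis. First, the returning parameter $r_0$ needs to combine with a shift $r_1$ so that $r_0 + r_1 \in I$ (equivalently $e^t(r_0+r_1) \in J$); I would arrange this by running the Athreya argument on each of the $\lesssim \eta^{-1}$ shifts from a cover of $I$ by translates of $[0, \tfrac12]$, the logarithmic cost being absorbed into $|\log\eta|$. Second, the decomposition $a_{t^*} u_{r_0+r_1} = a_{t^*-t} u_{e^t(r_0+r_1)}a_t$ places $u_{e^t(r_0+r_1)}(a_t\tilde{x})$ in the pre-image $a_{-(t^*-t)}K'_\alpha$, whose systole is only bounded below by $e^{-(t^*-t)/2}\rho_0$; a second short application of Minsky--Weiss on a horocycle subarc, together with an enlargement of $\kappa_4$ and $C_6$, upgrades this weaker threshold back to the universal constant $\rho_0$, completing the verification of nondivergence and hence the proof.
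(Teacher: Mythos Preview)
Your overall strategy matches the paper's: commute $a_t u_r = u_{e^t r} a_t$, reduce to Minsky--Weiss (Theorem~\ref{closing2024.3.60}) on the long interval $J = e^t I$ applied to $a_t\tilde{x}$, and supply the hypothesis $\sup_{s\in J}\ell(u_s a_t\tilde{x})\geq\rho$ via the Eskin--Masur/Athreya nondivergence (Theorem~\ref{effective2023.11.2}). The first ``residual technicality'' about landing in $I$ is fine.

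The second one, however, is a genuine gap. A single application of Theorem~\ref{effective2023.11.2} with $t'=t$ gives $t^*\in[t,2t]$, so the overshoot $t^*-t$ can be as large as $t$, and the systole lower bound you obtain on $J$ is only $e^{-(t^*-t)/2}\rho_0$, which may be as small as $e^{-t/2}\rho_0$. A ``second short application of Minsky--Weiss'' cannot repair this: Theorem~\ref{closing2024.3.60} with $\rho=e^{-(t^*-t)/2}\rho_0$ and $\epsilon=\rho_0$ gives a proportion bound $C_5\,e^{\kappa_3(t^*-t)/2}$, which is useless once $t^*-t$ is large. No enlargement of the absolute constants $\kappa_4,C_6$ absorbs a factor exponential in $t$.

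The paper's fix is to \emph{iterate} Theorem~\ref{effective2023.11.2}. After the first application (with $t'$ small) lands you in $K'_\alpha$ at some time $t_0\lesssim\kappa_2\log V(x)$, each subsequent application starts from a point already in $K'_\alpha$, hence has $V$ uniformly bounded, and therefore returns to $K'_\alpha$ after an additional time in $[1,C]$ with $C$ a universal constant. Stringing these together produces return times $t(i)$ that are $C$-dense in $[t_0,\infty)$; in particular for the given $t\geq t_0$ there is $t(i)$ with $|t-t(i)|\leq C$, yielding a point on $J$ with systole $\geq \epsilon_1 e^{-C}$, a universal constant. This iteration is the missing ingredient in your argument.
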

      \begin{proof} Assume for simplicity that $I=[0,1]$. More general situation follows from a similar argument.
         Let $\epsilon_{1}>0$ satisfy $K^{\prime}_{\alpha}\subset \mathcal{H}_{1}^{(\epsilon_{1})}(\alpha)$. Let $\tilde{x}\in\mathcal{TH}_{1}(\alpha)$. Without loss of generality, we assume that $\ell(\tilde{x})\ll 1$. Let $t^{\prime}=\max\{1,\frac{\hyperlink{2024.12.k2}{\kappa_{2}}}{2}\log V(x)\}$. Then applying Theorem \ref{effective2023.11.2} to $x$ and $t^{\prime}$, there exist 
         \[s_{0}\in[0,1/2],\ \ \   t_{0}\in[1, \hyperlink{2024.12.k2}{\kappa_{2}}\log \hyperlink{2023.11.C1}{C_{1}}\ell(x)^{-5/4}]\]
         such that $x_{0}\coloneqq a_{t_{0}}u_{s_{0}}x\in K^{\prime}_{\alpha}$.
         
         Next, let $t^{\prime}=\max\{1,\frac{\hyperlink{2024.12.k2}{\kappa_{2}}}{2}\log V(x_{0})\}$, $C=\hyperlink{2024.12.k2}{\kappa_{2}}\log \hyperlink{2023.11.C1}{C_{1}}\epsilon_{1}^{-5/4}$. Applying Theorem \ref{effective2023.11.2} again to $x_{0}$ and $t^{\prime}$, we obtain that there exist
           \[s_{1}\in[0,1/2],\ \ \   t_{1}\in[1, C]\]
            such that $x_{1}\coloneqq a_{t_{1}}u_{s_{1}}x_{0}\in K^{\prime}_{\alpha}$. Repeating the argument, we further obtain   that there exist
           \[s_{i}\in[0,1/2],\ \ \   t_{i}\in[1, C]\]
            such that $x_{i}\coloneqq a_{t_{i}}u_{s_{i}}x_{i-1}\in K^{\prime}_{\alpha}$ for all $i\in\mathbb{N}$. One calculates that $x_{i}= a_{t(i)}u_{s(i)}x$ where
            \[t(i)\coloneqq t_{i}+\cdots +t_{0},\ \ \ \text{ and }\ \ \  s(i)\coloneqq\sum_{j=0}^{i}\frac{s_{j}}{e^{t_{j-1}+\cdots+t_{0}}}\leq\sum_{j=0}^{i}\frac{s_{j}}{e^{j}}\leq 1.\]
            
            Then we see that 
              \[\sup_{s\in [0,1]}\ell(u_{e^{t(i)}s}a_{t(i)}\tilde{x})=\sup_{s\in [0,1]}\ell(a_{t(i)}u_{s}\tilde{x})\geq\epsilon_{1}.\] 
              Moreover, note that $t(i)-t(i-1)=t_{i}\leq  C$. Then for any $t\geq t_{0}$, we have 
              \[\sup_{s\in [0,1]}\ell(u_{e^{t}s}a_{t}\tilde{x})=\sup_{s\in [0,1]}\ell(a_{t}u_{s}\tilde{x})\geq\epsilon_{1}e^{-C}.\] 
              Now applying Theorem \ref{closing2024.3.60}, we obtain that 
                   \[|\{s\in [0,1]:\ell(a_{t}u_{s}\tilde{x})<\epsilon\}|\leq \hyperlink{2024.3.C5}{C_{5}}\left(\epsilon_{1}e^{-C}\right)^{-\hyperlink{2024.12.k3}{\kappa_{3}}}\epsilon^{\hyperlink{2024.12.k3}{\kappa_{3}}}\]
                   whenever $t\geq \hyperlink{2024.12.k2}{\kappa_{2}}\log \hyperlink{2023.11.C1}{C_{1}}\ell(x)^{-5/4}$.
                   This establishes (\ref{closing2024.3.61}).
      \end{proof}
         
       In view of Corollary \ref{closing2024.3.63}, let $\epsilon_{0}>0$ be so that 
       \begin{equation}\label{effective2024.6.02}
       |\{r\in I:\ell(a_{t}u_{r}\tilde{x})<\epsilon_{0}\}|\leq \textstyle{\frac{1}{100}}|I|
       \end{equation} 
for any $I\subset[-10,10]$ with $|I|\geq \eta$, and $t\geq \hyperlink{2024.12.k4}{\kappa_{4}}|\log \ell(x)|+|\log \eta|+\hyperlink{2024.3.C6}{C_{6}}$.
    \subsection{Avila-Gou\"{e}zel-Yoccoz norm}\label{closing2024.3.26} 
 We now introduce the  AGY norm, first defined in  \cite{avila2006exponential}, some properties of which were further developed in \cite{avila2013small}. 
 \begin{defn}[AGY norm]\label{closing2024.12.9}
     For $\tilde{x}\in  \mathcal{TH}(\alpha)$ and any $c\in H^{1}(M,\Sigma;\mathbb{C})$, we define
   \[ \|c\|_{\tilde{x}} \coloneqq \sup_{\gamma}\frac{|c(\gamma)|}{|\int_{\gamma}\omega|}\]
   where $\gamma$ is a saddle connection of $\tilde{x}$. We refer to $\|\cdot\|_{\tilde{x}}$ as the \textit{Avila-Gou\"{e}zel-Yoccoz norm} or \textit{AGY norm} for short.
 \end{defn}
 
 Note first that by definition, for $w=a+ib\in H^{1}(M,\Sigma;\mathbb{C})$, we have 
 \begin{equation}\label{effective2024.6.05}
   \max\{\|a\|_{x},\|b\|_{x}\}\leq \|w\|_{x}\leq \|a\|_{x}+\|b\|_{x}.
 \end{equation} 
 
By construction, the AGY norm is invariant under the action of the mapping class group $\Gamma$. Thus, it induces a norm on the moduli space $\mathcal{H}(\alpha)$.

     It was shown in  \cite[\S2.2.2]{avila2006exponential} that   this defines a norm and the corresponding Finsler metric is complete.  
    For $\tilde{x},\tilde{y}\in \mathcal{TH}(\alpha)$, we define a distance  
     \[d (\tilde{x},\tilde{y})\coloneqq\inf_{\gamma}\int_{0}^{1}\|\gamma^{\prime}(r)\|_{\gamma(r)}dr\] where $\gamma$ ranges over smooth paths $\gamma:[0,1]\rightarrow \mathcal{TH}(\alpha)$ with $\gamma(0)=\tilde{x}$ and $\gamma(1)=\tilde{y}$. It also induces a quotient metric on $\mathcal{H}(\alpha)$.
     
 Due to the splitting
   \[H^{1}(M,\Sigma;\mathbb{C})=H^{1}(M,\Sigma;\mathbb{R})\oplus iH^{1}(M,\Sigma;\mathbb{R}),\] 
   we often write an element of $H^{1}(M,\Sigma;\mathbb{C})$ as  $a+ib$ for $a,b\in H^{1}(M,\Sigma;\mathbb{R})$. 
    Let $\tilde{x}\in \mathcal{TH}(\alpha)$. For every $r>0$, define 
    \[R(\tilde{x},r)\coloneqq\{\phi(\tilde{x})+a+ib: a,b\in H^{1}(M,\Sigma;\mathbb{R}),\ \|a+ib\|_{\tilde{x}}\leq r\}.\]
   Let   $r>0$ be so that $\phi^{-1}$ is a homeomorphism on $R_{\tilde{x}}(r)$. Let 
    \[B(\tilde{x},r)\coloneqq \phi^{-1}(R(\tilde{x},r)).\]
    We call it a \textit{period box}\index{period box} of radius $r$ centered at $\tilde{x}$. Using   \cite[Proposition 5.3]{avila2013small}, $B(\tilde{x},r)$ is well defined for all $r\in(0,1/2]$ and all $\tilde{x}\in\mathcal{TH}(\alpha)$.
  Let $\inj(\tilde{x})$ be the injectivity radius of $\tilde{x}$ under the affine exponential map.

     We have the following estimate:
     \begin{lem}\label{closing2023.07.1}
    Let  $\tilde{x}\in \mathcal{TH}(\alpha)$. Then  for all $\tilde{y},\tilde{z}\in B(\tilde{x}, \inj(\tilde{x})/50)$, we have
    \[\frac{1}{2}\|\tilde{y}-\tilde{z}\|_{\tilde{y}}\leq \|\tilde{y}-\tilde{z}\|_{\tilde{z}}\leq 2\|\tilde{y}-\tilde{z}\|_{\tilde{y}},\]
    and further
    \[\frac{1}{4}\|\tilde{y}-\tilde{z}\|_{\tilde{x}}\leq d(\tilde{y},\tilde{z})\leq 4\|\tilde{y}-\tilde{z}\|_{\tilde{x}}.\] 
     \end{lem}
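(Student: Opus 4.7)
The plan is to establish the pointwise norm comparison first, and then deduce the metric estimate by integrating along the Euclidean segment in period coordinates.

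For the norm comparison, by symmetry it suffices to prove $\|\tilde{y}-\tilde{z}\|_{\tilde{z}} \leq 2 \|\tilde{y}-\tilde{z}\|_{\tilde{y}}$. The crucial structural input is that on the period box $B(\tilde{x},\inj(\tilde{x})/50)$ the affine exponential map is a homeomorphism, so that every short saddle connection of $\tilde{y}$ persists as a saddle connection of $\tilde{z}$ representing the same class in $H_{1}(M,\Sigma;\mathbb{Z})$. For such a $\gamma$, the identity $\tilde{z}(\gamma) = \tilde{y}(\gamma) + (\tilde{z}-\tilde{y})(\gamma)$ together with the defining inequality $|(\tilde{z}-\tilde{y})(\gamma)| \leq \|\tilde{z}-\tilde{y}\|_{\tilde{y}}\,|\tilde{y}(\gamma)|$ gives, once $\|\tilde{z}-\tilde{y}\|_{\tilde{y}} \leq 1/2$, the bound $|\tilde{y}(\gamma)| \leq 2|\tilde{z}(\gamma)|$. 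Taking the supremum over saddle connections of $\tilde{z}$ in the definition of $\|\tilde{y}-\tilde{z}\|_{\tilde{z}}$ then yields the factor of $2$. The hypothesis $\|\tilde{z}-\tilde{y}\|_{\tilde{y}} \leq 1/2$ is secured by the choice of radius $\inj(\tilde{x})/50$ via a preliminary coarser comparison of $\|\cdot\|_{\tilde{x}}$ with $\|\cdot\|_{\tilde{y}}$.

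For the metric estimate, the upper bound $d(\tilde{y},\tilde{z}) \leq 4 \|\tilde{y}-\tilde{z}\|_{\tilde{x}}$ follows by taking the straight-line path $\sigma(r) = \tilde{y} + r(\tilde{z}-\tilde{y})$ in period coordinates; its tangent is $\sigma'(r) \equiv \tilde{z}-\tilde{y}$, and applying the norm comparison between $\tilde{x}$ and $\sigma(r)$ gives
\[
d(\tilde{y},\tilde{z}) \;\leq\; \int_{0}^{1} \|\tilde{z}-\tilde{y}\|_{\sigma(r)}\, dr \;\leq\; 2\|\tilde{z}-\tilde{y}\|_{\tilde{x}}.
\]
For the lower bound $\|\tilde{y}-\tilde{z}\|_{\tilde{x}} \leq 4 d(\tilde{y},\tilde{z})$, I would fix a saddle connection $c$ of $\tilde{x}$ nearly realizing the sup defining $\|\tilde{z}-\tilde{y}\|_{\tilde{x}}$, choose an arbitrary smooth path $\sigma:[0,1] \to B(\tilde{x},\inj(\tilde{x})/50)$ from $\tilde{y}$ to $\tilde{z}$, and estimate
\[
\frac{|(\tilde{z}-\tilde{y})(c)|}{|\tilde{x}(c)|} \;=\; \frac{1}{|\tilde{x}(c)|}\left|\int_{0}^{1} \sigma'(r)(c)\, dr\right| \;\leq\; \int_{0}^{1} \|\sigma'(r)\|_{\sigma(r)}\,\frac{|\sigma(r)(c)|}{|\tilde{x}(c)|}\, dr,
\]
bounding the last ratio by $2$ via the pointwise norm comparison and infimizing over $\sigma$.

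The main obstacle is the careful handling of saddle connections: the set of saddle connections is not locally constant under deformations, so one cannot naively put those of $\tilde{y}$ in bijection with those of $\tilde{z}$. The resolution is that the AGY sup is essentially governed by the shortest saddle connections, and these do persist on the period box $B(\tilde{x},\inj(\tilde{x})/50)$ by continuity of periods, since the length of the shortest saddle connection of $\tilde{x}$ is comparable to $\inj(\tilde{x})$. Making this persistence argument precise relies on \cite[Proposition 5.3]{avila2013small}, which guarantees that the period box is well-defined at this scale.
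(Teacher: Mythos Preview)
The paper does not give a self-contained argument here; its entire proof is a citation to \cite[Proposition~5.3]{avila2013small} (and \cite[Lemma~3.3]{Chaika2023ergodic}). Since you ultimately also invoke that same proposition, at the level of logical dependencies your proposal and the paper coincide.

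That said, your attempt to expose the mechanism behind the norm comparison has a genuine gap. To bound $\|\tilde{y}-\tilde{z}\|_{\tilde{z}}$ you must control $|(\tilde{y}-\tilde{z})(\gamma)|/|\tilde{z}(\gamma)|$ for \emph{every} saddle connection $\gamma$ of $\tilde{z}$, but the defining inequality $|(\tilde{z}-\tilde{y})(\gamma)| \leq \|\tilde{z}-\tilde{y}\|_{\tilde{y}}\,|\tilde{y}(\gamma)|$ you use is only available when $\gamma$ is a saddle connection of $\tilde{y}$. Your proposed fix---that ``the AGY sup is essentially governed by the shortest saddle connections,'' and these persist---is not correct: the supremum in the AGY norm runs over all saddle connections with no length cutoff, and there is no a~priori reason a long saddle connection of $\tilde{z}$ cannot nearly realize the sup while failing to be a saddle connection of $\tilde{y}$. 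The actual argument in \cite{avila2006exponential,avila2013small} is different in kind: one shows that an arbitrary saddle connection $\gamma$ of $\tilde{z}$, transported to $\tilde{y}$ via the marking and then straightened, decomposes as a concatenation of saddle connections $\gamma_{1},\dots,\gamma_{k}$ of $\tilde{y}$ with $\sum_{i}|\tilde{y}(\gamma_{i})|$ controlled by $|\tilde{z}(\gamma)|$. It is this decomposition, not persistence of short connections, that drives the comparison (and in fact yields $\tfrac{1}{2}\|c\|_{\tilde{y}}\leq\|c\|_{\tilde{z}}\leq 2\|c\|_{\tilde{y}}$ for \emph{all} $c$, not just $c=\tilde{y}-\tilde{z}$). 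A secondary issue: in your lower bound for $d(\tilde{y},\tilde{z})$ you tacitly restrict to paths remaining inside $B(\tilde{x},\inj(\tilde{x})/50)$; a competing path could leave this box, so one also needs the easy observation that any such path has length at least a fixed multiple of $\inj(\tilde{x})$ and hence cannot beat the straight segment.
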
 
 \begin{proof}
   It is actually a rephrasing of   \cite[Proposition 5.3]{avila2013small}.   See also \cite[Lemma 3.3]{Chaika2023ergodic}. 
 \end{proof}

      \begin{lem}[{\cite[Lemma 5.1]{avila2013small}}]\label{closing2023.12.3}
      For $x\in\mathcal{H}(\alpha)$, $g\in G$, we have 
         \[d(x,gx)\leq d_{G}(e,g). \]
         In particular, via the Cartan decomposition $g=kak^{\prime}$, we have 
               \[d(x,gx)\leq \log\|a\|+4\pi. \]
      \end{lem}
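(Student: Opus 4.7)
The plan is to bound $d(x,gx)$ by producing an explicit smooth path $r\mapsto g_r x$ from $x$ to $gx$ in $\mathcal{TH}(\alpha)$ and estimating its Finsler length directly from the period coordinate description of the $G$-action. Once this is done, the first inequality comes from taking the infimum over paths in $G$, and the Cartan bound comes from concatenating three explicit pieces.

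First I would fix a smooth path $r\in[0,1]\mapsto g_r\in G$ with $g_0=e$, $g_1=g$. Recalling the description of the $G$-action on period coordinates in (\ref{closing2023.08.6}), the periods of $g_r x$ on any saddle connection $\gamma$ are $g_r v_\gamma\in\mathbb{R}^2$, where $v_\gamma=\int_\gamma\omega$ and $G$ acts by its standard linear action on $\mathbb{C}\cong\mathbb{R}^2$. Differentiating in $r$ gives a tangent vector $c_r\in H^1(M,\Sigma;\mathbb{R}^2)$ with $c_r(\gamma)=(\dot g_r g_r^{-1})(g_r v_\gamma)$. Plugging into the definition of the AGY norm,
\[
\|c_r\|_{g_r x}=\sup_\gamma\frac{|(\dot g_r g_r^{-1})(g_r v_\gamma)|}{|g_r v_\gamma|}\leq \|\dot g_r g_r^{-1}\|_{\mathrm{op}},
\]
where the sup is over saddle connections and $\|\cdot\|_{\mathrm{op}}$ denotes the Euclidean operator norm on $\mathbb{R}^2$. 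Integrating, the Finsler length of the path $r\mapsto g_r x$ is at most $\int_0^1\|\dot g_r g_r^{-1}\|_{\mathrm{op}}\,dr$, and taking the infimum over $g_r$ yields $d(x,gx)\leq d_G(e,g)$ with respect to the right-invariant Finsler metric on $G$ induced by $\|\cdot\|_{\mathrm{op}}$ on $\mathfrak{sl}_2$.

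For the Cartan bound, write $g=kak'$ with $k=R_\theta$, $k'=R_{\theta'}$, $\theta,\theta'\in[0,2\pi)$, and $a=\mathrm{diag}(e^{\tau/2},e^{-\tau/2})$ for some $\tau\geq 0$, so that $\log\|a\|=\tau/2$. Concatenate three pieces: on $[0,\tfrac13]$ take $g_r=R_{3r\theta'}$; on $[\tfrac13,\tfrac23]$ take $g_r=\mathrm{diag}(e^{3(r-1/3)\tau/2},e^{-3(r-1/3)\tau/2})k'$; and on $[\tfrac23,1]$ take $g_r=R_{3(r-2/3)\theta}\cdot ak'$. On each rotation segment $\dot g_r g_r^{-1}$ is a multiple of the standard skew-symmetric generator and its operator-norm integral is $|\theta'|\leq 2\pi$ (resp.\ $|\theta|\leq 2\pi$); on the diagonal segment $\dot g_r g_r^{-1}=\mathrm{diag}(\tau/2,-\tau/2)$ has operator norm $\tau/2$, contributing $\tau/2=\log\|a\|$. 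Summing gives $d(x,gx)\leq\log\|a\|+4\pi$.

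I expect the only nontrivial step to be the tangent-vector inequality $\|c_r\|_{g_r x}\leq\|\dot g_r g_r^{-1}\|_{\mathrm{op}}$. Its validity hinges on two scale-invariant features of the AGY norm: the ratio $|(\dot g_r g_r^{-1})(g_r v_\gamma)|/|g_r v_\gamma|$ is bounded by the operator norm uniformly in $\gamma$ because the same linear map $\dot g_r g_r^{-1}$ acts on every period, and the sup over saddle connections of $g_r x$ is taken in exactly the AGY denominator. Once this observation is in place, the rest is bookkeeping along the chosen path.
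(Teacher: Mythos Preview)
Your argument is correct and is exactly the standard proof (the one in \cite{avila2013small}): the $G$-action is linear on each period, so the AGY norm of the tangent to $r\mapsto g_r x$ is bounded by $\|\dot g_r g_r^{-1}\|_{\mathrm{op}}$, and the Cartan bound follows by concatenating two rotation arcs and one diagonal arc. One arithmetic slip: on the diagonal segment your reparametrization gives $\dot g_r g_r^{-1}=\tfrac{3\tau}{2}\,\mathrm{diag}(1,-1)$, not $\mathrm{diag}(\tau/2,-\tau/2)$; the factor of $3$ is then absorbed by the interval length $1/3$, so the contribution $\tau/2=\log\|a\|$ is correct as you stated.
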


      The following crude estimates are well known, e.g. \cite[Corollary 2.6]{chaika2020tremors}.
      \begin{thm}\label{effective2023.11.3}
           For all $s,t\in\mathbb{R}$, $x\in\mathcal{H}(\alpha)$, we have 
         \[ \|u_{s}v\|_{u_{s}x}\leq \left(1+\frac{s^{2}+|s|\sqrt{s^{2}+4}}{2}\right)\|v\|_{x}\]
         and 
         \[ \|a_{t}v\|_{a_{t}x}\leq e^{2|t|}\|v\|_{x}.\]
      \end{thm}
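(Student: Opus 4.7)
The plan is to bound the AGY norm of the pushed-forward cochain by the operator norms of $h$ and $h^{-1}$ acting on $\mathbb{R}^{2}$, and then to compute these operator norms explicitly for $h = u_{s}$ and $h = a_{t}$. The crucial setup is the identification $H^{1}(M,\Sigma;\mathbb{C}) \cong H^{1}(M,\Sigma;\mathbb{R}^{2})$ together with the diagonal $G$-action: for a saddle connection $\gamma$ on $x$ and a cochain $v$, one has $v(\gamma) \in \mathbb{R}^{2}$ and $(hv)(\gamma) = h \cdot v(\gamma)$; in particular $\int_{\gamma} h\omega = h \cdot \int_{\gamma} \omega$.

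First, I would observe that the saddle connections of $hx$ are in natural bijection with those of $x$, since the $G$-action preserves the underlying surface and merely alters the flat structure in period coordinates. Consequently,
\[
\|hv\|_{hx} \;=\; \sup_{\gamma}\, \frac{|h\cdot v(\gamma)|}{|h\cdot \int_{\gamma}\omega|},
\]
where the supremum ranges over the same saddle connections of $x$ used to define $\|v\|_{x}$.

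Next, I would apply the elementary two-sided bound: for every nonzero $w \in \mathbb{R}^{2}$,
\[
\|h^{-1}\|^{-1}|w| \;\leq\; |hw| \;\leq\; \|h\|\,|w|.
\]
Using the upper bound in the numerator and the lower bound in the denominator yields
\[
\frac{|h\cdot v(\gamma)|}{|h\cdot \int_{\gamma}\omega|} \;\leq\; \|h\|\cdot\|h^{-1}\|\cdot\frac{|v(\gamma)|}{|\int_{\gamma}\omega|},
\]
and taking the supremum over $\gamma$ gives the general estimate $\|hv\|_{hx} \leq \|h\|\,\|h^{-1}\|\,\|v\|_{x}$.

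It remains to compute $\|h\|\,\|h^{-1}\|$ in the two cases. For $h = a_{t}$ the operator norm is immediate from the diagonal form: $\|a_{t}\| = \|a_{t}^{-1}\| = e^{|t|/2}$, so the product is $e^{|t|} \leq e^{2|t|}$, matching (indeed improving) the stated bound. For $h = u_{s}$, since $u_{s}^{-1} = u_{-s}$ we have $\|u_{s}\| = \|u_{s}^{-1}\|$; computing $\|u_{s}\|^{2}$ as the largest eigenvalue of $u_{s}^{T}u_{s} = \bigl(\begin{smallmatrix}1 & s \\ s & 1+s^{2}\end{smallmatrix}\bigr)$ via the quadratic formula gives
\[
\|u_{s}\|^{2} \;=\; \frac{(2+s^{2}) + |s|\sqrt{s^{2}+4}}{2} \;=\; 1 + \frac{s^{2} + |s|\sqrt{s^{2}+4}}{2},
\]
which is exactly the constant in the statement. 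There is no real obstacle here; the only point meriting care is the identification of saddle connections before and after the $G$-action, which is built into the very definition of the action on $\mathcal{H}(\alpha)$.
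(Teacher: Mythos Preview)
Your proof is correct and is essentially the standard argument for this estimate. The paper does not give its own proof of this theorem; it simply records the bounds as well known, citing \cite[Corollary 2.6]{chaika2020tremors}, so there is nothing further to compare.
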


         \begin{lem}[{\cite[Lemma 2.6]{eskin2022effective}}]\label{effective2023.11.4} \hypertarget{2024.12.k5}   There \hypertarget{2023.11.C2} exist     $\kappa_{5}=\kappa_{5}(\alpha)>0$ and  $C_{2}>1$   so that for all $x\in \mathcal{H}_{1}(\alpha)$, the following hold.
      For any $r\in(0,\hyperlink{2023.11.C2}{C_{2}}\ell(x)^{ \hyperlink{2024.12.k5}{\kappa_{5}}}]$, any lift $\tilde{x}$ of $x$, the restriction of the covering map $\pi:\mathcal{TH}(\alpha)\rightarrow \mathcal{H}(\alpha)$ to $B(\tilde{x},r)$ is injective. 
      \end{lem}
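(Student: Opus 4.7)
The plan is to argue by contradiction: suppose $\tilde{y}_{1}, \tilde{y}_{2} \in B(\tilde{x}, r)$ are distinct with $\pi(\tilde{y}_{1}) = \pi(\tilde{y}_{2})$, so that $\tilde{y}_{2} = \tilde{y}_{1}.\gamma$ for some nontrivial $\gamma \in \Gamma$, and then extract a contradiction as soon as $r \leq C_{2}\ell(x)^{\kappa_{5}}$ for well-chosen constants. First I would fix a triangulation of $\tilde{x}$ by saddle connections---namely, the Delaunay-type triangulation announced in Section \ref{closing2024.3.17}---whose edges $\gamma_{1}, \ldots, \gamma_{n}$ all have holonomy of length at most $C\ell(x)^{-a}$ for some constants $a, C$ depending only on $\alpha$; such a bound follows from the standard diameter estimate for unit-area translation surfaces combined with the systole lower bound $\ell(x)$. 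These edges form a $\mathbb{Z}$-basis of $H_{1}(M, \Sigma; \mathbb{Z})$, so they distinguish lifts in $\mathcal{TH}(\alpha)$.

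For any $\tilde{y} \in B(\tilde{x}, r)$, the very definition of the AGY norm gives
\[|\hol_{\tilde{y}}(\gamma_{i}) - \hol_{\tilde{x}}(\gamma_{i})| \leq r\,|\hol_{\tilde{x}}(\gamma_{i})| \leq rC\ell(x)^{-a},\]
so the period vectors of $\tilde{y}_{1}$ and $\tilde{y}_{2}$ along $\{\gamma_{i}\}$ agree with that of $\tilde{x}$ up to an error of order $r\ell(x)^{-a}$. The relation $\tilde{y}_{2} = \tilde{y}_{1}.\gamma$ together with the cocycle formula (\ref{closing2023.08.6}) becomes
\[\hol_{\tilde{y}_{2}}(\gamma_{i}) = \hol_{\tilde{y}_{1}}(R(\gamma)\gamma_{i}) = \sum_{j} n_{ij}\hol_{\tilde{y}_{1}}(\gamma_{j}),\]
where $R(\gamma) = (n_{ij}) \in \GL(n, \mathbb{Z})$ is the induced action on relative first homology. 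Combining the two inputs, the integer matrix $R(\gamma)$ sends the period vector $(\hol_{\tilde{x}}(\gamma_{i}))_{i}$ to itself with error $O(r\ell(x)^{-a})$. Since each $R(\gamma)\gamma_{i}$ is a relative integer cycle whose $\tilde{x}$-holonomy is within $O(r\ell(x)^{-a})$ of $\hol_{\tilde{x}}(\gamma_{i})$, and since the number of such integer cycles of length at most $2C\ell(x)^{-a}$ is polynomially bounded in $1/\ell(x)$ by the quadratic Masur-type count for saddle connections, taking $r \leq C_{2}\ell(x)^{\kappa_{5}}$ with $\kappa_{5}$ somewhat larger than $a$ and $C_{2}$ sufficiently small forces the only feasible $R(\gamma)$ to be the identity. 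Finally, the rigidity coming from the marking of $\tilde{x}$ together with the labeling of $\Sigma$ upgrades $R(\gamma) = \id$ to $\gamma = \id$, contradicting the assumption.

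The main obstacle is the quantitative bookkeeping in the third step: one must balance the Delaunay-diameter exponent $a$, the Masur-type counting exponent for short relative cycles, and the linear propagation of the approximation error in order to isolate a clean pair $(C_{2}, \kappa_{5})$. A subsidiary but nontrivial point is the passage from $R(\gamma) = \id$ to $\gamma = \id$: this requires invoking the faithfulness of $\Gamma$ on $H_{1}(M, \Sigma; \mathbb{Z})$ modulo translation equivalences respecting the marking, and the labeling of zeros in $\mathcal{H}(\alpha)$ together with the choice of a lift $\tilde{x}$ is precisely what rules out residual Torelli-type ambiguities.
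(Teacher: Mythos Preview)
The paper does not supply its own proof of this lemma; it simply cites \cite[Lemma~2.6]{eskin2022effective}. So there is no in-paper argument to compare against, and the question is only whether your sketch stands on its own.

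Your overall architecture is reasonable, but the crucial third step has a genuine gap. You claim that ``the number of such integer cycles of length at most $2C\ell(x)^{-a}$ is polynomially bounded in $1/\ell(x)$ by the quadratic Masur-type count for saddle connections.'' This is false as stated: the holonomy map $\hol_{\tilde{x}}:H_{1}(M,\Sigma;\mathbb{Z})\to\mathbb{C}$ goes from a lattice of rank $2g+|\Sigma|-1$ into $\mathbb{R}^{2}$, so as soon as $g\geq 2$ it has an infinite kernel, and there are infinitely many integral relative cycles with holonomy in any fixed bounded region. Masur's quadratic growth bound counts \emph{saddle connections}, not homology classes. Even if you observe (correctly) that $\gamma(\gamma_{i})$ is isotopic to a saddle connection of $\tilde{y}_{1}$, the argument still does not close: two distinct saddle connections of a translation surface can have identical, or arbitrarily close, holonomies (think of parallel core curves in a cylinder decomposition), so ``holonomy within $O(r\ell(x)^{-a})$'' does not by itself force equality of the underlying curves or homology classes. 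You need an additional mechanism---for instance, tracking the full combinatorics of the Delaunay triangulation under $\gamma$ rather than just edge holonomies, or invoking a separation statement for the set of \emph{marked} short saddle connections---that your sketch does not supply.

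A smaller point: the final step does not require faithfulness of $\Gamma$ on $H_{1}(M,\Sigma;\mathbb{Z})$ (which fails: the Torelli group is nontrivial). What you actually need is that once $R(\gamma)=\id$, the periods of $\tilde{y}_{1}$ and $\tilde{y}_{2}$ coincide, and then local injectivity of the period map on $B(\tilde{x},\ell(x)^{5})$ (Corollary~\ref{closing2024.1.4} and Lemma~\ref{effective2023.9.20}) gives $\tilde{y}_{1}=\tilde{y}_{2}$ directly. Also, the Delaunay edges span $H_{1}(M,\Sigma;\mathbb{Z})$ but do not form a basis (there are $6g-6+3|\Sigma|$ of them); you would need to extract a basis, which is harmless but should be said.
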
   
     In what follows, we shall deduce the certain elementary   lemmas. It will be used in the proof of  Margulis functions (see Section \ref{margulis2024.3.04}). 
 
\begin{lem}\label{effective2024.6.07} For \hypertarget{2024.12.t3} any $C>0$ and $\gamma>0$, there exists $t_{3}=t_{3}(\ell(x),C)>0$ and $\lambda^{\prime}=\lambda^{\prime}(C)<1$ such that  
   \[ \int_{I}e^{\gamma \int_{0}^{t}\Lambda^{+}(a_{s}u_{r}x)ds}dr \leq  C  e^{\gamma t}+  e^{\gamma\lambda^{\prime}t} \]
   for any $t\geq 2  \hyperlink{2024.12.t3}{t_{3}}$, interval $I\subset[0,1]$ with $|I|\geq C$. Here $\Lambda^{+}:\mathcal{H}_{1}(\alpha)\rightarrow[0,1)$ is the function given by Theorem \ref{closing2024.12.6}.
\end{lem}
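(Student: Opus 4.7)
The plan is to split $I$ into an $r$-set where the $a_s$-orbit of $u_rx$ spends most of its time in the thick part $\mathcal{H}_{1}^{(\epsilon^*)}(\alpha)$ (on which $\Lambda^{+}$ is uniformly bounded away from $1$) and a complementary ``bad'' set of measure at most $C$. Since $\Lambda^{+}$ takes values in $[0,1)$ and is continuous, for any $\epsilon^{*}>0$ the supremum $\lambda_{0}:=\sup_{\mathcal{H}_{1}^{(\epsilon^{*})}(\alpha)}\Lambda^{+}$ satisfies $\lambda_{0}<1$. Setting $A(r):=|\{s\in[0,t]:\ell(a_{s}u_{r}x)<\epsilon^{*}\}|$, one has
\[\int_{0}^{t}\Lambda^{+}(a_{s}u_{r}x)\,ds\leq \lambda_{0}t+(1-\lambda_{0})A(r).\]
Fix once and for all $\delta:=1/2$ and call $r\in I$ \emph{good} if $A(r)\leq\delta t$. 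Then on the good set the exponent is bounded by $\lambda' t$ with $\lambda':=\lambda_{0}+\delta(1-\lambda_{0})\in(0,1)$; on the \emph{bad} set $B:=\{r\in I:A(r)>\delta t\}$ I will use only the trivial bound $\int_{0}^{t}\Lambda^{+}\leq t$. This yields the decomposition
\[\int_{I}e^{\gamma\int_{0}^{t}\Lambda^{+}(a_{s}u_{r}x)\,ds}\,dr\leq |B|e^{\gamma t}+|I|e^{\gamma\lambda' t}\leq |B|e^{\gamma t}+e^{\gamma\lambda' t},\]
reducing everything to an upper bound on $|B|$.

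For the bad set, I apply Corollary \ref{closing2024.3.63} with $\eta=C$: for every $s\geq s_{0}:=\kappa_{4}|\log\ell(x)|+|\log C|+C_{6}$ one has $|\{r\in I:\ell(a_{s}u_{r}x)<\epsilon^{*}\}|\leq C_{6}(\epsilon^{*})^{\kappa_{3}}|I|$. Fubini together with the trivial estimate for $s\leq s_{0}$ then gives
\[\int_{I}A(r)\,dr\leq s_{0}|I|+(t-s_{0})C_{6}(\epsilon^{*})^{\kappa_{3}}|I|\leq s_{0}+tC_{6}(\epsilon^{*})^{\kappa_{3}},\]
and Markov's inequality produces $|B|\leq s_{0}/(\delta t)+C_{6}(\epsilon^{*})^{\kappa_{3}}/\delta$.

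Finally, I choose $\epsilon^{*}:=(C\delta/(2C_{6}))^{1/\kappa_{3}}$, which depends only on $C$, so that the second summand is at most $C/2$; and I set $t_{3}:=\max\{s_{0},\,2s_{0}/(C\delta)\}$, which depends only on $\ell(x)$ and $C$, so that for every $t\geq 2t_{3}$ the first summand is also at most $C/2$. This gives $|B|\leq C$, and $\lambda'$ depends only on $\epsilon^{*}$, hence only on $C$, as required. The main subtlety I expect is this calibration step: $\epsilon^{*}$ must be fixed as a function of $C$ alone (independent of $t$ and of $\ell(x)$) so that $\lambda'=\lambda'(C)$ does not accidentally inherit an $\ell(x)$-dependence, with the entire $\ell(x)$-dependence then being absorbed into $t_{3}$ via the non-divergence estimate.
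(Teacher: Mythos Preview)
Your argument is correct and reaches the same conclusion with the same dependencies, but the route differs from the paper's. The paper first peels off the initial segment $[0,t_{3}]$ (bounding $\Lambda^{+}\le 1$ there), then applies Jensen's inequality in the \emph{time} variable to replace $e^{\gamma\int_{t_{3}}^{t}\Lambda^{+}(a_{s}u_{r}x)\,ds}$ by the average $\frac{1}{t-t_{3}}\int_{t_{3}}^{t}e^{(t-t_{3})\gamma\Lambda^{+}(a_{s}u_{r}x)}\,ds$, and only then, for each fixed $s$, splits the $r$-interval into the thick/thin sets coming from Corollary~\ref{closing2024.3.63}. You instead work pointwise in $r$: you introduce the excursion time $A(r)$, bound the exponent by $\lambda_{0}t+(1-\lambda_{0})A(r)$, and control the bad set $\{A(r)>\delta t\}$ via Fubini and Markov. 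Your approach avoids Jensen entirely and is arguably more transparent about where each constant comes from; the paper's Jensen trick is a bit shorter on the page but hides the same averaging. Amusingly, with your choice $\delta=\tfrac12$ you land on exactly the paper's $\lambda'=(1+\lambda_{0})/2$. The calibration you flag at the end---fixing $\epsilon^{*}$ as a function of $C$ alone so that $\lambda'=\lambda'(C)$, and pushing all $\ell(x)$-dependence into $t_{3}$ via $s_{0}$---is handled correctly.
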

\begin{proof} By Corollary \ref{closing2024.3.63}, there exists $\epsilon_{1}=\epsilon_{1}(C)>0$ such that 
\begin{equation}\label{effective2024.6.06}
 |\{r\in I:\ell(a_{t}u_{r}\tilde{x})<\epsilon_{1}\}|\leq C|I|
\end{equation} 
for any interval $I\subset[0,1]$ with $|I|\geq C$.
 We let
\[\lambda=\max_{x\in\mathcal{H}^{(\epsilon_{1})}_{1}(2)}\Lambda^{+}(x),\ \ \ \lambda^{\prime}=\frac{1+\lambda}{2}.\]
Then $\lambda,\lambda^{\prime}<1$.  Let   $I\subset[0,1]$  be an interval with $|I|\geq C$. By (\ref{effective2024.6.06}), let $\hyperlink{2024.12.t3}{t_{3}}\coloneqq \hyperlink{2024.12.k4}{\kappa_{4}}|\log \ell(x)|+|\log C|+\hyperlink{2024.3.C6}{C_{6}}$. Then for  $t\geq 2\hyperlink{2024.12.t3}{t_{3}}$, one estimate 
\[\int_{I}e^{\gamma \int_{\hyperlink{2024.12.t3}{t_{3}}}^{t}\Lambda^{+}(a_{s}u_{r}x)ds}dr\leq C e^{(t-\hyperlink{2024.12.t3}{t_{3}})\gamma}+ (1-C)  e^{(t-\hyperlink{2024.12.t3}{t_{3}})\gamma\lambda_{0}} . \]
 Then by Jensen's inequality, one calculates
\begin{align}
  \int_{I}e^{\gamma \int_{0}^{t}\Lambda^{+}(a_{s}u_{r}x)ds}dr  & = \int_{I}e^{\gamma\int_{0}^{\hyperlink{2024.12.t3}{t_{3}}}\Lambda^{+}(a_{s}u_{r}x)ds }e^{\gamma\int_{\hyperlink{2024.12.t3}{t_{3}}}^{t}\Lambda^{+}(a_{s}u_{r}x)ds }dr    \;\nonumber\\ 
  &\leq e^{\gamma \hyperlink{2024.12.t3}{t_{3}}}\int_{I}e^{\gamma\int_{\hyperlink{2024.12.t3}{t_{3}}}^{t}\Lambda^{+}(a_{s}u_{r}x)ds }dr    \;\nonumber\\ 
  &\leq \frac{e^{\gamma \hyperlink{2024.12.t3}{t_{3}}}}{t-\hyperlink{2024.12.t3}{t_{3}}}\int_{\hyperlink{2024.12.t3}{t_{3}}}^{t}\int_{I}e^{(t-\hyperlink{2024.12.t3}{t_{3}})\gamma\Lambda^{+}(a_{s}u_{r}x) }dr   ds  \;\nonumber\\  
    & \leq \frac{e^{\gamma \hyperlink{2024.12.t3}{t_{3}}}}{t-\hyperlink{2024.12.t3}{t_{3}}} \int_{\hyperlink{2024.12.t3}{t_{3}}}^{t} C e^{(t-\hyperlink{2024.12.t3}{t_{3}})\gamma}+ (1-C)  e^{(t-\hyperlink{2024.12.t3}{t_{3}})\gamma\lambda_{0}} ds \;\nonumber\\ 
        & \leq e^{\gamma \hyperlink{2024.12.t3}{t_{3}}}( C e^{(t-\hyperlink{2024.12.t3}{t_{3}})\gamma}+   e^{(t-\hyperlink{2024.12.t3}{t_{3}})\gamma\lambda_{0}} ) \;\nonumber\\ 
    &  =   C e^{t\gamma}+  e^{\gamma \hyperlink{2024.12.t3}{t_{3}}+(t-\hyperlink{2024.12.t3}{t_{3}})\gamma\lambda_{0}}  \leq  C e^{\gamma t}+   e^{\gamma\lambda^{\prime}t} .\;  \nonumber
\end{align}
This establishes the claim.
\end{proof}

\begin{lem}\label{margulis2024.3.03}  \hypertarget{2024.4.C7}  
  Let $\gamma\in(0,1)$, $x\in\mathcal{H}_{1}(\alpha)$, $0\neq w\in H^{1}(M,\Sigma;\mathbb{C})$. Then  \hypertarget{2024.12.t4} there exists a time $t_{4}=t_{4}(\ell(x),\gamma)>0$  such that for any $t\geq \hyperlink{2024.12.t4}{t_{4}}$, we have
  \[\int_{0}^{1}\|a_{t}u_{r}w\|_{a_{t}u_{r}x}^{-\gamma}dr\leq   e^{-2}\|w\|_{x}^{-\gamma} .\]
\end{lem}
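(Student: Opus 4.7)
My plan is to combine the top-rate expansion of the AGY norm along the Teichm\"{u}ller geodesic flow with Forni's effective bound on the second Lyapunov exponent (Theorem \ref{closing2024.12.6}), and to use $\gamma<1$ to control the small set of $r$ where the expansion degenerates.

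\textbf{Step 1 (Decomposition).} Write $w=a+ib$ with $a,b\in H^{1}(M,\Sigma;\mathbb{R})$. Since $u_{r}$ and $a_{t}$ act as the corresponding $2\times 2$ matrices on each complex period,
\[
a_{t}u_{r}w=e^{t/2}(a+rb)+ie^{-t/2}b.
\]
Combined with (\ref{effective2024.6.05}) this gives $\|a_{t}u_{r}w\|_{a_{t}u_{r}x}\geq \|e^{t/2}(a+rb)\|_{a_{t}u_{r}x}$, so it suffices to bound the norm of the real expanding vector $e^{t/2}(a+rb)$.

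\textbf{Step 2 (Pointwise Lyapunov lower bound).} Using Theorem \ref{effective2023.11.3} in reverse to pass between base points along the $u_{r}$-orbit and iterating the instantaneous form of Forni's bound, I would establish an estimate of the shape
\[
\|e^{t/2}(a+rb)\|_{a_{t}u_{r}x}\;\geq\;c\,e^{t/2}\exp\Bigl(-\tfrac{1}{2}\int_{0}^{t}\Lambda^{+}(a_{s}u_{r}x)\,ds\Bigr)\|a+rb\|_{u_{r}x},
\]
reflecting that away from the tautological direction the geodesic expands at a rate at worst $e^{\Lambda^{+}t/2}$ below the top rate $e^{t/2}$.

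\textbf{Step 3 (Singular integral in $r$).} Since $r\mapsto a+rb$ is affine with at most one zero $r_{0}\in\mathbb{R}$, and since $\|\cdot\|_{u_{r}x}$ and $\|\cdot\|_{x}$ are comparable for $r\in[0,1]$ by Lemma \ref{closing2023.07.1},
\[
\|a+rb\|_{u_{r}x}\;\gtrsim\;|r-r_{0}|\cdot\|w\|_{x}.
\]
Because $\gamma<1$, the singular integral $\int_{0}^{1}|r-r_{0}|^{-\gamma}dr$ is finite, bounded by a constant $C_{\gamma}$ depending only on $\gamma$.

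\textbf{Step 4 (Assembly).} Raising Step 2 to the $(-\gamma)$-th power and integrating in $r$, the matter reduces to controlling
\[
\int_{0}^{1}\exp\Bigl(\tfrac{\gamma}{2}\int_{0}^{t}\Lambda^{+}(a_{s}u_{r}x)\,ds\Bigr)|r-r_{0}|^{-\gamma}\,dr.
\]
Applying H\"{o}lder's inequality to decouple the two factors and then Lemma \ref{effective2024.6.07} with an auxiliary constant $C$ (depending on $\gamma$), one arrives at a bound of the form
\[
C_{\gamma}'\,\|w\|_{x}^{-\gamma}\bigl(C+e^{-\gamma(1-\lambda^{\prime})t/2}\bigr).
\]
Choosing $C$ small (relative to $e^{-2}/C_{\gamma}'$) and then $t_{4}$ large enough (the latter depending on $\ell(x)$ through the threshold $t_{3}$ in Lemma \ref{effective2024.6.07}, and on $\gamma$) so that the second term is $\leq e^{-2}/(2C_{\gamma}')$ delivers the claim.

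The main obstacle is the pointwise estimate in Step 2: the AGY norm does not split cleanly across Lyapunov subspaces, so one must carefully track Kontsevich--Zorich cocycle corrections when changing the base point along both $u_{r}$ and $a_{t}$. Semisimplicity of the cocycle (Theorem \ref{closing2024.11.1}) combined with an Oseledets-type iteration of the infinitesimal Forni bound should be the technical heart of the argument.
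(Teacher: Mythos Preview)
Your strategy matches the paper's: decompose $w=a+ib$, bound from below by the real part via (\ref{effective2024.6.05}), control the basepoint change $u_rx\to a_tu_rx$ through Forni's pointwise function $\Lambda^+$, and exploit $\gamma<1$ to integrate the singularity in $r$. The difference is in how the two factors are combined. The paper splits into cases $\|b\|_x<1/3$ (where $\|a+rb\|_x\geq 1/3$ uniformly and Lemma \ref{effective2024.6.07} applies directly on $[0,1]$) and $\|b\|_x\geq 1/3$; in the latter it uses a dyadic level-set decomposition $J_k=\{r:e^{-(k+1)}\leq\|a+rb\|_x\leq e^{-k}\}$ with $|J_k|\lesssim e^{-k}$, applies Lemma \ref{effective2024.6.07} on each $J_k$ for $k\leq m_1(\gamma)$, and bounds the tail by the convergent series $\sum_{k>m_1}e^{k(\gamma-1)}$. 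Your H\"older decoupling with exponents $q\in(1,1/\gamma)$ and $p=q/(q-1)$ is a legitimate alternative that avoids both the case split and the dyadic sum; Lemma \ref{effective2024.6.07} is then applied once on $[0,1]$ with exponent $p\gamma/2$. Both routes land on the same ``small constant plus exponentially decaying'' structure and finish identically by choosing $C$ small and then $t$ large. The dyadic route keeps constants more explicit; yours is shorter.

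Two points to tighten. Your Step 3 inequality $\|a+rb\|_{u_rx}\gtrsim|r-r_0|\,\|w\|_x$ is not literally correct: there need be no zero $r_0$, and the linear slope is $\|b\|_x$, not $\|w\|_x$. The robust statement (which the paper uses) is the sublevel bound $|\{r\in[0,1]:\|a+rb\|_x\leq R\}|\leq 2R/\|b\|_x$, which yields the same finite $L^{q\gamma}$-integral once one is in the case $\|b\|_x\gtrsim\|w\|_x$; the norm comparison along $u_r$ comes from Theorem \ref{effective2023.11.3}, not Lemma \ref{closing2023.07.1}. Second, you overstate Step 2. The paper uses the inequality $\|v\|_{a_ty}\geq e^{-\int_0^t\Lambda^+(a_sy)\,ds}\|v\|_y$ for real $v$ as an immediate consequence of Forni's pointwise definition of $\Lambda^+$ (integrating the infinitesimal bound along the geodesic), with no appeal to Oseledets splittings or to Theorem \ref{closing2024.11.1}; semisimplicity plays no role here.
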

\begin{proof}  Write $w=a+ib$ for $a,b\in H^{1}(x)$. Without loss of generality, we may assume $\|w\|_{x}=1$.  Then one calculates $a_{t}u_{r}w=e^{t}(a+rb)+ie^{-t}b$ 
and so  
\[\|a_{t}u_{r}w\|_{a_{t}u_{r}x}=\|e^{t}(a+rb)+ie^{-t}b\|_{a_{t}u_{r}x}.\]
Then by (\ref{effective2024.6.05}), we have
\[ \|a_{t}u_{r}w\|_{a_{t}u_{r}x}\leq \|e^{t}(a+rb)\|_{a_{t}u_{r}x}+\|e^{-t}b\|_{a_{t}u_{r}x}\leq 2\|a_{t}u_{r}w\|_{a_{t}u_{r}x}.\]

   If $\|b\|_{x}<1/3$, then $\|a\|_{x}-\|b\|_{x}>1/3$. Let $C>0$. Then by Lemma \ref{effective2024.6.07}, for $t>2\hyperlink{2024.12.t3}{t_{3}}=2\hyperlink{2024.12.t3}{t_{3}}(\ell(x),C)$, we have
\begin{align}
   \int_{0}^{1}\|a_{t}u_{r}w\|_{a_{t}u_{r}x}^{-\gamma}dr  & \leq  \int_{0}^{1}(e^{t}\|a+rb\|_{a_{t}u_{r}x})^{-\gamma}dr  \;\nonumber\\
   & \leq  \int_{0}^{1}(e^{t-\int_{0}^{t}\Lambda^{+}(a_{s}u_{r}x)ds}\|a+rb\|_{u_{r}x})^{-\gamma}dr  \;\nonumber\\ 
    & \leq  3^{\gamma} \int_{0}^{1}e^{-\gamma (t-\int_{0}^{t}\Lambda^{+}(a_{s}u_{r}x)ds)}(\|a+rb\|_{x})^{-\gamma}dr  \;\nonumber\\
     & \leq  3^{\gamma}e^{-\gamma t} (\|a\|_{x}-\|b\|_{x})^{-\gamma} \int_{0}^{1}e^{ \gamma\int_{0}^{t}\Lambda^{+}(a_{s}u_{r}x)ds}dr  \;\nonumber\\
      & \leq  3^{2\gamma}e^{-\gamma t}  (C e^{\gamma t}+  e^{\gamma\lambda_{0}^{\prime}t}) \;\nonumber\\
    &  \leq  3^{2\gamma}  C+  3^{2\gamma} e^{-\gamma(1-\lambda_{0}^{\prime})t}.\;  \label{margulis2024.4.02}
\end{align}
Then for sufficiently small $C$ and large $t$, the right hand side of (\ref{margulis2024.4.02}) can be smaller than $e^{-2}$.

Now assume that $\|b\|_{x}\geq 1/3$.    
For $R>0$, consider
\[I(R)\coloneqq\{r\in[0,1]:\|a+rb\|_{x}\leq R\}.\]
Then there exists an absolute constant $c_{1}>0$ such that $|I(R)|\leq c_{1}\|b\|_{x}^{-1}R$. Let 
\[J_{k}\coloneqq\{r\in[0,1]:e^{-(k+1)}\leq \|a+rb\|_{x}\leq e^{-k}\}.\] 
Then $|J_{k}|\leq c_{1}\|b\|_{x}^{-1}\cdot e^{-k}\leq 3c_{1}  e^{-k}$. Further, one calculates
\begin{align}
\int_{I_{1}^{c}}\|a_{t}u_{r}w\|_{a_{t}u_{r}x}^{-\gamma}dr &    \leq  \sum_{k=0}^{\infty}\int_{J_{k}}(e^{t}\|a+rb\|_{a_{t}u_{r}x})^{-\gamma}dr  \;\nonumber\\
  & \leq  \sum_{k=0}^{\infty}\int_{J_{k}}(e^{t-\int_{0}^{t}\Lambda^{+}(a_{s}u_{r}x)ds}\|a+rb\|_{u_{r}x})^{-\gamma}dr  \;\nonumber\\
    & \leq  3^{\gamma}  \sum_{k=0}^{\infty}\int_{J_{k}}(e^{t-\int_{0}^{t}\Lambda^{+}(a_{s}u_{r}x)ds}\|a+rb\|_{x})^{-\gamma}dr  \;\nonumber\\
    & \leq  3^{\gamma}  \sum_{k=0}^{\infty}\int_{J_{k}}(e^{t-\int_{0}^{t}\Lambda^{+}(a_{s}u_{r}x)ds}e^{-(k+1)})^{-\gamma}dr  \;\nonumber\\
      & =  3^{\gamma} e^{\gamma(1- t)} \sum_{k=0}^{\infty}e^{k\gamma}\int_{J_{k}}(e^{\gamma\int_{0}^{t}\Lambda^{+}(a_{s}u_{r}x)ds})dr  .\;  \nonumber
\end{align}  

Then we split the sum into two parts. Let $m_{1}=m_{1}(\gamma)\in \mathbb{N}$ so that 
\[\sum_{k=m_{1}+1}^{\infty}e^{k(\gamma-1)}<\frac{1}{1000}.\]
Then by Lemma \ref{effective2024.6.07}, for $t>2\hyperlink{2024.12.t3}{t_{3}}=2\hyperlink{2024.12.t3}{t_{3}}(\ell(x),C,\gamma)$, we have  
\begin{align}
 &   3^{\gamma} e^{\gamma(1- t)} \sum_{k=0}^{\infty}e^{k\gamma}\int_{J_{k}}(e^{\gamma\int_{0}^{t}\Lambda^{+}(a_{s}u_{r}x)ds})dr  \;\nonumber\\
  \leq & 3^{\gamma} e^{\gamma(1- t)} \sum_{k=0}^{m_{1}}e^{k\gamma}\int_{J_{k}}(e^{\gamma\int_{0}^{t}\Lambda^{+}(a_{s}u_{r}x)ds})dr  +3^{\gamma} e^{\gamma} \sum_{k=m_{1}+1}^{\infty}e^{k(\gamma-1)} \;\nonumber\\  
    \leq & 3^{\gamma} e^{\gamma(1- t)} \sum_{k=0}^{m_{1}}e^{k\gamma}\int_{J_{k}}(e^{\gamma\int_{0}^{t}\Lambda^{+}(a_{s}u_{r}x)ds})dr  + \frac{3^{\gamma} e^{\gamma}}{1000}\;\nonumber\\  
     \leq & 3^{\gamma} e^{\gamma(1- t)} \sum_{k=0}^{m_{1}}e^{k(\gamma-1)} (Ce^{\gamma t}+   e^{\gamma\lambda_{0}^{\prime}t} )  + \frac{3^{\gamma} e^{\gamma}}{1000}\;\nonumber\\  
   \leq &  \frac{3^{\gamma} e^{\gamma}}{1-e^{\gamma-1}}(C+ e^{-\gamma(1-\lambda_{0}^{\prime})t} )  +     \frac{3^{\gamma} e^{\gamma}}{1000}.\;  \label{effective2024.6.08}
\end{align}  
Finally, let $C=C(\gamma)=\frac{1}{1000}(\frac{3^{\gamma} e^{\gamma}}{1-e^{\gamma-1}})^{-1}$. Then for sufficiently large $t\geq \hyperlink{2024.12.t4}{t_{4}}(\ell(x),\gamma)$, we can make the right hand side of (\ref{effective2024.6.08}) smaller than $e^{-2}$.
\end{proof}

      \subsection{Triangulation}\label{closing2024.3.17}
      
      In application, we usually choose a triangulation for the period coordinates, i.e. fix  a triangulation $\tau$ of the surface and choose a sequence of saddle connections from $\tau$ which form a basis for $H_{1}(M,\Sigma;\mathbb{Z})$.   In this section, we follow the idea in \cite{masur1991hausdorff} to  discuss the period coordinates, and find a lower bound of the non-degenerate deformations of a triangulation. In particular, this gives a  lower bound of the injectivity radius of the period map $\phi:\mathcal{TH}(\alpha)\rightarrow H^{1}(M,\Sigma;\mathbb{C})$. We adopt the notation introduced in \cite{chaika2020tremors}. 
      \begin{defn}[geodesic triangulation] \label{closing2024.1.3}
      We say $\tau$ is a \textit{geodesic triangulation}\index{geodesic triangulation} of $x$ if it is a decomposition of the surface into triangles whose sides are saddle connections,  and whose vertices are singular points, which need not be distinct.
      \end{defn} 
In \cite[$\S$4]{masur1991hausdorff}, Masur and Smillie showed that every translation surface $x\in\mathcal{H}(\alpha)$ admits a \textit{Delaunay triangulation}\index{Delaunay triangulation} $\tau_{x}$, which is  a typical geodesic triangulation. By the construction, each triangle $\Delta\in\tau_{x}$ can be inscribed in a disk of radius not greater than the diameter $d(M)$ of $M$ (cf. {\cite[Theorem 4.4]{masur1991hausdorff}}).

  Let $\tilde{x}\in\mathcal{TH}(\alpha)$ be a marked translation surface with the marking map $\varphi:(S,\Sigma)\rightarrow(M,\Sigma)$, and $x=\pi(\tilde{x})=(M,\omega)\in\mathcal{H}(\alpha)$.
 Let $\tau_{\tilde{x}}$ denote the pullback of the Delaunay triangulation with vertices in $\Sigma$, from $(M,\Sigma)$ to  $(S,\Sigma)$. 
       
Note that the period map $\hol_{\tilde{x}}(\gamma)$  can be thought of as giving a map from the triangles of $\tau_{\tilde{x}}$ to triangles in $\mathbb{C}\cong\mathbb{R}^{2}$ (well-defined up to translation). Moreover, we can define a local inverse of the period map as follows.
 
 Let $U_{\tilde{x}}\subset H^{1}(S,\Sigma;\mathbb{C})$ be the collection of all cohomology classes which map each triangle of $\tau_{\tilde{x}}$ into a positively oriented non-degenerate triangle in $\mathbb{C}$. 
    Each $\nu\in U_{\tilde{x}}$ gives a translation surface $M_{\tilde{x},\nu}$ built by gluing together the corresponding triangles in $\mathbb{C}$ along parallel edges, and a   marking map $\varphi_{\tilde{x},\nu}:(S,\Sigma)\rightarrow (M_{\tilde{x},\nu},\Sigma)$, by taking each triangle of the triangulation $\tau_{\tilde{x}}$ of $S$ to the corresponding triangle of the triangulation of $M_{\tilde{x},\nu}$. 
    Let $\tilde{y}_{\tilde{x},\nu}\in\mathcal{TH}(\alpha)$ denote the marked translation surface corresponding to the marking map $\varphi_{\tilde{x},\nu}:(S,\Sigma)\rightarrow (M_{\tilde{x},\nu},\Sigma)$. 
    Let 
    \[V_{\tilde{x}}\coloneqq\{\tilde{y}_{\tilde{x},\nu}:\nu\in U_{\tilde{x}}\}\subset \mathcal{TH}(\alpha)\]
     and $\psi_{\tilde{x}}:U_{\tilde{x}}\rightarrow V_{\tilde{x}}$ be defined by
       \[\psi_{\tilde{x}}:\nu\mapsto\tilde{y}_{\tilde{x},\nu}.\] 
  Let $\phi:V_{\tilde{x}}\rightarrow U_{\tilde{x}}$ be the period map. 
 By construction, $\nu\in U_{\tilde{x}}$ agrees with $\phi(\tilde{y}_{\tilde{x},\nu})$  on edges of $\tau_{\tilde{x}}$, and these edges generate $H_{1}(S,\Sigma;\mathbb{Z})$. Thus, the map $\psi_{\tilde{x}}$ is an inverse to $\phi$. Thus, we obtain the following:
   \begin{lem}[{\cite[Lemma 1.1]{masur1991hausdorff}}]\label{effective2023.9.20} The map $\phi$ is injective and locally onto when restricted to $V_{\tilde{x}}$.
   \end{lem}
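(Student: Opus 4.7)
The plan is to show that $\psi_{\tilde{x}}$ is a genuine two-sided inverse of $\phi$ on $V_{\tilde{x}}$, and then read off injectivity and local surjectivity as formal consequences.

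First, I would verify that $\phi \circ \psi_{\tilde{x}} = \mathrm{id}_{U_{\tilde{x}}}$. Given $\nu \in U_{\tilde{x}}$, the translation surface $M_{\tilde{x},\nu}$ is built by gluing the concrete Euclidean triangles whose edge vectors are $\nu$-images of the edges of $\tau_{\tilde{x}}$; the marking $\varphi_{\tilde{x},\nu}$ sends each triangle of $\tau_{\tilde{x}}$ in $S$ to the corresponding triangle in $M_{\tilde{x},\nu}$. Hence, for every oriented edge $e$ of $\tau_{\tilde{x}}$, the period $\hol_{\tilde{y}_{\tilde{x},\nu}}(e)$ equals $\nu(e)$ by construction. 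Since the edges of $\tau_{\tilde{x}}$ span $H_{1}(S,\Sigma;\mathbb{Z})$, a cohomology class in $H^{1}(S,\Sigma;\mathbb{C})$ is determined by its values on those edges, so $\phi(\tilde{y}_{\tilde{x},\nu}) = \nu$.

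Next, I would establish $\psi_{\tilde{x}} \circ \phi = \mathrm{id}_{V_{\tilde{x}}}$. Any $\tilde{y} \in V_{\tilde{x}}$ is of the form $\tilde{y}_{\tilde{x},\nu}$ for some $\nu \in U_{\tilde{x}}$, so from the previous step $\phi(\tilde{y}) = \nu$ and therefore $\psi_{\tilde{x}}(\phi(\tilde{y})) = \tilde{y}_{\tilde{x},\nu} = \tilde{y}$. This identity gives injectivity of $\phi$ on $V_{\tilde{x}}$ directly: if $\phi(\tilde{y}) = \phi(\tilde{y}')$ for $\tilde{y},\tilde{y}' \in V_{\tilde{x}}$, then applying $\psi_{\tilde{x}}$ yields $\tilde{y} = \tilde{y}'$.

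For the local ontoness statement, the point is that the condition defining $U_{\tilde{x}}$ (each triangle of $\tau_{\tilde{x}}$ develops to a positively oriented non-degenerate triangle in $\mathbb{C}$) is an open condition in $H^{1}(S,\Sigma;\mathbb{C})$ satisfied by $\phi(\tilde{x})$. Thus $U_{\tilde{x}}$ is an open neighborhood of $\phi(\tilde{x})$, and since $\phi(V_{\tilde{x}}) = U_{\tilde{x}}$ by the two identities above, $\phi$ is locally onto around $\tilde{x}$. The main subtlety in the argument is purely set-theoretic: one must confirm that the marking map $\varphi_{\tilde{x},\nu}$ produced by the triangle-by-triangle gluing recipe is well-defined up to the equivalence relation generating $\mathcal{TH}(\alpha)$, so that $\tilde{y}_{\tilde{x},\nu}$ is unambiguous. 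This is handled by observing that parallel edge identifications in the gluing are determined by the combinatorics of $\tau_{\tilde{x}}$, which is fixed, so the only ambiguity is an overall translation that disappears upon passing to cohomology.
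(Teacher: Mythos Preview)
Your proposal is correct and follows essentially the same approach as the paper: the paper observes, just before stating the lemma, that $\nu$ agrees with $\phi(\tilde{y}_{\tilde{x},\nu})$ on edges of $\tau_{\tilde{x}}$, these edges generate $H_{1}(S,\Sigma;\mathbb{Z})$, and hence $\psi_{\tilde{x}}$ is an inverse to $\phi$. You have simply spelled out the two-sided inverse verification and the openness of $U_{\tilde{x}}$ more carefully, which is a faithful expansion of the paper's one-line argument.
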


 Now let $x=(M,\omega)\in \mathcal{H}_{1}^{(\epsilon)}(\alpha)$; in other words, the shortest length of   saddle connections in $x$ is not smaller than $\epsilon$.  Let $\tau_{x}$ be  the Delaunay triangulation of $x$. Then by the construction, each triangle $\Delta\in\tau_{x}$ can be inscribed in a disk of radius not greater than the diameter $d(M)$ of $M$ (cf. {\cite[Theorem 4.4]{masur1991hausdorff}}). In addition, we can further control these quantities as follows.
 \begin{thm}[{\cite[Theorem 5.3, Proposition 5.4]{masur1991hausdorff}}]\label{effective2023.9.19}
 \hypertarget{2024.3.C3}   Let   $x=(M,\omega)\in \mathcal{H}_{1}(\alpha)$.  Then there exists a constant  $C_{3}>0$, such that for any $p\in\Sigma$, $M\setminus B(p,\hyperlink{2024.3.C3}{C_{3}})$ is contained in a union of disjoint metric cylinders.
   
   Moreover, the Delaunay triangulation $\tau_{x}$ of $x$ consists of edges which either have length $\leq \hyperlink{2024.3.C3}{C_{3}}$ or which cross a cylinder $C\subset M$ whose height $h$ is greater than its circumference $c$. If an edge crosses $C$, then its length $l$ satisfies $h\leq l\leq \sqrt{h^{2}+c^{2}}$. 
 \end{thm}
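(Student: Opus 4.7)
The plan is to perform a thick--thin decomposition of the unit-area translation surface $(M,\omega)$ and then analyze the Delaunay triangulation in each part. Recall that the Delaunay triangulation is characterized by the \emph{empty-disk property}: for each triangle $\Delta\in\tau_x$, the circumscribing disk of a developed lift of $\Delta$ contains no singular points in its interior. All ball notation $B(p,\cdot)$ below refers to metric balls on $M$ in the flat metric $|\omega|$.

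First I would establish the constant $C_3$ together with the thick-part statement. Fix $p\in\Sigma$ of cone angle $2\pi(\alpha_p+1)$, and let $R_p$ denote the supremum of $r$ such that $B(p,r)$ embeds in $M\setminus(\Sigma\setminus\{p\})$. Any such embedded ball has area $\pi(\alpha_p+1)r^2$, so the normalization $\Area(M,\omega)=1$ gives a universal upper bound $R_p\le R_0(\alpha)$. For $r$ slightly larger than $R_p$ the ball is no longer embedded; by examining the first two geodesic rays from $p$ that meet transversally, one of two things happens: either a second singularity lies within distance $2R_0(\alpha)$ of $p$, or the pair of rays closes up into a geodesic loop of length at most $2R_0(\alpha)$ bounding a metric cylinder disjoint from $p$. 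Choosing $C_3=C_3(\alpha)$ large enough to absorb both cases, and iterating around all $p\in\Sigma$, we obtain that $M\setminus\bigcup_{p\in\Sigma}B(p,C_3)$ is contained in a finite disjoint union of metric cylinders.

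Next I would analyze long Delaunay edges. Suppose $e\in\tau_x$ has length $\ell(e)>C_3$. The circumscribing disk $D$ of a Delaunay triangle adjacent to $e$ contains $e$, hence has radius $\ge\ell(e)/2$. If a developed lift of $D$ embedded in the thick part, then we would contradict the universal bound $R_p\le R_0(\alpha)$ of the previous step, provided $C_3$ was chosen large enough. Therefore the triangle must lie inside one of the cylinders $C$ constructed above. An edge sitting in $C$ is either parallel to the core (wrapping along it) or crosses $C$ from one boundary component to the other. A wrapping edge of length $>C_3$ is ruled out by the empty-disk condition, since a short translate of a core saddle connection would otherwise lie in the interior of $D$. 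Hence $e$ is a crossing edge, and by elementary planar geometry of the universal cover of $C$ (the infinite strip of width equal to the height $h$, with horizontal period $c$) one gets immediately $h\le\ell(e)\le\sqrt{h^2+c^2}$. The same empty-disk condition forces $h>c$: if $h\le c$, a Delaunay disk contained in $C$ that reaches both boundaries would have diameter $\ge h$ and hence would contain a translate of a saddle connection winding once around the core, contradicting emptiness.

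The main obstacle, and the technical core of the statement, is the first step, namely promoting ``failure of an embedded flat ball at $p$'' into the concrete alternative ``nearby singularity or short closed geodesic bounding a cylinder.'' The natural route is a careful bookkeeping of angles and areas for the family of geodesic rays emanating from $p$, together with a Gauss--Bonnet / cone-angle argument to identify the resulting short closed curve as the core of a metric cylinder; this is the content of Sections~4--5 of \cite{masur1991hausdorff}. Once this structural dichotomy is in place, the remaining assertions, including the sharp inequalities $h\le\ell(e)\le\sqrt{h^2+c^2}$ and the strict inequality $h>c$, follow from the empty-disk property and planar Euclidean geometry in the universal cover of the cylinder.
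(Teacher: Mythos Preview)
The paper does not prove this theorem; it is quoted verbatim from \cite[Theorem~5.3, Proposition~5.4]{masur1991hausdorff} and used as a black box (the paper immediately passes to Corollary~\ref{effective2023.9.22}). So there is no ``paper's own proof'' to compare against.

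Your outline is a reasonable sketch of the Masur--Smillie argument and you correctly locate the hard step: upgrading ``the metric ball around $p$ stops embedding'' to the structural dichotomy ``either another singularity is nearby, or a short loop bounds a flat cylinder.'' You also rightly defer that step to \cite{masur1991hausdorff}.

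One point in your sketch does not work as written. In the last paragraph you argue that $h>c$ because ``a Delaunay disk contained in $C$ that reaches both boundaries would have diameter $\ge h$ and hence would contain a translate of a saddle connection winding once around the core.'' But when $c\ge h$, a disk of diameter $h$ (or even somewhat larger) in the universal-cover strip need not contain two points differing by the horizontal period $c$, so it need not contain a core translate; the emptiness contradiction does not follow from the inequality you wrote. The actual mechanism in Masur--Smillie is different: the vertices of the Delaunay triangle are singularities on the boundary circles of $C$, and the circumscribing disk therefore protrudes into the thick part on both sides; combined with the diameter bound on the thick part, this forces any edge crossing a cylinder with $h\le c$ to be short (length $\le C_3$), rather than forcing the disk to contain a core curve. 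In other words, $h>c$ is deduced from the thick-part bound plus the crossing geometry, not from an empty-disk violation inside $C$.
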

 After calculating the area, we immediately obtain:
 \begin{cor}\label{effective2023.9.22}
    Let $\epsilon>0$, $x=(M,\omega)\in \mathcal{H}_{1}^{(\epsilon)}(\alpha)$. Then the length $l$ of any edge of the Delaunay triangulation $\tau_{x}$ of $x$ is bounded above by $l\leq 2\epsilon^{-1}$. Also, the diameter $d(M)\leq 2 \epsilon^{-1}$.
 \end{cor}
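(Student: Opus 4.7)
The plan is to derive both bounds as direct consequences of Theorem~\ref{effective2023.9.19} combined with the area normalization $\mathrm{Area}(M)=1$.

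First, I would handle the edge-length bound. By Theorem~\ref{effective2023.9.19}, every edge $e$ of $\tau_{x}$ either has length at most the absolute constant $\hyperlink{2024.3.C3}{C_{3}}$, or crosses a cylinder $C\subset M$ with height $h>c$ and therefore has length $l\le \sqrt{h^{2}+c^{2}}$. In the cylinder case, the boundary of $C$ is a concatenation of saddle connections, so its circumference satisfies $c\ge\ell(x)\ge\epsilon$; combining this with the area bound $hc=\mathrm{Area}(C)\le 1$ gives $h\le \epsilon^{-1}$. Using $h>c$ then yields
\[
l\le \sqrt{h^{2}+c^{2}}\le h\sqrt{2}\le \sqrt{2}\,\epsilon^{-1}<2\epsilon^{-1}.
\]
In the short case, the non-emptiness of $\mathcal{H}_{1}^{(\epsilon)}(\alpha)$ forces $\epsilon$ below an absolute threshold depending only on $\alpha$ (since $\ell$ is uniformly bounded above on any area-one stratum), so that $\hyperlink{2024.3.C3}{C_{3}}\le 2\epsilon^{-1}$ in the relevant range.

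For the diameter bound, the plan is to invoke the first half of Theorem~\ref{effective2023.9.19}: fix any zero $p\in\Sigma$ and write $M=B(p,\hyperlink{2024.3.C3}{C_{3}})\cup\bigcup_{i}C_{i}$ as a union where the $C_{i}$ are disjoint metric cylinders. The same saddle-connection plus area argument gives $c_{i}\ge\epsilon$ and $h_{i}\le\epsilon^{-1}$ for each cylinder, so the intrinsic diameter of the flat cylinder $[0,h_{i}]\times(\mathbb{R}/c_{i}\mathbb{Z})$ is at most $\sqrt{h_{i}^{2}+(c_{i}/2)^{2}}\le \epsilon^{-1}$. Since each $C_{i}$ attaches to $B(p,\hyperlink{2024.3.C3}{C_{3}})$ through its boundary, every point of $M$ sits within $\hyperlink{2024.3.C3}{C_{3}}+\max_{i}\mathrm{diam}(C_{i})$ of $p$, and after absorbing the universal constant $\hyperlink{2024.3.C3}{C_{3}}$ into the $\epsilon^{-1}$ term one obtains $d(M)\le 2\epsilon^{-1}$.

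The only substantive step is the observation in the cylinder case that a closed curve on $M$ built from saddle connections has length at least $\epsilon$, which is what converts the area constraint $hc\le 1$ into an effective height bound. The rest is bookkeeping of how the universal constant $\hyperlink{2024.3.C3}{C_{3}}$ and the area-one normalization interact, so the main obstacle is simply tracking constants cleanly rather than any conceptual difficulty.
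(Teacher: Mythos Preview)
Your proposal is correct and follows essentially the same approach as the paper: both invoke Theorem~\ref{effective2023.9.19}, observe that any cylinder circumference satisfies $c\ge\epsilon$ (since the boundary is a union of saddle connections), and use the area bound $ch\le 1$ to get $h\le\epsilon^{-1}$, from which the edge and diameter bounds follow. The paper's proof is in fact terser than yours, simply stating that the consequence follows immediately from Theorem~\ref{effective2023.9.19} once $h\le\epsilon^{-1}$ is established; your additional bookkeeping (handling the $C_3$ case via the absolute upper bound on the systole, and sketching the diameter via the decomposition into a ball and cylinders) just makes explicit what the paper leaves to the reader.
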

 \begin{proof} Note that for any $x=(M,\omega)\in \mathcal{H}_{1}^{(\epsilon)}(\alpha)$, the circumference of a cylinder is not less than $\epsilon$.  Let $C\subset M$  be a cylinder  with height $h$ and circumference $c\geq\epsilon$. Then one can calculate the area $ch=\Area(C)\leq \Area(M)=1$.  The consequence follows from Theorem \ref{effective2023.9.19} immediately.
 \end{proof}

     We shall also need certain elementary analysis of inscribed triangles. Let $T_{1}$ be the space of ordered triples of points in $\mathbb{C}\cong\mathbb{R}^{2}$ modulo the action of the group of translations. Let $T_{2} \subset T_{1}$ be the set of triples with positive determinant. Let $T_{3}(\epsilon,d)\subset T_{2}$ be the set of isometry classes of triangles, with all edges of length not less than $\epsilon$, which can be inscribed in circles of radius not greater than $d$.  
      \begin{lem}[{\cite[Lemma 6.7]{masur1991hausdorff}}]\label{effective2023.9.21} \hypertarget{2024.3.C4}  There exists a constant $C_{4}>0$ satisfying the following property. Let $\epsilon,d>0$, $\Delta\in T_{3}(\epsilon,d)$. Let $\Delta^{\prime}\in T_{3}(\epsilon,d)$ be a triangle such that each vertex of $\Delta^{\prime}$ differs from the corresponding  vertex of $\Delta$ by at most $\hyperlink{2024.3.C4}{C_{4}}\epsilon^{2}/d$. Then  $\Delta^{\prime}$ is a non-degenerate triangle with the same orientation as $\Delta$.
      \end{lem}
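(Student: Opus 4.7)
The plan is to view the (twice-)signed area
\[
\mathrm{SA}(p_1,p_2,p_3) \;=\; (p_2-p_1)\wedge(p_3-p_1)
\]
as a Lipschitz function of the three vertex positions and compare its values at $\Delta$ and at $\Delta'$. Since $\mathrm{SA}$ vanishes exactly on degenerate configurations, an orientation reversal between $\Delta$ and $\Delta'$ would force $\mathrm{SA}$ to change sign between them, which is incompatible with small vertex displacement once I have a lower bound on $|\mathrm{SA}|$ for triangles in $T_3(\epsilon,d)$ and an upper bound on the Lipschitz constant of $\mathrm{SA}$ in terms of the diameter.

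\emph{Step 1 (altitude / area lower bound).} For any $\Delta\in T_3(\epsilon,d)$ with circumradius $R\leq d$, the extended law of sines gives $\sin\alpha=(\text{opposite side})/(2R)\geq\epsilon/(2d)$ for every angle $\alpha$ of $\Delta$. Hence the altitude from each vertex satisfies
\[
h \;\geq\; (\text{adjacent side})\cdot \sin\alpha \;\geq\; \epsilon\cdot\frac{\epsilon}{2d} \;=\; \frac{\epsilon^{2}}{2d},
\]
so in particular $|\mathrm{SA}(\Delta)|=2\,\mathrm{Area}(\Delta)\geq\epsilon^{3}/(2d)$, and the same lower bound applies to $\Delta'$.

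\emph{Step 2 (Lipschitz estimate).} Writing $p_i'=p_i+\eta_i$ with $|\eta_i|\leq\delta$ and expanding,
\[
\mathrm{SA}(\Delta')-\mathrm{SA}(\Delta) \;=\; e_1\wedge(\eta_3-\eta_1)+(\eta_2-\eta_1)\wedge e_2+(\eta_2-\eta_1)\wedge(\eta_3-\eta_1),
\]
where $e_1=p_2-p_1$ and $e_2=p_3-p_1$ satisfy $|e_j|\leq 2d$ (since every chord of the circumcircle has length at most the diameter). This yields the bound $|\mathrm{SA}(\Delta')-\mathrm{SA}(\Delta)|\leq Cd\delta+C\delta^{2}$ for an absolute constant $C$, and the quadratic term is absorbed once $\delta\leq d$. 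If $\Delta$ and $\Delta'$ had opposite orientations, then $\mathrm{SA}(\Delta)$ and $\mathrm{SA}(\Delta')$ would have opposite signs, whence $|\mathrm{SA}(\Delta)-\mathrm{SA}(\Delta')|\geq\epsilon^{3}/d$ by Step 1. Combined with the Lipschitz estimate, this forces $\delta$ to exceed a quantity of the desired order, and the contrapositive completes the proof on choosing $C_4$ small enough.

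The main obstacle I anticipate is calibrating the exponents so that the conclusion matches the stated bound $C_4\epsilon^{2}/d$. The naive combination of Steps 1 and 2 produces the somewhat stronger lower bound $\delta\gtrsim\epsilon^{3}/d^{2}$ for an orientation flip, which already implies the statement in the regime $\epsilon\asymp d$ typical of Delaunay cells. A more delicate argument, using that the perturbed line through $p_1',p_2'$ tilts by at most $O(\delta/\epsilon)$ relative to the line through $p_1,p_2$ (since $|p_2'-p_1'|\geq\epsilon/2$), lets one compare the signed distance of $p_3'$ from the perturbed base line with the altitude $\geq\epsilon^{2}/(2d)$ from Step 1 directly; this sharper bookkeeping is what yields the form $C_4\epsilon^{2}/d$ claimed in the lemma.
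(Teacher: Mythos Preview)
The paper does not prove this lemma; it is quoted from Masur--Smillie without argument, so there is no ``paper's proof'' to compare with. I will therefore evaluate your argument directly.

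Your Step~1 is correct and is the heart of the matter: every altitude of $\Delta$ is at least $\epsilon^{2}/(2d)$. The naive signed-area Lipschitz estimate in Step~2 indeed only yields $\delta\gtrsim\epsilon^{3}/d^{2}$, as you note. The gap is in your ``more delicate argument'': bounding the tilt of $\ell'$ by $O(\delta/\epsilon)$ is not enough by itself to control the signed distance of $p_{3}'$ to $\ell'$. The effect of that tilt on the signed distance depends on how far the foot $q$ of the altitude from $p_{3}$ lies from the segment $[p_{1},p_{2}]$, and in general $|q-p_{1}|$ is only bounded by $2d$. At such a point the perturbed line $\ell'$ can have moved by $O(d\cdot\delta/\epsilon)$, which brings you right back to the $\epsilon^{3}/d^{2}$ threshold.

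There are two standard ways to close the gap. One is to choose the base $p_{1}p_{2}$ to be the \emph{longest} side; then the angles at $p_{1},p_{2}$ are acute, so the foot $q$ lies inside the segment. Writing $q=tp_{1}+(1-t)p_{2}$ with $t\in[0,1]$ and $q'=tp_{1}'+(1-t)p_{2}'\in\ell'$, one has $|q-q'|\le\delta$, so the line has moved by at most $\delta$ (not $d\delta/\epsilon$) at the relevant point, and the signed distance of $p_{3}'$ to $\ell'$ is at least $h_{3}\cos\theta-2\delta\ge\epsilon^{2}/(2d)-3\delta$ for $\delta$ small. The other (cleaner) route: if $\Delta'$ were degenerate or of opposite orientation, the linear interpolation $\Delta_{t}=(1-t)\Delta+t\Delta'$ would hit a collinear configuration whose three points lie within $\delta$ of the vertices of $\Delta$; hence $\Delta$ would fit in a strip of width $2\delta$, contradicting the fact that the minimum width of a triangle equals its minimum altitude $\ge\epsilon^{2}/(2d)$. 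Either completion gives $C_{4}=1/4$ (or any smaller constant).
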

      
      \begin{cor}\label{closing2024.1.4} Let $\epsilon>0$ be small enough, and let $x\in\mathcal{H}_{1}^{(\epsilon)}(\alpha)$.  Let $\tilde{x}\in\mathcal{TH}(\alpha)$ satisfy $\pi_{1}(\tilde{x})=x$. Let $\tilde{y}\in B(\tilde{x},\epsilon^{5})$. Suppose that $\Delta\in\tau_{\tilde{x}}$ is a triangle, and  $\delta_{1},\delta_{2}$ are two directed edges of     $\Delta$.  Then $\tilde{y}(\delta_{1}),\tilde{y}(\delta_{2})$  are not parallel.
      \end{cor}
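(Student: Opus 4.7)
The plan is to combine the two ingredients just established: the AGY norm control of periods packaged in the definition of the period box, and the Delaunay geometry of Corollary~\ref{effective2023.9.22}. Together these place the developed triangle $\tilde{y}(\Delta)$ so close to $\tilde{x}(\Delta)$ that the stability Lemma~\ref{effective2023.9.21} forces $\tilde{y}(\Delta)$ to remain a non-degenerate Euclidean triangle in $\mathbb{C}$, which automatically rules out parallel edges.

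First I would unpack the hypothesis $\tilde{y}\in B(\tilde{x},\epsilon^{5})$. By the definition of the period box and of the AGY norm, $|(\tilde{y}-\tilde{x})(\gamma)|\leq\epsilon^{5}\,|\tilde{x}(\gamma)|$ for every saddle connection $\gamma$ of $\tilde{x}$. The edges of $\Delta\in\tau_{\tilde{x}}$ are such saddle connections, and Corollary~\ref{effective2023.9.22} gives $|\tilde{x}(\delta)|\leq 2\epsilon^{-1}$ (with $|\tilde{x}(\delta)|\geq\epsilon$ since $x\in\mathcal{H}_{1}^{(\epsilon)}(\alpha)$). Developing $\Delta$ from a common base vertex, each vertex of $\tilde{y}(\Delta)$ therefore lies within $O(\epsilon^{4})$ of the corresponding vertex of $\tilde{x}(\Delta)$.

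Next I would apply Lemma~\ref{effective2023.9.21}. Still by Corollary~\ref{effective2023.9.22}, the circumscribed disk of the Delaunay triangle $\tilde{x}(\Delta)$ has radius at most $d(M)\leq 2\epsilon^{-1}$, and its edges have length at least $\epsilon$, so $\tilde{x}(\Delta)\in T_{3}(\epsilon,2\epsilon^{-1})$. The lemma then supplies the constant $\hyperlink{2024.3.C4}{C_{4}}$ guaranteeing that any competitor whose vertices are within $\hyperlink{2024.3.C4}{C_{4}}\epsilon^{2}/(2\epsilon^{-1})=\hyperlink{2024.3.C4}{C_{4}}\epsilon^{3}/2$ of those of $\tilde{x}(\Delta)$ is non-degenerate and positively oriented. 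Since $\epsilon^{4}\ll\epsilon^{3}$ for small $\epsilon$, this tolerance is satisfied by $\tilde{y}(\Delta)$ once $\epsilon$ is chosen small enough, and this is the quantitative source of the qualifier ``$\epsilon$ small enough'' in the statement.

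Finally, two directed edges of a Euclidean triangle share a vertex; were they parallel they would lie on a common line through that vertex, forcing all three vertices to be collinear and the triangle to be degenerate. Since $\tilde{y}(\Delta)$ is non-degenerate, $\tilde{y}(\delta_{1})$ and $\tilde{y}(\delta_{2})$ cannot be parallel. There is no genuine obstacle in the argument; the only care needed is the exponent bookkeeping that ensures $\epsilon^{5}$ in the period box beats the $\epsilon^{2}/d=\epsilon^{3}/2$ tolerance from Lemma~\ref{effective2023.9.21}, which is precisely why the radius $\epsilon^{5}$ is the correct order for this corollary to run cleanly.
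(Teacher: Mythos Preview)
Your proposal is correct and follows essentially the same argument as the paper: bound the edge lengths via Corollary~\ref{effective2023.9.22}, use the AGY norm to control $|\tilde{y}(\gamma)-\tilde{x}(\gamma)|$ by $O(\epsilon^{4})$, and then invoke Lemma~\ref{effective2023.9.21} with tolerance $\asymp\epsilon^{3}$ to conclude non-degeneracy. The paper's proof is terser but identical in substance; your explicit remark that non-degeneracy rules out parallel edges, and your exponent bookkeeping, simply make explicit what the paper leaves implicit.
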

      \begin{proof} Recall that $\tau_{\tilde{x}}$ is the Delaunay triangulation of $\tilde{x}$.  Then by Corollary \ref{effective2023.9.22}, the  lengths of edges  of the triangle $\Delta$ satisfy $\epsilon\leq |\tilde{x}(\gamma)|\leq 4\epsilon^{-1}$.  Then 
      \[|\tilde{y}(\gamma)-\tilde{x}(\gamma)|\leq \|\tilde{y}-\tilde{x}\|_{\tilde{x}}\cdot |\tilde{x}(\gamma)|\leq 4 \epsilon^{4} \ll \epsilon^{2}/d(x).\]  
      Then by Lemma \ref{effective2023.9.21}, we get that $\Delta^{\prime}\in\tau_{\tilde{y}}$ generated by $\tilde{y}(\delta_{1}),\tilde{y}(\delta_{2})$ is  a non-degenerate triangle with the same orientation as $\Delta$. 
      \end{proof}
      
      \begin{cor}\label{closing2024.3.62} \hypertarget{2024.12.k6} Let $\kappa_{6}=\hyperlink{2024.12.k5}{\kappa_{5}}+5>0$. Then for $x\in\mathcal{H}_{1}(\alpha)$, the composition of the affine exponential map and the covering map is injective on $R(\tilde{x},\ell(x)^{\hyperlink{2024.12.k6}{\kappa_{6}}})$.
      \end{cor}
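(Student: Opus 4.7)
The plan is to factor the composition through $\mathcal{TH}(\alpha)$ as
$R(\tilde{x}, \ell(x)^{\kappa_6}) \to \mathcal{TH}(\alpha) \to \mathcal{H}(\alpha)$,
where the first arrow is the affine exponential and the second is the covering map $\pi$, and to verify injectivity of each arrow on the relevant region. The exponent $\kappa_6=\kappa_5+5$ is designed so that the two injectivity thresholds apply simultaneously: the ``$+5$'' feeds the non-degeneracy condition coming from the Delaunay triangulation (Corollary \ref{closing2024.1.4}), while the ``$\kappa_5$'' feeds the injectivity-radius estimate for $\pi$ (Lemma \ref{effective2023.11.4}).

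For the first arrow, I would rely on the reconstruction map $\psi_{\tilde{x}} \colon U_{\tilde{x}} \to V_{\tilde{x}}$ from Section \ref{closing2024.3.17}, which by Lemma \ref{effective2023.9.20} is the inverse of the period map $\phi$ on $V_{\tilde{x}}$ and hence coincides with the affine exponential wherever both are defined. It therefore suffices to show that for every $v \in R(\tilde{x}, \ell(x)^{\kappa_6})$ the shifted cohomology class $\phi(\tilde{x}) + v$ lies in $U_{\tilde{x}}$, i.e.\ evaluates non-degenerately and with the correct orientation on every triangle of $\tau_{\tilde{x}}$. This is exactly the content of Corollary \ref{closing2024.1.4}: the bound $\|v\|_{\tilde{x}} \leq \ell(x)^{\kappa_6}$ forces each edge of each Delaunay triangle to be deformed by at most $4 \ell(x)^{\kappa_6 - 1}$, which, since $\kappa_6 \geq 5$ and the systole is bounded above by an $\alpha$-dependent constant on $\mathcal{H}_1(\alpha)$, is well below the threshold $C_4 \ell(x)^2 / d(M)$ of Lemma \ref{effective2023.9.21}. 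Consequently the triangles stay non-degenerate and positively oriented, $\psi_{\tilde{x}}$ is well-defined on this region, and the affine exponential is injective there.

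For the second arrow, Lemma \ref{closing2023.07.1} places the image of $R(\tilde{x}, \ell(x)^{\kappa_6})$ inside the Finsler ball $B(\tilde{x}, 4 \ell(x)^{\kappa_6})$, while Lemma \ref{effective2023.11.4} asserts injectivity of $\pi$ on any ball of radius at most $C_2 \ell(x)^{\kappa_5}$. Writing $4 \ell(x)^{\kappa_6} = 4 \ell(x)^5 \cdot \ell(x)^{\kappa_5}$ and absorbing the factor $4 \ell(x)^5$ into $C_2$ using the same universal upper bound on the systole shows that $\pi$ is injective on the relevant ball, so composing the two injections finishes the proof. The only ``obstacle'' is the bookkeeping of constants, which the generous choice $\kappa_6 = \kappa_5 + 5$ is designed to sidestep; there is no substantive new difficulty beyond assembling Corollaries \ref{closing2024.1.4} and Lemma \ref{effective2023.11.4}.
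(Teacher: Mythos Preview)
Your argument is correct and matches the paper's intended proof: the corollary is stated without proof, placed immediately after its two ingredients Corollary~\ref{closing2024.1.4} and Lemma~\ref{effective2023.11.4}, and your factorization through $\mathcal{TH}(\alpha)$ is exactly how these combine. One small simplification: the detour through the Finsler metric via Lemma~\ref{closing2023.07.1} is unnecessary, since the image of $R(\tilde{x},\ell(x)^{\kappa_6})$ under the affine exponential is \emph{by definition} the period box $B(\tilde{x},\ell(x)^{\kappa_6})$, and Lemma~\ref{effective2023.11.4} is stated directly for period boxes, so you only need $\ell(x)^{\kappa_6}\le C_2\,\ell(x)^{\kappa_5}$, i.e.\ $\ell(x)^{5}\le C_2$, which follows from the uniform upper bound on the systole in $\mathcal{H}_1(\alpha)$.
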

         For   $\eta>0$, $z\in \mathcal{H}_{1}(\alpha)$, we define 
\begin{equation}\label{margulis2024.4.14}
 L(z)\coloneqq \ell(z)^{\hyperlink{2024.12.k6}{\kappa_{6}}},\ \ \ \ L(\eta)\coloneqq \eta^{\hyperlink{2024.12.k6}{\kappa_{6}}}.
\end{equation}  
Then clearly, $L(\eta)\leq L(z)$ whenever $z\in \mathcal{H}_{1}^{(\eta)}(\alpha)$. By Lemma \ref{effective2023.11.4},  for $w\in H^{1}(M,\Sigma;\mathbb{C})$, we have 
\[\|w\|_{z}\leq L(z)\ \ \ \Leftrightarrow\ \ \ \|w\|_{z} \leq \ell(z)^{\hyperlink{2024.12.k6}{\kappa_{6}}}.\]
Thus, by Corollary \ref{closing2024.3.62}, we conclude that $w\mapsto z+w$ is well-defined and injective on $\|w\|_{z} \leq  L(z)$. In particular, the map 
\begin{equation}\label{margulis2024.4.52}
  (g,w)\mapsto g(z+w)
\end{equation} 
 is injective on   $ B_{G}(\frac{L(z)}{3})\times B_{H^{\perp}_{\mathbb{C}}(z)}(\frac{L(z)}{3})$.

   \section{Spectral gap and effective lattice point counting}\label{closing2024.3.27}
    In this section, we shall state some facts related  to effective volume estimate on the homogeneous space $G/\Gamma$ via the theory of spectral gap.  The results in this section are known to the experts. We adopt the standard notation in \cite{einsiedler2009effective,venkatesh2010sparse}.
     \subsection{Basic properties of spectral gap}
     In this section, we recall some basic properties of spectral gap of $G=\SL_{2}(\mathbb{R})$.  The origin of ideas comes from \cite{cowling1988almost,ratner1987rate}.
     \begin{defn}[Spectral gap] We say that a unitary representation $(\pi,V)$ of $G$   possesses a \textit{spectral gap}\index{spectral gap} if there is a compactly supported probability measure $\nu$ on   $G$, and $\delta>0$, so that
 \[\|\pi(\nu)v\|<(1-\delta)\|v\|\]
 for all $v\in V$, where $\pi(\nu)\coloneqq\int_{G}\pi(g)d\nu(g)$ denotes the convolution operator.
 \end{defn}
 
 We shall use the following uniform spectral gap result:
     \begin{prop}\label{closing2024.1.5}
  Let $\Gamma$ be a Fuchsian group in $G$. If $\Gamma$ is not Zariski dense in $G$, then  the action of $G$ on $L^{2}(G/\Gamma)$ has a uniform spectral gap $\delta>0$ for all such $\Gamma$. 
     \end{prop}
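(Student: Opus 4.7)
The plan is to use the classification of proper algebraic subgroups of $G = \SL_{2}(\mathbb{R})$ to reduce the uniform bound to a bounded number of model cases. Because $\Gamma$ is Fuchsian but not Zariski dense, the Zariski closure $H := \overline{\Gamma}^{\mathrm{Zar}}$ is a proper algebraic subgroup of $G$; up to conjugation, $H$ is contained in one of only four standard families: the compact torus $\SO(2)$, the split torus $A$, the unipotent subgroup $N$, or the Borel $B = AN$. In particular $\Gamma$ is virtually solvable, and only finitely many conjugacy types of $H$ can occur. The desired $\delta > 0$ will be obtained as the minimum of type-dependent constants $\delta_{H}$ over these finitely many cases.

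For each conjugacy type, I would exploit the $G$-equivariant fibration $G/\Gamma \to G/H$, whose fibers are the (closed) quotient $H/\Gamma$. This allows me to decompose $L^{2}(G/\Gamma)$ as a direct integral of $G$-representations of the form $\mathrm{Ind}_{H}^{G}(\chi)$, with $\chi$ ranging over characters of $H/\Gamma$. Bargmann's classification of the unitary dual of $\SL_{2}(\mathbb{R})$ then identifies each such induced representation as a sum of principal series together with, at worst, a bounded complementary series whose spherical parameter depends continuously on $\chi$. When $H$ is conjugate to $\SO(2)$, to $A$, or to $N$, a direct inspection shows that only tempered series appear, so that the spherical parameter is uniformly bounded away from the trivial representation by a constant $\delta_{H}>0$ depending only on the conjugacy type of $H$.

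The main obstacle is the Borel case $H = B$, where $\mathrm{Ind}_{B}^{G}(\chi)$ can approach the trivial representation as $\chi \to 1$. Here I would use that $\Gamma$ is discrete in $B$, so its image under the abelianization map $B \to B/N \cong A$ is either trivial or discrete, hence the characters of $H/\Gamma$ that actually appear remain uniformly bounded away from the trivial character. Combined with the explicit spherical picture of the principal/complementary series for $\SL_{2}(\mathbb{R})$, this forces a spherical parameter bound $\delta_{B}>0$ that is independent of the specific $\Gamma$. Finally, setting $\delta := \min_{H} \delta_{H} > 0$ over the finitely many conjugacy types gives the uniform spectral gap. The most delicate step is verifying that the Borel-case bound does not degenerate as $\Gamma$ varies through all non-Zariski-dense Fuchsian subgroups, which is where the discreteness of $\Gamma$ in $B$ must be used quantitatively rather than just qualitatively.
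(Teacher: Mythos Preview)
Your first move---pass to the Zariski closure $L=\overline{\Gamma}^{\mathrm{Zar}}$ and write $L^{2}(G/\Gamma)=\Ind_{L}^{G}\bigl(L^{2}(L/\Gamma)\bigr)$---is exactly what the paper does. From there the paper is far more economical: rather than decomposing $L^{2}(L/\Gamma)$ into irreducibles and invoking Bargmann case by case, it uses the single majorization
\[
\|\nu\|_{\op,\Ind_{L}^{G}V}\le\|\nu\|_{\op,L^{2}(G/L)},
\]
valid for \emph{every} unitary $V$, which reduces the problem, uniformly in $\Gamma$, to a spectral gap for $L^{2}(G/L)$ over proper algebraic $L<G$; that is precisely Lemma~\ref{effective22}, quoted from \cite{einsiedler2009effective}. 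No classification of $\Gamma$, no decomposition of $L^{2}(L/\Gamma)$, and no unitary-dual bookkeeping are needed.

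Your Borel discussion, which you flag as the crux, is where the proposal actually goes wrong. Two points. First, the case $L=B$ never occurs: a discrete $\Gamma\subset B$ with $\Gamma\cap N\neq\{e\}$ cannot contain any element with nontrivial $A$-part (conjugation would produce unipotents accumulating at the identity), while if $\Gamma\cap N=\{e\}$ then $\Gamma$ is abelian and lies in a one-parameter subgroup; either way the Zariski closure is a torus or $N$, never $B$. Second, the obstacle you describe is not an obstacle: for a \emph{unitary} character $\chi$ of $B$, the unitary induction $\Ind_{B}^{G}(\chi)$ is a tempered principal series and does not approach the trivial representation as $\chi\to\mathbf{1}$; the complementary series arises only from non-unitary characters, which never occur in the Plancherel decomposition of $L^{2}(B/\Gamma)$. (Conceptually: $B$ is amenable, so every unitary representation of $B$ induces to a tempered representation of $G$.) Your proposed fix---that the characters appearing stay uniformly away from trivial---would not yield a uniform bound anyway: if the image of $\Gamma$ in $A$ is $t_{0}\mathbb{Z}$, the nonzero characters of $A/t_{0}\mathbb{Z}$ approach the trivial character as $t_{0}\to\infty$.
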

     
     This is a direct consequence of the following lemma: 
   \begin{lem}[{\cite[Lemma 6.5]{einsiedler2009effective}}]\label{effective22}
   Let $\mathbf{L}< \SL_{2}$ be an algebraic subgroup of strictly lower dimension. Let  $L=\mathbf{L}(\mathbb{R})$ be the real points of $\mathbf{L}$. Then the left action of $H$ on $L^{2}(G/L)$ has a uniform spectral gap for all such $L$.
\end{lem}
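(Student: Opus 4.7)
The plan is to reduce the statement to a finite list of conjugacy classes of proper algebraic subgroups $L < G$, verify temperedness of the unitary representation of $G$ on $L^{2}(G/L)$ in each case, and then invoke the fact that the tempered dual of $\SL_{2}(\mathbb{R})$ is isolated in the Fell topology from the trivial representation with an explicit gap depending only on $G$.

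First, I would enumerate the conjugacy classes of proper algebraic subgroups $\mathbf{L} < \SL_{2}$. Over $\mathbb{R}$ the real points $L = \mathbf{L}(\mathbb{R})$ fall, up to conjugation in $G$, into finitely many classes: the finite cyclic subgroups contained in a maximal compact $\SO(2)$; the compact torus $\SO(2)$ itself; the split torus $A = a_{\mathbb{R}}$; the normalizer $N(A)$, a two-element extension of $A$; the horocyclic subgroup $U = u_{\mathbb{R}}$; and the Borel subgroup $B = AU$. Since the existence of a spectral gap and its numerical value are invariants of the $G$-conjugacy class of $L$, it suffices to produce, for each of the finitely many classes above, a spectral gap that depends only on $G$; the desired uniform $\delta > 0$ is then the minimum over this finite collection.

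Second, I would verify that the representation of $G$ on $L^{2}(G/L)$ is tempered in each case. When $L$ is compact (so $L \subset \SO(2)$), the space $L^{2}(G/L)$ embeds as the closed subspace of right-$L$-invariant functions inside $L^{2}(G)$ (with norm rescaled by $\sqrt{\vol(L)}$), and the left regular representation of $G$ on $L^{2}(G)$ is tempered by the Kunze--Stein phenomenon for non-compact semisimple Lie groups. When $L \in \{U, A, N(A), B\}$, the induced representation $\Ind_{L}^{G}(\mathbf{1}_{L})$ can be identified, via induction in stages through $B$, with a direct integral of unitary principal series representations, all of which are tempered; alternatively, one can appeal to the concrete identification $G/U \cong \mathbb{R}^{2}\setminus\{0\}$, on which the quasi-regular representation is well known to be tempered. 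Either way, $L^{2}(G/L)$ is weakly contained in $L^{2}(G)$ in every case.

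The main obstacle is obtaining uniformity of the spectral gap across the family. Temperedness of a single $L^{2}(G/L)$ yields a spectral gap via the bound $\|\pi(\mu)\| \le \|\lambda(\mu)\|_{L^{2}(G)\to L^{2}(G)} < 1$ for a fixed compactly supported symmetric probability measure $\mu$ on $G$, where $\lambda$ denotes the regular representation; crucially, the operator-norm bound on the right-hand side depends only on $G$ and $\mu$, not on the particular tempered representation $\pi$. Since the classification in the first step has only finitely many conjugacy classes and each yields the same universal tempered gap, taking the minimum produces a uniform $\delta > 0$ valid for all proper algebraic $L < \SL_{2}$, as asserted.
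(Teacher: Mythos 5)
Your proposal is correct in substance, but it takes a genuinely different route from the paper. The paper does not argue case by case at all: it quotes Einsiedler--Margulis--Venkatesh (\S 6.4, Lemma 6.5), i.e.\ the general theorem that unitary representations of a semisimple group arising from algebraic actions on real algebraic homogeneous spaces have a spectral gap, with the uniformity built into that general statement; that formulation is what one needs when the ambient group is $\SL_{N}$ rather than $\SL_{2}$. Your route is $\SL_{2}$-specific: classify the proper algebraic subgroups, verify temperedness of $L^{2}(G/L)$ in each case, and then get uniformity for free from weak containment in the regular representation, $\|\pi(\nu)\|\leq\|\lambda_{G}(\nu)\|<1$ for one fixed symmetric, absolutely continuous, compactly supported probability measure $\nu$ (Kesten). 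This buys something the citation does not: temperedness, hence an explicit gap equal to that of the regular representation, by an elementary and self-contained argument; the cost is that it does not extend beyond this low-rank situation without redoing the Plancherel analysis, which is exactly what the general algebraic-actions theorem avoids.

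Three points in your write-up need repair, none fatal. First, there are not finitely many conjugacy classes of proper algebraic subgroups (cyclic subgroups of every order occur); but this finiteness is also unnecessary: once every $L^{2}(G/L)$ is tempered, the single bound $\|\pi(\nu)\|\leq\|\lambda_{G}(\nu)\|$ already yields a uniform $\delta$, so the reduction to a finite list can simply be dropped. Second, for $L=A$ the induction-in-stages claim is inaccurate: $\Ind_{A}^{B}\mathbf{1}\cong L^{2}(\mathbb{R})$ as a representation of the $ax+b$ group contains no characters of $B$, so $\Ind_{A}^{G}\mathbf{1}$ decomposes through $\Ind_{U}^{G}\psi$ with $\psi$ nontrivial rather than through principal series; equivalently, the Plancherel decomposition of $L^{2}(G/A)$ (the de Sitter space) contains discrete series as well as principal series. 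Temperedness still holds (Whittaker--Plancherel, or the Benoist--Kobayashi criterion, which is trivially satisfied since $A$ is abelian), so your conclusion survives, but the stated justification should be corrected. Third, for non-unimodular $L$ (the Borel and its finite extensions) $G/L$ carries no invariant measure, so $L^{2}(G/L)$ must be interpreted via unitary induction with the modular shift; with that convention $\Ind_{B}^{G}\mathbf{1}$ is the unitary principal series at the tempered parameter and the argument goes through.
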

Lemma  \ref{effective22} follows from the fact that representations of the real points of an algebraic group, on the real points of an algebraic homogeneous spaces, have spectral gaps. See {\cite[\S6.4]{einsiedler2009effective}} for more details. 
\begin{proof}[Proof of Proposition \ref{closing2024.1.5}]
Let $\mathbf{L}=\overline{\Gamma}^{Z}$ be the Zariski closure of $\Gamma$, and let $L=\mathbf{L}(\mathbb{R})$. Then by the assumption, $L$ satisfies $\dim L<\dim G$. On the other hand, the representation $L^{2}(G/\Gamma)$ may be regarded as the induced representation $\Ind_{L}^{G}V$ from $L$ to $G$, of $V=L^{2}(L/\Gamma)$. Note that if  $\nu$ a probability measure on $G$, then the convolution operator satisfies
\[\|\nu\|_{\op,\Ind_{L}^{G}V}\leq \|\nu\|_{\op,L^{2}(G/L)}\] 
   where $\|\cdot\|_{\op,W}$ refers to the operator norm of a given unitary representation $W$.  It follows that, if $L^{2}(G/L)$ has a spectral gap, so also does $\Ind_{L}^{G}V$. Then Proposition \ref{closing2024.1.5} follows from Lemma \ref{effective22}. 
\end{proof}
     Once we have a spectral gap, it is known that the decay of matrix coefficients has an effective control. More precisely, let $\Gamma\subset G$ be as in Proposition \ref{closing2024.1.5}.
      \begin{defn}[Sobolev norm]   Fix for all time a basis $\mathcal{B}$ for $\mathfrak{g}$. We can define the Sobolev norms $\mathcal{S}_{d}$ on the smooth subspace of $L^{2}(G/\Gamma)$ via 
     \[\mathcal{S}_{d}(f)\coloneqq\big(\sum_{\ord(\mathcal{D})\leq d} \|\mathcal{D}f\|^{2}\big)^{1/2}\]
     where the sum is taken over all monomials  $\mathcal{D}$ in $\mathcal{B}$ of order $\leq d$. 
   \end{defn}
   
   Next, we define   the \textit{Harish-Chandra spherical function}\index{Harish-Chandra spherical function} on $G$. Let $G=KAN$ be an \textit{Iwasawa decomposition}. Then there exists a projection map $a:G\rightarrow A$. 
   \begin{defn}[Harish-Chandra spherical function]
      The \textit{Harish-Chandra spherical function}\index{Harish-Chandra spherical function} $\varphi$ on $G$ is defined by
         \[\varphi(g)\coloneqq\int_{K}\rho(a(gk))dk\]
         where $\rho:A\rightarrow\mathbb{R}^{+}$ is the half-sum of the positive root of $A$ acting on $N$.
   \end{defn}
   $G$ also admits  a \textit{Cartan decomposition}\index{Cartan decomposition} $G=KAK$. The function $\varphi_{0}$ is bi-$K$-invariant and belongs to $L^{2+\epsilon}(G)$, for every $\epsilon>0$.  Moreover,
     given the spectral gap $\delta>0$ as in Proposition \ref{closing2024.1.5},  it is known that \cite{cowling1988almost} that
     there exists a corresponding spectral coefficient \hypertarget{2024.12.k7}   $\kappa_{7}=\kappa_{7}(\delta)>0$ so that the following property holds.  
     Let $g\in G$, and let $f_{1},f_{2}\in C^{\infty}(G/\Gamma)$.  
     Then we have the effective mixing estimate:
     \begin{equation}\label{closing2024.2.2}
      \langle g.f_{1},f_{2}\rangle\ll \varphi(g)^{  \hyperlink{2024.12.k7}{\kappa_{7}}}\cdot S_{1}(f_{1}) \cdot S_{1}(f_{2}) .
     \end{equation} 
   See e.g. \cite[\S9]{venkatesh2010sparse}, \cite[Appendix B]{avila2013small} for more details. 
    \begin{lem}[{\cite[Lemma 6.1]{einsiedler2009effective}}]\label{closing2024.2.6} Let $p\geq 1$ and  let $F\subset G$ be bi-$K$-invariant, i.e. $KFK=F$. Then \hypertarget{2024.3.C5}  there exists a constant $C_{5}>0$ such that 
    \begin{equation}\label{effective16}
     \frac{1}{(\Vol(F))^{2}} \iint_{g_{1},g_{2}\in F}\varphi(g_{1}g_{2}^{-1})^{\frac{1}{p}} dg_{1}dg_{2}\leq \hyperlink{2024.3.C5}{C_{5}} \Vol(F)^{-\frac{2}{3p}}. 
    \end{equation} 
  \end{lem}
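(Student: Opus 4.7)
The plan is to reduce the double integral to a one-dimensional integral along the Weyl chamber via the Cartan decomposition $G = KAK$, and then invoke Harish-Chandra's multiplication formula for spherical functions together with two applications of H\"older's inequality. Writing $g_i = k_i a_{t_i} k_i'$ and setting $F^+ = \{t \geq 0 : a_t \in F\}$, and using the bi-$K$-invariance of both $F$ and $\varphi$ together with the substitution $k = k_1'(k_2')^{-1}$ in one of the $K$-integrations, the double integral becomes
\begin{equation*}
\iint_{F \times F} \varphi^{1/p}(g_1 g_2^{-1})\, dg_1\, dg_2 = c_1 \iint_{F^+ \times F^+} \delta(t_1)\, \delta(t_2) \left( \int_K \varphi^{1/p}(a_{t_1} k a_{-t_2})\, dk \right) dt_1\, dt_2,
\end{equation*}
where $\delta(t)\, dt$ is the Haar density on $A^+$ (with $\delta(t) \asymp \sinh(t)$) and $c_1$ is a fixed constant.

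Next, I would apply Harish-Chandra's defining identity $\int_K \varphi(xky)\, dk = \varphi(x) \varphi(y)$ together with H\"older's inequality on $K$ to bound
\begin{equation*}
\int_K \varphi^{1/p}(a_{t_1} k a_{-t_2})\, dk \leq \Vol(K)^{1 - 1/p}\, \varphi(a_{t_1})^{1/p}\, \varphi(a_{t_2})^{1/p},
\end{equation*}
using also $\varphi(a_{-t}) = \varphi(a_t)$, which follows from bi-$K$-invariance via the Weyl element in $K$. The double integral then factors as a constant multiple of $I^2$, where $I = \int_{F^+} \delta(t)\, \varphi^{1/p}(a_t)\, dt$.

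To estimate $I$, I would use the standard growth bound $\varphi(a_t) \leq C(1+t) e^{-t/2}$ and apply H\"older's inequality once more with exponents $q = 2p + 1$ and $q' = (2p+1)/(2p)$:
\begin{equation*}
I \leq \left( \int_{F^+} \delta(t)\, \varphi^{q/p}(a_t)\, dt \right)^{1/q} V^{1/q'}, \qquad V = \int_{F^+} \delta(t)\, dt \asymp \Vol(F).
\end{equation*}
Because $q = 2p + 1 > 2p$, the integrand $\delta(t)\, \varphi^{q/p}(a_t) \asymp (1+t)^{q/p} e^{-t/(2p)}$ is integrable on $[0, \infty)$, so the first factor is bounded by a constant depending only on $p$. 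Thus $I \lesssim V^{2p/(2p+1)}$, and dividing the factored bound by $\Vol(F)^2 \asymp V^2$ yields an estimate of order $V^{-2/(2p+1)}$. Since $2/(2p+1) \geq 2/(3p)$ for every $p \geq 1$, the stated bound $\Vol(F)^{-2/(3p)}$ follows, with the trivial small-volume case absorbed into $C_5$.

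The main obstacle is precisely this third step: because $F^+$ may be an arbitrary measurable subset of $[0, \infty)$ rather than an interval, one cannot control the growth of $\varphi^{1/p}$ along $F^+$ using a single ``radius'' parameter for $F$. The H\"older split with $q$ strictly greater than $2p$ is essential to trade off the polynomial factor $(1+t)^{1/p}$ and the partial decay $e^{-t/(2p)}$ against the exponentially growing Haar density $\delta(t)$, in a manner that remains uniform in the shape of $F^+$.
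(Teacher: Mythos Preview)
Your argument is correct, and in fact yields the slightly sharper exponent $\Vol(F)^{-2/(2p+1)}$, which dominates the stated $\Vol(F)^{-2/(3p)}$ for all $p\ge 1$. The paper, however, proceeds in the opposite order and avoids the Cartan decomposition entirely. It first applies H\"older on $F\times F$ with exponents $(p,q)$ to reduce
\[
\frac{1}{\Vol(F)^2}\iint_{F\times F}\varphi(g_1g_2^{-1})^{1/p}\,dg_1\,dg_2
\le\Bigl(\frac{1}{\Vol(F)^2}\iint_{F\times F}\varphi(g_1g_2^{-1})\,dg_1\,dg_2\Bigr)^{1/p},
\]
and only then uses the bi-$K$-invariance of $F$ together with the exact identity $\int_K\varphi(g_1kg_2)\,dk=\varphi(g_1)\varphi(g_2)$ (no H\"older on $K$ needed, since the exponent is now $1$) to factor the double integral as $\bigl(\tfrac{1}{\Vol(F)}\int_F\varphi\bigr)^2$. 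Finally, a single H\"older with exponents $(3,3/2)$ and the fact that $\varphi\in L^{2+\epsilon}(G)$ give $\tfrac{1}{\Vol(F)}\int_F\varphi\ll\Vol(F)^{-1/3}$. The paper's route is shorter and more structural (it never writes the Haar density $\delta(t)$ or the pointwise bound on $\varphi$); your route is more hands-on and, as a byproduct, produces a better exponent.
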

  \begin{proof}
     First, by the  H\"{o}lder's inequality for $L^{3}$ and $L^{3/2}$, we have 
     \begin{equation}\label{closing2024.2.1}
      \frac{1}{\Vol(F)}\int_{F}\varphi (g)dg\ll \Vol(F)^{-\frac{1}{3}}.
     \end{equation}  
    Next, let  $q$ satisfy $1/p+1/q=1$. Then by  the H\"{o}lder's inequality for $L^{p}$ and $L^{q}$, we have
    \[\frac{1}{(\Vol(F))^{2}}\int_{g_{1},g_{2}\in F} \varphi(g_{1}g_{2}^{-1})^{\frac{1}{p}} dg_{1}dg_{2}\leq\left( \frac{1}{(\Vol(F))^{2}}\int_{g_{1},g_{2}\in F}\varphi(g_{1}g_{2}^{-1}) dg_{1}dg_{2}\right)^{\frac{1}{p}}.\] 
      Noting the identity $\int_{K}\varphi(g_{1}kg_{2})dk=\varphi(g_{1})\varphi(g_{2})$, the consequence follows.
  \end{proof}
  \subsection{Effective lattice point counting}
In this section, we shall derive upper bounds for the number of points of $\Gamma\subset G$ inside a big ball.  The following proposition is well-known.  See \cite{duke1993density,eskin1993mixing,margulis2004some,einsiedler2009effective} for examples of this technique. We give a
short elementary proof.
   \begin{prop}\label{effective73} Let $\varphi$ be the Harish-Chandra spherical function of $G$, and $\alpha=\alpha(\delta)>0$    the spectral coefficient presented as in (\ref{closing2024.2.2}).
  Let  $\Gamma\subset G$ be a Fuchsian group that is not Zariski dense, $B\subset G$ an open set. Then  \hypertarget{2024.3.C6}  there exists a constant $C_{6}>0$ so that the cardinality of any $1$-separated subset $\Delta\subset B\cap\Gamma$ is bounded by
    \begin{equation}\label{effective63}
     |\Delta|^{2}\leq \hyperlink{2024.3.C6}{C_{6}}  \iint_{\tilde{B}\times \tilde{B}}\varphi (g_{1}g_{2}^{-1})^{  \hyperlink{2024.12.k7}{\kappa_{7}}}dg_{1}dg_{2}
    \end{equation} 
  where     $\tilde{B}\coloneqq B_{G}(1)BB_{G}(1)$. 
\end{prop}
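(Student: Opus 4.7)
The plan is to deduce the inequality from the effective mixing estimate \eqref{closing2024.2.2} by encoding $\Delta$ as a smooth test function on $G/\Gamma$ and then invoking a pointwise-to-$L^2$ bound via Sobolev embedding. First I would fix a smooth non-negative bump $\psi$ on $G$, supported in $B_G(r_0)$ for some $r_0 \in (0, 1/2)$ (chosen also smaller than half the minimum separation of $\Gamma$), with $\int_G \psi\,dg = 1$ and $\psi(e) \geq c_0 > 0$. The $1$-separation of $\Delta$ together with right-invariance of $d_G$ guarantees that the left-translates $\psi_\gamma(g) := \psi(\gamma^{-1}g)$ have pairwise disjoint supports $\gamma B_G(r_0) \subset B\cdot B_G(r_0) \subset \tilde B$. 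Let $f_\gamma \in L^2(G/\Gamma)$ be the $\Gamma$-periodization of $\psi_\gamma$; it satisfies $f_\gamma = \gamma\cdot f_e$ for the left $G$-action on $L^2(G/\Gamma)$, where $f_e$ is the periodization of $\psi$, and the Sobolev norms $\mathcal{S}_k(f_e)$ are bounded by constants depending only on $\psi$.

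Next I would form $\Phi := \sum_{\gamma \in \Delta} f_\gamma$ and exploit that $\gamma^{-1} x_0 = x_0$ for every $\gamma \in \Delta \subset \Gamma$, where $x_0 = e\Gamma$. Evaluation at $x_0$ gives
\[
\Phi(x_0) \;=\; \sum_{\gamma \in \Delta} f_e(\gamma^{-1} x_0) \;=\; |\Delta|\,f_e(x_0) \;\geq\; c_0|\Delta|,
\]
so $\|\Phi\|_{L^\infty(G/\Gamma)} \geq c_0|\Delta|$. Applying the Sobolev embedding $H^s(G/\Gamma) \hookrightarrow L^\infty(G/\Gamma)$ for $s > (\dim G)/2$, valid since $G/\Gamma$ has bounded geometry, yields $|\Delta|^2 \ll \|\Phi\|_{H^s(G/\Gamma)}^2$.

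To bound $\|\Phi\|_{H^s}^2$ from above, I would use that any left-invariant differential operator $D$ of order at most $s$ commutes with the left $G$-action on $L^2(G/\Gamma)$, so $D(\gamma\cdot f_e) = \gamma\cdot Df_e$ for each $\gamma \in \Gamma$. Hence
\[
\|D\Phi\|_{L^2}^2 \;=\; \sum_{\gamma_1, \gamma_2 \in \Delta}\langle (\gamma_2^{-1}\gamma_1)\cdot Df_e,\, Df_e\rangle,
\]
and applying the effective mixing estimate \eqref{closing2024.2.2} to each matrix coefficient and summing over a basis of $D$ of order $\leq s$ gives $\|\Phi\|_{H^s}^2 \ll \sum_{\gamma_1, \gamma_2 \in \Delta}\varphi(\gamma_2^{-1}\gamma_1)^{\kappa_7}$, with implicit constants depending on $\psi$ and $s$ but not on $\Delta$ or $B$.

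Finally I would convert the discrete sum into the advertised double integral. The boxes $\gamma_1 B_G(r_0)\times\gamma_2 B_G(r_0)$ are pairwise disjoint subsets of $\tilde B\times\tilde B$ by $1$-separation on each factor. For $(g_1,g_2)$ in such a box, writing $g_1 g_2^{-1} = \gamma_1 b_1 b_2^{-1}\gamma_2^{-1}$ with $b_i \in B_G(r_0)$ and invoking uniform local constancy of the Harish-Chandra spherical function under left/right multiplication by elements of the bounded set $B_G(r_0)B_G(r_0)^{-1}$ yields $\varphi(g_1g_2^{-1}) \asymp \varphi(\gamma_1\gamma_2^{-1})$ with constants independent of the pair. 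Integrating over each box and summing gives
\[
\sum_{\gamma_1,\gamma_2 \in \Delta}\varphi(\gamma_2^{-1}\gamma_1)^{\kappa_7} \;\ll\; \iint_{\tilde B\times\tilde B}\varphi(g_1g_2^{-1})^{\kappa_7}\,dg_1\,dg_2,
\]
and chaining with the previous estimates yields the proposition. The hardest step is this last one: one must argue that the multiplicative constants in the local constancy of $\varphi$ do not blow up as $\gamma_1,\gamma_2$ become large, which follows from the fact that bounded perturbations change the Cartan $A$-projection of $\gamma_1\gamma_2^{-1}$ only by a bounded additive amount, together with the bi-$K$-invariance of $\varphi$.
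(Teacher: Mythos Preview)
Your approach—translate a bump by each $\gamma\in\Delta$, evaluate at the basepoint, pass to $H^s$ via Sobolev embedding, then apply effective mixing—is a reasonable alternative, but the final conversion step is not correct as written. For $g_i=\gamma_i b_i$ you have $g_1g_2^{-1}=\gamma_1(b_1b_2^{-1})\gamma_2^{-1}$, which is a \emph{middle} insertion of a bounded element, and the Cartan projection is not stable under this: take $\gamma_1,\gamma_2$ diagonal of size $e^T$ with $\gamma_1\gamma_2^{-1}$ bounded (this already occurs in the cyclic hyperbolic $\Gamma$, which is not Zariski dense); then $\|\gamma_1 b\gamma_2^{-1}\|\asymp e^{2T}$ as soon as $b$ has a nonzero off-diagonal entry, so $\varphi(g_1g_2^{-1})$ is much smaller than $\varphi(\gamma_1\gamma_2^{-1})$ on almost all of your box, and your box integral cannot dominate the discrete term. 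The easy repair is to compare instead with $g_2^{-1}g_1=b_2^{-1}(\gamma_2^{-1}\gamma_1)b_1$, an \emph{outer} perturbation, for which $\varphi(g_2^{-1}g_1)\asymp\varphi(\gamma_2^{-1}\gamma_1)$ uniformly; this matches the form of your matrix-coefficient sum. There is also a smaller issue with the commutation claim: the operators $d\pi(X)$ defining the Sobolev norm on $L^2(G/\Gamma)$ satisfy $\pi(g)^{-1}d\pi(X)\pi(g)=d\pi(\operatorname{Ad}(g^{-1})X)$, so $D(\gamma\cdot f_e)\neq\gamma\cdot Df_e$ in general; you would need a local Sobolev inequality at $x_0$, or an $\operatorname{Ad}$-invariant element of $U(\mathfrak{g})$, to make that expansion rigorous.

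The paper's argument is shorter and avoids both difficulties. It takes a single smoothed indicator $f=1_{WBW}\ast h$ on $G$ with $W=B_G(1/10)$ and bounds $\langle\pi_*f,\pi_*f\rangle_{G/\Gamma}$ two ways. Unfolding gives $\langle\pi_*f,\pi_*f\rangle\geq\int_{WB}|\Gamma\cap g^{-1}WB|\,dg\geq\operatorname{Vol}(W)\,|\Delta|^2$, since for each $\delta\in\Delta$ and each $g\in W\delta$ one has $\delta^{-1}\Delta\subset\Gamma\cap g^{-1}WB$. For the upper bound one writes $\pi_*f=\int_{WBW}(k\cdot\pi_*h)\,dk$ and applies the mixing estimate \eqref{closing2024.2.2} to each $\langle k_1\cdot\pi_*h,\,k_2\cdot\pi_*h\rangle$, which produces the double integral directly—no Sobolev embedding and no discrete-to-continuous comparison are needed.
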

   \begin{proof}
      Let $h$ be a nonnegative smooth function supported in the neighborhood $W\coloneqq B_{G}(1/10)$ with $\int h=1$. 
 Let $\pi:G\rightarrow G/\Gamma$ be the projection and let $\pi_{\ast}:C_{c}(G)\rightarrow C_{c}(G/\Gamma)$ be  the natural projection map:
  \[\pi_{\ast}(f)( g\Gamma)\coloneqq\sum_{\gamma\in\Gamma}f( g\gamma).  \]
  Let $\pi^{\ast}$ be the pullback $C(G/\Gamma)\rightarrow C(G)$:
   \[\pi^{\ast}(f)( g)\coloneqq f( g\Gamma).  \]
    Then $f=1_{WBW}\ast h\in C_{c}(G)$ is a nonnegative smooth bump function satisfying $f\geq 1_{WB}$ and $\supp(f)\subset WB$. Then one calculates
    \begin{multline}  \label{closing2024.2.4}
      \langle\pi_{\ast}f,\pi_{\ast} f\rangle_{G/\Gamma}=   \langle f,\pi^{\ast}\pi_{\ast}f\rangle_{G} \\
     \geq   \langle 1_{WB},\pi^{\ast}\pi_{\ast}1_{WB}\rangle_{G}= \int_{WB}| g\Gamma\cap WB|dg=\int_{WB}|\Gamma \cap g^{-1}WB|dg.
    \end{multline} 
For each $\delta\in\Delta$, $w\in W$, we have
\[\delta^{-1}\Delta \subset \Gamma\cap \delta^{-1}B \subset\Gamma\cap \delta^{-1}w^{-1} WB.\]
In addition,
\[\bigcup_{\delta\in\Delta}W\delta \subset WB.\]
It follows that  
\begin{equation}\label{closing2024.2.3}
 \Vol(W)\cdot|\Delta|^{2}=\sum_{\delta\in\Delta} \Vol( W\delta)\cdot |\delta^{-1}\Delta |\leq \int_{WB}|\Gamma \cap g^{-1}WB|dg.
\end{equation} 
Combining (\ref{closing2024.2.4}) and (\ref{closing2024.2.3}), we get
     \begin{equation}\label{closing2024.2.5}
       \Vol(W)\cdot|\Delta|^{2}\leq  \langle\pi_{\ast}f,\pi_{\ast} f\rangle_{G/\Gamma}.
     \end{equation}
     
     On the other hand, by the effective mixing estimate (\ref{closing2024.2.2}), we get 
\[|\langle g. \pi_{\ast}h,\pi_{\ast}h\rangle|\ll\varphi(g)^{\hyperlink{2024.12.k7}{\kappa_{7}}}\]
     for $g\in G$. It follows that 
     \[ \langle\pi_{\ast}f,\pi_{\ast} f\rangle_{G/\Gamma}\ll \iint_{\tilde{B}\times \tilde{B}}\varphi (g_{1}g_{2}^{-1})^{\hyperlink{2024.12.k7}{\kappa_{7}}}dg_{1}dg_{2}.\]
     Using (\ref{closing2024.2.5}), the consequence follows. 
   \end{proof}
   Combining Lemma \ref{closing2024.2.6} and Proposition \ref{effective73}, we get 
   \begin{cor}\label{closing2024.3.18}  \hypertarget{2024.3.C7} Let  $\Gamma\subset G$ be a Fuchsian group that is not Zariski dense, $B_{G}(T)\subset G$ an open ball with   radius $T\geq 1$. Then the cardinality of any $1$-separated subset $\Delta\subset B\cap\Gamma$ is bounded by
      \[     |\Delta| \leq \hyperlink{2024.3.C7}{C_{7}} \Vol(B)^{1-\frac{1}{3}  \hyperlink{2024.12.k7}{\kappa_{7}}}\]
      for some absolute constants $C_{7}>0$,  $  \hyperlink{2024.12.k7}{\kappa_{7}}>0$.   
   \end{cor}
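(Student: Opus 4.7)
The plan is to read this corollary as a mechanical concatenation of the two preceding results, so the strategy is essentially to line up the estimates and carry out one elementary volume comparison. First I would apply Proposition~\ref{effective73} directly to $B=B_{G}(T)$; writing $\tilde B=B_{G}(1)\,B\,B_{G}(1)$, this immediately yields
\[
|\Delta|^{2}\ \leq\ \hyperlink{2024.3.C6}{C_{6}}\iint_{\tilde B\times\tilde B}\varphi(g_{1}g_{2}^{-1})^{\hyperlink{2024.12.k7}{\kappa_{7}}}dg_{1}dg_{2}.
\]

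Next, Lemma~\ref{closing2024.2.6} is stated for a bi-$K$-invariant domain of integration, so I would replace $\tilde B$ by its bi-$K$-invariant hull $F\coloneqq K\tilde B K$. Since $\varphi\geq 0$ on $G$, enlarging the domain only increases the integral, so the previous bound still holds with $\tilde B$ replaced by $F$. Now applying Lemma~\ref{closing2024.2.6} with $p=1/\hyperlink{2024.12.k7}{\kappa_{7}}$ (which satisfies $p\geq 1$ since $\hyperlink{2024.12.k7}{\kappa_{7}}\leq 1$; if not, replace $\hyperlink{2024.12.k7}{\kappa_{7}}$ by $\min\{\hyperlink{2024.12.k7}{\kappa_{7}},1\}$, which only weakens the conclusion) gives
\[
\iint_{F\times F}\varphi(g_{1}g_{2}^{-1})^{\hyperlink{2024.12.k7}{\kappa_{7}}}dg_{1}dg_{2}\ \leq\ \hyperlink{2024.3.C5}{C_{5}}\,\Vol(F)^{2-\frac{2}{3}\hyperlink{2024.12.k7}{\kappa_{7}}}.
\]

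The only non-bookkeeping step is the volume comparison $\Vol(F)\ll \Vol(B_{G}(T))$. I would justify this using the Cartan decomposition $G=KA^{+}K$: under the standard Haar integration formula, the volume of $B_{G}(T)$ is determined, up to bounded constants, by the image of $B_{G}(T)$ under the Cartan projection to $A^{+}$. Thickening $B_{G}(T)$ on both sides by the fixed unit ball $B_{G}(1)$ and then saturating under $K\times K$ only enlarges this Cartan projection by an additive constant, so for $T\geq 1$ we have $\Vol(F)\asymp \Vol(B_{G}(T))$. Combining all three estimates,
\[
|\Delta|^{2}\ \leq\ \hyperlink{2024.3.C6}{C_{6}}\hyperlink{2024.3.C5}{C_{5}}\,\Vol(F)^{2-\frac{2}{3}\hyperlink{2024.12.k7}{\kappa_{7}}}\ \ll\ \Vol(B)^{2-\frac{2}{3}\hyperlink{2024.12.k7}{\kappa_{7}}},
\]
and taking square roots gives the claimed bound $|\Delta|\leq \hyperlink{2024.3.C7}{C_{7}}\Vol(B)^{1-\frac{1}{3}\hyperlink{2024.12.k7}{\kappa_{7}}}$.

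There is no serious obstacle here: the spectral-gap input has already been packaged into Proposition~\ref{effective73} and the averaging estimate for $\varphi$ into Lemma~\ref{closing2024.2.6}, so the only conceivable pitfall is the volume comparison between $B_{G}(T)$, its thickening $\tilde B$, and its bi-$K$-invariant hull $F$. This is a standard $\SL_{2}(\mathbb{R})$ calculation based on the fact that the matrix norm $\|g-e\|$ is comparable to $\|g\|$ on $G\setminus B_{G}(1)$ and that Haar measure on $KA^{+}K$ factors as $dk_{1}\,\sinh(2t)\,dt\,dk_{2}$.
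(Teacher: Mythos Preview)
Your proof is correct and follows exactly the approach the paper intends: the paper simply states that the corollary is obtained by ``Combining Lemma~\ref{closing2024.2.6} and Proposition~\ref{effective73}'', and you have spelled out precisely how that combination works, including the passage to the bi-$K$-invariant hull and the volume comparison $\Vol(K\tilde B K)\asymp\Vol(B_{G}(T))$, which are the only points requiring any justification.
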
 
     
  \section{Dynamics over $\mathcal{H}(2)$}\label{closing2024.3.28}
   \subsection{McMullen's  classification}\label{closing2024.12.8} In this section, we manage to combine  the quantitative discreteness developed in Section \ref{closing2024.3.5}, with the McMullen's classification of Teichm\"{u}ller curves over  $\mathcal{H}(2)$. We shall first
    recall the dynamics of $G=\SL_{2}(\mathbb{R})$ over $\mathcal{H}(2)$.  
    We  refer to $\mathcal{H}(0)=\Omega\mathcal{M}_{1}$ as  the moduli space of holomorphic $1$-forms of genus $1$ with a marked point. We usually identify elements of $\mathcal{H}(0)$   with   lattices $\Lambda\subset\mathbb{C}$, via the correspondence $(M,\omega)=(\mathbb{C}/\Lambda,dz)$; in other words, $\Lambda$ is the image of the absolute periods $\omega(H_{1}(M;\mathbb{Z}))$. Thus, $\mathcal{H}(0)\cong \GL_{2}^{+}(\mathbb{R})/\Gamma=\mathbb{R}^{+}\times G/\Gamma$.  (Here $G/\Gamma$ is identified with the space of tori of area $1$.)
       
        In the sequel, we shall study tori with different areas: $\Lambda_{1}\in  \mathcal{H}_{A}(0)$ and   $\Lambda_{2}\in  \mathcal{H}_{1-A}(0)$. We consider $(\Lambda_{1},\Lambda_{2})\in G/\Gamma\times G/\Gamma$ as two corresponding tori with area $1$, after rescaling.  
        
    For $(X,\omega)\in\mathcal{H}(2)$, we will be interested in presenting forms of genus $2$ as connected sums of forms of genus $1$, 
    \begin{equation}\label{closing2024.3.8}
      (X,\omega)=(E_{1},\omega_{1})\stackrel[I]{}{\#} (E_{2},\omega_{2}).
    \end{equation} 
Here $(E_{1},\omega_{1}),(E_{2},\omega_{2}) \in\mathcal{H}(0)$ , $v\in\mathbb{C}^{\ast}$ and $I=[0,v]\coloneqq [0,1]\cdot v$. We also say   that $(E_{1},\omega_{1})\stackrel[I]{}{\#} (E_{2},\omega_{2})$ is a splitting of $(X,\omega)$.

It is straightforward to check that the connected sum operation commutes with the action of $\GL_{2}^{+}(\mathbb{R})$: we have 
\begin{equation}\label{two2023.6.14}
   g.((Y_{1},\omega_{1})\stackrel[I]{}{\#}(Y_{2},\omega_{2}))=   g.(Y_{1},\omega_{1})\stackrel[g\cdot I]{}{\#} g. (Y_{2},\omega_{2})
\end{equation}
for all $g\in\GL_{2}^{+}(\mathbb{R})$.

Let   $S(2)$ denote the set of triples $(\Lambda_{1},\Lambda_{2},v)\in\mathcal{H}(0)\times\mathcal{H}(0)\times\mathbb{C}^{\ast}$  
satisfying 
\begin{equation}\label{two2023.6.16}
   [0,v]\cap\Lambda_{1}=\{0\},\ \ \  [0,v]\cap\Lambda_{2}=\{0,v\}
\end{equation}
or vice versa.  The group $\GL^{+}_{2}(\mathbb{R})$ acts on the space of  triples $(\Lambda_{1},\Lambda_{2},v)$, leaving $S(2)$ invariant.  Clearly,  given a triple $(\Lambda_{1},\Lambda_{2},v)$, one defines
\[ x=\Lambda_{1}\stackrel[{[0,v]}]{}{\#} \Lambda_{2}\in \mathcal{H}(2)\]
and we obtain a natural map  $\Phi:S(2)\rightarrow \mathcal{H}(2)$. By {\cite[Theorem 7.2]{mcmullen2007dynamics}}, the connected sum mapping  $\Phi$ is a surjective, $\GL^{+}_{2}(\mathbb{R})$-equivariant local covering map.

    In \cite{mcmullen2007dynamics}, McMullen    classified the $\SL_{2}(\mathbb{R})$-orbit  closures of $\mathcal{H}_{1}(2)$:
  \begin{thm}[{\cite[Theorem 10.1]{mcmullen2007dynamics}}]\label{closing2024.12.7}
     Let $Z=\overline{G.x}$ be a $G$-orbit closure of some $x\in\mathcal{H}_{1}(2)$. Then either:
     \begin{itemize}
       \item $Z$ is a Teichm\"{u}ller curve, or
       \item $Z=\mathcal{H}_{1}(2)$.
     \end{itemize}
  \end{thm}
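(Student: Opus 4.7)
The plan is to follow the connected sum strategy outlined in Section~\ref{closing2024.3.28} of the excerpt. Suppose $Z=\overline{G.x}$ is a proper, closed, $G$-invariant subset of $\mathcal{H}_{1}(2)$; the goal is to show that $Z$ is a closed $G$-orbit, hence a Teichm\"{u}ller curve by Theorem \ref{effective2023.10.3}. Pick a generic $y\in Z$, and using the surjectivity and $G$-equivariance of $\Phi:S(2)\to\mathcal{H}(2)$, choose a short splitting
\[
y=(E_{1},\omega_{1})\stackrel[{[0,v]}]{}{\#}(E_{2},\omega_{2}),
\]
with $|v|$ small. By equation~(\ref{two2023.6.14}), the entire $G$-orbit of $y$ lifts to a $G$-orbit of triples in $S(2)$, and, after rescaling each torus to unit area, projects to a $G$-orbit in $G/\Gamma\times G/\Gamma$ under the diagonal action. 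Let $Y\subset G/\Gamma\times G/\Gamma$ denote the closure of this projected orbit; it is a closed $G$-invariant set.

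Next, apply Ratner's orbit-closure theorem to $Y$. The classification of closed $G$-invariant subsets of $G/\Gamma\times G/\Gamma$ is well known: $Y$ is either all of $G/\Gamma\times G/\Gamma$, a finite union of points, or a union of closed $G$-orbits given by graphs of algebraic correspondences, i.e.\ sets of the form $\{(g\Gamma,g\gamma_{0}\Gamma):g\in G\}$ for some $\gamma_{0}\in\mathrm{GL}_{2}(\mathbb{Q})^{+}$ (up to commensuration). In the first case, perturbations of the pair $(\Lambda_{1},\Lambda_{2})$ in $G/\Gamma\times G/\Gamma$, together with perturbations of $v$, can be realized inside $Z$ by $G$-equivariance of $\Phi$ and the local covering property; combined with density, this forces $Z=\mathcal{H}_{1}(2)$, contradicting properness. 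The finite-orbit case similarly leads to contradictions with the dimension of $Z$.

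In the remaining case, $E_{1}$ and $E_{2}$ are isogenous by a fixed isogeny class, uniformly along the $G$-orbit of this splitting. Translating this into the language of $\mathcal{H}(2)$, the Jacobian of $y$ admits real multiplication by a quadratic order $\mathcal{O}_{D}$, with $\omega_{y}$ an eigenform. One must then upgrade this from a single splitting to all splittings: for each short direction $v^{\prime}$ producing another splitting $y=(E_{1}^{\prime},\omega_{1}^{\prime})\stackrel[{[0,v^{\prime}]}]{}{\#}(E_{2}^{\prime},\omega_{2}^{\prime})$, the same Ratner argument applies and must yield a compatible isogeny relation. Consistency across the countably many splittings pins down a single discriminant $D$ independent of the choice of $v$, i.e.\ $y\in\Omega W_{D}$. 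Finally, invoke McMullen's classification of eigenform loci in $\mathcal{H}_{1}(2)$: each $\Omega W_{D}$ decomposes into finitely many Teichm\"{u}ller curves, and since $Z$ is $G$-invariant and connected through the orbit of $y$, $Z$ coincides with the one containing $x$.

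The main obstacle is the bridge from the single-splitting rigidity to a global eigenform structure on $y$. The difficulty is that different splittings of $y$ produce \emph{different} pairs $(E_{1},E_{2})$, and the algebraic correspondence supplied by Ratner in each instance could, a priori, be splitting-dependent. One must show that the family of such correspondences, parametrized by the countable set of admissible $v$, forces a single quadratic order $\mathcal{O}_{D}$ acting on $H_{1}(y;\mathbb{Z})$ — in other words, that the pointwise algebraic constraints globalize to real multiplication on the Jacobian. This is where most of the geometric input (the structure of the set of short saddle connections on a splitting, and the transitivity of the Veech-type action on splittings) is required, and where the argument genuinely uses that the stratum is $\mathcal{H}(2)$ and not a higher-genus stratum.
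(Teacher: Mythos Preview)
The paper does not supply its own proof of this theorem: it is quoted verbatim as \cite[Theorem 10.1]{mcmullen2007dynamics} and used as a black box throughout. So there is no ``paper's approach'' to compare against beyond the citation itself.

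Your sketch does track the broad architecture of McMullen's actual argument (splitting map $\Phi:S(2)\to\mathcal{H}(2)$, projection to $G/\Gamma\times G/\Gamma$, Ratner), but it has real gaps relative to that argument. First, your trichotomy for closed $G$-invariant sets in $G/\Gamma\times G/\Gamma$ is misstated: ``a finite union of points'' is not a possibility, and the relevant intermediate case is that $Y$ is a closed $G$-orbit (graph of a commensurator). Second, and more seriously, the implication ``$Y=G/\Gamma\times G/\Gamma\Rightarrow Z=\mathcal{H}_{1}(2)$'' is not as cheap as you suggest. Density of the pair $(\Lambda_{1},\Lambda_{2})$ does not by itself let you vary the slit $v$ freely inside $Z$; McMullen handles this by producing a point of $Z$ admitting two \emph{non-parallel} splittings and then using the interplay of the two splitting directions (plus the $N$-action) to move in a full neighborhood. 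Your outline never produces this second splitting. Third, in the isogenous case, the passage from ``one splitting has isogenous summands'' to ``$\omega$ is an eigenform for $\mathcal{O}_{D}$'' is exactly Theorem~\ref{effective2023.9.12}, so the ``consistency across countably many splittings'' step you flag as the main obstacle is in fact not needed: a single splitting with isogenous summands already forces real multiplication on $\Jac(y)$, and the discriminant $D$ is then determined by the absolute periods. The genuine remaining work in McMullen's proof is the analysis of orbit closures \emph{within} the eigenform locus, which your sketch does not address.
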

   We also have a simple criterion for Teichm\"{u}ller curves in $\mathcal{H}(2)$:
   \begin{thm}[{\cite[Theorems 5.8, 5.10]{mcmullen2007dynamics}}]\label{closing2024.3.19} A form $(X,\omega)\in\mathcal{H}(2)$ generates a Teichm\"{u}ller curve if and only if the Veech group $\SL(X,\omega)$ contains a hyperbolic element.
   \end{thm}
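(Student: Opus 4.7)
I would prove the two directions separately, using the classification of orbit closures in $\mathcal{H}_{1}(2)$ (Theorem \ref{closing2024.12.7}) and the lattice characterization of Teichm\"{u}ller curves (Theorem \ref{effective2023.10.3}) as the main inputs. First, if $(X,\omega)$ generates a Teichm\"{u}ller curve, then by Theorem \ref{effective2023.10.3}, $\SL(X,\omega)$ is a lattice in $G = \SL_{2}(\mathbb{R})$. Any lattice in $\SL_{2}(\mathbb{R})$ has finite covolume on $\mathbb{H}$ and hence is a non-elementary Fuchsian group; the quotient orbifold $\mathbb{H}/\SL(X,\omega)$ carries infinitely many primitive closed geodesics, each lifting to a hyperbolic conjugacy class in $\SL(X,\omega)$, so hyperbolic elements are abundant.

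For the converse, suppose $h \in \SL(X,\omega)$ is hyperbolic with dilatation $\lambda > 1$, and let $\phi \in \Aff(X,\omega)$ satisfy $D\phi = h$; then $\phi$ is a pseudo-Anosov affine automorphism. By Theorem \ref{closing2024.12.7}, the orbit closure $\overline{G.(X,\omega)}$ is either a Teichm\"{u}ller curve, in which case we are done, or equals all of $\mathcal{H}_{1}(2)$. The remaining content is to exclude the dense-orbit alternative by showing that the hyperbolic symmetry constrains $(X,\omega)$ to a proper $\GL_{2}^{+}(\mathbb{R})$-invariant subvariety of $\mathcal{H}(2)$.

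To do so, I would exploit the algebraic rigidity imposed by $\phi$. The induced action $\phi^{\ast}$ on $H^{1}(X;\mathbb{Z})$ is an integral symplectic matrix having $\lambda, \lambda^{-1}$ among its eigenvalues. Since $\lambda$ is a real algebraic unit with $|\lambda| > 1$, the trace field $K = \mathbb{Q}(\lambda + \lambda^{-1})$ cannot be $\mathbb{Q}$ (as $\lambda = \pm 1$ is excluded by hyperbolicity), and for genus $2$ it is a real quadratic field. McMullen's eigenform theorem in \cite{mcmullen2007dynamics} then promotes $\phi^{\ast}$ to real multiplication by an order $\mathcal{O}_{D} \subset K$ on $\mathrm{Jac}(X)$ and identifies $\omega$ as an eigenform for this action. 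Hence $(X,\omega)$ lies in the eigenform locus $\Omega E_{D} \subset \mathcal{H}(2)$, which is a proper $\GL_{2}^{+}(\mathbb{R})$-invariant affine submanifold cut out by $\mathbb{Z}$-linear equations in period coordinates. Therefore $\overline{G.(X,\omega)} \subset \Omega E_{D} \cap \mathcal{H}_{1}(2) \subsetneq \mathcal{H}_{1}(2)$, ruling out the dense-orbit alternative and forcing $\overline{G.(X,\omega)}$ to be a Teichm\"{u}ller curve.

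The main obstacle, which is where the entire non-trivial content of the converse direction resides, is the promotion of the single symmetry $\phi^{\ast}$ to a genuine real multiplication structure with $\omega$ itself serving as an eigenform. This requires Hodge-theoretic compatibility between the $K$-eigenspace decomposition of $H^{1}(X;\mathbb{R})$ and the complex structure on the Hodge bundle, so that the $\lambda$-eigenspace is spanned by a holomorphic $1$-form which, by symplectic and Hodge index considerations, must be a scalar multiple of $\omega$. Once this identification is secured, the invariance of $\Omega E_{D}$ under $\GL_{2}^{+}(\mathbb{R})$ and the exclusion of the dense orbit are routine.
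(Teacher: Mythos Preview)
The paper does not prove this statement; it is quoted from \cite{mcmullen2007dynamics} (Theorems~5.8 and~5.10) and used as a black box, so there is no in-paper argument to compare against. Your forward direction is fine.

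For the converse there is a factual error: the trace field $K = \mathbb{Q}(\lambda + \lambda^{-1})$ \emph{can} equal $\mathbb{Q}$. Hyperbolicity excludes $\lambda = \pm 1$, but not $\lambda + \lambda^{-1} \in \mathbb{Z}$; for instance $\lambda = \tfrac{3+\sqrt{5}}{2}$ gives $\lambda + \lambda^{-1} = 3$. This is exactly the square-tiled (arithmetic) case, discriminant $D = d^{2}$, where $\Jac(X)$ is isogenous to a product of elliptic curves rather than admitting real multiplication by a genuine real quadratic order. The eigenform-locus conclusion survives in that case too, but your stated justification for it does not, and the case must be handled separately.

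More structurally, the step you call ``the main obstacle'' --- promoting the single symplectic automorphism $\phi^{\ast}$ to real multiplication on $\Jac(X)$ with $\omega$ an eigenform --- is not a lemma on the way to McMullen's Theorem~5.8; it \emph{is} Theorem~5.8. Your converse therefore reads: assume~5.8, note $\Omega E_{D}\cap\mathcal{H}_{1}(2)\subsetneq\mathcal{H}_{1}(2)$, and invoke the dichotomy of Theorem~\ref{closing2024.12.7}. That does deduce the Teichm\"{u}ller-curve conclusion from~5.8, replacing McMullen's direct analysis of the Weierstrass locus (his~5.10) by the full classification; but be aware that in McMullen's own development the classification (his Theorem~10.1) is proved \emph{after}, and partly \emph{using}, the Section~5 eigenform results. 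As a self-contained argument your route is circular. Within the present paper, where both results are imported, nothing breaks --- but nothing independent is established either.
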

  
In addition, in \cite{mcmullen2005teichmullerDiscriminant}, McMullen provided a complete list of  Teichm\"{u}ller curves in $\mathcal{H}(2)$.  
 We say $W_{D}\subset\Omega\mathcal{M}_{2}$ is a \textit{Weierstrass curve}\index{Weierstrass curve} if it is the locus of Riemann surfaces $M\in\mathcal{M}_{2}$ such that 
  \begin{enumerate}[\ \ \ (i)]
    \item  $\Jac(M)$ admits real multiplication by $\mathcal{O}_{D}$, where $\mathcal{O}_{D}\cong\mathbb{Z}[x]/(x^{2}+bx+c)$ is a quadratic order with  $b,c\in\mathbb{Z}$ and the \textit{discriminant}\index{discriminant} $D=b^{2}-4c>0$ (cf. \cite{mcmullen2003billiards});
    \item $M$ carries an eigenform $\omega$ with a double zero at one of the six Weierstrass points of $M$.
  \end{enumerate}
  
  Every \textbf{irreducible} component of $W_{D}$ is a Teichm\"{u}ller curve. When $D\equiv 1\bmod 8$, one can also define a \textit{spin invariant}\index{spin invariant} $\epsilon(M,\omega)\in\mathbb{Z}/2\mathbb{Z}$ which is constant along the components of $W_{D}$.  In \cite{mcmullen2005teichmullerDiscriminant}, McMullen showed that each Teichm\"{u}ller curve is uniquely determined by these two invariants:
  \begin{thm}[{\cite[Theorem 1.1]{mcmullen2005teichmullerDiscriminant}}]\label{effective2023.10.1}
     For any integer $D\geq 5$ with $D\equiv0$ or $1\bmod 4$, either:
     \begin{itemize}
       \item The Weierstrass curve $W_{D}$ is irreducible, or
       \item We have $D\equiv 1\bmod 8$ and $D\neq 9$, in which case $W_{D}=W_{D}^{0}\sqcup W_{D}^{1}$ has exactly two components, distinguished by their spin invariants.
     \end{itemize}
  \end{thm}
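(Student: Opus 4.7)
The plan is to parametrize the components of $W_{D}$ by combinatorial data attached to periodic splittings of eigenforms as connected sums of tori, and then to analyze the orbits of a natural equivalence on this parameter set. First, I would invoke the splitting framework from Section \ref{closing2024.3.28}: every eigenform $(X,\omega) \in W_{D}$ generating a Teichm\"uller curve has a completely periodic horizontal direction, and hence decomposes as $(X,\omega) = (E_{1},\omega_{1}) \stackrel[{[0,v]}]{}{\#} (E_{2},\omega_{2})$ along a horizontal saddle connection through the double zero. After using the $\GL^{+}_{2}(\mathbb{R})$-equivariance (\ref{two2023.6.14}) to place the splitting in a normal form (horizontal $v$, rescaled $E_{1}$), each splitting yields a \emph{prototype} $(a,b,c,e) \in \mathbb{Z}^{4}$ satisfying an arithmetic relation of the form $D = e^{2} + 4bc$ together with normalization inequalities such as $0 \leq a < \gcd(b,c)$, $b,c > 0$, and a range restriction on $e$. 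The resulting set $\mathcal{P}_{D}$ of prototypes is finite and can be computed explicitly for any fixed $D$.

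Second, I would verify that the set of irreducible components of $W_{D}$ is in bijection with the orbit set $\mathcal{P}_{D}/\!\sim$, where $\sim$ is generated by elementary moves coming from (i) re-splitting the same surface along a different periodic cylinder direction, obtained by applying parabolic elements of the Veech group (cylinder-shearing Dehn twists), and (ii) interchanging the two summands $E_{1}, E_{2}$. Each of these moves can be written explicitly as an affine transformation of $(a,b,c,e)$, so $\sim$ becomes the orbit equivalence of a finite combinatorial group action on $\mathcal{P}_{D}$, whose orbit count equals the number of components of $W_{D}$.

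Third, to count these orbits I would interpret prototypes arithmetically: $(a,b,c,e)$ encodes a proper $\mathcal{O}_{D}$-ideal class in the real quadratic order of discriminant $D$, and the $\sim$-moves correspond to the natural action of a finite-index subgroup of the narrow class group $\mathrm{Pic}^{+}(\mathcal{O}_{D})$. Classical genus theory for $\mathcal{O}_{D}$ then shows that the orbit space has exactly one element unless $D \equiv 1 \pmod{8}$ and $D \neq 9$, in which case it has exactly two, distinguished by a $\mathbb{Z}/2\mathbb{Z}$-valued genus character. The final step is to identify this combinatorial $\mathbb{Z}/2\mathbb{Z}$ invariant with the topological spin $\epsilon(M,\omega)$; this is done by computing the Arf invariant of the natural $\mathbb{F}_{2}$-quadratic refinement of the intersection form on $H_{1}(M;\mathbb{F}_{2})$ in a symplectic basis adapted to the splitting, and checking that both invariants reduce to the same expression in $(a,b,c,e) \bmod 2$.

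The main obstacle is the transitivity step inside a fixed spin class, i.e.\ showing that any two prototypes sharing the same genus character are $\sim$-equivalent. This requires carefully chaining elementary cylinder moves on $\mathcal{P}_{D}$ and exploiting the fact that the $2$-torsion subgroup of $\mathrm{Pic}^{+}(\mathcal{O}_{D})$ is generated by genera precisely when $D \equiv 1 \pmod{8}$. The exceptional case $D=9$ must be handled by a direct verification that $W_{9}$ is a single Teichm\"uller curve (the one generated by the regular decagonal surface), which reflects the collapse of the expected two-component structure in small discriminant.
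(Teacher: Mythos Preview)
The paper does not prove this theorem; it is quoted from \cite{mcmullen2005teichmullerDiscriminant} as background, so there is no ``paper's own proof'' to compare against. Your outline follows McMullen's actual strategy in its first two steps: parametrize splittings by integer prototypes $(a,b,c,e)$ with $D=e^{2}+4bc$ and the normalization constraints, then identify components of $W_{D}$ with orbits of the set $\mathcal{P}_{D}$ under elementary re-splitting moves. That part is accurate as a plan.

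The third step, however, contains a genuine gap. You assert that a prototype ``encodes a proper $\mathcal{O}_{D}$-ideal class'' and that the $\sim$-moves correspond to the action of a finite-index subgroup of $\mathrm{Pic}^{+}(\mathcal{O}_{D})$, so that genus theory finishes the count. But the cardinality of $\mathcal{P}_{D}$ is governed by divisor sums of $(D-e^{2})/4$ over admissible $e$, not by the class number; there is no natural bijection between prototypes and ideal classes, and McMullen does not use one. His transitivity argument is instead a direct combinatorial analysis of explicit ``butterfly moves'' on $\mathcal{P}_{D}$, reducing the orbit count by hand. Your appeal to genus theory would require building a correspondence that is not in evidence, and the sentence about the $2$-torsion of $\mathrm{Pic}^{+}(\mathcal{O}_{D})$ being ``generated by genera precisely when $D\equiv 1\bmod 8$'' is not a correct statement of genus theory in any case.

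Finally, the claim that $W_{9}$ is generated by the regular decagonal surface is factually wrong: the regular decagon has discriminant $D=5$. The curve $W_{9}$ arises from primitive square-tiled surfaces of degree three (since $9$ is a perfect square, $\mathcal{O}_{9}$ is an order in $\mathbb{Q}\times\mathbb{Q}$), and its irreducibility is checked directly on the small prototype set for $D=9$.
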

  Intuitively, a Teichm\"{u}ller curve in $\mathcal{H}(2)$ of low discriminant in the sense of Definition \ref{closing2024.3.6} has a low discriminant introduced by McMullen. In this section, we shall show it is indeed the case. 
  
  We are interested in the information provided by the absolute periods.
  \begin{thm}[{\cite[Theorem 3.1]{mcmullen2005teichmullerDiscriminant}}]\label{effective2023.9.12}
     Let $(M,\omega)=(E_{1},\omega_{1})\stackrel[I]{}{\#} (E_{2},\omega_{2})$. Then the following are equivalent:
     \begin{enumerate}[\ \ \ (i)]
       \item  $\omega$ is an eigenform for real multiplication by $\mathcal{O}_{D}$ on $\Jac(M)$;
       \item $\omega_{1}+\omega_{2}$ is an eigenform for real multiplication by $\mathcal{O}_{D}$ on $E_{1}\times E_{2}$.
     \end{enumerate}
  \end{thm}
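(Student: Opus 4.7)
The plan is to translate both conditions into statements about the period lattice, leveraging the fact that the connected sum provides a natural symplectic identification of first homology groups. The core observation is that a choice of symplectic basis adapted to the slit $I$ makes the absolute periods of $\omega$ decouple into those of $\omega_1$ and $\omega_2$, so that the eigenform condition can be checked on either side interchangeably.

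First I would establish the homological decomposition induced by the connected sum. The primitivity conditions defining $S(2)$ (namely $[0,v]\cap\Lambda_1=\{0\}$ and $[0,v]\cap\Lambda_2=\{0,v\}$) guarantee that the slit is embedded in each torus. Choose symplectic bases $\{\alpha_i,\beta_i\}$ of $H_1(E_i;\mathbb{Z})$ whose representative cycles are isotoped off a neighborhood of $I$, which is possible since $I$ sits in an arbitrarily small region. After the cut-and-paste construction the four cycles give a symplectic basis of $H_1(M;\mathbb{Z})$, producing a symplectic isomorphism
\[
\Phi: H_1(E_1;\mathbb{Z})\oplus H_1(E_2;\mathbb{Z}) \xrightarrow{\sim} H_1(M;\mathbb{Z})
\]
carrying the direct-sum intersection form on the left to the intersection form of $M$ on the right. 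Because the representing cycles stay in their respective tori, $\omega\bigl(\Phi(\gamma_1\oplus\gamma_2)\bigr)=\omega_1(\gamma_1)+\omega_2(\gamma_2)$, and this matches the period of $\omega_1+\omega_2$ on $E_1\times E_2$.

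Second I would reformulate the eigenform condition cohomologically: an abelian variety $A$ admits real multiplication by $\mathcal{O}_D\cong\mathbb{Z}[T]/(T^2+bT+c)$ with $D=b^2-4c$ having $\eta\in H^{1,0}(A)$ as an eigenform if and only if there is a self-adjoint endomorphism $\sigma$ of $\bigl(H_1(A;\mathbb{Z}),\langle\cdot,\cdot\rangle\bigr)$ satisfying $\sigma^2+b\sigma+c=0$, together with a real $\lambda$ such that the period map of $\eta$ intertwines $\sigma$ with multiplication by $\lambda$. Then I would transport endomorphisms across $\Phi$: if $\sigma$ acts self-adjointly on $H_1(M;\mathbb{Z})$ as above, then $\Phi^{-1}\sigma\Phi$ is self-adjoint with respect to the product form, has the same minimal polynomial (hence generates the same order $\mathcal{O}_D$), and the eigenform equation for $\omega$ pulls back to the eigenform equation for $\omega_1+\omega_2$ under the period identification. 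The converse direction is identical.

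The main obstacle is verifying that $\Phi$ is genuinely an isomorphism of symplectic lattices over $\mathbb{Z}$, and not merely over $\mathbb{Q}$, so that the same order $\mathcal{O}_D$ (rather than a different order in the same real quadratic field) is realized on both sides. The delicate point is that the double zero of $\omega$ at the concatenated singularity contributes to \emph{relative} cohomology $H^1(M,\Sigma;\mathbb{C})$ but should leave the \emph{absolute} period lattice $H^1(M;\mathbb{Z})$ untouched by the gluing data; this needs a careful topological argument showing that the chosen bases of $H_1(E_i;\mathbb{Z})$ indeed assemble into a $\mathbb{Z}$-basis of $H_1(M;\mathbb{Z})$ under the cut-and-paste, and that no cycle surrounding the cone point interferes. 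Once this integral structure is pinned down, the equivalence reduces to a tautology about period matrices of the form $(P_1,P_2)$.
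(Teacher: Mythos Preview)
The paper does not supply its own proof of this statement: it is quoted verbatim as \cite[Theorem 3.1]{mcmullen2005teichmullerDiscriminant} and used as a black box, so there is nothing in the present paper to compare your argument against.

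That said, your outline is essentially the argument McMullen gives. The connected sum along the slit $I$ does yield an integral symplectic isomorphism $H_1(E_1;\mathbb{Z})\oplus H_1(E_2;\mathbb{Z})\cong H_1(M;\mathbb{Z})$, and hence an isomorphism of principally polarized abelian varieties $\Jac(M)\cong E_1\times E_2$ (not merely an isogeny), under which the eigenform conditions correspond. The obstacle you flag---that one must work over $\mathbb{Z}$ rather than $\mathbb{Q}$ to preserve the specific order $\mathcal{O}_D$---is the right thing to worry about, and your resolution via cycles disjoint from the slit is the standard one: since $|\Sigma|=1$ in $\mathcal{H}(2)$, the map $H_1(M;\mathbb{Z})\to H_1(M,\Sigma;\mathbb{Z})$ is an isomorphism, so no relative cycle around the cone point can interfere with the absolute lattice. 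Your sketch would pass as a proof once that topological check is written out.
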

  In particular, the discriminant $D$ of a Weierstrass curve $W_{D}$ is purely determined by the absolute period map $I_{\omega}:H_{1}(M;\mathbb{Z})\rightarrow\mathbb{C}$. In addition, we have a complete list of all possible  absolute period maps. More precisely, we define the locus
  \begin{multline} 
    \Omega Q_{D}\coloneqq\{(E_{1}\times E_{2},\omega)\in\Omega\mathcal{M}_{1}\times\Omega\mathcal{M}_{1}: \\
    \omega \text{ is an eigenform for real muliplication by }\mathcal{O}_{D}\}.\nonumber
  \end{multline} 
 Besides, we define the splitting space
  \[\Omega W_{D}^{s}\coloneqq\{(X,\omega,I):(X,\omega)\in\Omega W_{D}\text{ splits along }I\}. \]
  Then by Theorem \ref{effective2023.9.12}, there is a covering map 
  \begin{equation}\label{closing2024.3.10}
    \Pi:\Omega W_{D}^{s}\rightarrow\Omega Q_{D}
  \end{equation} 
   which records the summands $(E_{i},\omega_{i})$ in (\ref{closing2024.3.8}).
  \subsection{Prototypes of splittings}
Let us say a triple of integers $(e,\ell,m)$ is a \textit{prototype}\index{prototype} for real multiplication, with discriminant $D$, if 
\begin{equation}\label{effective2023.9.14}
D=e^{2}+4\ell^{2}m,\ \ \ \ell,m>0,\ \ \  \gcd(e,\ell)=1.
\end{equation} 
 We can associate a prototype  $(e,\ell,m)$ to each eigenform $(E_{1}\times E_{2},\omega)\in  \Omega Q_{D}$. Moreover, we have
\begin{thm}[{\cite[Theorem 2.1]{mcmullen2005teichmullerDiscriminant}}]\label{effective2023.9.24}
   The space $\Omega Q_{D}$ decomposes into a finite union
   \[\Omega Q_{D}=\bigcup \Omega Q_{D}(e,\ell,m)\]
   of closed $\GL_{2}^{+}(\mathbb{R})$-orbits, one for each prototype $(e,\ell,m)$. Besides, we have
   \[\Omega Q_{D}(e,\ell,m)\cong\GL_{2}^{+}(\mathbb{R})/\Gamma_{0}(m)\]
   where $\Gamma_{0}(m)$ is the \textit{Hecke congruence subgroup}\index{Hecke congruence subgroup} of level $m$:
   \[\Gamma_{0}(m)\coloneqq \left\{\left[
            \begin{array}{cccc}
   a & b  \\
   c & d   \\
            \end{array}
          \right]\in\Gamma:c\equiv 0\bmod m\right\}.\]
\end{thm}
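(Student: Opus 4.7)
The plan is to extract a complete $\GL_2^+(\mathbb{R})$-invariant $(e,\ell,m)$ from each eigenform in $\Omega Q_D$ via the real multiplication data, and then identify the fibre over $(e,\ell,m)$ with the homogeneous space $\GL_2^+(\mathbb{R})/\Gamma_0(m)$.

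First I would fix $(E_1\times E_2,\omega_1+\omega_2)\in \Omega Q_D$ and choose a generator $T\in \End(E_1\times E_2)\otimes\mathbb{Q}$ of the $\mathcal{O}_D$-action having $\omega_1+\omega_2$ as an eigenvector. If $E_1$ and $E_2$ were non-isogenous, $T$ would have to act diagonally by integers, forcing $T\in\mathbb{Z}$; hence $E_1$ and $E_2$ are isogenous and $T$ takes the matrix form $\left(\begin{smallmatrix} a & \widehat\psi \\ \psi & d \end{smallmatrix}\right)$ for some $a,d\in\mathbb{Z}$ and an isogeny $\psi:E_1\to E_2$, with $\widehat\psi$ its dual. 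Its characteristic polynomial $X^2-(a+d)X+(ad-\deg\psi)$ has discriminant $(a-d)^2+4\deg\psi$, and the requirement $\disc(\mathcal{O}_D)=D$ forces $D=(a-d)^2+4\deg\psi$. Factoring $\psi=\psi_0\circ[\ell]$ with $\psi_0$ cyclic of degree $m$ and setting $e=a-d$ gives $D=e^2+4\ell^2m$. The coprimality $\gcd(e,\ell)=1$ comes from primitivity of the order: if $k=\gcd(e,\ell)>1$, then $(T-\tfrac{a+d}{2})/k$ would be an integral endomorphism witnessing a larger order strictly containing $\mathcal{O}_D$, a contradiction. Finiteness of the decomposition is then immediate from the Diophantine equation $D=e^2+4\ell^2m$ having finitely many positive solutions.

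Next I would check the invariance and surjectivity of the prototype map. The endomorphism ring of the abelian variety is unchanged by the $\GL_2^+(\mathbb{R})$-action on $\omega$, so the prototype is constant on orbits. For the converse, I would show that any two eigenforms with the same prototype lie in a single orbit by normalizing: fix the prototype $(e,\ell,m)$, produce a standard pair of elliptic curves $(E_1,E_2)$ connected by a cyclic $m$-isogeny $\psi_0$, and match $\omega_1,\omega_2$ against the standard data by applying a unique $\GL_2^+(\mathbb{R})$-element. This reduces the moduli problem to the one parametrised by the modular curve $Y_0(m)=\mathbb{H}/\Gamma_0(m)$.

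Finally I would identify the stabilizer with $\Gamma_0(m)$ by explicit normalization. Writing $\Lambda_1=\mathbb{Z}\langle 1,\tau\rangle$ and taking $\Lambda_2$ to be the image lattice of the cyclic $m$-isogeny (so up to rescaling $\Lambda_2=\mathbb{Z}\langle 1,\tau/m\rangle$), an element $g\in\GL_2^+(\mathbb{R})$ stabilising both lattices together with $\psi_0$ must lie in $\SL_2(\mathbb{Z})$ with lower-left entry divisible by $m$, which is precisely $\Gamma_0(m)$. Since $\Gamma_0(m)$ is a lattice in $\GL_2^+(\mathbb{R})$, the orbit is closed. The step I expect to be the main obstacle is ensuring that the scaling ambiguity introduced by the factorisation $\psi=\psi_0\circ[\ell]$ is entirely absorbed into the discrete prototype (so that no residual continuous parameter survives inside a single $\Omega Q_D(e,\ell,m)$); to control this, I would tie the normalization of $\omega_1,\omega_2$ directly to the prototype data via the eigenvalue equation $T^{\ast}(\omega_1+\omega_2)=\lambda(\omega_1+\omega_2)$, so that all ambiguity is removed before matching against the standard representative.
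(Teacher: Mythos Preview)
The paper does not prove this theorem; it is stated as a citation of \cite[Theorem 2.1]{mcmullen2005teichmullerDiscriminant} and used as a black box (immediately followed by Corollary \ref{effective2023.9.11} and the discussion of prototypical splittings). There is therefore no proof in the paper to compare against.

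That said, your sketch is essentially the argument McMullen gives in the cited reference: extract the generator of $\mathcal{O}_D$ as a $2\times 2$ matrix of isogenies, read off $(e,\ell,m)$ from the trace/degree data and the cyclic factorisation of the off-diagonal isogeny, and then identify the orbit with $\GL_2^+(\mathbb{R})/\Gamma_0(m)$ via the standard $Y_0(m)$ parametrisation of cyclic $m$-isogenies. One point to tighten: your primitivity argument for $\gcd(e,\ell)=1$ assumes you can divide $T-\tfrac{a+d}{2}$ by $k$ and stay integral, but $\tfrac{a+d}{2}$ need not be an integer when $D$ is odd; the correct statement is that $\gcd(e,\ell)>1$ would make $\mathbb{Z}[T]$ sit inside a strictly larger quadratic order of smaller discriminant, contradicting $\mathbb{Z}[T]=\mathcal{O}_D$. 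Otherwise the outline is sound.
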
  
Let $\lambda=(e+\sqrt{D})/2$. Define a pair of lattices in $\mathbb{C}$ by 
\begin{equation}\label{effective2023.9.13}
 \Lambda_{1}=\mathbb{Z}(\ell m,0)\oplus\mathbb{Z}(0,\ell),\ \ \         \Lambda_{2}=\mathbb{Z}(\lambda,0)\oplus\mathbb{Z}(0,\lambda).
\end{equation} 
Let $(E_{i},\omega_{i})=(\mathbb{C}/\Lambda_{i},dz)$ be the corresponding forms of genus $1$, and let 
\[(A,\omega)=(E_{1}\times E_{2},\omega_{1}+\omega_{2}).\]
Then $(A,\omega)$ is an eigenform with invariant $(e,\ell,m)$, and we refer to it as the \textit{prototypical example}\index{prototypical example} of type $(e,\ell,m)$.
\begin{cor}\label{effective2023.9.11} 
   Every eigenform $(E_{1}\times E_{2},\omega)\in\Omega Q_{D}$ is equivalent, under the action of $\GL_{2}^{+}(\mathbb{R})$, to a unique prototypical example.
\end{cor}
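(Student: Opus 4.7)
The plan is to derive Corollary~\ref{effective2023.9.11} as a direct consequence of Theorem~\ref{effective2023.9.24}, with the main work going into verifying that the prototypical example of type $(e,\ell,m)$ built from the lattices in (\ref{effective2023.9.13}) actually lies in the orbit $\Omega Q_D(e,\ell,m)$, so that ``prototype'' as a combinatorial triple and ``prototype'' as an orbit label line up.

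First I would invoke Theorem~\ref{effective2023.9.24} to write
\[
\Omega Q_D \;=\; \bigsqcup_{(e,\ell,m)} \Omega Q_D(e,\ell,m),
\]
with the union ranging over triples satisfying (\ref{effective2023.9.14}), and each stratum a single closed $\GL_2^+(\mathbb{R})$-orbit isomorphic to $\GL_2^+(\mathbb{R})/\Gamma_0(m)$. Existence in Corollary~\ref{effective2023.9.11} is then immediate: any $(E_1\times E_2,\omega)\in \Omega Q_D$ lies in exactly one such orbit, so it suffices to exhibit one distinguished representative per orbit and check that the $(e,\ell,m)$ associated to that representative matches the orbit label.

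Next I would verify that the prototypical example $(A,\omega)=(E_1\times E_2,\omega_1+\omega_2)$ coming from the lattices $\Lambda_1=\mathbb{Z}(\ell m,0)\oplus\mathbb{Z}(0,\ell)$ and $\Lambda_2=\mathbb{Z}(\lambda,0)\oplus\mathbb{Z}(0,\lambda)$ with $\lambda=(e+\sqrt{D})/2$ is an eigenform with invariants $(e,\ell,m)$. Here one computes the action of multiplication by $\lambda$ on $\Lambda_1\oplus\Lambda_2$: on the $\Lambda_2$ factor it is just multiplication by $\lambda$, while on $\Lambda_1$ the proper self-adjoint endomorphism of discriminant $D$ produced by the recipe in \cite{mcmullen2005teichmullerDiscriminant} is the one whose matrix on the basis $((\ell m,0),(0,\ell))$ has characteristic polynomial $x^2-ex-\ell^2 m$. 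This gives a self-adjoint endomorphism of $H_1(A;\mathbb{Z})$ generating $\mathcal{O}_D$ for which $\omega_1+\omega_2$ is an eigenform, so $(A,\omega)\in \Omega Q_D(e,\ell,m)$ by the very definition of the invariants.

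For uniqueness, I would argue that distinct prototypes give distinct orbits: the invariants $(e,\ell,m)$ are $\GL_2^+(\mathbb{R})$-invariant by construction (they are read off from the lattice of relative periods of the eigenform decomposition, up to the $\GL_2^+(\mathbb{R})$-symmetry used to place $\Lambda_2$ in the normalized form), so two prototypical examples with different triples cannot be $\GL_2^+(\mathbb{R})$-equivalent. Combined with the decomposition from Theorem~\ref{effective2023.9.24}, this pins down exactly one prototypical representative per orbit. The main obstacle is the bookkeeping in the previous step: one must verify carefully that the combinatorial normalization (\ref{effective2023.9.14})--namely $\ell,m>0$ and $\gcd(e,\ell)=1$--coincides with the normalization implicit in choosing the lattices (\ref{effective2023.9.13}), so that the map from prototypes to orbits is a genuine bijection rather than a surjection with hidden identifications; this is essentially the content of \cite[\S 2]{mcmullen2005teichmullerDiscriminant} and can be cited.
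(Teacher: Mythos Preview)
Your proposal is correct and follows the same approach as the paper: the corollary is stated immediately after Theorem~\ref{effective2023.9.24} and the construction of the prototypical example, with no separate proof, so it is intended to follow exactly as you describe---one orbit per prototype triple, and the prototypical example sits in its orbit by construction. Your additional verification that the lattices in (\ref{effective2023.9.13}) yield an eigenform with the correct invariants $(e,\ell,m)$ is just an elaboration of what the paper asserts without proof in the sentence preceding the corollary.
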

Moreover, we can assign a prototypical splitting to a quadruple of integers $(a,b,c,e)$.
First, we 
 say a quadruple of integers $(a,b,c,e)$ is a \textit{prototype}\index{prototype} of discriminant $D$, if   
\begin{alignat}{5}
 &  D=e^{2}+4bc,\ \ \ & & 0\leq a<\gcd(b,c),\ \ \  & &   c+e<b, \nonumber\\
& b>0, & & c>0, &  & \gcd(a,b,c,e)=1. \label{closing2024.3.9}
\end{alignat} 
We then assign a prototypical splitting to a prototype quadruple $(a,b,c,e)$ as follows. Let 
\[
 \Lambda_{1}=\mathbb{Z}(b,0)\oplus\mathbb{Z}(a,c),\ \ \         \Lambda_{2}=\mathbb{Z}(\lambda,0)\oplus\mathbb{Z}(0,\lambda)\]
 and $\lambda=(e+\sqrt{D})/2$ and $D=e^{2}+4bc$ (see Figure \ref{closing2024.3.7}).
\begin{figure}[H]
\centering

\tikzset{every picture/.style={line width=0.75pt}} 

\begin{tikzpicture}[x=0.75pt,y=0.75pt,yscale=-1,xscale=1]

\draw    (165,71) -- (301.11,71) ;
\draw    (165,71) -- (121.11,140.67) ;
\draw [shift={(121.11,140.67)}, rotate = 122.21] [color={rgb, 255:red, 0; green, 0; blue, 0 }  ][fill={rgb, 255:red, 0; green, 0; blue, 0 }  ][line width=0.75]      (0, 0) circle [x radius= 3.35, y radius= 3.35]   ;
\draw    (301.11,71) -- (257.22,140.67) ;
\draw [shift={(257.22,140.67)}, rotate = 122.21] [color={rgb, 255:red, 0; green, 0; blue, 0 }  ][fill={rgb, 255:red, 0; green, 0; blue, 0 }  ][line width=0.75]      (0, 0) circle [x radius= 3.35, y radius= 3.35]   ;
\draw [shift={(301.11,71)}, rotate = 122.21] [color={rgb, 255:red, 0; green, 0; blue, 0 }  ][fill={rgb, 255:red, 0; green, 0; blue, 0 }  ][line width=0.75]      (0, 0) circle [x radius= 3.35, y radius= 3.35]   ;
\draw    (121.11,140.67) -- (257.22,140.67) ;
\draw    (121.11,140.67) -- (205.44,140.67) ;
\draw    (121.11,225) -- (121.11,140.67) ;
\draw    (121.11,225) -- (205.44,225) ;
\draw [shift={(121.11,225)}, rotate = 0] [color={rgb, 255:red, 0; green, 0; blue, 0 }  ][fill={rgb, 255:red, 0; green, 0; blue, 0 }  ][line width=0.75]      (0, 0) circle [x radius= 3.35, y radius= 3.35]   ;
\draw    (205.44,225) -- (205.44,140.67) ;
\draw [shift={(205.44,140.67)}, rotate = 270] [color={rgb, 255:red, 0; green, 0; blue, 0 }  ][fill={rgb, 255:red, 0; green, 0; blue, 0 }  ][line width=0.75]      (0, 0) circle [x radius= 3.35, y radius= 3.35]   ;
\draw [shift={(205.44,225)}, rotate = 270] [color={rgb, 255:red, 0; green, 0; blue, 0 }  ][fill={rgb, 255:red, 0; green, 0; blue, 0 }  ][line width=0.75]      (0, 0) circle [x radius= 3.35, y radius= 3.35]   ;
\draw    (165,71) -- (249.33,71) ;
\draw [shift={(249.33,71)}, rotate = 0] [color={rgb, 255:red, 0; green, 0; blue, 0 }  ][fill={rgb, 255:red, 0; green, 0; blue, 0 }  ][line width=0.75]      (0, 0) circle [x radius= 3.35, y radius= 3.35]   ;
\draw [shift={(165,71)}, rotate = 0] [color={rgb, 255:red, 0; green, 0; blue, 0 }  ][fill={rgb, 255:red, 0; green, 0; blue, 0 }  ][line width=0.75]      (0, 0) circle [x radius= 3.35, y radius= 3.35]   ;

\draw (145,45) node [anchor=north west][inner sep=0.75pt]    {$( a,c)$};
\draw (267,135) node [anchor=north west][inner sep=0.75pt]    {$( b,0)$};
\draw (105,180) node [anchor=north west][inner sep=0.75pt]    {$\lambda$};
\draw (158,205) node [anchor=north west][inner sep=0.75pt]    {$\lambda$};

\end{tikzpicture}

  \caption{Prototypical splitting of type $(a,b,c,e)$.}
\label{closing2024.3.7}
\end{figure}
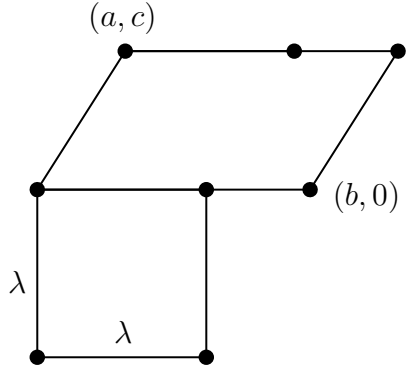
\noindent
Then the splittings $(X,\omega)=\Lambda_{1}\stackrel[{[0,v]}]{}{\#} \Lambda_{2}$ is said to be a \textit{prototypical splitting of type $(a,b,c,e)$}\index{prototypical splitting}. 
   In {\cite[\S3]{mcmullen2005teichmullerDiscriminant}}, McMullen  showed that all prototypical splittings are well-defined, and all other  splittings of $\Omega W^{s}_{D}$ can be generated by these prototypes via $\GL_{2}^{+}(\mathbb{R})$-action. In particular, one may check (\ref{closing2024.3.9}) that  
   \begin{itemize}
     \item  For $D\equiv 0\bmod 4$,  the Weierstrass curve $W_{D}$  contains a   prototypical splitting
         \begin{equation}\label{closing2024.3.11}
           x_{D}=\Lambda_{1}(D) \stackrel[I(D)]{}{\#} \Lambda_{2}(D)\in \Omega W_{D}
         \end{equation} 
          of type $(0,D/4,1,0)$. Under the projection map (\ref{closing2024.3.10}), $\Pi(x_{D})$ is of type $(0,1,D)$.
     \item For  $D\equiv 1\bmod 4$, $\epsilon=0,1$, there are    splittings 
  \[x_{D}^{(\epsilon)}\coloneqq \Lambda_{1}^{(\epsilon)}(D) \stackrel[I^{(\epsilon)}(D)]{}{\#} \Lambda_{2}^{(\epsilon)}(D)\in \Omega W_{D}^{\epsilon}\]
  of type $(0,1,(D-1)/4,(-1)^{\epsilon})$. Under the projection map,    (\ref{closing2024.3.10}), $\Pi(x_{D}^{(\epsilon)})$ is of type $( (-1)^{\epsilon},1,(D-1)/4)$.
   \end{itemize} 

\subsection{Density of Teichm\"{u}ller curves}
 After  constructing certain splittings, we are able to observe the density of Teichm\"{u}ller curves.  First, we recall the density of periodic $G$-orbit on the homogeneous space $G/\Gamma\times G/\Gamma$. Let 
  \[K(\delta)\coloneqq\{x\in G/\Gamma\times G/\Gamma:\inj(x)\geq \delta\} \]
  where $\inj(x)$ denotes the injectivity radius of $x$. We also abuse notation and refer to $d_{G}(\cdot,\cdot)$  as the metric on $G/\Gamma\times G/\Gamma$ induced by the right-invariant metric on $G\times G$.
 \begin{thm}[{\cite[Theorem 1.3]{lindenstrauss2023polynomial}}]\label{effective2022.2.12} \hypertarget{2024.12.k8} 
  Let $Y\subset G/\Gamma\times G/\Gamma$ be a periodic $G$-orbit in $G/\Gamma\times G/\Gamma$.  
  Then there exist $\kappa_{8}>0$ and $C_{11}>0$ such that 
  for every $z^{\ast}\in K(\vol(Y)^{-\hyperlink{2024.12.k8}{\kappa_{8}}})$, we have \hypertarget{2024.3.C11}
  \[d_{G}(z^{\ast},Y)\leq \hyperlink{2024.3.C11}{C_{11}}\vol(Y)^{-\hyperlink{2024.12.k8}{\kappa_{8}}}\] 
  where  $\vol(Y)$ is the volume of $Y$.
\end{thm}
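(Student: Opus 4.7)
The plan is to deduce Theorem \ref{effective2022.2.12} from an effective equidistribution of the natural $G$-invariant probability measure $\mu_Y$ carried by the periodic orbit $Y$, followed by a bump-function density argument. Since $G = \SL_2(\mathbb{R})$ is simple and acts diagonally on $G/\Gamma \times G/\Gamma$, every periodic $G$-orbit has the form $Y = G.(\Gamma, g_0 \Gamma)$ for some $g_0$ in the commensurator $\Comm_G(\Gamma) = \SL_2(\mathbb{Q})$. The $G$-stabilizer of the basepoint is the finite-index subgroup $\Gamma_{g_0} = \Gamma \cap g_0 \Gamma g_0^{-1}$, and $\vol(Y) \asymp [\Gamma : \Gamma_{g_0}]$; the measure $\mu_Y$ is the pushforward of Haar measure on a fundamental domain for $\Gamma_{g_0}$.

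The analytic core of the proof is to establish, for every $f \in C_c^\infty(G/\Gamma \times G/\Gamma)$, the effective equidistribution
\[
\left| \int f \, d\mu_Y - \int f \, d\mu \right| \leq C \, \mathcal{S}_1(f) \cdot \vol(Y)^{-\kappa},
\]
where $\mu$ is the Haar probability measure on $G/\Gamma \times G/\Gamma$ and $\kappa>0$ depends only on the spectral gap. This is obtained by expressing the orbital integral $\int f \, d\mu_Y$ as a sum over $\gamma \in \Gamma_{g_0}$ in a thickened fundamental domain, and then invoking the effective matrix-coefficient decay (\ref{closing2024.2.2}) provided by the uniform spectral gap (Proposition \ref{closing2024.1.5}); Lemma \ref{closing2024.2.6} supplies the bi-$K$-invariant Harish-Chandra estimate needed to convert pointwise mixing into equidistribution with a polynomial rate in $\vol(Y)$. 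Selberg's uniform spectral gap for congruence subgroups of $\SL_2(\mathbb{Z})$ ensures $\kappa$ may be chosen independently of $Y$.

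Once the effective equidistribution is in hand, the density statement follows by testing against a bump. Set $r = \vol(Y)^{-\kappa_8}$ and let $z^* \in K(r)$. Because the injectivity radius at $z^*$ exceeds $r$, one can choose a nonnegative $f \in C_c^\infty(G/\Gamma \times G/\Gamma)$ supported in $B(z^*, r/2)$ satisfying $\int f \, d\mu \asymp r^6$ (using $\dim_{\mathbb{R}}(G/\Gamma \times G/\Gamma) = 6$) and $\mathcal{S}_1(f) \ll r^{2}$. Plugging into the effective equidistribution gives
\[
\int f \, d\mu_Y \;\geq\; c\, r^6 - C \, r^{2} \, \vol(Y)^{-\kappa}.
\]
The right-hand side is strictly positive as soon as $r^{4} \gg \vol(Y)^{-\kappa}$, i.e. whenever $\kappa_8 < \kappa/4$. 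For such a choice of $\kappa_8$ the orbit $Y$ must meet the support of $f$, and this yields $d_G(z^*, Y) \leq r/2$, which is the desired conclusion after adjusting $C_{11}$.

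The principal obstacle is upgrading the pointwise matrix-coefficient estimate (\ref{closing2024.2.2}) to the equidistribution inequality above with a rate polynomial in $\vol(Y)$ and uniform across the whole family of periodic orbits. The uniform Selberg gap controls $\kappa_7$ independently of $Y$, but propagating this through the orbital integral requires an effective lattice-point count for $\Gamma_{g_0}$ in balls $B_G(T)$ balanced against $\vol(Y)$; this is of Eskin--McMullen/Duke--Rudnick--Sarnak type and cannot be imported from Corollary \ref{closing2024.3.18}, which is designed for Fuchsian groups that fail to be Zariski dense, whereas $\Gamma_{g_0}$ is itself Zariski dense in $G$. The noncompactness of $G/\Gamma \times G/\Gamma$ at the cusps is the reason one must restrict to the thick part, and this is encoded precisely in the hypothesis $z^* \in K(\vol(Y)^{-\kappa_8})$.
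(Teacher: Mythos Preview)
The paper does not supply its own proof of this statement: Theorem~\ref{effective2022.2.12} is quoted verbatim as \cite[Theorem 1.3]{lindenstrauss2023polynomial} and is used as a black box in the proof of Proposition~\ref{closing2024.3.22}. There is therefore nothing in the paper to compare your argument against.

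That said, your sketch is a reasonable outline of the classical spectral route (Hecke-orbit equidistribution via Selberg's gap and a bump-function test), and the final density step is fine. Two remarks are worth making. First, the method actually used in \cite{lindenstrauss2023polynomial} is not the spectral/mixing argument you describe; it is the effective unipotent-dynamics machinery (closing lemmas and Margulis functions) that the present paper is itself adapting to $\mathcal{H}(2)$. So your approach, while legitimate for this particular $\SL_2\times\SL_2$ setting, is genuinely different from the cited source. Second, the step you flag as the ``principal obstacle'' is indeed where the real work lies: turning the matrix-coefficient bound (\ref{closing2024.2.2}) into an equidistribution rate that is \emph{polynomial in $\vol(Y)$} requires relating $[\Gamma:\Gamma_{g_0}]$ to the size of $g_0$ (essentially a Hecke-eigenvalue bound), and this is not something you can extract from the tools assembled in Section~\ref{closing2024.3.27}. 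Your proposal correctly identifies this gap but does not close it, so as written it remains a plan rather than a proof.
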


Now we are able to show the connection between the density and the discriminant of  Teichm\"{u}ller curves in $\mathcal{H}(2)$.

 \begin{prop}\label{closing2024.3.22}\hypertarget{2024.12.k9} 
 There exist  $\kappa_{9}>0$, $C_{12}>0$ and  $D_{0}>0$ such that   
    \[\disc(\Omega W_{D})\geq  \hyperlink{2024.3.C12}{C_{12}}D^{\hyperlink{2024.12.k9}{\kappa_{9}}}\]
  \hypertarget{2024.3.C12} for $D\geq D_{0}$, where $\disc(\cdot)$ is given in Definition \ref{closing2024.3.6}.
 \end{prop}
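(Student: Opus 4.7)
The plan is to exploit McMullen's correspondence from Section \ref{closing2024.3.28} between splittings of points in $\Omega W_D$ and periodic $\GL_2^+(\mathbb{R})$-orbits in $\Omega Q_D$, combined with the effective density bound for periodic $G$-orbits in $G/\Gamma \times G/\Gamma$ recorded in Theorem \ref{effective2022.2.12}. By Theorem \ref{effective2023.9.24}, the space $\Omega Q_D$ decomposes as a disjoint union of closed $\GL_2^+(\mathbb{R})$-orbits $\Omega Q_D(e,\ell,m) \cong \GL_2^+(\mathbb{R})/\Gamma_0(m)$ indexed by prototypes with $D = e^2 + 4\ell^2 m$. After normalizing area, each such orbit becomes a periodic $G$-orbit inside $G/\Gamma \times G/\Gamma$ whose volume is comparable to the index $[\SL_2(\mathbb{Z}) : \Gamma_0(m)] \asymp m$.

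First, I would fix the prototypical splitting $x_D$ from (\ref{closing2024.3.11}) when $D \equiv 0 \pmod 4$, and $x_D^{(\epsilon)}$ when $D \equiv 1 \pmod 4$. In either case the splitting realizes a prototype with level $m \asymp D$, so applying the covering $\Pi$ from (\ref{closing2024.3.10}) produces a point on a periodic $G$-orbit $Y \subset G/\Gamma \times G/\Gamma$ with $\vol(Y) \asymp D$. The key point is that every point of $\Omega W_D$ is obtained from a prototypical example under the $\GL_2^+(\mathbb{R})$-action by Corollary \ref{effective2023.9.11}, so this single orbit controls the whole Teichmüller curve modulo the splitting parameter.

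Next, I would apply Theorem \ref{effective2022.2.12} to $Y$: any point $z^\ast \in G/\Gamma \times G/\Gamma$ with injectivity radius at least $\vol(Y)^{-\kappa_{8}}$ lies within $G$-distance $C_{11}\vol(Y)^{-\kappa_{8}} \asymp D^{-\kappa_{8}}$ of $Y$. Because both $\Pi$ and the connected sum map $\Phi$ from (\ref{two2023.6.14}) are $\GL_2^+(\mathbb{R})$-equivariant local covering maps, on a fixed compact set $K \subset \mathcal{H}_1^{(\eta)}(2)$ they are bi-Lipschitz with uniformly bounded distortion in the AGY norm. Transporting the density estimate back along $\Phi \circ \Pi^{-1}$ therefore shows that $\Omega W_D \cap K$ is $C D^{-\kappa_{9}}$-dense in $K$, with $\kappa_{9}$ determined by $\kappa_{8}$ and the distortion constants. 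Unwinding Definition \ref{closing2024.3.6}, this density at scale $D^{-\kappa_{9}}$ is precisely the statement $\disc(\Omega W_D) \geq C_{12} D^{\kappa_{9}}$ for all $D$ large enough.

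The main obstacle is controlling the splitting data $(v,\Lambda_1,\Lambda_2)$ as $D \to \infty$, so that the passage through $\Phi$ preserves uniform metric estimates: one must show that the prototype $x_D$ admits a splitting with bounded $|v|$ and bounded geometry, which follows from the explicit shape of $\Lambda_1(D), \Lambda_2(D)$ in (\ref{effective2023.9.13}) after an appropriate $\GL_2^+(\mathbb{R})$-normalization, but requires verifying that the normalized surface lies in $\mathcal{H}_1^{(\eta)}(2)$ for a fixed $\eta > 0$ independent of $D$. A secondary issue when $D \equiv 1 \pmod 8$ is keeping track of the spin components $W_D^\epsilon$ and matching them with the prototype $((-1)^\epsilon,1,(D-1)/4)$; this is handled by the decomposition of $\Omega W_D^s$ recorded after (\ref{closing2024.3.11}).
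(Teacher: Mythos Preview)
Your setup is essentially the same as the paper's: choose the prototypical splitting $x_D$, pass to the periodic $G$-orbit $Y_D\subset G/\Gamma\times G/\Gamma$ with $\vol(Y_D)\asymp D$, and apply Theorem~\ref{effective2022.2.12} to get density of $Y_D$ at scale $D^{-\kappa_8}$. The paper also handles your ``main obstacle'' the way you suggest, by first picking $x\in\Omega W_D\cap K'_\alpha$ via nondivergence (Theorem~\ref{effective2023.11.2}) so that all metric comparisons take place on a fixed compact set.

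The genuine gap is your last step. You write that ``unwinding Definition~\ref{closing2024.3.6}, this density at scale $D^{-\kappa_9}$ is precisely the statement $\disc(\Omega W_D)\geq C_{12}D^{\kappa_9}$.'' This is not what Definition~\ref{closing2024.3.6} says: the discriminant is defined as the height $\htt(\Lie(\Sp(p(\bar\iota(x)))))$ of a Lie subalgebra, and has no direct interpretation as a density. What density of $Y_D$ actually gives you, after pulling back through $\Pi$ and $\Phi$, is two points $x_1',x_2'\in\Omega W_D$ lying in \emph{distinct} local $\GL_2^+(\mathbb{R})$-orbit components of a single small ball, with $d(x_1',x_2')\ll D^{-\kappa_8}$. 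The paper then invokes the discreteness result Corollary~\ref{closing2024.3.15} (equivalently Proposition~\ref{closing2024.3.3}): two distinct local components of Teichm\"uller curves with discriminant $\leq D'$ are separated by $\gg (D')^{-1}$, so finding them at distance $\ll D^{-\kappa_8}$ forces $\disc(\Omega W_D)\gg D^{\kappa_8}$. Without this step you have not linked the geometric density to the arithmetic height, and the argument does not close.

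A secondary point: density of $Y_D$ in $G/\Gamma\times G/\Gamma$ does not pull back to density of $\Omega W_D$ in $\mathcal{H}_1(2)$, since $\Phi$ also involves the slit parameter $v$. What you actually need (and what the paper extracts) is that the chosen point $x^\ast=(g\Lambda_1(D),g\Lambda_2(D))$ itself lies on $Y_D$ with good injectivity radius, so that $Y_D$ must revisit its neighborhood in a second sheet; this is the self-intersection that feeds into Corollary~\ref{closing2024.3.15}.
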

 \begin{proof}    Suppose that $D\equiv 0\bmod 4$. Then by  Theorem \ref{effective2023.9.24}, the splitting (\ref{closing2024.3.11}) (after ignoring the areas of tori) generates a periodic $G$-orbit
\[Y_{D}\coloneqq G.(\Lambda_{1}(D) ,\Lambda_{2}(D))\subset G/\Gamma \times G/\Gamma,\ \ \ \text{ and }\ \ \ \vol(Y_{D})\geq D.\]
By Theorem \ref{effective2022.2.12}, we conclude that for every $z^{\ast}\in  K(D^{-\hyperlink{2024.12.k8}{\kappa_{8}}})$, we have 
\begin{equation}\label{closing2024.3.12}
 d_{G}(z^{\ast},Y_{D})\leq \hyperlink{2024.3.C11}{C_{11}}\vol(Y)^{-\hyperlink{2024.12.k8}{\kappa_{8}}}.
\end{equation} 
 
 On the other hand, let $K_{2}^{\prime}\subset\mathcal{H}_{1}(2)$ be as in Theorem \ref{effective2023.11.2}. Then there exists a  large $D_{1}> 1$ such that $K_{2}^{\prime}\subset\mathcal{H}_{1}^{(D_{1}^{-1})}(2)$ and  $\Omega W_{D}\cap K_{2}^{\prime}\neq \emptyset$ for all $D\geq D_{1}$.
 Fix $x\in \Omega W_{D}\cap K_{2}^{\prime}$. Then there exists $g\in G$ such that 
 \[x=gx_{D}=g\Lambda_{1}(D) \stackrel[gI(D)]{}{\#} g\Lambda_{2}(D).\]

 Recall that a splitting consists of two tori and a slit (see e.g. (\ref{two2023.6.16})). Let $^{\ast}:S(2)\rightarrow G/\Gamma\times G/\Gamma$ be the forgetting map from the space of splittings to the space of tori defined by
 \[^{\ast}:(\Lambda_{1},\Lambda_{2},v)\mapsto(\Lambda_{1},\Lambda_{2}).\]
 Let $x^{\ast}\coloneqq(g\Lambda_{1}(D),g\Lambda_{2}(D))\in G/\Gamma\times G/\Gamma$, and let  $\|\cdot\|^{\prime}$ be a norm on  $H^{1}(M,\Sigma;\mathbb{C})$. Then $\|\cdot\|^{\prime}$ is comparable to $d_{G}(\cdot,\cdot)$, in the sense that  
 \[ \|y-z\|^{\prime}\asymp_{D_{2}} d_{G}(y^{\ast},z^{\ast})\]
 for $y,z\in B(x,D_{2}^{- 1})$ for some $D_{2}\geq D_{1}$. In addition, by  Lemma \ref{effective2023.11.4} and the fact that $x\in\mathcal{H}_{1}^{(D_{1}^{-1})}(2)$, we get that  
 \begin{equation}\label{closing2024.3.14}
  \|y-z\|_{x}\asymp_{D_{2}} d_{G}(y^{\ast},z^{\ast})
 \end{equation}
 for $y,z\in B(x,D_{2}^{- 1})$.
 
 Therefore,  by (\ref{closing2024.3.14}) and $x\in\mathcal{H}_{1}^{(D_{1}^{-1})}(2)$ again, there exists $D_{3}\geq D_{2}$ so that for  $D\geq D_{3}$, we have 
  \[x^{\ast}=(g\Lambda_{1}(D),g\Lambda_{2}(D))\in     K(2\hyperlink{2024.3.C11}{C_{11}} D^{-\hyperlink{2024.12.k8}{\kappa_{8}}}).\]
  Then by
 (\ref{closing2024.3.12}), the periodic $G$-orbit $Y_{D}$ must intersect $B_{G}(x^{\ast},2\hyperlink{2024.3.C11}{C_{11}} D^{-\hyperlink{2024.12.k8}{\kappa_{8}}})$ at least twice. Thus, there exist $h_{1},h_{2}\in G$  such that $h_{1}x^{\ast}$ and $h_{2}x^{\ast}$ are in different connected components of the intersection of $Y_{D}$ and $B_{G}(x^{\ast},2\hyperlink{2024.3.C11}{C_{11}} D^{-\hyperlink{2024.12.k8}{\kappa_{8}}})$, and that
 \[d_{G}(h_{1}x^{\ast},h_{2}x^{\ast})\leq 4\hyperlink{2024.3.C11}{C_{11}} D^{-\hyperlink{2024.12.k8}{\kappa_{8}}}.\]
 
 Going back to $\mathcal{H}(2)$, we see that there exist  $h_{1}x,h_{2}x\in G.x_{D}$ such that in period coordinates,
 \[\|x-h_{i}x\|_{x} \ll_{D_{2}} 2\hyperlink{2024.3.C11}{C_{11}} D^{-\hyperlink{2024.12.k8}{\kappa_{8}}},\ \ \ \|h_{1}x-h_{2}x\|_{x} \ll_{D_{2}} 4\hyperlink{2024.3.C11}{C_{11}} D^{-\hyperlink{2024.12.k8}{\kappa_{8}}}.\]
 By (\ref{closing2024.3.10}), there exist $C=C(D_{3})>0$ and $x_{1}^{\prime},x_{2}^{\prime}\in B(x,C D^{-\hyperlink{2024.12.k8}{\kappa_{8}}})\cap \Omega W_{D}$ with the same period coordinates as $h_{1}x,h_{2}x$, such that they are in different connected components of the intersection of $\Omega W_{D}$ and $B(x,C D^{-\hyperlink{2024.12.k8}{\kappa_{8}}})\subset B(x,D_{2}^{- 1})$. Therefore, by Corollary \ref{closing2024.3.15}, we conclude that $\disc(\Omega W_{D})\gg  D^{\hyperlink{2024.12.k8}{\kappa_{8}}}$. By a similar argument, one may  obtain the same inequalities    for odd $D$.  
 \end{proof}

\begin{cor}\label{closing2024.12.5} Let   \hypertarget {2024.12.C13} $T>0$, $y\in\mathcal{H}_{1}(2)$. Suppose that  $y$ generates \hypertarget {2024.12.k10} a Teichm\"{u}ller curve $\Omega W_{D}(y)$, and $\{g^{\ast}_{0},\ldots,g^{\ast}_{l}\}\subset\SL(y)\cap B_{G}(T) $
 generates the Veech group $\SL(y)$.  Then there exist $\kappa_{10}>0$ and  $C_{13}>1$ such that 
the discriminant 
\[D\leq  \hyperlink{2024.12.C13}{C_{13}} T^{\hyperlink{2024.12.k10}{\kappa_{10}}} .\]   
\end{cor}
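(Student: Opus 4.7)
The strategy is to produce a short word in the generators $g_{0}^{\ast},\ldots,g_{l}^{\ast}$ whose trace arithmetically detects $\sqrt{D}$, and then to extract from the $G$-norm bound $T$ a polynomial bound on $D$ via the two real embeddings of $\mathcal{O}_{D}$.

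First I would invoke McMullen's classification of Teichm\"{u}ller curves in $\mathcal{H}(2)$ (Theorems \ref{effective2023.9.12} and \ref{effective2023.9.24}, together with the prototypical splitting in (\ref{closing2024.3.11})) to conclude that $\SL(y)$ is a Fuchsian lattice with trace field $\mathbb{Q}(\sqrt{D})$ and that the traces of all of its elements lie in the quadratic order $\mathcal{O}_{D}=\mathbb{Z}[\lambda]$ with $\lambda=(e+\sqrt{D})/2$; in particular $\SL(y)$ contains elements with trace in $\mathcal{O}_{D}\setminus\mathbb{Z}$. Next, by the classical invariant-theoretic description of the $\SL_{2}$-character variety (Fricke--Vogt, extended by Horowitz--Magnus--Procesi--Sikora) the trace field of any finitely generated subgroup of $\SL_{2}(\mathbb{R})$ is generated over $\mathbb{Q}$ by traces of products of length at most three in any finite generating set. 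Applied to $\{g_{0}^{\ast},\ldots,g_{l}^{\ast}\}$ this produces a word $h=g_{i_{1}}^{\ast}g_{i_{2}}^{\ast}g_{i_{3}}^{\ast}$ (possibly with fewer factors) of $G$-norm at most $T^{3}$ whose trace $\tr(h)=p+q\lambda$ satisfies $p,q\in\mathbb{Z}$, $q\neq 0$; consequently $|\tr(h)|\leq 2\|h\|_{G}\leq 2T^{3}$.

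The decisive step is a matching upper bound for the Galois-conjugate trace $\tr(h)^{\sigma}=p+q\lambda^{\sigma}$: once $|\tr(h)^{\sigma}|\ll T^{3}$ is in hand, the identity
\[
|q|\sqrt{D}\;=\;\bigl|\tr(h)-\tr(h)^{\sigma}\bigr|\;\leq\;|\tr(h)|+|\tr(h)^{\sigma}|\;\ll\;T^{3},
\]
combined with $|q|\geq 1$, yields $D\ll T^{6}$, proving the corollary with $\kappa_{10}=6$. To bound $|\tr(h)^{\sigma}|$ I would use the real-multiplication decomposition of $H^{1}(M_{y};\mathbb{R})$ coming from the splitting $y=E_{1}\stackrel[I]{}{\#}E_{2}$ (Theorem \ref{effective2023.9.12}): the two $\mathcal{O}_{D}$-eigenspaces correspond to the summands $E_{1},E_{2}$, and $h$ acts on them as $h$ and $h^{\sigma}$ respectively. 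Since $y\in\mathcal{H}_{1}(2)$ forces $\Area(E_{1})+\Area(E_{2})=1$, the AGY norm on $y$ controls both blocks simultaneously, which should yield $\|h^{\sigma}\|_{G}\lesssim\|h\|_{G}$ with an absolute multiplicative constant and hence the desired trace estimate.

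The principal technical obstacle is making this last estimate rigorous and uniform in $D$: one must carefully track how the AGY norm (defined via saddle connections on $y$, including those crossing the slit $I$) interacts with the real-multiplication eigenspace decomposition, and verify that the implicit constant does not deteriorate as $D\to\infty$. A convenient device is to transport the computation to the prototypical splitting $x_{D}$ of (\ref{closing2024.3.11}), where the period data of $\Lambda_{1}(D),\Lambda_{2}(D)$ is explicit and the stabilizer of $\Pi(x_{D})$ in $\GL_{2}^{+}(\mathbb{R})$ is commensurable with $\Gamma_{0}(D)$ via Theorem \ref{effective2023.9.24}, so that the Galois action becomes visible at the level of matrix coefficients; since $\tr$ is conjugation-invariant, any trace bound proved at $x_{D}$ transfers verbatim to $y$.
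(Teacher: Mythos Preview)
Your trace–field strategy is a genuinely different route from the paper's, but it has a real gap that is not merely a matter of making estimates uniform.

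\textbf{How the paper argues.} The paper does \emph{not} go through the trace field at all. It first observes that generators of $\SL(y)$ in $B_G(T)$ give mapping classes whose action on $H^1(M;\mathbb{Z})$ is polynomially bounded in $T$, which yields $\disc(\bar\iota(y))\ll T^{\kappa_8}$ for the height--type discriminant of Definition~\ref{closing2024.3.6}. It then invokes Proposition~\ref{closing2024.3.22}, whose proof uses the Lindenstrauss--Mohammadi--Wang effective density on $G/\Gamma\times G/\Gamma$ (Theorem~\ref{effective2022.2.12}), to convert this into $D\ll T^{\kappa_{10}}$. The effective equidistribution input is precisely what lets the paper see the \emph{full} McMullen discriminant $D$, conductor included.

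\textbf{The gap in your approach.} Even if one grants $|\tr(h)^{\sigma}|\ll T^{3}$, your argument only bounds the fundamental discriminant of the trace field $\mathbb{Q}(\sqrt{D})$, not $D$ itself. Fricke's theorem produces a word $h$ of length $\le 3$ with $\tr(h)\notin\mathbb{Q}$; one then knows only that $\tr(h)$ is an algebraic integer in $\mathbb{Q}(\sqrt{D})$, so $\tr(h)-\tr(h)^{\sigma}$ is a nonzero integer multiple of $\sqrt{D_0}$ with $D_0$ the \emph{fundamental} discriminant. Thus you bound $D_0$, not $D$. Concretely, $W_5$ and $W_{20}$ (and all $W_{f^2\cdot 5}$) share the trace field $\mathbb{Q}(\sqrt{5})$, and for $D=d^2$ the trace field collapses to $\mathbb{Q}$; your argument cannot separate these. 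There is no Fricke--type statement guaranteeing that traces of \emph{short} words generate the order $\mathcal{O}_D$ over $\mathbb{Z}$, which is what you would need. The paper's height machinery is sensitive to the full integral lattice in $H^1$, and Proposition~\ref{closing2024.3.22} is exactly the device that recovers the conductor.

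\textbf{A secondary issue.} Your justification of $|\tr(h)^{\sigma}|\ll\|h\|$ via ``the AGY norm on $y$ controls both blocks'' does not work: the AGY norm is built from $\omega$--saddle connections and has no uniform (in $D$ or in $y$) comparison to the Galois--conjugate block, and the transport to the prototype $x_D$ does not help since the conjugating element can be arbitrarily large. The estimate \emph{is} true for hyperbolic $h$, but the correct reason is that the corresponding affine automorphism is pseudo--Anosov with dilatation $\mu\le\|h\|$, and Forni's spectral gap (Theorem~\ref{closing2024.12.6}) applied to the periodic Teichm\"uller geodesic forces every eigenvalue of $\phi^{*}$ on $H^1(M;\mathbb{R})$ to have modulus $\le\mu$, whence $|\tr(h)^{\sigma}|\le 2\mu$. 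This repairs the analytic step, but not the arithmetic conductor gap above.
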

\begin{proof}
If $\{g^{\ast}_{0},\ldots,g^{\ast}_{l}\}\subset\SL(y)\cap B_{G}(T) $
 generates the Veech group $\SL(y)$, then the corresponding elements in the mapping class group generate a subgroup $\Sp(p(\bar{\iota}(y)))$  so that  
the discriminant 
\[\disc(\bar{\iota}(y))\leq  C T^{ \hyperlink{2024.3.k8}{\kappa_{8}}}  \]
for some $\kappa_{8}>0$ and  $C>1$.

 Then by Proposition \ref{closing2024.3.22}, we conclude that the discriminant $D$ of the Teichm\"{u}ller curve satisfies  $D\leq ( \hyperlink{2024.3.C12}{C_{12}}^{-1}C)^{1/\hyperlink{2024.12.k9}{\kappa_{9}}} T^{ \hyperlink{2024.3.k8}{\kappa_{8}}/\hyperlink{2024.12.k9}{\kappa_{9}}}$. The consequence follows from letting $\hyperlink{2024.12.C13}{C_{13}} =( \hyperlink{2024.3.C12}{C_{12}}^{-1}C)^{1/\hyperlink{2024.12.k9}{\kappa_{9}}} $ and $\hyperlink{2024.12.k10}{\kappa_{10}}=\hyperlink{2024.3.k8}{\kappa_{8}}/\hyperlink{2024.12.k9}{\kappa_{9}}$.
\end{proof}

\section{Discreteness of Teichm\"{u}ller curves}\label{closing2024.3.5}
\subsection{Heights}
The theory of heights of Lie groups have been used extensively. In particular, it measures the arithmetic complexity of Lie groups. See e.g. \cite{einsiedler2009distribution,einsiedler2009effective,einsiedler2020effective} for more details. 

Let $W= \mathfrak{sl}_{2g+n-1}(\mathbb{R})$ be the Lie algebra of $\SL_{2g+n-1}(\mathbb{R})$, $B$ the Killing form of $W$. Then $W$ is a normed vector space defined over $\mathbb{Q}$ and has an integral lattice $W_{\mathbb{Z}}=\mathfrak{sl}_{2g+n-1}(\mathbb{Z})$.
\begin{defn}[Height] Given a subspace $V\subset W$, the \textit{height}\index{height} of $V$ is given by the norm
\[\htt(V)\coloneqq\|e_{1}\wedge\cdots\wedge e_{r}\|_{\wedge^{r} W}\]
where $e_{1},\ldots, e_{r}$ is a basis for $V\cap W_{\mathbb{Z}}$, and $\|\cdot\|$ is derived from the Killing form $B$.
\end{defn}

  Let $H\subset \SL_{2g+|\Sigma|-1}(\mathbb{R})$ be defined over $\mathbb{Q}$,   $\mathfrak{h}=\Lie(H)\subset W$   the Lie algebra of $H$, and $r=\dim\mathfrak{h}$.  
   Let $V\coloneqq\wedge^{r} W$, $V_{\mathbb{Z}}\coloneqq\wedge^{r} W_{\mathbb{Z}}$. Then there exists a natural $\SL_{2g+|\Sigma|-1}$-action on $V$ given  by
\begin{equation}\label{closing2024.1.1}
 (v_{1}\wedge \cdots\wedge v_{r}).\gamma=v_{1}\gamma\wedge \cdots\wedge v_{r}\gamma
\end{equation} 
for $v_{1},v_{2}\in H^{1}(M,\Sigma;\mathbb{R})$, $\gamma\in\SL_{2g+|\Sigma|-1}(\mathbb{R})$. It therefore leads to a $\Gamma$-action on $V$. Besides, $V_{\mathbb{Z}}$ is $\Gamma$-stable.
  
    Suppose that $\htt(\mathfrak{h})\leq D$. Then one may choose a basis $e_{1},\ldots,e_{r}$ for $\mathfrak{h}_{\mathbb{Z}}$ such that for all $1\leq i\leq r$,
    \begin{equation}\label{closing2024.2.7}
      \|e_{i}\|\leq D.
    \end{equation} 
    In fact, (\ref{closing2024.2.7}) follows from the lattice reduction theory, together with the fact that the lengths of elements of $W_{\mathbb{Z}}$ are bounded below.

 Now we set 
\[v_{\mathfrak{h}}\coloneqq\frac{e_{1}\wedge \cdots\wedge   e_{r}}{\|e_{1}\wedge \cdots\wedge e_{r}\|}\in V.\]
Suppose that two different $\mathfrak{h}_{1}$, $\mathfrak{h}_{2}$ with $\htt(\mathfrak{h}_{1}),\htt(\mathfrak{h}_{2})\leq D$. Then one can show that 
\begin{equation}\label{closing2024.2.8}
  \|v_{\mathfrak{h}_{1}}-v_{\mathfrak{h}_{2}}\|\gg  D^{-1}.
\end{equation}

\subsection{Grassmannian}
Let $p:H^{1}(M,\Sigma;\mathbb{R})\rightarrow H^{1}(M;\mathbb{R})$ be the forgetful map from relative to absolute cohomology.
Let $\mathcal{N}\subset\mathcal{H}(\alpha)$ be an affine invariant submanifold. Let $T\mathcal{N}\subset H^{1}(M,\Sigma;\mathbb{R})$ be the sublocal system of $H^{1}_{\rel}$ given by the period coordinates on $\mathcal{N}$. We have the short exact sequences
\[0\rightarrow (\ker p)\cap T\mathcal{N}\rightarrow T\mathcal{N}\rightarrow p(T\mathcal{N})\rightarrow0.\]
Let $n_{0}\in\mathcal{N}$ and let $\Gr^{\circ}(2,T\mathcal{N}_{n_{0}})\subset \wedge^{2} T\mathcal{N}_{n_{0}}/\mathbb{R}^{\times}$ be the Grassmannian  of real $2$-planes in $T\mathcal{N}_{n_{0}}$ whose projection to $H^{1}$ is a symplectically non-degenerate $2$-plane. Note that $\Gr^{\circ}(2,T\mathcal{N}_{n_{0}})$  is an open subset of the full Grassmannian of $2$-planes.

The reason we consider the Grassmannian $\Gr^{\circ}(2,T\mathcal{N}_{n_{0}})$ is that the set of $\GL_{2}^{+}(\mathbb{R})$ orbits near $n_{0}\in\mathcal{N}$ is locally modeled on $\Gr^{\circ}(2,T\mathcal{N}_{n_{0}})$.
Let $U$ a simply connected neighborhood of  $n_{0}$.
Let $\bar{\iota}:U\rightarrow\Gr^{\circ}(2,T\mathcal{N}_{n_{0}})$ be the map given by
\begin{equation}\label{closing2024.3.16}
  \bar{\iota}:(X,\omega)\mapsto\Span(\Ree(\omega),\Imm(\omega)).
\end{equation} 
For $T\in\Gr^{\circ}(2,T\mathcal{N}_{n_{0}})$, the fibers  $\bar{\iota}^{-1}(T)$ are connected components of the intersection of $\GL_{2}^{+}(\mathbb{R})$ orbits with $U$. This helps explain the motivation. 

Let $G_{T\mathcal{N}}\subset\GL(T\mathcal{N})$   be the subgroup which acts as the identity on $(\ker p)\cap T\mathcal{N}$ and by symplectic transformations on $p(T\mathcal{N})$, i.e.
\[G_{T\mathcal{N}}=\Sp(p(T\mathcal{N}))\ltimes\Hom(p(T\mathcal{N}),(\ker p)\cap T\mathcal{N}).\] 
In \cite{eskin2018algebraic}, Eskin, Filip and Wright showed that  $G_{T\mathcal{N}}$ is the algebraic hull of the Kontsevich-Zorich cocycle of $T\mathcal{N}$. 
The group $G_{T\mathcal{N}}$ acts transitively on $\Gr^{\circ}(2,T\mathcal{N}_{n_{0}})$, hence 
\begin{equation}\label{algebraic.hull2024.1.6}
  \Gr^{\circ}(2,T\mathcal{N}_{n_{0}})=\stab_{T}\backslash  G_{T\mathcal{N}}
\end{equation}
is a homogenous space, where $T\in\Gr^{\circ}(2,T\mathcal{N}_{n_{0}})$ and $\stab_{T}$ satisfies the short exact sequence
 \[0\rightarrow U\rightarrow\stab_{T}\rightarrow \Sp(p(T))\times\Sp(p(T)^{\perp})\rightarrow0\]
 for  the symplectic-orthogonal decomposition $(p(T\mathcal{N}))_{n_{0}}=p(T)\oplus p(T)^{\perp}$ and some unipotent subgroup $U$. 
 
 For arithmetic applications, we introduce the notion of discriminant of a plane in $T\mathcal{N}$.
 \begin{defn}[Discriminant]\label{closing2024.3.6} Let $T\in  \Gr^{\circ}(2,T\mathcal{N}_{n_{0}})$ be a plane of $T\mathcal{N}_{n_{0}}$. Then the \textit{discriminant}\index{discriminant} $\disc(T)$ of $T$ is defined by
 \[\disc(T)\coloneqq\htt(\Lie( \Sp(p(T)))).\]
 Similarly, we define the   \textit{discriminant} of a Teichm\"{u}ller curve $\mathcal{O}$ as the discriminant of the corresponding plane. That is, if $\mathcal{O}=G.x$ for some $x\in\mathcal{H}(\alpha)$, then 
 \begin{equation}\label{closing2024.3.1}
   \disc(\mathcal{O})\coloneqq\disc(\bar{\iota}(x)).
 \end{equation} 
 \end{defn}
 \begin{rem}
    Note that the mapping class group $\Gamma$ preserves $W_{\mathbb{Z}}$. Thus, the definition (\ref{closing2024.3.1}) is well-defined, i.e. independent of the choice of $x$.
 \end{rem}

Let $d_{G_{T\mathcal{N}}}(\cdot,\cdot)$ be a left-invariant metric on $G_{T\mathcal{N}}$. It induces a metric $\bar{d}_{G_{T\mathcal{N}}}(\cdot,\cdot)$ on $\stab_{T}\backslash  G_{T\mathcal{N}}$ and so on $\Gr^{\circ}(2,T\mathcal{N}_{n_{0}})$.
\begin{lem}\label{closing2024.3.2} \hypertarget{2024.3.C8}
   Let $D>0$, and $\Omega\subset G_{T\mathcal{N}}$  a fixed compact subset of $G_{T\mathcal{N}}$. Let $H\subset G_{T\mathcal{N}}$ is a subgroup with $\Lie(H)=\mathfrak{h}$. Suppose $g_{1},g_{2}\in\Omega$ are so that \[\htt(\Ad(g_{1})\mathfrak{h})\leq D,\ \ \   \htt(\Ad(g_{2})\mathfrak{h})\leq D,\ \ \ \Ad(g_{1})\mathfrak{h}\neq \Ad(g_{2})\mathfrak{h}.\]
    Then $d_{G_{T\mathcal{N}}}(g_{1},g_{2})\geq \hyperlink{2024.3.C8}{C_{8}(\Omega)}  D^{-1}$ for some $C_{8}(\Omega)>0$.
\end{lem}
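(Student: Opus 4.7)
My plan is to combine the height-discreteness estimate (\ref{closing2024.2.8}) with a Lipschitz bound for the adjoint action on the compact set $\Omega$. Let $r=\dim\mathfrak{h}$, and consider the Plücker embedding sending an $r$-plane $V$ to the line $[v_{V}]\in\mathbb{P}(\wedge^{r}W)$; after normalizing, (\ref{closing2024.2.8}) supplies the key input: if $V_{1}\neq V_{2}$ are rational subspaces with $\htt(V_{i})\leq D$, then the corresponding unit Plücker vectors satisfy
\[
\|v_{V_{1}}-v_{V_{2}}\|\gg D^{-1}.
\]
Applied to $V_{i}=\Ad(g_{i})\mathfrak{h}$, which have height $\leq D$ by hypothesis and are distinct by assumption, this yields a quantitative lower bound on their Plücker separation.

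The second ingredient is that the map $g\mapsto[\Ad(g)\mathfrak{h}]$, viewed through the Plücker embedding, is smooth: fixing any basis $X_{1},\ldots,X_{r}$ of $\mathfrak{h}$, the Plücker direction of $\Ad(g)\mathfrak{h}$ coincides with that of $(\Lambda^{r}\Ad(g))(X_{1}\wedge\cdots\wedge X_{r})$, which depends smoothly on $g\in G_{T\mathcal{N}}$. Restricting to the compact set $\Omega$, the map is therefore Lipschitz with some constant $L=L(\Omega)$, so
\[
\|v_{\Ad(g_{1})\mathfrak{h}}-v_{\Ad(g_{2})\mathfrak{h}}\|\leq L(\Omega)\,d_{G_{T\mathcal{N}}}(g_{1},g_{2}).
\]
Combining the two inequalities immediately yields $d_{G_{T\mathcal{N}}}(g_{1},g_{2})\gg L(\Omega)^{-1}D^{-1}$, which is the claim with $\hyperlink{2024.3.C8}{C_{8}(\Omega)}\asymp L(\Omega)^{-1}$.

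The only minor subtlety is the sign ambiguity when lifting from the Grassmannian to the unit sphere of $\wedge^{r}W$; this is handled either by working with the Fubini--Study distance on $\mathbb{P}(\wedge^{r}W)$ (equivalent to the Euclidean distance on the unit sphere for nearby points and bounded below by a positive constant otherwise), or by locally choosing signs so that the lift varies continuously with $g$. In both cases the two comparisons above remain valid. Beyond this cosmetic point, the proof is a short combination of the two inputs, so I foresee no serious obstacle.
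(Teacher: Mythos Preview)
Your proposal is correct and follows essentially the same approach as the paper: apply the height-discreteness bound (\ref{closing2024.2.8}) to get $\|v_{\Ad(g_{1})\mathfrak{h}}-v_{\Ad(g_{2})\mathfrak{h}}\|\gg D^{-1}$, then use that $g\mapsto \Ad(g)\mathfrak{h}$ is smooth (hence Lipschitz on the compact set $\Omega$) to transfer this to a lower bound on $d_{G_{T\mathcal{N}}}(g_{1},g_{2})$. Your remark on the sign ambiguity is a slight refinement over the paper, which simply works with the normalized Pl\"ucker vector $v_{\mathfrak{h}}$ without comment.
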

\begin{proof}
  Let $\mathfrak{h}$, $g_{1}$, $g_{2}$ be as stated. Then by (\ref{closing2024.2.8}), we have
\[\|v_{\Ad(g_{1})\mathfrak{h}}-v_{\Ad(g_{2})\mathfrak{h}}\|\geq  D^{-1}.\]
Note that the map $g\mapsto \Ad(g)\mathfrak{h}$ is a smooth map from $\Omega$ to $V=\wedge^{r} W$. Thus, it cannot increase distances by more than a constant factor depending on $\Omega$. In particular, we get that 
\[d_{G_{T\mathcal{N}}}(g_{1},g_{2})\gg_{\Omega}D^{-1}\]
as claimed.
\end{proof}
\begin{prop}[Discreteness of Teichm\"{u}ller curves]\label{closing2024.3.3} \hypertarget{2024.3.C9} Suppose $x_{1},x_{2}\in\mathcal{N}$ generate   Teichm\"{u}ller curves $\mathcal{O}_{1}=\GL_{2}^{+}(\mathbb{R}).x_{1}$, $\mathcal{O}_{2}=\GL_{2}^{+}(\mathbb{R}).x_{2}$ of discriminants 
\[\disc(\mathcal{O}_{1}), \disc(\mathcal{O}_{2})\leq D.\] Let $U$ be a simply connected neighborhood of $\mathcal{N}$.  Suppose  $x_{1},x_{2}\in U$ are lying in the different connected components of the  intersection of $\GL_{2}^{+}(\mathbb{R})$-orbits  with $U$.  Then 
\begin{equation}\label{closing2024.3.4}
  d(x_{1},x_{2})\geq  \hyperlink{2024.3.C9}{C_{9}(U)} D^{-1}
\end{equation}
for some $C_{9}(U)>0$.
\end{prop}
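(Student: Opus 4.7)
The plan is to push the problem from the stratum $\mathcal{N}$ down to the Grassmannian $\Gr^{\circ}(2,T\mathcal{N}_{n_{0}})$ via the map $\bar{\iota}$, apply the height estimate of Lemma \ref{closing2024.3.2}, and then pull back.

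First, I would use the fact that the fibers $\bar{\iota}^{-1}(T)$ are precisely the connected components of the intersection of the $\GL_{2}^{+}(\mathbb{R})$-orbits with the simply connected neighborhood $U$. Since by hypothesis $x_{1}$ and $x_{2}$ lie in different such connected components, we obtain the key fact
\[
\bar{\iota}(x_{1})\neq\bar{\iota}(x_{2})\ \ \text{in }\Gr^{\circ}(2,T\mathcal{N}_{n_{0}}).
\]
Using the homogeneous space description $\Gr^{\circ}(2,T\mathcal{N}_{n_{0}})=\stab_{T}\backslash G_{T\mathcal{N}}$ from (\ref{algebraic.hull2024.1.6}), I would choose representatives $g_{1},g_{2}\in G_{T\mathcal{N}}$ with $\bar{\iota}(x_{i})=\stab_{T}g_{i}$, lying in some fixed compact subset $\Omega$ determined by $U$. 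Let $\mathfrak{h}=\Lie(\Sp(p(T)))$ be the Lie algebra attached to the reference plane $T$. Then under the adjoint action,
\[
\Lie\bigl(\Sp(p(\bar{\iota}(x_{i})))\bigr)=\Ad(g_{i})\mathfrak{h},
\]
so the discriminant bound $\disc(\mathcal{O}_{i})\leq D$ becomes $\htt(\Ad(g_{i})\mathfrak{h})\leq D$.

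Next, since $\bar{\iota}(x_{1})\neq\bar{\iota}(x_{2})$, the two cosets $\stab_{T}g_{1}$ and $\stab_{T}g_{2}$ are distinct, and in particular $\Ad(g_{1})\mathfrak{h}\neq\Ad(g_{2})\mathfrak{h}$ (as the stabilizer of the plane determines the symplectic subgroup and conversely). Applying Lemma \ref{closing2024.3.2} with this data yields
\[
d_{G_{T\mathcal{N}}}(g_{1},g_{2})\geq \hyperlink{2024.3.C8}{C_{8}(\Omega)}D^{-1}.
\]
Descending to the quotient, this gives $\bar{d}_{G_{T\mathcal{N}}}(\bar{\iota}(x_{1}),\bar{\iota}(x_{2}))\gg_{U}D^{-1}$.

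Finally, I would observe that the map $\bar{\iota}:U\to\Gr^{\circ}(2,T\mathcal{N}_{n_{0}})$ defined in (\ref{closing2024.3.16}) is smooth; in particular, on the relatively compact set $U$ it is Lipschitz with some constant $L(U)$ with respect to the AGY-induced metric $d$ on $\mathcal{N}$ and the quotient metric $\bar{d}_{G_{T\mathcal{N}}}$ on the Grassmannian. Combining this with the lower bound above,
\[
d(x_{1},x_{2})\geq L(U)^{-1}\bar{d}_{G_{T\mathcal{N}}}(\bar{\iota}(x_{1}),\bar{\iota}(x_{2}))\geq \hyperlink{2024.3.C9}{C_{9}(U)}D^{-1},
\]
which is the required bound. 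The main technical point — and the step I expect to need the most care — is the second one: verifying that two distinct cosets in $\stab_{T}\backslash G_{T\mathcal{N}}$ with both representatives in $\Omega$ genuinely force $\Ad(g_{1})\mathfrak{h}\neq\Ad(g_{2})\mathfrak{h}$ (so that Lemma \ref{closing2024.3.2} applies), which amounts to checking that $\mathfrak{h}=\Lie(\Sp(p(T)))$ is its own normalizer within the relevant Lie algebra, so that the map sending a plane to its symplectic Lie algebra is injective on $\Gr^{\circ}$.
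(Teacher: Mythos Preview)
Your proposal is correct and follows essentially the same route as the paper: push to the Grassmannian via $\bar{\iota}$, invoke Lemma~\ref{closing2024.3.2} on the height of $\Ad(g)\mathfrak{h}$, and pull back. The paper's version is terser---it picks a single $g$ with $T_{2}=T_{1}g$ realizing the quotient distance (so $g_{1}=e$, $g_{2}=g$ in your notation) and then asserts $d(x_{1},x_{2})\gg_{U}D^{-1}$ directly---whereas you spell out the Lipschitz step for $\bar{\iota}$ and flag the self-normalizer issue for $\mathfrak{h}$, both of which the paper leaves implicit.
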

\begin{rem}
  When $\mathcal{O}_{1}=\mathcal{O}_{2}$, we see that a Teichm\"{u}ller curve of low discriminant implies that the distance of any two connected components  of $\GL_{2}^{+}(\mathbb{R})$-orbits  in a neighborhood has a lower bound. In particular, we conclude that there are only finitely many connected components of  a given Teichm\"{u}ller curve in a neighborhood.
\end{rem}
\begin{proof}[Proof of Proposition \ref{closing2024.3.3}]
Let $T_{1}=\bar{\iota}(x_{1}),T_{2}=\bar{\iota}(x_{2})\in\Gr^{\circ}(2,T\mathcal{N}_{n_{0}})$. Then there exists $g\in G_{T\mathcal{N}}$ such that $T_{2}=T_{1}g$ and
\begin{equation}\label{closing2024.2.9}
  \bar{d}_{G_{T\mathcal{N}}}(T_{1},T_{2})=d_{G_{T\mathcal{N}}}(e,g).
\end{equation} 
Let $\mathfrak{h}=\Lie(\Sp(p(T_{1})))$. Then the assumption indicates that 
\[\htt(\mathfrak{h})\leq D,\ \ \   \htt(\Ad(g)\mathfrak{h})\leq D,\ \ \ \mathfrak{h}\neq \Ad(g)\mathfrak{h}.\]
Then by Lemma \ref{closing2024.3.2}, we get that 
\[d_{G_{T\mathcal{N}}}(e,g)\geq \hyperlink{2024.3.C8}{C_{8}(U)}  D^{-1}.\]
 Thus, by  (\ref{closing2024.2.9}), we conclude that $d(x_{1},x_{2})\gg_{U}D^{-1}$.
\end{proof}

 Enlighten by Theorem \ref{effective2023.11.2}, when we focus on a compact subset of $\mathcal{H}_{1}(\alpha)$, we are able to make the implicit constant in (\ref{closing2024.3.4}) absolute.
 \begin{cor}\label{closing2024.3.15} \hypertarget{2024.3.C10}
    Let $K_{\alpha}^{\prime}\subset\mathcal{H}_{1}(\alpha)$ be as in Theorem \ref{effective2023.11.2}. Then there exists a constant $C>0$ such that the following holds. For any $x\in K_{\alpha}^{\prime}$, any Teichm\"{u}ller  curves  $\mathcal{O}_{1},\mathcal{O}_{2}$ of discriminants $\disc(\mathcal{O}_{1}), \disc(\mathcal{O}_{2})\leq D$, if $E_{1}\subset B(x,C)\cap\mathcal{O}_{1}$, $E_{2}\subset B(x,C)\cap\mathcal{O}_{2}$ are two distinct  connected components of $\SL_{2}(\mathbb{R})$-orbits, then  
    \[d(E_{1},E_{2})\geq  \hyperlink{2024.3.C10}{C_{10}}  D^{-1}\]
    for some $C_{10}>0$.
 \end{cor}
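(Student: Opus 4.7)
The plan is to upgrade Proposition \ref{closing2024.3.3}, whose constant $C_9(U)$ depends on the choice of simply connected neighborhood $U$, to a uniform statement over the compact set $K_\alpha^\prime$. The key observation is that the dependence of $C_9(U)$ on $U$ ultimately comes (via Lemma \ref{closing2024.3.2}) from the $C^1$-norm of the map $g \mapsto \mathrm{Ad}(g)\mathfrak{h}$ on a compact subset $\Omega \subset G_{T\mathcal{N}}$ that records the relative positions of the planes $\bar\iota(y)$ as $y$ ranges over $U$. So I need to arrange that, when $y$ ranges over a ball $B(x,C)$ with $x \in K_\alpha^\prime$, the corresponding elements of $G_{T\mathcal{N}}$ lie in a single compact set independent of $x$.

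First, I would argue that for each $x \in K_\alpha^\prime$ there exists $r_x > 0$ such that $B(x,2r_x)$ is a simply connected neighborhood on which the period coordinates trivialize the Grassmannian bundle, and such that the corresponding set $\Omega_x \subset G_{T\mathcal{N}}$ given by the fibers $\bar\iota(y)$ for $y \in B(x,2r_x)$ is contained in a fixed compact set. Then Proposition \ref{closing2024.3.3} applied to $U = B(x,2r_x)$ produces a constant $C_9(B(x,2r_x)) > 0$ such that $d(y_1,y_2) \geq C_9(B(x,2r_x)) D^{-1}$ for any two points lying in distinct components of the intersection of $\SL_2(\mathbb{R})$-orbits of Teichm\"{u}ller curves of discriminant at most $D$ with $B(x,2r_x)$.

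Second, by compactness of $K_\alpha^\prime$, I would extract a finite subcover $\{B(x_i, r_{x_i})\}_{i=1}^N$ of $K_\alpha^\prime$. Set $C := \min_i r_{x_i}$ and $C_{10} := \min_i C_9(B(x_i,2r_{x_i}))$; both are strictly positive since the collection is finite. Given any $x \in K_\alpha^\prime$, pick an index $i$ with $x \in B(x_i, r_{x_i})$. Then $B(x,C) \subset B(x_i, 2r_{x_i})$ by the triangle inequality, so any connected components $E_1 \subset B(x,C) \cap \mathcal{O}_1$ and $E_2 \subset B(x,C) \cap \mathcal{O}_2$ sit inside $B(x_i,2r_{x_i})$. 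Since $\mathcal{O}_1 \neq \mathcal{O}_2$, the sets $E_1$ and $E_2$ lie in distinct connected components of the intersection of $\SL_2(\mathbb{R})$-orbits with $B(x_i,2r_{x_i})$. Applying Proposition \ref{closing2024.3.3} pointwise to any $y_1 \in E_1$, $y_2 \in E_2$ gives $d(y_1,y_2) \geq C_9(B(x_i,2r_{x_i})) D^{-1} \geq C_{10} D^{-1}$, and taking the infimum yields $d(E_1,E_2) \geq C_{10} D^{-1}$.

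The only genuine subtlety, rather than obstacle, is the first step: ensuring that the neighborhoods $B(x, 2r_x)$ can be chosen so that the relative positions of the planes $\bar\iota(y)$ stay in a compact part of $G_{T\mathcal{N}}$. This is straightforward because $\bar\iota$ is continuous in period coordinates and $K_\alpha^\prime$ is compact, so one can pick $r_x$ uniformly bounded below on $K_\alpha^\prime$. Everything else is a finite-cover argument.
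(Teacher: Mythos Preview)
Your argument is correct and matches the paper's intended approach: the paper states the corollary without proof, prefacing it only with the remark that ``when we focus on a compact subset of $\mathcal{H}_{1}(\alpha)$, we are able to make the implicit constant in (\ref{closing2024.3.4}) absolute,'' which is precisely your finite-cover compactness argument. One small slip: you write ``Since $\mathcal{O}_1 \neq \mathcal{O}_2$,'' but the statement does not assume this (and the remark after Proposition~\ref{closing2024.3.3} explicitly allows $\mathcal{O}_1 = \mathcal{O}_2$); the correct hypothesis, which you already have and which is all Proposition~\ref{closing2024.3.3} needs, is that $E_1$ and $E_2$ are distinct connected components.
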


     \section{Effective closing}\label{closing2024.3.29}

      \begin{thm}[Effective closing lemma] \label{closing2024.12.2} \hypertarget{2024.12.k11} Let   $\alpha=(2g-2)$.
 There exists    $\kappa_{11}>0$ such that for  $N\geq \hyperlink{2024.12.k11}{\kappa_{11}}$, $x\in\mathcal{H}_{1}(\alpha)$, there exists  $T_{0}=T_{0}(\ell(x))>0$, with the following property.
         Let $T\geq T_{0}$. Suppose that $\{g_{1},\ldots,g_{l}\}\subset B_{G}(T)$ is $1$-separated,   and that  
          \[d(g_{i}x,g_{j}x)< T^{-N}.\]  
       for any $1\leq i,j\leq l$.   Then there is   a point $y\in B(x,T^{\hyperlink{2024.12.k11}{\kappa_{11}}-N})$, and  a  collection of  $\frac{1}{2}$-separated points 
 \begin{equation}\label{closing2024.12.1}
   \{g^{\ast}_{i}:1\leq i\leq l\}\subset B_{G}(2T),\ \ \ d_{G}(g^{\ast}_{i},g_{i})\leq T^{\hyperlink{2024.12.k11}{\kappa_{11}}-N}
 \end{equation} 
 such that 
 \begin{equation}\label{closing2024.12.4}
  g^{\ast}_{i}y=g^{\ast}_{j}y
 \end{equation} 
  for any $1\leq i,j\leq l$.
\end{thm}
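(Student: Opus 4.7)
The plan is to lift the hypothesis to Teichm\"uller space, convert the near-coincidences $d(g_i x, g_j x) < T^{-N}$ into an approximate invariance of a $2$-plane in $H^{1}(S,\Sigma;\mathbb{R})$ under a finite family of mapping class group elements, and then correct this to an \emph{exact} common invariant plane via a quantitative Newton iteration. Concretely, fix a lift $\tilde{x} \in \mathcal{TH}(\alpha)$ of $x$; by Corollary \ref{closing2024.3.62} the period map is injective on a ball of radius comparable to $T^{\kappa_{11}-N}$ around $\tilde{x}$ once $T \geq T_0(\ell(x))$. The assumption, combined with the proper discontinuity of the $\Mod$-action, produces for every pair $i,j$ an element $\gamma_{ij} \in \Mod$ with
\[ d_{\mathcal{TH}}(g_i\tilde{x},\, g_j\tilde{x}\cdot\gamma_{ij}) \lesssim T^{-N}. \]
Because $g_i \in B_G(T)$, Theorem \ref{effective2023.11.3} forces the Kontsevich--Zorich cocycle $\Gamma(g_i,\tilde{x})$ to be polynomial in $T$, so the matrices $R(\gamma_{ij}) \in \SL(2g+|\Sigma|-1,\mathbb{Z})$ satisfy $\|R(\gamma_{ij})\| \leq T^{C}$ for some $C = C(\alpha)$. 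Passing through the period map and invoking Lemma \ref{closing2023.07.1} yields
\[ \bigl\| g_i\,\hol(\tilde{x}) - g_j\,\hol(\tilde{x})\,R(\gamma_{ij}) \bigr\|_{\tilde{x}} \lesssim T^{-N}, \]
with $\hol(\tilde{x})$ viewed as a $2\times(2g+|\Sigma|-1)$ matrix and $R(\gamma_{ij})$ acting on the right.

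Normalize $g^{*}_{1} := g_1$ and observe that the desired system $g^{*}_{i}\,\hol(\tilde{y}) = g_1\,\hol(\tilde{y})\,R(\gamma_{i1})$ is solvable for some $g^{*}_{i} \in \SL_2(\mathbb{R})$ if and only if the row space $V := \mathrm{row}(\hol(\tilde{y})) \subset \mathbb{R}^{2g+|\Sigma|-1}$ is invariant under right multiplication by each $R(\gamma_{i1})$; in that case $g^{*}_{i}$ is uniquely determined by $g^{*}_{i} = (g_1\,\hol(\tilde{y})\,R(\gamma_{i1}))\cdot\hol(\tilde{y})^{\dagger}$. Hence the problem reduces to finding a common fixed point $V^{*}$ of the finite family $\{R(\gamma_{i1})^{\top}\}_{i\geq 2}$ on $\Gr(2,2g+|\Sigma|-1)$, lying within distance $\lesssim T^{\kappa_{11}-N}$ of $V_0 := \mathrm{row}(\hol(\tilde{x}))$. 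The previous display gives the seed estimate $\|R(\gamma_{i1})^{\top}V_0 - V_0\| \lesssim T^{C-N}$, and linearizing the displacement $V \mapsto \bigl(R(\gamma_{i1})^{\top}V - V\bigr)_{i\geq 2}$ at $V_0$ yields a linear operator $L$ on $T_{V_0}\Gr(2,n) \cong \Hom(V_0,V_0^{\perp})$ of norm $\lesssim T^{C}$. Granting a polynomial bound $\|L^{-1}\| \lesssim T^{C'}$, the contraction mapping principle produces $V^{*} = V_0 + O(T^{\kappa_{11}-N})$ with $\kappa_{11} := C + C' + O(1)$. Pulling $V^{*}$ back through the period map to $\tilde{y} \in B(\tilde{x}, T^{\kappa_{11}-N})$, reading off the $g^{*}_{i}$ from the equations, and using $\|g^{*}_{i} - g_i\| \lesssim T^{\kappa_{11}-N}$, we get that $\{g^{*}_{i}\}$ are $1/2$-separated and contained in $B_G(2T)$ for $N \geq \kappa_{11}$.

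The main obstacle is the polynomial bound on $\|L^{-1}\|$. Degeneracies of $L$ occur exactly when some subfamily of $\{R(\gamma_{i1})^{\top}\}$ fixes a 2-plane very close to $V_0$, which via (\ref{closing2024.3.16}) means $x$ is close to an affine invariant locus (in the $\alpha=(2)$ case a Teichm\"uller curve) of low height in the sense of Section \ref{closing2024.3.5}. In that regime the arithmetic discreteness furnished by Lemma \ref{closing2024.3.2} and Proposition \ref{closing2024.3.3} provides a polynomial lower bound on the nonzero spectrum of $L$, and it is precisely this height/discreteness estimate that fixes the value of $\kappa_{11}$. Gluing the height control to the quantitative Newton step---so that the right inverse of $L$ remains bounded uniformly over the relevant ball in $\Gr(2,n)$---is the technical heart of the argument.
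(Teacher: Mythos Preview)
Your setup matches the paper's: lift to $\mathcal{TH}(\alpha)$, extract $\gamma_{ij}\in\Mod$ with $d(g_i\tilde x,\,g_j\tilde x\cdot\gamma_{ij})\lesssim T^{-N}$, bound $\|R(\gamma_{ij})\|$ polynomially via Forni's estimate on the Kontsevich--Zorich cocycle, and reduce to finding a nearby $2$-plane simultaneously fixed by the $R(\gamma_{i1})$. Where you diverge is in how to \emph{solve} this approximate fixed-plane problem, and your proposed route has a genuine gap.

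You attempt a Newton/contraction argument directly on $\Gr(2,n)$ and then need a polynomial bound on $\|L^{-1}\|$ for the linearization $L$. Your justification via Lemma~\ref{closing2024.3.2} and Proposition~\ref{closing2024.3.3} does not work: those results separate \emph{distinct} Teichm\"uller curves of bounded discriminant, which is a statement about heights of Lie algebras in $\mathfrak{sl}_{2g+|\Sigma|-1}$, not about the spectrum of your $L$. Worse, degeneracy of $L$ is precisely the favorable case---it signals that a common invariant plane exists nearby---so a contraction-mapping scheme that demands $L$ be well-conditioned is fighting the structure of the problem rather than exploiting it. You acknowledge this is ``the technical heart,'' but the sketch does not close it.

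The paper sidesteps the issue entirely by linearizing via the Pl\"ucker-type embedding $\iota:\tilde x\mapsto \Ree\tilde x\wedge\Imm\tilde x\in V=\wedge^2 H^1$. The fixed-plane condition becomes the \emph{linear} system $v\cdot R(\gamma)=v$ in $V$; after an effective Noetherian reduction (Lemma~\ref{closing2023.12.8}) to finitely many $\gamma$'s, one has an integer matrix $\Phi$ with entries $\lesssim T^{O(1)}$ and $\|\Phi(\iota(\tilde x))\|\lesssim T^{-N}$. Elementary lattice linear algebra (the paper cites \cite[Lemma~13.1]{einsiedler2009effective}) then produces $\tilde v\in\ker\Phi$ with $\|\tilde v-\iota(\tilde x)\|\lesssim T^{\kappa_{13}-N}$---no inverse bound is needed, because projection onto the kernel of an integer matrix is automatically polynomially controlled. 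The remaining step, pushing $\tilde v$ back onto the image of $\iota$, uses Filip's semisimplicity (Theorem~\ref{closing2024.11.1}) to split $V=\iota(H^1_{\mathbb C})\oplus W$ and check that the $W$-component of $\tilde v$ is itself $\gamma_{ij}$-fixed, hence can be discarded. Finally the $g_i^\ast$ are read off explicitly from the first two period coordinates (Proposition~\ref{closing2024.3.40}). This is the argument you should replace your Newton step with.
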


        Let $\varepsilon>0$, $N,T>1$ and let $x\in\mathcal{H}_{1}(\alpha)$.   Suppose that $\{g_{1},\ldots,g_{l}\}\subset B_{G}(T)$ is $1$-separated,  and that  
          \[d(g_{i}x,g_{j}x)< T^{-N}.\] 
Write $\tilde{x}\in\mathcal{TH}(\alpha)$ with $\pi(\tilde{x})=x$. Then by the assumption, there exists $\gamma_{ij}\in\Gamma$ so that 
\begin{equation}\label{closing2023.12.6}
 d( g_{i}\tilde{x}\gamma_{ij} ,g_{j}\tilde{x})\leq T^{-N} .
\end{equation} 
i.e.
          \[\Gamma(g_{j}^{-1}g_{i},\tilde{x})=\Gamma(\gamma_{ij}).\]
Now recall that  
\[ \log\|\Gamma(g,x)\|\leq C_{14}\log\|g\|+C_{15}\]
for some $C_{14}, C_{15}>0$.  This was first proved by Forni in \cite{forni2002deviation}, see also \cite[Lemma 2.3]{forni2014lyapunov} and \cite[Corollary 30]{forni2014introduction}. Then
 \begin{equation}\label{closing2024.05.1}
   \log\|\Gamma(g_{j}^{-1}g_{i},\tilde{x})\|\leq C_{14}\log\|g_{j}^{-1}g_{i}\|+C_{15}\ll_{N,x} \log T.
 \end{equation}

On the other hand, note  that   
\begin{multline}\label{closing2024.1.2}
 d(g_{i}\tilde{x}\gamma_{ij} \gamma_{jk},g_{i}\tilde{x}\gamma_{ik}) \\ 
 \leq  d(g_{i}\tilde{x}\gamma_{ij} \gamma_{jk},g_{i}\tilde{x}\gamma_{ik})+d(g_{j}\tilde{x}\gamma_{jk},g_{k}\tilde{x})+d(g_{k}\tilde{x},g_{i}\tilde{x}\gamma_{ik})\leq 3T^{-N}.
\end{multline}
In addition, note that 
 by Lemma \ref{effective2023.11.4}, if $T\geq T_{0}^{\prime}$ and $N$ are large enough so that $T^{N} \geq\hyperlink{2023.11.C2}{C_{2}} ^{-1} \ell(x)^{ - \hyperlink{2024.12.k5}{\kappa_{5}}}T^{\hyperlink{2024.12.k5}{\kappa_{5}} }\geq\hyperlink{2023.11.C2}{C_{2}} ^{-1} \ell(g_{k}x)^{ -\hyperlink{2024.12.k5}{\kappa_{5}}}$, the points of (\ref{closing2024.1.2}) are in the same fundamental domain. Then we get
\begin{equation}\label{closing2023.12.4}
 \gamma_{ij} \gamma_{jk}=\gamma_{ik}.
\end{equation}

Next, we study the effective closing via the $\SL_{2}(\mathbb{R})$-invariant subbundles. 
Let $x\in \mathcal{H}(\alpha)$, and $E$ be a subbundle of $H^{1}$ over $\mathcal{H}(\alpha)$ with dimension $d$. As in Section \ref{closing2024.3.5}, let 
\[V(x)\coloneqq\wedge^{d} H^{1}(x)\]
 be the
$d$-exterior product of $H^{1}(x)$. In particular, $E$ can be considered as an element of $V$. Let $n=\dim V$. Let $v_{1}^{E},\ldots,v_{d}^{E}\in E$ generate $E$ so that  $\|\iota\|=1$ where
\[\iota=v_{1}^{E}\wedge\cdots\wedge v_{d}^{E}.\]
 
\begin{prop}\label{closing2024.11.2}
   Let $E$ be an $\SL_{2}(\mathbb{R})$-invariant subbundle of $H^{1}$ over $\mathcal{H}(\alpha)$ with dimension $d$ and $\alpha=(2g-2)$. Suppose that 
   \begin{equation}\label{closing2023.12.6}
d(\iota(g_{j}x),\iota(g_{i}x)) \leq T^{-N}.
\end{equation}  
Then there exists $y\in\mathcal{H}(\alpha)$ with $d(y, x)<T^{-1}$ such that 
\[\iota(g_{j} y) =\iota(g_{i}y).\]
and so 
\[E(g_{i}y)=E(g_{j} y).\]
\end{prop}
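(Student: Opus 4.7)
The plan is to lift the approximate equality to the Teichmüller cover $\mathcal{TH}(\alpha)$ and recast the assertion as a quantitative fixed-point problem for the Kontsevich--Zorich cocycle acting on $V=\wedge^{d} H^{1}$. The essential inputs are the semisimplicity statement of Theorem \ref{closing2024.11.1}, the cocycle growth bound (\ref{closing2024.05.1}), and the cocycle relation (\ref{closing2023.12.4}) produced in the earlier discussion.

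First, I would lift $x$ to $\tilde x\in\mathcal{TH}(\alpha)$. Because the $G$-action on $\mathcal{TH}(\alpha)$ is induced by the linear action on $H^{1}(S,\Sigma;\mathbb{C})$ and fixes the absolute cohomology pointwise as a local system, the $\SL_{2}(\mathbb{R})$-invariance of $E$ forces $\iota$ to be constant on $G$-orbits in $\mathcal{TH}(\alpha)$. Hence $\iota(g_{i}\tilde x)=\iota(\tilde x)$ for every $i$ when computed inside $V(\tilde x)$. The hypothesis $d(\iota(g_{j}x),\iota(g_{i}x))\le T^{-N}$, after choosing the mapping class group elements $\gamma_{ij}\in\Gamma$ that bring $g_{j}\tilde x\gamma_{ij}$ close to $g_{i}\tilde x$ as in (\ref{closing2023.12.6}), transforms into the approximate fixed-point condition
\[
\bigl\|\,\iota(\tilde x)-R^{\wedge d}(\gamma_{ij})\,\iota(\tilde x)\,\bigr\|_{V}\ \lesssim\ T^{-N},
\]
and the cocycle bound (\ref{closing2024.05.1}) gives the polynomial control $\|R^{\wedge d}(\gamma_{ij})\|\le T^{C}$.

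Next I would seek $\tilde y$ near $\tilde x$ whose $\iota$-value lies in the common fixed set $V_{0}:=\bigcap_{i,j}\ker(R^{\wedge d}(\gamma_{ij})-\mathrm{Id})$; once $\iota(\tilde y)\in V_{0}$, projecting to $\mathcal{H}(\alpha)$ yields the desired exact equality $\iota(g_{i}y)=\iota(g_{j}y)$ and hence $E(g_{i}y)=E(g_{j}y)$. The obstruction to moving $\tilde x$ into $V_{0}$ is only in the transverse-to-$G$-orbit directions (since $\iota$ is $G$-invariant), so I invoke Theorem \ref{closing2024.11.1} to obtain an $\SL_{2}(\mathbb{R})$-invariant complement $E^{c}$ of $E$ inside the ambient Hodge bundle. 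This decomposition supplies a Grassmannian chart in period coordinates in which the derivative $d\iota|_{\tilde x}$, restricted to $E^{c}$-valued deformations, has image spanning the directions in $V$ required to cancel the defect $\iota(\tilde x)-\Pi_{V_{0}}\iota(\tilde x)$. Setting
\[
F(\tilde y)_{i}\ :=\ \iota(\tilde y)\,-\,R^{\wedge d}(\gamma_{1i})\,\iota(\tilde y),\qquad i=2,\ldots,l,
\]
we have $\|F(\tilde x)\|\lesssim T^{-N}$ and, thanks to semisimplicity, $dF|_{\tilde x}$ restricted to the transverse $E^{c}$-directions admits a right-inverse of norm $\lesssim T^{C'}$. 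A quantitative implicit function theorem (equivalently, a one-step Newton correction) then produces $\tilde y$ with $F(\tilde y)=0$ and $d(\tilde x,\tilde y)\lesssim T^{C'-N}<T^{-1}$, provided $N$ is chosen larger than $C'+1$.

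The main obstacle is the \emph{quantitative} non-degeneracy needed in the third step: Theorem \ref{closing2024.11.1} furnishes the invariant complement only qualitatively, but the argument requires a lower bound of the form $\|d\iota|_{\tilde x}(v)\|\gtrsim T^{-C'}\|v\|$ on transverse unit vectors $v$ whose image cancels the defect, with $C'$ controlled by the polynomial cocycle growth $\|R(\gamma_{ij})\|\le T^{C}$. Turning the qualitative semisimple decomposition into this quantitative estimate—so that the Newton step still lies well inside the injectivity radius and yields a displacement of order $T^{-1}$—is the technical heart of the proof and the place where the hypothesis $\alpha=(2g-2)$ (which controls the geometry of the stratum and the structure of the invariant bundles) is likely to enter.
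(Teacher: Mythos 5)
Your overall strategy (lift to $\mathcal{TH}(\alpha)$, rewrite the hypothesis as an approximate fixed-point condition for the cocycle matrices acting on $\iota(\tilde x)$ in $V=\wedge^{d}H^{1}$, then deform $\tilde x$ transversally so the fixed-point condition becomes exact, using the invariant complement from Theorem \ref{closing2024.11.1}) is the same skeleton as the paper's argument, but the step you yourself flag as "the technical heart" is a genuine gap, and it is not the route by which the paper obtains effectivity. Your Newton/implicit-function scheme needs two things that are never established: (i) that the defect $\iota(\tilde x)-\Pi_{V_{0}}\iota(\tilde x)$ lies in the image of $d\iota|_{\tilde x}$ restricted to the transverse directions, and (ii) a polynomially controlled right inverse (a lower bound of the form $T^{-C'}$) for that restricted derivative. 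Neither follows from Theorem \ref{closing2024.11.1}, which is purely qualitative, and the operator $v\mapsto v(\Gamma(\gamma)-I)$ is singular precisely on the space you want to land in, so there is no generic non-degeneracy to appeal to. As written, the proof does not close.

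The paper sidesteps this entirely by exploiting the \emph{integrality} of the Kontsevich--Zorich cocycle. First, the Effective Noetherian lemma (Lemma \ref{closing2023.12.8}) reduces the full family $\{\gamma_{ij}\}$ to at most $\dim V$ relations, so the fixed-point equations assemble into a single integer matrix $\Phi$ of bounded size whose entries are $\ll T^{O(1)}$ by the cocycle growth bound. An elementary effective linear-algebra lemma for such integer systems (quoted from Einsiedler--Margulis--Venkatesh, \cite[Lemma 13.1]{einsiedler2009effective}) then produces an \emph{exact} kernel vector $\tilde v$ within $T^{\kappa_{13}-N}$ of $\iota(\tilde x)$ --- this is where the quantitative content comes from, with no transversality estimate on $d\iota$ needed. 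Semisimplicity is then used only to split $V=\iota(H^{1}_{\mathbb{C}})\oplus W$ with $W$ invariant and to project $\tilde v$ along $W$ back onto the period-coordinate directions; because the splitting is preserved by each $\Gamma(\gamma_{ij})$, the projected vector is still exactly fixed, which yields $\tilde y=\tilde x+c$ with $\iota(g_{i}\tilde y\gamma_{ij})=\iota(g_{j}\tilde y)$. So the missing idea in your proposal is exactly this integrality-plus-projection mechanism: replace the unproved quantitative right inverse by the effective integer linear algebra, and use the invariant complement only to discard the $W$-component of $\tilde v$, not to cancel the defect by transverse deformations. (Incidentally, the hypothesis $\alpha=(2g-2)$ enters because with a single zero the relative and absolute cohomologies coincide, not through the non-degeneracy estimate you conjectured.)
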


We consider the bundle $E$ (and so $\iota$) are over $\mathcal{TH}(\alpha)$.  Then by (\ref{closing2023.12.6}), under local affine structure, we have
\begin{equation}\label{closing2023.12.7}
 d(\iota(\tilde{x})\Gamma(g_{j}^{-1}g_{i},\tilde{x}),\iota(\tilde{x})) \leq T^{-N}.
\end{equation}

For a subset $F\subset\{\gamma_{ij}\}_{i,j}$, let 
\[W(F)\coloneqq\{v\in \mathbb{R}^{n}:v \Gamma(\gamma)= v,\ \gamma\in F\}.\]
We shall show that (\ref{closing2023.12.7}) reduces to a system of finitely many equations.
\begin{lem}[Effective Noetherian]\label{closing2023.12.8}\hypertarget{2024.3.k12}  There exists an integer $\kappa_{12}>0$ depending only on the genus $g$ so that there exists a finite subset $F\subset\{\gamma_{ij}\}_{i,j}$, $|F|\leq\hyperlink{2024.3.k12}{\kappa_{12}}$, and
\[W(F)=W(\{\gamma_{ij}\}_{i,j}).\]
\end{lem}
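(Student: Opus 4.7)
The plan is to proceed by a greedy dimension-drop argument. The key observation is that $W(F)$ is defined as a subspace of $\mathbb{R}^{n}$ (the $n=\binom{2g+|\Sigma|-1}{d}$-dimensional ambient space of $V=\wedge^{d}H^{1}$), and it is cut out as the simultaneous fixed subspace
\[
W(F) \;=\; \bigcap_{\gamma\in F}\,\ker\!\bigl(\Gamma(\gamma)-\Id\bigr)\;\subset\;\mathbb{R}^{n}.
\]
Thus enlarging $F$ can only shrink $W(F)$, so $\dim W(F)$ is a non-increasing function of $F$ taking values in the finite set $\{0,1,\dots,n\}$.

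I would build $F$ greedily. Order the elements $\gamma_{ij}$ arbitrarily as $\gamma^{(1)},\gamma^{(2)},\dots$. Set $F_{0}=\emptyset$, so $W(F_{0})=\mathbb{R}^{n}$. Inductively, given $F_{k}$, look at the next $\gamma^{(m)}$; if $W(F_{k}\cup\{\gamma^{(m)}\})\subsetneq W(F_{k})$ strictly, put $F_{k+1}=F_{k}\cup\{\gamma^{(m)}\}$, otherwise discard $\gamma^{(m)}$ and move on. Each time we actually append an element, $\dim W(\cdot)$ drops by at least one, so the procedure terminates after at most $n$ appendings, yielding a subset $F$ with $|F|\le n$.

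I claim the resulting $F$ satisfies $W(F)=W(\{\gamma_{ij}\}_{i,j})$. Indeed, every discarded $\gamma^{(m)}$ had the property that $\Gamma(\gamma^{(m)})$ already acted as the identity on the current $W(F_{k})$, and $W(F)\subset W(F_{k})$ for all later $k$, so $\Gamma(\gamma^{(m)})$ fixes $W(F)$ pointwise; thus every $\gamma_{ij}$ fixes $W(F)$ pointwise, giving $W(F)\subset W(\{\gamma_{ij}\}_{i,j})$, while the reverse inclusion is trivial.

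Finally, set $\hyperlink{2024.3.k12}{\kappa_{12}}=n$, which depends only on $g$ (since $\alpha=(2g-2)$ forces $|\Sigma|=1$ and the exterior power degree $d=\dim E$ is bounded in terms of $g$). There is no real obstacle here; the only thing to be mildly careful about is checking that $n$ is indeed a purely topological constant of $(M,\Sigma)$ and not of the dynamical data, which is immediate from $V=\wedge^{d}H^{1}(M,\Sigma;\mathbb{R})$.
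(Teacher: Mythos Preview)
Your proposal is correct and follows essentially the same approach as the paper: both build $F$ greedily, using that $W(\cdot)$ is a nested sequence of subspaces of $\mathbb{R}^{n}$ whose dimension can strictly drop at most $n=\dim V$ times, and then take $\kappa_{12}=n$. Your write-up is in fact a bit more careful than the paper's, which just records the one-step dimension inequality and invokes induction; your explicit check that discarded $\gamma^{(m)}$ already fix $W(F)$ pointwise, and your remark that $n$ depends only on $g$ (via $|\Sigma|=1$ and $d\le \dim H^{1}=2g$), are exactly the points the paper leaves implicit.
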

\begin{proof} For any $F\subset\{\gamma_{ij}\}_{i,j}$ and $\gamma\in  \{\gamma_{ij}\}_{i,j}\setminus F$, we clearly have 
 \[\dim W(F)-1\leq \dim W(F\cup\{\gamma\}).\]
An induction then shows that there exists a finite set $F\subset\{\gamma_{ij}\}_{i,j}$ with $|F|\leq \dim V$ so that $W(F)=W(\{\gamma_{ij}\}_{i,j})$.
\end{proof}

Now let $F=\{\gamma_{1},\ldots,\gamma_{m}\}\subset\{\gamma_{ij}\}_{i,j}$  be the finite set given by Lemma \ref{closing2023.12.8}. Define $\Phi:V\rightarrow V^{m}$ by
\[\Phi:v\mapsto \bigoplus_{k=1}^{m}v(\Gamma(\gamma_{k})-I).\]
Then  (\ref{closing2024.05.1}) indicates that  
\[\|\Phi(\iota(\tilde{x}))\|\leq  C_{16}  T^{-N}\]
for some $C_{16}>0$.
Moreover, since $\Gamma(\gamma_{ij})$ is an integral matrix, by (\ref{closing2024.05.1}), all the entries of $\Phi:\mathbb{R}^{n}\rightarrow (\mathbb{R}^{n})^{m}$ are integers of size $\ll_{N,x} T$. Then there exists a  vector $\tilde{v}\in \ker(\Phi)$ near $\iota(\tilde{x})$.
\begin{lem}\hypertarget{2024.12.k13}
   There exists $\kappa_{13}=\kappa_{13}(g)>0$, $C_{17}=C_{17}(N,x)>0$, and  $\tilde{v}\in \mathbb{R}^{n}$ such that
    \begin{equation}\label{closing2023.12.9}
  \Phi(\tilde{v})=0,\ \ \   \|\tilde{v}-\iota(\tilde{x})\|\leq C_{17} T^{\hyperlink{2024.12.k13}{\kappa_{13}}-N}.
    \end{equation}  
\end{lem}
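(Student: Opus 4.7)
The plan is to exploit that $\Phi\colon \mathbb{R}^{n}\to(\mathbb{R}^{n})^{m}$ is represented by an integer matrix all of whose entries have magnitude $\ll_{N,x} T$; for such a matrix the nonzero singular values are quantitatively bounded away from $0$, and this will let me realize $\iota(\tilde{x})$ as a small perturbation of a genuine kernel vector. Concretely, I would orthogonally decompose $\iota(\tilde{x}) = v_{\parallel} + v_{\perp}$ with $v_{\parallel}\in\ker\Phi$ and $v_{\perp}\in(\ker\Phi)^{\perp}$, and set $\tilde{v}\coloneqq v_{\parallel}$. Then $\Phi(\tilde{v})=0$ is automatic and $\|\tilde{v}-\iota(\tilde{x})\|=\|v_{\perp}\|$, while the injectivity of $\Phi$ on $(\ker\Phi)^{\perp}$ gives
\[
\|v_{\perp}\|\leq \sigma_{\min}(\Phi)^{-1}\|\Phi(v_{\perp})\| = \sigma_{\min}(\Phi)^{-1}\|\Phi(\iota(\tilde{x}))\|\leq C_{16}\,\sigma_{\min}(\Phi)^{-1}T^{-N},
\]
where $\sigma_{\min}(\Phi)$ denotes the smallest nonzero singular value.

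Everything then reduces to a lower bound on $\sigma_{\min}(\Phi)$. Setting $r=\rank\Phi$, the matrix $\Phi^{T}\Phi$ is an $n\times n$ integer matrix, so its characteristic polynomial has integer coefficients; factoring out $x^{n-r}$, the remaining degree-$r$ polynomial still has integer coefficients, and its constant term equals $(-1)^{r}\prod_{i=1}^{r}\sigma_{i}(\Phi)^{2}$, which is a nonzero integer and hence at least $1$ in absolute value. Combining this with the crude Hilbert--Schmidt upper bound $\sigma_{i}(\Phi)\leq \|\Phi\|_{\mathrm{HS}}\leq C_{18}\,T$---a consequence of (\ref{closing2024.05.1}) and the fact that $\Phi$ has only $O_{g}(1)$ many entries, since $m\leq \kappa_{12}$ and $n$ depends only on $g$---one gets
\[
\sigma_{\min}(\Phi)\geq (C_{18}T)^{-(r-1)}\geq c\,T^{-(n-1)}
\]
for some $c=c(g,N,x)>0$. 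Substituting back yields $\|v_{\perp}\|\leq C_{17}\,T^{(n-1)-N}$, and one takes $\kappa_{13}$ to be any integer $\geq n-1$, a quantity depending only on $g$.

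The main thing to be careful about, rather than any genuine obstacle, is the separation of dependencies: the exponent $\kappa_{13}$ must be a function of $g$ alone, so every $T$-free factor entering the estimate---in particular the constant $C_{18}$, which via (\ref{closing2024.05.1}) is controlled by the Forni bound and depends on $N$ and $x$---must be absorbed into $C_{17}=C_{17}(N,x)$. The argument is otherwise quite robust: we never use the specific algebraic structure of $\Phi$, only that its entries are integers of controlled size and that $\iota(\tilde{x})$ is already almost in its kernel.
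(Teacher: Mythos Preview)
Your proof is correct and follows precisely the approach the paper intends: the paper does not give a self-contained argument here but simply notes that this is ``purely the linear algebra'' and refers to \cite[Lemma 13.1]{einsiedler2009effective}, and your singular-value argument (integer characteristic polynomial of $\Phi^{T}\Phi$ forcing $\prod\sigma_i^2\ge 1$, combined with the entrywise bound to control the large singular values) is exactly that lemma. Your care in tracking which constants depend on $g$ versus on $(N,x)$ is appropriate and matches the statement.
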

\begin{proof} This is purely the linear algebra. Note that $\Phi$ is an integer matrix with bounded coefficients. If for some $v$, $\Phi(v)$ is sufficiently close to $0$, then $\ker(\Phi)$   have a large contribution.
  See \cite[Lemma 13.1]{einsiedler2009effective} for the  detailed proof.
\end{proof}
Furthermore, we shall show that there exists $\tilde{y}\in \mathcal{TH}(\alpha)$ near $\tilde{x}$, such that
\[ \Phi(\iota(\tilde{y}))=0.\]
Roughly speaking, this can be done by projecting $\tilde{v}$ down to $\iota(\tilde{x})+H_{\mathbb{C}}^{1}(\tilde{x})$.

 First,  define $\iota_{\tilde{x}}:H_{\mathbb{C}}^{1}(\tilde{x})\rightarrow  \mathbb{R}^{d}\cong V(\tilde{x})$ by
    \[\iota_{\tilde{x}}:a+ib\mapsto \iota(\tilde{x}+a+ib).\]
    Consider its differential  on tangent spaces 
    \[ D _{0}\iota_{\tilde{x}}:T_{0}(H_{\mathbb{C}}^{1}(\tilde{x}))\rightarrow T_{\iota(\tilde{x})}(\mathbb{R}^{n})\cong  V(\tilde{x}).\] 
    
    Then by Theorem \ref{closing2024.11.1}, we know that any $\SL_{2}(\mathbb{R})$-invariant subbundle has an invariant complement:
    \[V=\iota(H_{\mathbb{C}}^{1}) \oplus W.\]
    Consider the map $H_{\mathbb{C}}^{1}(\tilde{x})\oplus W\rightarrow V$
    defined by
    \[(c,w)\mapsto \iota(\tilde{x}+c)+w.\]
    Its differential  is subjective at $(0,0)$ so that we obtain a map from $T_{(0,0)}(H_{\mathbb{C}}^{1}(\tilde{x}))\oplus 0$ onto $W^{\perp}$.
    Thus, it defines a smooth map from a neighborhood $\mathcal{U}_{1}\subset   H_{\mathbb{C}}^{1}(\tilde{x})\oplus W=V$ at $(0,0)$, to a neighborhood $\mathcal{U}_{2}$ at $\iota(\tilde{x})$. Now consider the projection $\Pi:\mathcal{U}_{1}\rightarrow \mathcal{U}_{2}$ given by
    \[\Pi:\iota+w\mapsto \iota.\]
    \begin{lem} For any $\gamma_{ij}$, we have 
    \begin{equation}\label{closing2023.12.10}
      \Pi(\tilde{v})\gamma_{ij}=\Pi(\tilde{v}).
    \end{equation} 
    \end{lem}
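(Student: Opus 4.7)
The plan is to combine the $\Gamma(\gamma_{ij})$-invariance of $\tilde v$ with the $\Gamma$-invariance of the direct-sum decomposition underlying $\Pi$. First, one unpacks what $\Phi(\tilde v)=0$ really says: by definition of $\Phi$ it is equivalent to $\tilde v\,\Gamma(\gamma_k)=\tilde v$ for every $\gamma_k\in F$, and Lemma \ref{closing2023.12.8} immediately upgrades this to $\tilde v\,\Gamma(\gamma_{ij})=\tilde v$ for every pair $(i,j)$, since $F$ was chosen so that $W(F)=W(\{\gamma_{ij}\}_{i,j})$.

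Next, one exploits semisimplicity. The projection $\Pi$ is modelled on the decomposition $V=\iota(H^1_{\mathbb C})\oplus W$. Both summands are $\SL_2(\mathbb R)$-invariant: the first tautologically, as $\iota(H^1_{\mathbb C})$ is the linearization of the $\SL_2(\mathbb R)$-invariant bundle supporting $\iota$, and the second by Theorem \ref{closing2024.11.1}. Because every $\Gamma(\gamma_{ij})$ arises as a value of the Kontsevich-Zorich cocycle, it preserves each summand, and hence commutes with the linear projection along $W$ onto $\iota(H^1_{\mathbb C})$. Locally, the nonlinear map $\Pi$ agrees with this linear projection up to a diffeomorphism that is itself equivariant (the Gauss-Manin connection being flat and preserved by the cocycle), so one obtains
\[
\Pi\bigl(v\,\Gamma(\gamma_{ij})\bigr) \;=\; \Pi(v)\,\Gamma(\gamma_{ij})
\]
for every $v$ in the neighbourhood $\mathcal U_2$. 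Applied to $v=\tilde v$ and combined with the first step, this gives $\Pi(\tilde v)=\Pi(\tilde v\,\Gamma(\gamma_{ij}))=\Pi(\tilde v)\,\Gamma(\gamma_{ij})$, which is precisely \eqref{closing2023.12.10}.

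The main obstacle in making the above rigorous is the nonlinearity of $\Pi$: the image $\iota_{\tilde x}(H^1_{\mathbb C}(\tilde x))$ is a submanifold of $V$, not a linear subspace, so equivariance of the linear decomposition does not by itself transfer to equivariance of $\Pi$. Resolving this amounts to checking that this submanifold is locally the $\Gamma(\gamma_{ij})$-stable graph of a smooth function from $\iota(H^1_{\mathbb C})$ to $W$, which is determined by how the $\SL_2(\mathbb R)$-invariant subbundle underlying $\iota$ varies along the flat $H^1_{\mathbb C}$-directions. Once this is in place, projection along the $\Gamma$-stable complement $W$ is automatically equivariant and the scheme above closes.
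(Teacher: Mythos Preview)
Your outline is correct and follows the same route as the paper, but it stops exactly where the work is. You correctly identify that the only issue is the equivariance of the nonlinear projection $\Pi$, and you correctly say this reduces to showing that the submanifold $\iota_{\tilde x}(H^{1}_{\mathbb C}(\tilde x))$ is $\gamma_{ij}$-stable; but you then write ``once this is in place'' without actually putting it in place. That step is not a formality: it is the entire content of the lemma, and the paper's proof is precisely this verification carried out concretely.

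Two ingredients are needed and you have supplied neither. First, the equivariance identity $\iota(\tilde x+c)\,\gamma_{ij}=\iota\bigl((\tilde x+c)\gamma_{ij}\bigr)$, which holds because $E$ (hence $\iota$) is defined over the moduli space and is therefore $\Gamma$-equivariant on the Teichm\"uller cover; this is what makes the image of $\iota_{\tilde x}$ locally $\gamma_{ij}$-invariant. Second, and this is the quantitative point your sketch misses entirely, one must check that $\iota_{\tilde x}(c)\gamma_{ij}$ and $w\gamma_{ij}$ remain inside the neighbourhood $\mathcal U_{2}$ where the local inverse to $(c,w)\mapsto\iota_{\tilde x}(c)+w$ exists; otherwise uniqueness of the decomposition is unavailable and the argument collapses. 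The paper secures this via the bound $\|\iota_{\tilde x}(c)\gamma_{ij}-\iota(\tilde x)\|\ll\|\Gamma(\gamma_{ij})\|\cdot\|\iota(\tilde x)-\tilde v\|$, using the cocycle norm estimate (\ref{closing2024.05.1}) and the smallness of $\|\iota(\tilde x)-\tilde v\|$ from (\ref{closing2023.12.9}), then invokes $T$ and $N$ large. With both facts in hand, the equation $\tilde v=\tilde v\gamma_{ij}$ yields two admissible decompositions $\iota_{\tilde x}(c)+w=\iota_{\tilde x}\bigl((\tilde x+c)\gamma_{ij}-\tilde x\bigr)+w\gamma_{ij}$, and uniqueness gives $\Pi(\tilde v)=\iota_{\tilde x}(c)=\iota_{\tilde x}(c)\gamma_{ij}=\Pi(\tilde v)\gamma_{ij}$.
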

    \begin{proof}
       Choose $(c,w)\in\mathcal{U}_{1}$ so that 
       \[\iota_{\tilde{x}}(c)+w=\tilde{v}.\]
      Then, note that 
      \[\|\iota(\tilde{x})-\iota_{\tilde{x}}(c)\|,\|w\|,\|w\gamma_{ij}\|\ll\|\Gamma(\gamma_{ij})\|\|\iota(\tilde{x})-\tilde{v}\|.\]
       Moreover, since $\tilde{v}\gamma_{ij}-\tilde{v}=(\iota_{\tilde{x}}(c)\gamma_{ij}-\iota(\tilde{x}))+(\iota(\tilde{x})-\iota_{\tilde{x}}(c))+(w\gamma_{ij}-w)$, we have 
       \begin{align}
\|\Pi(\tilde{v})\gamma_{ij}-\iota(\tilde{x})\|=& \|\iota_{\tilde{x}}(c)\gamma_{ij}-\iota(\tilde{x})\|\;\nonumber\\
\leq & \|\iota(\tilde{x})-\iota_{\tilde{x}}(c)\|+\|w\gamma_{ij}-w\|  \;\nonumber\\
\ll & \|\Gamma(\gamma_{ij})\|\|\iota(\tilde{x})-\tilde{v}\|.\;  \nonumber
\end{align}

Next, we observe
 \[\iota_{\tilde{x}}(c)+w=\tilde{v}=\tilde{v}\gamma_{ij}=\iota_{\tilde{x}}(c)\gamma_{ij}+w\gamma_{ij}=\iota_{\tilde{x}}((\tilde{x}+c)\gamma_{ij}-\tilde{x})+w\gamma_{ij}.\] 
By (\ref{closing2024.05.1}) and (\ref{closing2023.12.7}), for sufficiently large $T\geq T_{0}^{\prime\prime}$ and $N$, we get
\[\iota_{\tilde{x}}(c)=\iota_{\tilde{x}}((\tilde{x}+c)\gamma_{ij}-\tilde{x}),\ \ \ w=w\gamma_{ij}\]
and so
 \[  \Pi(\tilde{v})\gamma_{ij}=\iota_{\tilde{x}}(c)\gamma_{ij}=\iota_{\tilde{x}}(c)=  \Pi(\tilde{v}).\]
We obtain (\ref{closing2023.12.10}).
    \end{proof}
    \begin{proof}[Proof of Proposition \ref{closing2024.11.2}]
        Therefore, there exists $c\in H^{1}(M,\Sigma;\mathbb{C})$ close to $0$ so that  $\iota_{\tilde{x}}(c)\gamma_{ij}=\iota_{\tilde{x}}(c)$. 
     In other words, letting $\tilde{y}\coloneqq\tilde{x}+c$,  we get
     \begin{equation}\label{closing2023.12.12}
     \iota(g_{i}\tilde{y}\gamma_{ij})=      \iota(\tilde{y}\gamma_{ij})= \iota(\tilde{y}\gamma_{ij})=\iota(\tilde{y})=  \iota(g_{j}\tilde{y})
     \end{equation} 
   for any $\gamma_{ij}$.
    \end{proof}

  Now let $T(w)=\Ree w\oplus \Imm w$ be the tautological bundle, and 
  \[\iota=v_{1}^{T}\wedge v_{2}^{T}.\]
   Then  It is a $\SL_{2}(\mathbb{R})$-invariant subbundle. Also, by (\ref{closing2024.1.2}),  we have
\[d(\iota(g_{j}\tilde{x}),\iota(g_{i}\tilde{x})\gamma_{ij}) \leq T^{-N}.\]  
Then Proposition \ref{closing2024.11.2} implies that  there exists $\tilde{y}\in\mathcal{TH}(\alpha)$ with $d(\tilde{y}, \tilde{x})<T^{-1}$ such that 
\[\iota(g_{j} \tilde{y}) =\iota(g_{i}\tilde{y})\gamma_{ij}.\]
 Moreover, we have: 
  \begin{prop}\label{closing2024.3.40}  For sufficiently large $T\geq T_{0}^{\prime\prime\prime}$ and $N$, there is $\tilde{y}\in B(\tilde{x},T^{\hyperlink{2024.12.k13}{\kappa_{13}}-N})$ and a collection of $\frac{1}{2}$-separated points 
   \[\{h_{i}:1\leq i\leq l\}\subset B_{G}(2T),\ \ \ d_{G}(h_{i},g_{i})<T^{\hyperlink{2024.12.k13}{\kappa_{13}}+2-N}\]
   so that 
      \[h_{i}\tilde{y}=h_{1}\tilde{y}\gamma_{1i}\ \ \ \text{ or }\ \ \ h_{i}\pi_{1}(\tilde{y})=h_{1}\pi_{1}(\tilde{y})\]
      where $\pi_{1}:\mathcal{TH}(\alpha)\rightarrow \mathcal{H}_{1}(\alpha)$ is the natural projection.
  \end{prop}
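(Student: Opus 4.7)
The plan is to apply Proposition \ref{closing2024.11.2} to the tautological bundle $T$ over $\mathcal{H}(\alpha)$ with its distinguished $2$-form $\iota=v_{1}^{T}\wedge v_{2}^{T}$, which produces a point $\tilde{y}\in B(\tilde{x},T^{\hyperlink{2024.12.k13}{\kappa_{13}}-N})$ satisfying $\iota(g_{j}\tilde{y})=\iota(g_{i}\tilde{y})\gamma_{ij}$ for every pair $i,j$. Since the tautological plane $\bar{\iota}(\tilde{y})\subset H^{1}(M;\mathbb{R})$ depends only on the span of $\Ree\tilde{y},\Imm\tilde{y}$ and is $\SL_{2}(\mathbb{R})$-invariant, the $j=1$ specialization of this identity amounts to saying that $\gamma_{1i}^{\ast}$ preserves $\bar{\iota}(\tilde{y})$ together with its area form. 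Because $\{\Ree\tilde{y},\Imm\tilde{y}\}$ is a basis of $\bar{\iota}(\tilde{y})$, there is then a unique matrix $m_{i}\in\GL_{2}(\mathbb{R})$ with
\[\tilde{y}\gamma_{1i}=m_{i}\tilde{y}\]
as elements of $H^{1}(M,\Sigma;\mathbb{R})\otimes\mathbb{R}^{2}$, and $m_{i}\in\SL_{2}(\mathbb{R})$ because $\gamma_{1i}^{\ast}$ preserves the symplectic form on the non-degenerate tautological plane.

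I next set $h_{1}\coloneqq g_{1}$ and $h_{i}\coloneqq g_{1}m_{i}$ for $i\geq 2$. Because the $\SL_{2}(\mathbb{R})$-action on the $\mathbb{R}^{2}$-factor of $H^{1}(M,\Sigma;\mathbb{R})\otimes\mathbb{R}^{2}$ commutes with the mapping class group action on the $H^{1}$-factor, the defining identity for $m_{i}$ yields in $\mathcal{TH}(\alpha)$
\[h_{i}\tilde{y}=g_{1}m_{i}\tilde{y}=g_{1}(\tilde{y}\gamma_{1i})=h_{1}\tilde{y}\gamma_{1i},\]
which is the desired equation; descending to $\mathcal{H}_{1}(\alpha)$ produces the alternative form $h_{i}\pi_{1}(\tilde{y})=h_{1}\pi_{1}(\tilde{y})$.

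For the quantitative bounds I combine (\ref{closing2023.12.6}), the proximity $d(\tilde{x},\tilde{y})\leq T^{\hyperlink{2024.12.k13}{\kappa_{13}}-N}$, the cocycle estimate (\ref{closing2024.05.1}), and the lower bound on $\inj(\tilde{y})$ provided by Lemma \ref{effective2023.11.4}. The first three together say that $\tilde{y}\gamma_{1i}$ and $g_{1}^{-1}g_{i}\tilde{y}$ agree up to $O(T^{\hyperlink{2024.12.k13}{\kappa_{13}}-N})$ in period coordinates, so $m_{i}\tilde{y}=g_{1}^{-1}g_{i}\tilde{y}+O(T^{\hyperlink{2024.12.k13}{\kappa_{13}}-N})$; inverting $\tilde{y}$ on its tautological image, whose conditioning is controlled by $\ell(\tilde{y})^{-1}$, then gives $\|m_{i}-g_{1}^{-1}g_{i}\|\ll T^{\hyperlink{2024.12.k13}{\kappa_{13}}+2-N}$, whence $d_{G}(h_{i},g_{i})\leq T^{\hyperlink{2024.12.k13}{\kappa_{13}}+2-N}$. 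Taking $N$ large enough that this quantity is at most $\tfrac{1}{2}$, both the $\frac{1}{2}$-separation of $\{h_{i}\}$ and the inclusion $\{h_{i}\}\subset B_{G}(2T)$ follow at once from the corresponding hypotheses on $\{g_{i}\}\subset B_{G}(T)$.

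The chief obstacle is the quantitative extraction of $m_{i}$ from the abstract plane-preservation conclusion of Proposition \ref{closing2024.11.2}: the proposition gives $\gamma_{1i}^{\ast}\bar{\iota}(\tilde{y})=\bar{\iota}(\tilde{y})$ in a coordinate-free manner, but to realize the resulting $m_{i}$ as a concrete $\SL_{2}(\mathbb{R})$-matrix close to $g_{1}^{-1}g_{i}$ one must invert $\tilde{y}$ on its tautological image in a controlled way. This is the only place where the systole bound on $\tilde{y}$ (and hence on $\tilde{x}$) is used, and it is where the extra polynomial factor $T^{2}$ appearing in the exponent $\kappa_{13}+2$ arises.
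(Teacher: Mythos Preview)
Your strategy coincides with the paper's: apply Proposition \ref{closing2024.11.2} to the tautological bundle to produce $\tilde y$ with $\gamma_{1i}$ preserving the tautological $2$-plane, extract the resulting $\SL_2(\mathbb{R})$-element, and set $h_i$ accordingly. Your abstractly defined $m_i$ (via $\tilde y\gamma_{1i}=m_i\tilde y$) and the paper's explicit $h_1^{1i}$ (built from the first two period coordinates of $g_i\tilde y$ against those of $g_1\tilde y\gamma_{1i}$, with nondegeneracy supplied by Corollary \ref{closing2024.1.4}) in fact give the \emph{same} element $h_i=(h_1^{1i})^{-1}g_i=g_1m_i$.

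The only real slippage is in the quantitative step. The paper estimates $d_G(h_1^{1i},e)\ll T^{\kappa_{13}+2-N}$ directly at the ``upstairs'' point $g_i\tilde y$; right-invariance then gives $d_G(h_i,g_i)=d_G((h_1^{1i})^{-1},e)$ with no further loss. Your route works ``downstairs'' at $\tilde y$: you bound $\|m_i-g_1^{-1}g_i\|$ and then pass to $d_G(g_1m_i,g_i)$. But $d_G(g_1m_i,g_i)=d_G(g_1m_ig_i^{-1},e)$, and $g_1m_ig_i^{-1}-e=g_1(m_i-g_1^{-1}g_i)g_i^{-1}$ picks up factors $\|g_1\|,\|g_i^{-1}\|\le T$, so the implication ``$\|m_i-g_1^{-1}g_i\|\ll T^{\kappa_{13}+2-N}\Rightarrow d_G(h_i,g_i)\le T^{\kappa_{13}+2-N}$'' is not valid as written. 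Relatedly, the extra $T^2$ in the exponent does not come from the conditioning $\ell(\tilde y)^{-1}$ (which is $\asymp\ell(\tilde x)^{-1}$ and independent of $T$); it comes from the distortion between the scales at $\tilde y$ and at $g_i\tilde y$. The fix is exactly what the paper does: carry out the inversion at $g_i\tilde y$ using two period coordinates there, which yields $d_G(h_1^{1i},e)\ll T^{\kappa_{13}+2-N}$ directly and avoids conjugating by large group elements.
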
 
 \begin{proof}
    Let $\phi:\mathcal{TH}(\alpha)\rightarrow H^{1}(M,\Sigma;\mathbb{C})$ be the period map. Let 
    \begin{equation}\label{closing2023.12.13}
      \phi(g_{j}\tilde{y})=\begin{bmatrix}
a_{1} & \cdots & a_{2g+|\Sigma|-1}\\
b_{1} & \cdots & b_{2g+|\Sigma|-1}
\end{bmatrix},\ \ \ \phi( g_{i}\tilde{y} \gamma_{ij})=\begin{bmatrix}
a_{1}^{\prime} & \cdots & a_{2g+|\Sigma|-1}^{\prime}\\
b_{1}^{\prime} & \cdots & b_{2g+|\Sigma|-1}^{\prime}
\end{bmatrix},
    \end{equation} 
and let 
\[  h^{ij}_{1}=\begin{bmatrix}
a_{1} & a_{2}\\
b_{1} & b_{2}
\end{bmatrix}\begin{bmatrix}
a_{1}^{\prime} & a_{2}^{\prime}\\
b_{1}^{\prime} & b_{2}^{\prime}
\end{bmatrix}^{-1}.\] 
Then  for sufficiently large $T\geq T_{0}^{\prime\prime\prime}$ and $N$, by Corollary \ref{closing2024.1.4}, $h^{ij}_{1}\in\GL^{+}_{2}(\mathbb{R})$. One writes
  \begin{equation}\label{closing2023.12.11}
  \phi(g_{j}\tilde{y})=\left[
            \begin{array}{ccccccc}   \multicolumn{2}{c}{\multirow{2}*{$h^{ij}_{1}\begin{bmatrix}
a_{1}^{\prime} & a_{2}^{\prime}\\
b_{1}^{\prime} & b_{2}^{\prime}
\end{bmatrix}$,}} & a_{3} & a_{4} & \cdots & a_{2g+|\Sigma|-1}\\
                    & & b_{3}& b_{4} & \cdots & b_{2g+|\Sigma|-1}\\  
            \end{array}
          \right].
  \end{equation}  
  Recall from (\ref{closing2023.12.6}) that 
  \begin{equation}\label{closing2023.12.14}
  d(g_{j}\tilde{y},g_{i}\tilde{y}\gamma_{ij})\leq  d(g_{j}\tilde{y},g_{j}\tilde{x})+  d(g_{j}\tilde{x},g_{i}\tilde{x}  \gamma_{ij} )+ d(g_{i}\tilde{x}  \gamma_{ij} ,g_{i}\tilde{y}\gamma_{ij})\ll  T^{\hyperlink{2024.12.k13}{\kappa_{13}}+1-N}.
  \end{equation} 
As before,
 by Lemma \ref{effective2023.11.4}, if $T$ and $N$ are large enough, the points of (\ref{closing2023.12.14}) are in the same fundamental domain.  
 In particular, we get that 
  \begin{equation}\label{closing2023.12.16}
    d_{G}(h_{1}^{ij},e)\ll  T^{\hyperlink{2024.12.k13}{\kappa_{13}}+2-N}
  \end{equation} 
  for any $i,j$.
  
On the other hand, by (\ref{closing2023.12.12}), we get 
\[\iota(g_{j}\tilde{y})=\iota(g_{i}\tilde{y}\gamma_{ij})=|\det h^{ij}_{1}|^{-1}\iota(h^{ij}_{1}g_{i}\tilde{y}\gamma_{ij}).\]
Thus, as a $2$-dimensional plane, we have 
\[ \Ree(g_{j}\tilde{y})\oplus \Imm(g_{j}\tilde{y})=\bar{\iota}(g_{j}\tilde{y})=\bar{\iota}(h^{ij}_{1}g_{i}\tilde{y}\gamma_{ij})=\Ree(h^{ij}_{1}g_{i}\tilde{y}\gamma_{ij})\oplus \Imm(h^{ij}_{1}g_{i}\tilde{y}\gamma_{ij})\]
where $\overline{\iota}:(X,\omega)\mapsto\Span(\Ree(\omega),\Imm(\omega))$ is given in (\ref{closing2024.3.16}).
In particular, we have 
\[\Ree(g_{j}\tilde{y})-\Ree(h^{ij}_{1}g_{i}\tilde{y}\gamma_{ij}), \ \Imm(g_{j}\tilde{y})-\Imm(h^{ij}_{1}g_{i}\tilde{y}\gamma_{ij})\in \Ree(g_{j}\tilde{y})\oplus \Imm(g_{j}\tilde{y}).\]
In other words, we can write
\[\Ree(g_{j}\tilde{y})-\Ree(h^{ij}_{1}g_{i}\tilde{y}\gamma_{ij})=r_{1}\Ree(g_{j}\tilde{y})+r_{2}\Imm(g_{j}\tilde{y}).\]
for some $r_{1},r_{2}\in\mathbb{R}$.
Comparing (\ref{closing2023.12.13}) and (\ref{closing2023.12.11}), we see the first two entries of $\Ree(g_{j}\tilde{y})-\Ree(h^{ij}_{1}g_{i}\tilde{y}\gamma_{ij})\in H^{1}(X,\Sigma;\mathbb{R})$ in period coordinates are $0$.
This forces $r_{1}=r_{2}=0$.  Similarly, we have 
   $\Imm(g_{j}\tilde{y})=\Imm(h^{ij}_{1}g_{i}\tilde{y}\gamma_{ij})$.
     Thus, we get
      \begin{equation}\label{closing2023.12.15}
        \phi(g_{j}\tilde{y})=\phi(h^{ij}_{1}g_{i}\tilde{y}\gamma_{ij}).
      \end{equation}   
     Combining   (\ref{closing2023.12.14}) and (\ref{closing2023.12.15}), we conclude that 
     \[g_{j}\tilde{y}=h^{ij}_{1}g_{i}\tilde{y}\gamma_{ij} \ \ \ \text{ or }\ \ \ g_{j} \pi(\tilde{y})=h^{ij}_{1}g_{i}\pi(\tilde{y}). \]
      In particular, we have $\Area(g_{j}\pi(\tilde{y}))=\Area(h^{ij}_{1}g_{i}\pi(\tilde{y}))$ and so $h^{ij}_{1}\in G=\SL_{2}(\mathbb{R})$.
      
    Now writing $h_{i}\coloneqq (h^{1i}_{1})^{-1}g_{i}\in G$, we have  
     \[h_{i}\tilde{y}=h_{1}\tilde{y}\gamma_{1i} \ \ \ \text{ or }\ \ \ h_{i} \pi_{1}(\tilde{y})=h_{1}\pi_{1}(\tilde{y})\]
     for any $i$. By (\ref{closing2023.12.16}), $h_{i}$ and $h_{j}$ are $\frac{1}{2}$-separated for $i\neq j$.  
 \end{proof}  
 
By letting $g^{\ast}_{i}=h_{i}$, and $\hyperlink{2024.12.k11}{\kappa_{11}}=\hyperlink{2024.12.k13}{\kappa_{13}}+2$, we obtain Theorem \ref{closing2024.12.2}.
 
 Finally, we deduce   Theorem \ref{closing2023.12.1}  where a similar homogeneous version has been settled in \cite[Proposition  13.1]{einsiedler2009effective}.
 Theorem \ref{closing2023.12.1}  follows from Theorem \ref{closing2024.12.2} and the following lemma:
 \begin{lem}\label{closing2024.12.3} For $l\geq (\Vol B_{G}(T))^{1-\varepsilon}$, there is  $y\in B(x,T^{\hyperlink{2024.12.k11}{\kappa_{11}}-N})\subset\mathcal{H}_{1}(\alpha)$ such that  the Veech group $\SL(y)\subset G$ is a non-elementary Fuchsian group generated by elements in $B_{G}(2T)$.
 \end{lem}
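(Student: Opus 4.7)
The plan is to extract from the coincident orbit translates provided by Theorem~\ref{closing2024.12.2} a dense collection of Veech elements, and then use the effective lattice-point count of Corollary~\ref{closing2024.3.18} to force the Veech group to be Zariski dense. First, I apply Theorem~\ref{closing2024.12.2} to obtain $y\in B(x,T^{\kappa_{11}-N})$ and a $\tfrac12$-separated family $\{g_{i}^{\ast}\}_{i=1}^{l}\subset B_{G}(2T)$ with $g_{i}^{\ast}y=g_{j}^{\ast}y$ in $\mathcal{H}_{1}(\alpha)$ for all $i,j$. Setting $y_{1}=g_{1}^{\ast}y$ and $\tilde h_{i}:=g_{i}^{\ast}(g_{1}^{\ast})^{-1}$, right-invariance of $d_{G}$ gives $d_{G}(\tilde h_{i},\tilde h_{j})=d_{G}(g_{i}^{\ast},g_{j}^{\ast})\geq\tfrac12$, while $\tilde h_{i}y_{1}=g_{i}^{\ast}y=y_{1}$ places each $\tilde h_{i}$ in the Veech group $\SL(y_{1})$. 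Thus $\Delta:=\{\tilde h_{i}\}_{i=1}^{l}$ is a $\tfrac12$-separated subset of $\SL(y_{1})$ lying in the translated ball $B:=B_{G}(2T)(g_{1}^{\ast})^{-1}$, of radius $O(T)$. Discreteness of the Veech group in $G$, a standard consequence of proper discontinuity of the mapping class group on Teichm\"uller space, shows that $\SL(y_{1})$ is Fuchsian.

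Suppose for contradiction that $\SL(y_{1})$ is not Zariski dense in $\SL_{2}(\mathbb{R})$. By Proposition~\ref{effective73} applied with $B$ in place of the identity-centered ball, $|\Delta|^{2}\ll\iint_{\tilde B\times\tilde B}\varphi(g_{1}g_{2}^{-1})^{\kappa_{7}}dg_{1}dg_{2}$ where $\tilde B=B_{G}(1)BB_{G}(1)$; the substitution $g_{i}\mapsto g_{i}g_{1}^{\ast}$ shows this double integral is invariant under right translation of $B$, so it equals the corresponding integral for the centered ball $B_{G}(CT)$. Combined with Lemma~\ref{closing2024.2.6}, this gives
\[|\Delta|\;\leq\; C_{7}\,\Vol(B_{G}(CT))^{1-\kappa_{7}/3}\]
for an absolute $C>0$. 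Passing from a $\tfrac12$-separated to a $1$-separated sub-collection of size $\gg l$ and using the polynomial scaling $\Vol(B_{G}(CT))\asymp_{C}\Vol(B_{G}(T))$, we force
\[\Vol(B_{G}(T))^{1-\varepsilon}\;\ll\; l\;\ll\;\Vol(B_{G}(T))^{1-\kappa_{7}/3},\]
contradicting any choice $\varepsilon<\varepsilon_{0}:=\kappa_{7}/3$ once $T$ is sufficiently large. Hence $\SL(y_{1})$ is Zariski dense, and a Zariski-dense Fuchsian subgroup of $\SL_{2}(\mathbb{R})$ is automatically non-elementary (it cannot be virtually abelian). Conjugating by $(g_{1}^{\ast})^{-1}$, the Veech group $\SL(y)=(g_{1}^{\ast})^{-1}\SL(y_{1})g_{1}^{\ast}$ is also non-elementary Fuchsian, and it contains the elements $(g_{1}^{\ast})^{-1}g_{i}^{\ast}$, each of which lies in a ball of radius $O(T)$; the non-elementary subgroup they generate is the desired one.

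The only genuinely subtle step is the invocation of the counting bound on the \emph{translated} ball $B_{G}(2T)(g_{1}^{\ast})^{-1}$: one might be tempted to instead work with $(g_{1}^{\ast})^{-1}g_{i}^{\ast}\in\SL(y)$, but with the sup-norm $\|g\|=\max_{ij}\{|g_{ij}|,|g_{ij}^{-1}|\}$ used in the paper, products of two elements of $B_{G}(T)$ can only be guaranteed to lie in $B_{G}(O(T^{2}))$, which would inflate the right-hand side of the counting bound to $\Vol(B_{G}(T))^{2(1-\kappa_{7}/3)}$ and render the hypothesis insufficient for small $\kappa_{7}$. The right-invariance of the double integral in Proposition~\ref{effective73} bypasses this inflation and keeps everything at the scale $\Vol(B_{G}(T))$. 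Beyond this, the proof is a direct application of the three ingredients (the closing lemma, right-invariance of Haar, and the spectral-gap-based count).
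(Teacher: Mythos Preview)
Your argument is the paper's argument: pull Veech elements out of Theorem~\ref{closing2024.12.2} and feed them into the spectral--gap count (Proposition~\ref{effective73}/Corollary~\ref{closing2024.3.18}) to force Zariski density, hence non-elementariness. You are also right to flag the $T^{2}$ blow-up for products in $B_{G}(2T)$, which the paper's line $\{h_{i}h_{1}^{-1}\}\subset\SL(h_{1}y)\cap B_{G}(2T)$ glosses over.

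One correction to your fix, though: the substitution $g_{i}\mapsto g_{i}g_{1}^{\ast}$ does \emph{not} show that $\iint_{\tilde B\times\tilde B}\varphi(g_{1}g_{2}^{-1})^{\kappa_{7}}$ is invariant under right translation of $B$, because $\tilde B=B_{G}(1)BB_{G}(1)$ is not itself a right translate of the centered $\tilde B_{0}$ --- the trailing $B_{G}(1)$ gets conjugated to $(g_{1}^{\ast})^{-1}B_{G}(1)g_{1}^{\ast}$, which can have diameter $\asymp T^{2}$, and the naive enlargement then kills the contradiction. The clean way around this (and what makes both your argument and the paper's work) is to observe that the proof of Proposition~\ref{effective73} goes through unchanged when $\Delta$ lies in a \emph{coset} $g_{1}^{\ast}\Gamma$ rather than in $\Gamma$: take $B=B_{G}(2T)$ centered, $\Delta=\{g_{i}^{\ast}\}\subset B\cap g_{1}^{\ast}\SL(y)$, and use $\delta_{i}^{-1}\delta_{j}\in\SL(y)$ in the lower bound; the upper bound then sits over the centered $\tilde B$ and Corollary~\ref{closing2024.3.18} applies directly at scale $T$. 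Equivalently, right-translate the bump function $f_{0}=1_{WB_{G}(2T)W}\ast h$ itself rather than rebuilding it from the translated ball; then $\supp(R_{(g_{1}^{\ast})^{-1}}f_{0})$ is exactly $(\supp f_{0})(g_{1}^{\ast})^{-1}$ and your invariance claim becomes literally true. With that adjustment your proof is complete and matches the paper's.
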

 \begin{proof}
     Letting $y=\pi_{1}(\tilde{y})\in \mathcal{H}_{1}(\alpha)$, we have a collection of $\frac{1}{2}$-separated group elements $\{h_{1},\ldots,h_{l}\}\subset B_{G}(2T)$  such that 
 \[h_{i} y=h_{1}y\]
 for all $1\leq i\leq l$. In other words, we have 
 \begin{equation}\label{closing2024.3.21}
   \{h_{1}h_{1}^{-1},\ldots,h_{l}h_{1}^{-1}\}\subset \SL(h_{1}y)\cap B_{G}(2T). 
 \end{equation}  
 Then  for sufficiently small $\varepsilon$, we have 
 \[l\geq (\Vol B_{G}(2T))^{1-\varepsilon}\geq \hyperlink{2024.3.C7}{C_{7}} \Vol(B_{G}(2T))^{1-\frac{1}{3}  \hyperlink{2024.12.k7}{\kappa_{7}}}   \hypertarget{2024.3.k8}\]
Thus, by Corollary \ref{closing2024.3.18}, we conclude that 
$\SL(h_{1}y)$, and so $\SL(y)$, are Zariski dense in $G=\SL_{2}(\mathbb{R})$. 
In particular, $\SL(y)$ is a non-elementary Fuchsian group. 
 \end{proof}
\begin{proof}[Proof of Theorem \ref{closing2023.12.1}]
         Let $\varepsilon>0$, $N,T>1$ and let $x\in\mathcal{H}_{1}(2)$.   Suppose that $\{g_{1},\ldots,g_{l}\}\subset B_{G}(T)$ is $1$-separated, that $l\geq (\Vol B_{G}(T))^{1-\varepsilon}$, and that  
          \[d(g_{i}x,g_{j}x)< T^{-N}.\]  
          
          Now by  Theorem \ref{closing2024.12.2}, there is   a point $y\in B(x,T^{\hyperlink{2024.12.k11}{\kappa_{11}}-N})$, and  a  collection of  $\frac{1}{2}$-separated points 
\[  \{g^{\ast}_{i}:1\leq i\leq l\}\subset B_{G}(2T),\ \ \ d_{G}(g^{\ast}_{i},g_{i})\leq T^{\hyperlink{2024.12.k11}{\kappa_{11}}-N}
 \]
 such that 
 \[g^{\ast}_{i}y=g^{\ast}_{j}y\]  
  for any $1\leq i,j\leq l$.
          
          Since $l\geq (\Vol B_{G}(T))^{1-\varepsilon}$, by Lemma \ref{closing2024.12.3},  $\SL(y)$ contains a hyperbolic element. Then by  Theorem \ref{closing2024.3.19}, when we consider $y\in\mathcal{H}(2)$, we see that $y$ generates a Teichm\"{u}ller curve. 

Moreover, recall from (\ref{closing2024.3.20}) that $\SL(y)$  has an alternate definition in terms of affine automorphisms $\Aff(y)$. Each affine automorphism determines a mapping class in $\Gamma=\Mod(M,\Sigma)$. Thus, we can view the Veech group $\SL(y)$ as a subgroup of $\Gamma$. In addition, by (\ref{closing2024.3.21}), one may deduce that  $\SL(h_{1}y)\cap B_{G}(2T)$ is Zariski dense in $\Sp(p(\bar{\iota}(y)))$. Then by Corollary \ref{closing2024.12.5}, the discriminant of the Teichm\"{u}ller curve satisfies
\[D\leq    \hyperlink{2024.12.C13}{C_{13}} (2T)^{\hyperlink{2024.12.k10}{\kappa_{10}}}\leq     T^{2\hyperlink{2024.12.k10}{\kappa_{10}}} .\]   
Let 
     $T\geq T_{0}\geq\max\{ T_{0}^{\prime}, T_{0}^{\prime\prime}, T_{0}^{\prime\prime\prime}\}$ and $\hyperlink{2024.04.N6}{N_{1}}=\max\{2\hyperlink{2024.12.k10}{\kappa_{10}},\hyperlink{2024.12.k11}{\kappa_{11}}\}$ so that the constants $\hyperlink{2024.12.C13}{C_{13}} 2^{\hyperlink{2024.12.k10}{\kappa_{10}}}$ are absorbed. We establish Theorem \ref{closing2023.12.1}.
\end{proof}

\section{Observable transversal increment}\label{margulis2024.12.3}
In this section,   we define
a Margulis function on a long horocycle orbit, measuring the discrete
dimension  of  the additional direction transverse to
the $\SL_{2}(\mathbb{R})$-orbit. Then
we establish an effective closing lemma with respect to the $P$-action on $\mathcal{H}_{1}(2)$ similar to \cite{rached2024separation}. We fix the notation:
\begin{itemize}
  \item  Let $\kappa>0$. For $t>0$, let 
  \begin{equation}\label{closing2024.4.01}
   \mathsf{E}_{t}=\mathsf{E}_{t,[0,1]}(e^{-\kappa t})\coloneqq B_{G}(e^{-\kappa t})\cdot a_{t}\cdot u_{[0,1]}\subset G.
  \end{equation} 
  \item Let $x\in \mathcal{H}_{1}(\alpha)$. For every $z\in \mathsf{E}_{t}.x$, let
  \begin{equation}\label{effective2022.2.36}
     F_{z}(t)\coloneqq\{w\in H^{\perp}_{\mathbb{C}}(z):0<\|w\|_{x}<\ell(z), \ z+w\in \mathsf{E}_{t}.x\}.
  \end{equation}
It collects the appearance of the long horocycle   $\mathsf{E}_{t}.x$ in the transversal direction.  Note  that this is a finite subset of $H^{\perp}_{\mathbb{C}}(x)$.   
  \item Let $\gamma\in(0,1)$. Define the local density function $f_{t,\gamma}:\mathsf{E}_{t}.x\rightarrow[2,\infty)$ by
  \[f_{t,\gamma}(z)=\left\{
    \begin{array}{ll} \displaystyle{\sum_{w\in  F_{z}(t)}\|w\|^{-\gamma}_{x}} &,\text{ if } F_{z}(t)\neq\emptyset\\
   \ell(z)^{-\gamma} &,\text{ otherwise }
                     \end{array}\right. .\]
  The function $f_{t,\gamma}(z)$ record the local density at $z$ of the horocycle along the transversal direction. Informally, $f_{t,\gamma}(z)$ being small means $w\in F_{z}(t)$ are \textbf{not too close} to $0$.  
\end{itemize}

\begin{thm}[Observable transversal increment]\label{effective2024.6.01} There exist  $N_{0}=N_{0}(\alpha)$, \hypertarget{2024.04.N6}   $\varpi=\varpi(\alpha)>0$ satisfying the following:
\begin{enumerate}[\ \ \ ]
  \item  Let $N\geq 2N_{0}$, $x_{0}\in  \mathcal{H}_{1}(\alpha)$. \hypertarget{2024.12.t5} Then there     \hypertarget{2024.12.C18} exists $t_{5}=t_{5}(\ell(x_{0}))>0$ such \hypertarget{2024.12.N63} that for all large enough $t\geq \hyperlink{2024.12.t5}{t_{5}}$, at least one of the following holds:
      \begin{enumerate}[\ \ \ (1)]
        \item  There is some $x\in \mathcal{H}_{1}^{(N_{0}^{-1})}(\alpha)\cap\mathsf{E}_{(5\varpi +1)t}.x_{0}$ such that
        \begin{enumerate}[\ \ \ (a)]
          \item  $h\mapsto hx$ is injective on $\mathsf{E}_{t}$.
          \item We have
          \[f_{t,\gamma}(z)\leq e^{Nt}\]  
          for all $\gamma\in(0,1)$, $z\in \mathsf{E}_{t}.x$.
        \end{enumerate}
        \item There is $y\in \mathcal{H}_{1}(\alpha)$, $C_{18}>0$ such that   
          \begin{itemize}
            \item  $d(y,x)\leq \hyperlink{2024.12.C18}{C_{18}} e^{(N_{0}-N)t}$,
            \item The Veech group $\SL(y)\subset G$ is a non-elementary Fuchsian group.
          \end{itemize}  
          If we restrict our attention to $\alpha=(2)$, then we further  have
          \begin{itemize}
            \item   $y$ generates a Teichm\"{u}ller curve of discriminant $\leq  \hyperlink{2024.12.C18}{C_{18}} e^{N_{0}t}$.
          \end{itemize}
      \end{enumerate} 
\end{enumerate}
\end{thm}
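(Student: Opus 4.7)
The plan is to argue by contradiction: I will assume both alternatives fail and extract enough near-recurrence of the horocycle to violate the assumed failure of (2). First, I need to secure the existence of a good base point. Using the nondivergence of unipotent flows (Corollary~\ref{closing2024.3.63}, iterated in the spirit of Theorem~\ref{effective2023.11.2}) with $\varpi$ chosen larger than $\hyperlink{2024.12.k4}{\kappa_{4}}$, I obtain $r\in[0,1]$ and a bounded $G$-perturbation for which $x:=$ (resulting image) lies in $\mathcal{H}^{(N_0^{-1})}_1(\alpha)\cap \mathsf{E}_{(5\varpi+1)t}.x_0$; taking $\varpi$ sufficiently large with respect to $\hyperlink{2024.12.k6}{\kappa_6}$ and $\kappa$, Corollary~\ref{closing2024.3.62} and Lemma~\ref{closing2023.07.1} ensure that the injectivity radius at $x$ dominates the $G$-thickening scale $e^{-\kappa t}$, so the map $h\mapsto hx$ on $\mathsf{E}_t$ is automatically injective unless there is a nontrivial self-identification, which we will subsume into the argument for (2). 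This yields condition (1)(a); the real content is (1)(b).

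Now suppose (1)(b) fails at $x$, so some $z=h_0x\in\mathsf{E}_t.x$ and some $\gamma\in(0,1)$ satisfy $f_{t,\gamma}(z)>e^{Nt}$. The trivial cardinality bound $|F_z(t)|\leq e^{Ct}$ (from the $O(1)$-volume of $\mathsf{E}_t$ together with the injectivity radius $L(x)$) combined with a dyadic pigeonhole in $\|w\|_x$ produces a subset $F'_z\subseteq F_z(t)$ of size $\geq e^{c_0 Nt}$ on which $\|w\|_x$ is essentially constant. For each $w\in F'_z$ we have $z+w=h_w x$ for some $h_w\in\mathsf{E}_t$, and the group elements $g_w:=h_wh_0^{-1}\in\mathsf{E}_t\mathsf{E}_t^{-1}\subset B_G(e^{O(t)})$ satisfy $g_wx=z+w$. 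Writing $h_w=b_w a_t u_{r_w}$ with $b_w\in B_G(e^{-\kappa t})$ and using Lemma~\ref{margulis2024.4.35}, one computes $g_w\in B_G(O(1))\cdot u_{e^t(r_w-r_0)}$, so distinct $r_w$'s give elements whose mutual $G$-distance is $\gtrsim e^t|r_w-r_{w'}|$; extracting a $1$-separated subset costs only a factor $e^{-t}$, yielding $\ell\geq e^{(c_0N-1)t}$ pairwise $1$-separated elements $\{g_i\}\subset B_G(e^{O(t)})$ with $d(g_ix,g_jx)\leq e^{-(\kappa-o(1))t}$, transferred between $G$-distance and AGY-distance via Lemma~\ref{closing2023.07.1} and Lemma~\ref{effective2023.11.3}.

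This places us exactly in the hypothesis of the effective closing lemma (Theorem~\ref{closing2024.12.2}) applied at scale $T=e^{O(t)}$ with exponent renormalized by $\kappa/O(1)$: we obtain $y\in\mathcal{H}_1(\alpha)$ with $d(y,x)\leq \hyperlink{2024.12.C18}{C_{18}}e^{(N_0-N)t}$ and a $\tfrac12$-separated $\{g_i^\ast\}\subset B_G(2T)$ acting on $y$ through a single point. Since $\ell\geq e^{(c_0N-1)t}$ exceeds $\mathrm{Vol}(B_G(T))^{1-\varepsilon}$ once $N$ is chosen large enough relative to $N_0$ and $\varpi$, Lemma~\ref{closing2024.12.3} and Corollary~\ref{closing2024.3.18} force $\SL(y)$ to be Zariski dense in $G$, hence a non-elementary Fuchsian group; this is precisely alternative (2), contradicting our standing assumption. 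In the refinement $\alpha=(2)$, Zariski density forces a hyperbolic element, so Theorem~\ref{closing2024.3.19} makes $y$ generate a Teichm\"uller curve, and Corollary~\ref{closing2024.12.5} bounds its discriminant by $\hyperlink{2024.12.C13}{C_{13}}(2T)^{\hyperlink{2024.12.k10}{\kappa_{10}}}\leq \hyperlink{2024.12.C18}{C_{18}}e^{N_0t}$ upon absorbing constants into $N_0$.

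The step I expect to be most delicate is the extraction of the $1$-separated collection: $F'_z$ lives only in the transversal direction $H^\perp_{\mathbb{C}}$, yet the lattice-counting bound needed for Corollary~\ref{closing2024.3.18} demands cardinality of order $\mathrm{Vol}(B_G(T))^{1-\varepsilon}$, so the bookkeeping must ensure that the parameters $r_w$ alone furnish the requisite $e^t$-growth. Equally sensitive is the balancing of $N_0,\varpi,\kappa$ so that the closing lemma's perturbation scale $e^{(N_0-N)t}$ is consistent with the thickening scale $e^{-\kappa t}$ defining $\mathsf{E}_t$, and so that the injectivity conclusion genuinely forbids the self-identifications temporarily set aside in Step~1.
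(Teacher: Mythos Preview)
Your extraction of many $1$-separated elements is where the argument breaks. You start from $|F'_z|\geq e^{c_0Nt}$ transversal vectors and write $g_w=h_wh_0^{-1}=b_w\,u_{e^t(r_w-r_0)}\,b_0^{-1}$ with $b_w,b_0\in B_G(e^{-\kappa t})$. But every $g_w$ lies in the thin unipotent tube $B_G(e^{-\kappa t})\,u_{[-e^t,e^t]}\,B_G(e^{-\kappa t})\subset B_G(e^t)$, and this tube carries at most $O(e^t)$ pairwise $1$-separated elements, regardless of how many distinct $w$'s you began with. Distinct $w$'s can (and will) pile up over the same or nearby $r_w$'s; the map $w\mapsto r_w$ has no reason to be injective or spreading. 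So after $1$-separation you retain only $\ell\leq O(e^t)$, not $e^{(c_0N-1)t}$. With $T\asymp e^t$ one has $\mathrm{Vol}\,B_G(T)\asymp e^{2t}$, and the hypothesis $\ell\geq(\mathrm{Vol}\,B_G(T))^{1-\varepsilon}$ of Lemma~\ref{closing2024.12.3}/Corollary~\ref{closing2024.3.18} fails outright. Worse, $\ell\asymp e^t$ is exactly the growth rate of a unipotent subgroup inside $B_G(e^t)$, so cardinality alone cannot rule out that $\SL(y)$ is parabolic.

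The paper's proof avoids this by a genuinely different mechanism. It does not try to harvest many separated elements from a single $z$; instead it extracts \emph{one} very short vector $w$ (hence one near-return $s_r$) per base point $h_rx_1$, and then varies $r\in[0,1]$. Conjugation by $h_r=a_{5\varpi t}u_r$ gives the explicit polynomial form $h_r^{-1}s_rh_r=\bigl(\begin{smallmatrix}P_{11}^{t,r}(r)&P_{12}^{t,r}(r)\\P_{21}^{t,r}&P_{22}^{t,r}(r)\end{smallmatrix}\bigr)$; Lemma~\ref{closing2024.3.54} uses the quadratic $P_{12}^{t,r}$ to show the resulting mapping classes $\gamma_r$ take $\gg e^{2\varpi t}$ distinct values. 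Crucially, non-elementariness (Proposition~\ref{closing2023.12.17}) is \emph{not} obtained from the volume criterion but from the matrix structure: Claim~\ref{closing2024.3.58} shows that a single unipotent one-parameter group cannot simultaneously approximate $h_0^{-1}s_0h_0$ (which forces $|k_{11}|\ll e^{-2\varpi t}$) and $h_{r_i}^{-1}s_{r_i}h_{r_i}$ with $\|\gamma_{r_i}\|>e^{3t}$ (which forces $|k_{11}|\asymp|k_{21}|$). It is this interplay between the two base points $r=0$ and $r=r_i$, encoded in the coefficients of $P_{ij}^{t,r}$, that excludes the parabolic case---something your single-base-point construction cannot access.
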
  

Let $\epsilon_{0}>0$ be as defined in (\ref{effective2024.6.02}), $N_{0}\geq \epsilon_{0}^{-1}$ that will be determined later.
\begin{lem}\label{closing2024.3.31}
   Let $x\in \mathcal{H}_{1}^{(\epsilon_{0})}(\alpha)$. Then for every $z\in \mathsf{E}_{t}.x$, we have
   \[|F_{z}(t)|\ll e^{(3\hyperlink{2024.12.k6}{\kappa_{6}}+1)t}.\]
\end{lem}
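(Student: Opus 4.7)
The plan is to inject $F_z(t)$ into $\mathsf{E}_t\subset G$, show that distinct preimages are $L(z)$-separated in the right-invariant metric on $G$, and then apply a volume count on the tube $\mathsf{E}_t$.

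First I would lower bound $\ell(z)$: since $z=g_0x$ for some $g_0\in\mathsf{E}_t$ with $\|g_0^{-1}\|\lesssim e^{t/2}$, and $\ell(x)\ge\epsilon_0$, the $G$-action on holonomies of saddle connections can contract lengths by at most $\|g_0^{-1}\|$, so $\ell(z)\gtrsim e^{-t/2}$; hence $L(z)=\ell(z)^{\hyperlink{2024.12.k6}{\kappa_{6}}}\gtrsim e^{-\hyperlink{2024.12.k6}{\kappa_{6}} t/2}$. For each $w\in F_z(t)$ I pick some $g_w\in\mathsf{E}_t$ with $g_wx=z+w$ in $\mathcal{H}_1(\alpha)$; distinct $w$ trivially give distinct $g_w$.

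The crux is the separation claim: if $w_1\ne w_2\in F_z(t)$, then $d_G(g_{w_1},g_{w_2})\gtrsim L(z)$. Supposing $g_{w_2}=hg_{w_1}$ with $d_G(e,h)<cL(z)$ for some small $c>0$, Lemma \ref{closing2023.12.3} places $g_{w_1}x$ and $g_{w_2}x$ within distance $cL(z)$ of each other in $\mathcal{H}_1(\alpha)$, hence in a common injectivity chart at $z$ of radius $L(z)$ provided by Corollary \ref{closing2024.3.62}. Inside the chart the $h$-action linearizes as
\[
g_{w_2}x-g_{w_1}x=\eta\cdot g_{w_1}x+O(\|h-e\|^2),\qquad \eta=\log h\in\mathfrak{g},
\]
with the leading term in $T_{g_{w_1}x}(G\cdot x)$. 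On the other hand, by definition of $F_z(t)$, the same difference equals $w_2-w_1\in H^{\perp}_{\mathbb{C}}(z)$. Since $T_z(G\cdot z)\oplus H^{\perp}_{\mathbb{C}}(z)=T_z\mathcal{H}_1(\alpha)$ is a direct-sum decomposition with uniformly positive angle on the injectivity chart, projecting onto the $G$-orbit summand and using the quadratic smallness of the remainder forces $\eta=0$ for $c$ small enough, contradicting $w_1\ne w_2$.

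Finally, I carry out the volume count: the set $\mathsf{E}_t=B_G(e^{-\kappa t})\cdot a_tu_{[0,1]}$ is a tube around the arc $\{a_tu_r:r\in[0,1]\}$, whose right-invariant length is $\asymp e^t$ (since $d_G(a_tu_r,a_tu_{r'})\asymp e^t|r-r'|$), thickened by a ball of radius $e^{-\kappa t}$; hence its Haar volume is $\asymp e^{(1-2\kappa)t}$. Packing the $cL(z)$-separated family $\{g_w\}$ by disjoint $(cL(z)/2)$-balls lying in an $L(z)$-neighborhood of $\mathsf{E}_t$ then yields $|F_z(t)|\lesssim e^{(1+3\hyperlink{2024.12.k6}{\kappa_{6}}/2)t}\le e^{(3\hyperlink{2024.12.k6}{\kappa_{6}}+1)t}$, as required. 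I expect the separation step to be the main obstacle: one has to make the linearization-and-transversality argument effective uniformly on the injectivity chart, controlling the quadratic correction and the angle between the $G$-orbit and $H^{\perp}_{\mathbb{C}}$ summands through the AGY-norm bounds of Section \ref{closing2024.3.26} and Theorem \ref{effective2023.11.3}.
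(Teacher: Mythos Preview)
Your approach is correct and is essentially the same packing argument as the paper's: bound $\ell(z)$ from below, show that distinct $w\in F_z(t)$ give $G$-pieces that are separated at scale $\asymp L(z)$, and finish with a Haar-volume count on the tube. The one place where you work harder than the paper is the separation step. The paper does not linearize: it simply invokes the injectivity of $(h,w)\mapsto h(z+w)$ on $B_G(\tfrac{L(z)}{3})\times B_{H^\perp_{\mathbb C}(z)}(\tfrac{L(z)}{3})$, already recorded in Corollary~\ref{closing2024.3.62} and \eqref{margulis2024.4.52}, and from this the disjointness of the sets $B_G(\epsilon_t).(z+w_i)$ is immediate. Your transversality-and-quadratic-remainder argument is a correct way to reprove that injectivity, but unnecessary given it is available in the paper; citing it removes the ``main obstacle'' you anticipated.

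Two minor remarks. Your claim $d_G(a_tu_r,a_tu_{r'})\asymp e^t|r-r'|$ is not literally correct for a right-invariant metric (this distance grows only logarithmically in $e^t|r-r'|$); what you actually need and use is that the \emph{arc length} of $r\mapsto a_tu_r$ is $\asymp e^t$, which gives the tube volume you state. And your systole bound $\ell(z)\gtrsim e^{-t/2}$ is in fact sharper than the paper's $\ell(z)\gtrsim e^{-t}$; either suffices here.
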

\begin{proof}  
   Note that since $x\in\mathcal{H}_{1}^{(\epsilon_{0})}(\alpha)$, we have  
   \begin{equation}\label{closing2024.3.47}
     \ell(\mathsf{h}x)>\epsilon_{0} e^{-t}  
   \end{equation}
   for all $\mathsf{h}\in\mathsf{E}_{t}$.
   
   Let $z\in\mathsf{E}_{t}.x$, $w\in F_{z}(t)$, and let  $\epsilon_{t}\coloneqq \textstyle{\frac{1}{100}}\epsilon_{0}^{\hyperlink{2024.12.k6}{\kappa_{6}}}e^{-\hyperlink{2024.12.k6}{\kappa_{6}}t}$. Then since $z+w\in\mathsf{E}_{t}.x$, we obtain
   \[B_{G}(\epsilon_{t}).(z+w)\subset B_{G}(\epsilon_{t}+\delta)\cdot a_{t}\cdot u_{[0,1]}.x.\]  
 By (\ref{closing2024.3.47}), Corollary \ref{closing2024.3.62} and Lemma \ref{closing2023.07.1},   the map $(\mathsf{h},w)\mapsto  \mathsf{h}(z+w)$ is injective over $B_{G}(\epsilon_{t})\times  B_{H^{\perp}_{\mathbb{C}}(z)}(\epsilon_{t})$. Hence we have 
   \[B_{G}(\epsilon_{t}).(z+w_{1})\cap B_{G}(\epsilon_{t}).(z+w_{2})=\emptyset \]
   for all distinct $w_{1},w_{2}\in F_{z}(t)$. Now note that 
   \[\Leb_{G}(B_{G}(\epsilon_{t}+\delta)\cdot a_{t}\cdot u_{[0,1]})\ll e^{t}\ \ \ \text{and}\ \ \ \Leb_{G}(B_{G}(\epsilon_{t}))\gg e^{-3\hyperlink{2024.12.k6}{\kappa_{6}}t}.\]
   This establishes the claim.
\end{proof}
 In the following, we fix the coefficients: 
   \begin{itemize} 
  \item  By (\ref{effective2024.6.02}), let $t\geq   t^{\prime}= \hyperlink{2024.12.k4}{\kappa_{4}}|\log \ell(x_{0})|+|\log 10^{-3}|+\hyperlink{2024.3.C6}{C_{6}}$ be large enough so that
  \begin{equation}\label{closing2024.3.37}
    |\{r\in J:a_{t}u_{r}x_{0}\not\in \mathcal{H}_{1}^{(\epsilon_{0})}(\alpha)\}|\leq \textstyle{\frac{1}{100}}|J|
  \end{equation} 
  for all $J\subset[0,1]$ with $|J|\geq 10^{-3}$.
  \item Let $\varpi\geq 1$ be a constant that will be determined later. See (\ref{closing2024.3.59}).
  \item Let $x_{1}=a_{t}u_{r_{0}}x_{0}$ for $r_{0}\in[0,1/2]$  be so that 
      \[x_{1}\in \mathcal{H}_{1}^{(\epsilon_{0})}(\alpha)\ \ \ \text{and}\ \ \ a_{5\varpi t}x_{1}\in \mathcal{H}_{1}^{(\epsilon_{0})}(\alpha).\]
  \item For $r\in[0,1]$, let $h_{r}\coloneqq  a_{5\varpi t}u_{r}$. Note that for all $r\in[0,1]$,
  \[h_{r}x_{1}\in a_{(5\varpi+1)t}u_{[0,1]}x_{0}.\] 
\end{itemize} 
 \begin{lem}
    Assume that Theorem \ref{effective2024.6.01}(1) does not hold for $x=h_{r}x_{1}$ for any $r\in[0,1]$. Then  for $N_{0}> 3\hyperlink{2024.12.k6}{\kappa_{6}}+2$ and $t\geq t^{\prime\prime}$ is large enough, there exist $1\neq s_{r}\in G$ with $\|s_{r}\|\ll e^{2t}$ and $e\neq \gamma_{r}\in\Gamma$ such that  
    \begin{equation}\label{effective2024.6.03}
      d(h_{r}^{-1}s_{r}h_{r}\tilde{x}_{1},\tilde{x}_{1}\gamma_{r})\leq e^{(20\varpi-N_{0})t}.
    \end{equation} 
 \end{lem}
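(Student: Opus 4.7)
The plan is to exploit the failure of Theorem \ref{effective2024.6.01}(1) at $x = h_{r}x_{1}$ (which lies in $\mathsf{E}_{(5\varpi+1)t}.x_{0}$ by construction) to produce distinct elements $\mathsf{h}_{1}, \mathsf{h}_{2} \in \mathsf{E}_{t}$ whose actions on $x$ either coincide or almost coincide, and then translate this into an almost-return for $\tilde{x}_{1}$. The failure of (1) forces one of two scenarios: either (a) $\mathsf{h} \mapsto \mathsf{h}x$ is not injective on $\mathsf{E}_{t}$, or (b) $f_{t,\gamma}(z) > e^{Nt}$ for some $z \in \mathsf{E}_{t}.x$ and some $\gamma \in (0,1)$. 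I would treat these two scenarios separately.

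In case (a) I would pick distinct $\mathsf{h}_{1},\mathsf{h}_{2} \in \mathsf{E}_{t}$ with $\mathsf{h}_{1}(h_{r}x_{1}) = \mathsf{h}_{2}(h_{r}x_{1})$ and lift to $\mathcal{TH}(\alpha)$, obtaining a non-trivial mapping class $\gamma_{r} \in \Gamma$ with $\mathsf{h}_{1}h_{r}\tilde{x}_{1} = \mathsf{h}_{2}h_{r}\tilde{x}_{1}\gamma_{r}$. Setting $s_{r} = \mathsf{h}_{2}^{-1}\mathsf{h}_{1}$ makes $h_{r}^{-1}s_{r}h_{r}\tilde{x}_{1} = \tilde{x}_{1}\gamma_{r}$ an \emph{exact} equality, so (\ref{effective2024.6.03}) is trivially satisfied. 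The bound $\|s_{r}\| \ll e^{2t}$ follows from $\mathsf{E}_{t}^{-1}\mathsf{E}_{t} \subset u_{[-1,1]}(a_{-t}B_{G}(3e^{-\kappa t})a_{t})u_{[0,1]}$ together with the standard conjugation identity $a_{-t}B_{G}(\epsilon)a_{t} \subset B_{G}(e^{t}\epsilon)$ extracted from Lemma \ref{margulis2024.4.35}.

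In case (b), Lemma \ref{closing2024.3.31} combined with $f_{t,\gamma}(z) > e^{Nt}$ produces $w \in F_{z}(t)$ with $\|w\|_{x} \leq e^{-(N-3\hyperlink{2024.12.k6}{\kappa_{6}}-1)t/\gamma} \leq e^{-(N-3\hyperlink{2024.12.k6}{\kappa_{6}}-1)t}$ (using $\gamma<1$). Writing $z = \mathsf{h}_{1}x$ and $z+w = \mathsf{h}_{2}x$ and lifting, I obtain $\mathsf{h}_{1}h_{r}\tilde{x}_{1} + w' = \mathsf{h}_{2}h_{r}\tilde{x}_{1}\gamma'$ for some $\gamma' \in \Gamma$ and lift $w'$ of $w$. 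Setting $s_{r} = \mathsf{h}_{1}^{-1}\mathsf{h}_{2}$ and $\gamma_{r} = (\gamma')^{-1}$ and applying $h_{r}^{-1}\mathsf{h}_{1}^{-1}$ to both sides, the $\Gamma$-invariance of the AGY norm together with Lemma \ref{closing2023.07.1} reduce (\ref{effective2024.6.03}) to the estimate $\|h_{r}^{-1}\mathsf{h}_{1}^{-1}w'\|_{\tilde{x}_{1}} \leq e^{(20\varpi - N_{0})t}$. Applying Theorem \ref{effective2023.11.3} to the successive factors $\mathsf{h}_{1}^{-1}$ and $h_{r}^{-1} = u_{-r}a_{-5\varpi t}$ yields a combined AGY expansion factor of $e^{(10\varpi+2)t}$, which combined with $\|w'\|_{\mathsf{h}_{1}h_{r}\tilde{x}_{1}} \ll e^{O(t)}\|w\|_{x}$ produces a bound of $e^{(10\varpi+O(1)-N)t} \leq e^{(20\varpi-N_{0})t}$ whenever $N \geq 2N_{0}$ and $N_{0} > 3\hyperlink{2024.12.k6}{\kappa_{6}}+2$ with $\varpi \geq 1$. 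Non-triviality of $s_{r}$ is forced by $\mathsf{h}_{1} \neq \mathsf{h}_{2}$ (since $w \neq 0$ in the definition of $F_{z}(t)$), and non-triviality of $\gamma_{r}$ by the essential faithfulness of the $G$-action on generic $\mathcal{TH}(\alpha)$ lifts.

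The main obstacle will be the AGY bookkeeping in case (b): precisely tracking how the estimate on $\|w\|_{x}$ (measured at $x$) transforms through the $\Gamma$-equivariant lift to $w'$ at $\mathsf{h}_{1}h_{r}\tilde{x}_{1}$ and the subsequent pullback by $h_{r}^{-1}\mathsf{h}_{1}^{-1}$, each of whose $a_{t}$-factors expands the AGY norm by powers of $e^{t}$. This bookkeeping dictates how much larger $N_{0}$ must be chosen compared to $\hyperlink{2024.12.k6}{\kappa_{6}}$ and $\varpi$ and pins down the final constants in the target estimate.
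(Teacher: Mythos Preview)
Your proposal is correct and follows essentially the same approach as the paper: both split into the non-injectivity case (your (a), the paper's (ii)) and the large-density case (your (b), the paper's (i)), use Lemma~\ref{closing2024.3.31} together with the pigeonhole principle to extract a very short $w\in F_{z}(t)$ in the latter case, and then pull the resulting small displacement back through $h_{r}^{-1}\mathsf{h}^{-1}$ via the AGY estimates of Theorem~\ref{effective2023.11.3}. Your bookkeeping for $\|s_{r}\|\ll e^{2t}$ and for the AGY expansion factor is in fact more explicit than the paper's; the only place where you are slightly looser than the paper is the justification of $\gamma_{r}\neq e$ in case (b), which should really appeal to the transversality of the $G$-direction and $H^{\perp}_{\mathbb{C}}$ (cf.\ (\ref{margulis2024.4.52})) rather than a generic faithfulness statement, but the paper is equally terse on this point.
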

 \begin{proof}
By the assumption, for all $r\in[0,1]$ with $h_{r}x_{1}\in \mathcal{H}_{1}^{(\epsilon_{0})}(\alpha)$, we have 
      \begin{enumerate}[\ \ \ (i)]
        \item  either there exists $z\in \mathsf{E}_{t}\cdot h_{r}x_{1}$ so that $f_{t,\gamma}(z)>e^{Nt}$,
        \item or the map $\mathsf{h}\mapsto\mathsf{h}h_{r}x_{1}$ is not injective on $\mathsf{E}_{t}$.
      \end{enumerate} 
      
  For the case (i), since $h_{r}x_{1}\in \mathcal{H}_{1}^{(\epsilon_{0})}(\alpha)$, we have 
\begin{equation}\label{closing2024.3.30}
  \ell(\mathsf{h}h_{r}x_{1})\gg e^{-t}
\end{equation}
for all $\mathsf{h}\in\mathsf{E}_{t}$.
\begin{itemize}
  \item Suppose for some  $z=\mathsf{h}_{1}h_{r}x_{1}\in \mathsf{E}_{t}h_{r}x_{1}$, we have $f_{t,\gamma}(z)>e^{Nt}>e^{2N_{0}t}$.
\end{itemize} 
By the definition of $f_{t,\gamma}$, if $F_{z}(t)=\emptyset$, then $f_{t,\gamma}(z)\ll e^{t}$. Hence, we have $F_{z}(t)\neq\emptyset$ for $t\gg 1$. Recall that from Lemma \ref{closing2024.3.31} that $| F_{z}(t)|\ll e^{(3\hyperlink{2024.12.k6}{\kappa_{6}}+1)t}$.

Thus, if $N_{0}> 3\hyperlink{2024.12.k6}{\kappa_{6}}+2$ and $t\geq t^{\prime\prime}$ is large enough, there exists some $w\in F_{z}(t)$ such that 
\[0<\|w\|_{z}\leq e^{(3\hyperlink{2024.12.k6}{\kappa_{6}}+2-N_{0})t}.\]
It follows that for some  $\mathsf{h}_{1},\mathsf{h}_{2}\in \mathsf{E}_{t}$ and $w\in H^{\perp}_{\mathbb{C}}(\mathsf{h}_{1}h_{r}x_{1})$, we have $\mathsf{h}_{1}h_{r}x_{1}+w=\mathsf{h}_{2}h_{r}x_{1}$, i.e. 
\[\bar{u}_{s_{1}}a_{t}u_{r_{1}}h_{r}x+w=\bar{u}_{s_{2}}a_{t}u_{r_{2}}h_{r}x.\] Then we have 
\begin{equation}\label{closing2024.3.32}
 d(h_{r}^{-1}s_{r}h_{r}x_{1},x_{1})\leq e^{(20\varpi-N_{0})t}
\end{equation}
where $s_{r}=\mathsf{h}_{2}^{-1}\mathsf{h}_{1}$.  
Since $x_{1}\in  \mathcal{H}_{1}^{(\epsilon_{0})}(\alpha)$, we have  
\[ d(h_{r}^{-1}s_{r}h_{r}\tilde{x}_{1},\tilde{x}_{1}\gamma_{r})\leq e^{(20\varpi-N_{0})t}\]
where $1\neq s_{r}\in G$ with $\|s_{r}\|\ll e^{2t}$ and $e\neq \gamma_{r}\in\Gamma$. Thus, we establish (\ref{effective2024.6.03}) in this case.

Similarly,   for the case (ii), if $\mathsf{h}\mapsto \mathsf{h} h_{r}x_{1}$ is not injective, we conclude that 
\[h_{r}^{-1}s_{r}h_{r}\tilde{x}_{1}=\tilde{x}_{1}\gamma_{r}.\] 
Again we establish (\ref{effective2024.6.03}).
 \end{proof}

In view of (\ref{effective2024.6.03}), we calculate
\begin{equation}\label{closing2024.3.48}
   h_{r}^{-1}s_{r}h_{r} = h_{r}^{-1}\begin{bmatrix}
S_{11}^{(r)} & S_{12}^{(r)}\\
S_{21}^{(r)} & S_{22}^{(r)}
\end{bmatrix}h_{r}=\begin{bmatrix}
P_{11}^{t,r}(r) & P_{12}^{t,r}(r)\\
P_{21}^{t,r} & P_{22}^{t,r}(r)
\end{bmatrix} 
\end{equation} 
where
\begin{alignat*}{7}
 P_{11}^{t,r}(R)&=e^{5\varpi t}S_{21}^{(r)} R+S_{11}^{(r)}& , & \ \ \   & P_{12}^{t,r}(R)= &-e^{5\varpi t}S_{21}^{(r)} R^{2} +(S_{22}^{(r)}-S_{11}^{(r)}) R + e^{-5\varpi t}S_{12}^{(r)} & , \\ 
 P_{21}^{t,r}&=e^{5\varpi t}S_{21}^{(r)} &  &, \ \ \ & P_{22}^{t,r}(R)=& -e^{5\varpi t}S_{21}^{(r)} R+S_{22}^{(r)}  &. 
\end{alignat*}
Note that the leading coefficients of  the polynomials $P_{11}^{t,r}(\cdot)$, $P_{12}^{t,r}(\cdot)$, $P_{22}^{t,r}(\cdot)$ are $P_{21}^{t,r}$.

 \begin{lem}\label{closing2024.3.42}
 Given $T>0$,  suppose that $\|h_{r}^{-1}s_{r}h_{r}-I\|\geq  T$. Then we have 
\begin{equation}\label{closing2024.3.36}
   \max\{e^{5\varpi}|S_{21}^{(r)}|,|S_{11}^{(r)}-S_{22}^{(r)}|\}\gg T.
\end{equation}
   Moreover, if there is a unipotent element $\hat{u}\in G$ so that 
   \begin{equation}\label{closing2024.3.52}
  h_{r}^{-1}s_{r}h_{r}\stackrel[]{e^{(\varpi-N_{0})t}}{\sim}  \hat{u} \text{\ \ \ or \ \ \ }h_{r}^{-1}s_{r}h_{r}\stackrel[]{e^{(\varpi-N_{0})t}}{\sim}  -\hat{u}, 
   \end{equation}
   then we must have
    \begin{equation}\label{closing2024.3.41}
      |P_{21}^{t,r}|=e^{5\varpi}|S_{21}^{(r)}|\geq T.
    \end{equation} 
 \end{lem}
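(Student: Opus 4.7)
The plan is to analyze the four entries of $M \coloneqq h_r^{-1} s_r h_r - I$ directly via the explicit formulas in (\ref{closing2024.3.48}), using two structural constraints: $\det s_r = 1$ (since $s_r \in G$), and $|S_{12}^{(r)}| \leq \|s_r\| \ll e^{2t}$, so that $e^{-5\varpi t}|S_{12}^{(r)}| \ll e^{(2-5\varpi)t}$ is negligible provided $\varpi \geq 1$ and $t$ is sufficiently large. (I read the $e^{5\varpi}$ in (\ref{closing2024.3.36}) and (\ref{closing2024.3.41}) as $e^{5\varpi t}$, matching the $P_{21}^{t,r}$ of the display (\ref{closing2024.3.48}).)

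For Part 1, I will case-split on which entry of $M$ realizes its norm. The entry $M_{21} = e^{5\varpi t}S_{21}^{(r)}$ gives the first quantity at once. The entry $M_{12}$ is a polynomial in $r$ with coefficients $e^{5\varpi t}S_{21}^{(r)}$, $\pm(S_{11}^{(r)}-S_{22}^{(r)})$, and the negligible $e^{-5\varpi t}S_{12}^{(r)}$, so $|M_{12}|\geq T$ directly forces one of the two target quantities to be $\gg T$. For the diagonal entries $M_{11}, M_{22}$, I will use the determinant identity $(S_{11}^{(r)}-S_{22}^{(r)})^2 = (\tr s_r)^2 - 4 - 4\, S_{12}^{(r)}S_{21}^{(r)}$. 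If $\tr s_r$ stays within $O(1)$ of $\pm 2$, then $|M_{11}|\geq T$ or $|M_{22}|\geq T$ forces $|M_{11}-M_{22}|\gg T$, and this difference equals $(S_{11}^{(r)}-S_{22}^{(r)})\pm 2e^{5\varpi t}S_{21}^{(r)}r$, yielding the conclusion via $r\in[0,1]$. If instead $|\tr s_r \mp 2|\gg T$, the determinant identity gives either $|S_{11}^{(r)}-S_{22}^{(r)}|\gg T$ or $|S_{12}^{(r)}S_{21}^{(r)}|\gg T$, and in the latter case $|S_{12}^{(r)}|\leq e^{2t}$ forces $e^{5\varpi t}|S_{21}^{(r)}|\gg T$.

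For Part 2, the unipotent proximity hypothesis forces $\tr s_r = \tr(h_r^{-1}s_r h_r)$ to be within $2e^{(\varpi-N_0)t}$ of $\pm 2$, and the $(1,1)$ entry $e^{5\varpi t}S_{21}^{(r)}r + S_{11}^{(r)}$ to be within $e^{(\varpi-N_0)t}$ of $\pm 1$. Applying Part 1: if we are already in the case $e^{5\varpi t}|S_{21}^{(r)}|\gg T$, we are done; otherwise $|S_{11}^{(r)}-S_{22}^{(r)}|\gg T$, and combining with the trace estimate gives $|S_{11}^{(r)}\mp 1|\gg T$. Substituting into the $(1,1)$-proximity bound $|e^{5\varpi t}S_{21}^{(r)}r + S_{11}^{(r)}\mp 1|\leq e^{(\varpi-N_0)t}\ll T$ forces $e^{5\varpi t}|S_{21}^{(r)}|\cdot r \gg T$, and since $r\leq 1$ this gives $|P_{21}^{t,r}|\geq T$ up to an absolute constant, which I absorb into the choice of constants by a mild strengthening of $N$.

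The main obstacle I anticipate is isolating $|S_{21}^{(r)}|$ and $|S_{11}^{(r)}-S_{22}^{(r)}|$ from other possible sources of large entries — in particular, ruling out that $M_{11}$ or $M_{22}$ is large simply because $S_{11}^{(r)}$ or $S_{22}^{(r)}$ has large modulus without the \emph{difference} being large. The $\det s_r = 1$ relation is the key tool that breaks this symmetry, tying any large diagonal deviation to either trace excess (absorbed by $|S_{11}^{(r)}-S_{22}^{(r)}|$) or $S_{21}^{(r)}$ magnitude, via the identity $|S_{11}^{(r)}-S_{22}^{(r)}|^2 = (\tr s_r)^2 - 4 - 4 S_{12}^{(r)}S_{21}^{(r)}$ combined with $|S_{12}^{(r)}|\leq e^{2t}$.
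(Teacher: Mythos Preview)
Your Part~1 argument runs parallel to the paper's: both reduce to the case $e^{5\varpi t}|S_{21}^{(r)}| < T$ and then exploit $\det s_r = 1$ together with $|S_{12}^{(r)}|\ll e^{2t}$ to force $|S_{11}^{(r)} - S_{22}^{(r)}| \gg T$. The paper does this via $S_{11}^{(r)}S_{22}^{(r)} \approx 1 \Rightarrow |S_{11}^{(r)} - 1/S_{11}^{(r)}|\gg T$, while you use the algebraically equivalent identity $(S_{11}-S_{22})^2 = (\tr s_r)^2-4-4S_{12}S_{21}$ together with a trace dichotomy. (Both versions are slightly loose when $\tr s_r \approx -2$, e.g.\ at $s_r = -I$; this is a shared looseness of the lemma as stated and harmless in the actual applications.)

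Your Part~2, however, contains a genuine gap. You assert that the unipotent proximity hypothesis forces ``the $(1,1)$ entry $e^{5\varpi t}S_{21}^{(r)}r + S_{11}^{(r)}$ to be within $e^{(\varpi-N_0)t}$ of $\pm 1$.'' This is false: a unipotent element $\hat u\in\SL_2(\mathbb{R})$ satisfies $\tr\hat u = 2$, but its individual diagonal entries are unconstrained --- and indeed in the only application of (\ref{closing2024.3.41}), namely Claim~\ref{closing2024.3.58}, one has $\hat u = g_i^\ast \in kUk^{-1}$ with $(1,1)$ entry $1 - k_{11}k_{21}w_i$, not $1$. Your subsequent step, extracting $e^{5\varpi t}|S_{21}^{(r)}|\cdot r \gg T$ from the ``$(1,1)$-proximity bound'', therefore has no content. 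The paper avoids this by using \emph{only} the conjugation-invariant trace: from (\ref{closing2024.3.52}) one gets $|S_{11}^{(r)} + S_{22}^{(r)} \mp 2| \ll e^{(\varpi-N_0)t}$, and combining this with $|S_{11}^{(r)}S_{22}^{(r)} - 1| \ll T e^{(2-5\varpi)t}$ (already obtained in Part~1 under the standing assumption $e^{5\varpi t}|S_{21}^{(r)}|<T$) forces $S_{11}^{(r)},S_{22}^{(r)}$ both close to $\pm 1$, contradicting the Part~1 intermediate bound $\max\{|S_{11}^{(r)}-1|,|S_{22}^{(r)}-1|\}\geq T$. Your route through the $(1,1)$ entry cannot be repaired without an independent bound on $\hat u_{11}$, which is unavailable at this point; you should work with the trace instead.
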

 \begin{proof} 
    By (\ref{closing2024.3.48}), we have   
\begin{equation}\label{closing2024.3.35}
\max\{e^{5\varpi}|S_{21}^{(r)}|,|S_{11}^{(r)}-1|,|S_{22}^{(r)}-1|\}\geq T.
\end{equation}
Assume  that $e^{5\varpi t}|S_{21}^{(r)}|<T$. Then 
\begin{equation}\label{closing2024.3.43}
  \max\{|S_{11}^{(r)}-1|,|S_{22}^{(r)}-1|\}\geq T\ \ \ \text{ and }\ \ \ |S_{12}^{(r)}S_{21}^{(r)}|\ll T e^{(4-5\varpi) t}.
\end{equation}   
 Thus, 
\begin{equation}\label{closing2024.3.44}
   |S_{11}^{(r)}S_{22}^{(r)}-1|\ll T e^{(4-5\varpi) t}.
\end{equation}
Then by (\ref{closing2024.3.35}), if $|S_{11}^{(r)}-1|\geq T$, then 
\[|S_{11}^{(r)}-S_{22}^{(r)}|=\left|S_{11}^{(r)}-\frac{1+O(T e^{(4-5\varpi) t})}{S_{11}^{(r)}}\right|\gg T. \]
Hence, we get (\ref{closing2024.3.36}).

Now we  assume that (\ref{closing2024.3.52}) holds and $e^{5\varpi t}|S_{21}^{(r)}|<T$. Then   $h_{r}^{-1}s_{r}h_{r}$ is very near  either a unipotent element $u^{\star}$ or its minus, we conclude that 
\begin{equation}\label{closing2024.3.45}
   \min\{|S_{11}^{(r)}+S_{22}^{(r)}-2|,|S_{11}^{(r)}+S_{22}^{(r)}+2|\}\ll e^{(11\varpi-N)t}.
\end{equation}
Equations (\ref{closing2024.3.44}) and (\ref{closing2024.3.45}) contradict (\ref{closing2024.3.43}) if $N,t$ are large enough, hence necessarily $e^{5\varpi t}|S_{21}^{(r)}|<T$. 
 \end{proof}

Now we define:
\begin{itemize}
  \item  Let $J^{(\epsilon_{0})} =\{r\in[1/2,1]:h_{r}x_{1}\in \mathcal{H}_{1}^{(\epsilon_{0})}(\alpha)\}$.
  \item Let $\delta_{\Gamma}=\min_{\gamma\in\Gamma\setminus\{e\}} d_{\Gamma}(\gamma,e)$. 
\end{itemize}
It follows that 
\[\delta_{\Gamma}\leq  d_{\Gamma}(e,\gamma_{r})= d(\tilde{x}_{1},\tilde{x}_{1}\gamma_{r})\leq d(\tilde{x}_{1},h_{r}^{-1}s_{r}h_{r}\tilde{x}_{1})+ d(h_{r}^{-1}s_{r}h_{r}\tilde{x}_{1},\tilde{x}_{1}\gamma_{r}).\]
So by Lemma \ref{closing2023.12.3} and (\ref{effective2024.6.03}), we get 
\begin{equation}\label{closing2024.3.53}
  1\ll\delta_{\Gamma}\ll d(\tilde{x}_{1},h_{r}^{-1}s_{r}h_{r}\tilde{x}_{1})\leq d_{G}(e,h_{r}^{-1}s_{r}h_{r}).
\end{equation} 

\begin{lem}\label{closing2024.3.54} 
  There are $\gg e^{2\varpi t}$ distinct elements in $\{\gamma_{r}\in\Gamma :r\in J^{(\epsilon_{0})}\}$.
\end{lem}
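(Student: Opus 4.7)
The plan is to show that for each $\gamma\in\Gamma$ the fibre $J_\gamma:=\{r\in J^{(\epsilon_0)}:\gamma_r=\gamma\}$ is contained in an interval of length at most $e^{-(N_0-C\varpi)t}$ for some absolute constant $C=C(\alpha)$. The measure bound $|J^{(\epsilon_0)}|\gtrsim 1$ is available by applying Corollary~\ref{closing2024.3.63} to $x_1\in\mathcal{H}_1^{(\epsilon_0)}(\alpha)$ at time $5\varpi t$ (the set-up already grants $a_{5\varpi t}x_1\in\mathcal{H}_1^{(\epsilon_0)}$). Once the fibre bound is established, pigeonhole then delivers at least $\gg e^{(N_0-C\varpi)t}\gg e^{2\varpi t}$ distinct values of $\gamma_r$ provided $N_0$ is taken sufficiently large relative to $\varpi$.

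For the fibre estimate, take $r,r'\in J_\gamma$ and combine the two instances of (\ref{effective2024.6.03}) to obtain
\[
d\bigl(h_r^{-1}s_rh_r\tilde x_1,\;h_{r'}^{-1}s_{r'}h_{r'}\tilde x_1\bigr)\le 2e^{(20\varpi-N_0)t}.
\]
Because $x_1\in\mathcal{H}_1^{(\epsilon_0)}(\alpha)$, the map $g\mapsto g\tilde x_1$ from a neighbourhood of the identity in $G$ into Teichmüller space is bi-Lipschitz with constants depending only on $\epsilon_0$ (Lemma~\ref{closing2023.07.1} together with Lemma~\ref{closing2023.12.3}); combined with the lower bound $\delta_\Gamma\gtrsim 1$ from (\ref{closing2024.3.53}) and Corollary~\ref{closing2024.3.62}, which rule out branch-jumping across $\Gamma$-translates of $\tilde x_1$ at this scale, this gives
\[
\bigl\|h_r^{-1}s_rh_r-h_{r'}^{-1}s_{r'}h_{r'}\bigr\|\ll e^{(C_1\varpi-N_0)t}.
\]
The explicit matrix formulae (\ref{closing2024.3.48}) then yield, via the $(2,1)$-entry comparison, $|S_{21}^{(r)}-S_{21}^{(r')}|\ll e^{(C_1\varpi-5\varpi-N_0)t}$; substituting this into the $(2,2)$-entry comparison, and using $\det s_r=1$ together with the $(1,1)$-entry comparison to control $|S_{22}^{(r)}-S_{22}^{(r')}|$, produces
\[
e^{5\varpi t}\,|S_{21}^{(r)}|\cdot|r-r'|\ll e^{(C_2\varpi-N_0)t}.
\]
Applying Lemma~\ref{closing2024.3.42} with $T\asymp\delta_\Gamma\gtrsim 1$ from (\ref{closing2024.3.53}), the generic case in which the unipotent-closeness (\ref{closing2024.3.52}) holds delivers (\ref{closing2024.3.41}), namely $e^{5\varpi t}|S_{21}^{(r)}|\gtrsim 1$, so that $|r-r'|\ll e^{(C_2\varpi-N_0)t}$. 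In the alternative branch of (\ref{closing2024.3.36}) where $|S_{11}^{(r)}-S_{22}^{(r)}|\gtrsim 1$, the same quantitative bound follows from the linear-in-$r$ comparison of the diagonal entries $P_{11}^{t,r}(r)-P_{22}^{t,r}(r)$: the leading $r$-coefficient is $\Theta(e^{5\varpi t}|S_{21}^{(r)}|)$ and the constant term is forced to vary at scale $|S_{11}^{(r)}-S_{22}^{(r)}|\gtrsim 1$, so $|r-r'|$ must remain below the error tolerance.

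The principal obstacle will be the degenerate regime where $|S_{21}^{(r)}|$ may drop below $e^{-5\varpi t}$: conjugation by $h_r$ then affects $s_r$ only weakly as $r$ varies, so the required $r$-dependence must be squeezed out of the eigenvalue spread $|S_{11}^{(r)}-S_{22}^{(r)}|\gtrsim 1$ together with the trace and determinant normalizations, rather than from the bottom row directly. The numerology ultimately forces $N_0$ to exceed a concrete linear function $C\varpi$ of $\varpi$, consistent with the parameter hierarchy of Theorem~\ref{effective2024.6.01}.
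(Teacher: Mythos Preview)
Your fibre estimate has a genuine gap. The element $s_r\in G$ is merely \emph{some} matrix with $\|s_r\|\ll e^{2t}$ witnessing the failure of injectivity or of the density bound at $h_rx_1$; nothing ties $s_r$ to $s_{r'}$ beyond this norm constraint, so the differences $S_{ii}^{(r)}-S_{ii}^{(r')}$ and $S_{12}^{(r)}-S_{12}^{(r')}$ may each be as large as $e^{2t}$. When you compare the diagonal entries of $h_r^{-1}s_rh_r$ and $h_{r'}^{-1}s_{r'}h_{r'}$ you get relations of the shape
\[
\bigl|(S_{ii}^{(r)}-S_{ii}^{(r')})\pm e^{5\varpi t}S_{21}^{(r)}(r-r')\bigr|\ll e^{(C\varpi-N_0)t},
\]
and these cannot isolate $|r-r'|$ without independent control on $S_{ii}^{(r)}-S_{ii}^{(r')}$; the determinant relation does not supply it, since it couples to the uncontrolled $S_{12}^{(r)}$. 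Your appeal to Lemma~\ref{closing2024.3.42} is also misplaced: the conclusion (\ref{closing2024.3.41}) requires the unipotent-closeness hypothesis (\ref{closing2024.3.52}), which is nowhere verified here; only the dichotomy (\ref{closing2024.3.36}) is available, and in its second branch $e^{5\varpi t}|S_{21}^{(r)}|$ may be arbitrarily small.

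The paper avoids this entirely by conjugating once more by $h_{r'}$ and comparing $h_{r'}h_r^{-1}s_rh_rh_{r'}^{-1}$ against $s_{r'}$ itself, using only the crude bound $\|s_{r'}\|\ll e^{2t}$. By (\ref{closing2024.3.51}) the $(1,2)$-entry of the left side equals $e^{5\varpi t}P_{12}^{t,r}(r'-r)$, a quadratic in $r'-r$ whose coefficients are built from $s_r$ alone and at least one of which is $\gtrsim 1$ by (\ref{closing2024.3.53}) and (\ref{closing2024.3.36}). Hence the sublevel set $\{r':|P_{12}^{t,r}(r'-r)|\le e^{-4\varpi t}\}$ has measure $\ll e^{-2\varpi t}$; outside it the $(1,2)$-entry exceeds $e^{\varpi t}>\|s_{r'}\|$, contradicting (\ref{closing2024.3.50}). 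This gives the fibre bound $\ll e^{-2\varpi t}$ directly, and together with $|J^{(\epsilon_0)}|\ge\tfrac14$ yields $\gg e^{2\varpi t}$ distinct $\gamma_r$. The point is that the variation of $s_{r'}$ with $r'$ is absorbed into a single norm bound rather than tracked entry by entry.
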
 
\begin{proof}
   By (\ref{closing2024.3.37}), we have $|J^{(\epsilon_{0})}|\geq 1/4$ for $t\gg 1$. Fix $r\in J^{(\epsilon_{0})}$ as above, and consider the set of $r^{\prime}\in J^{(\epsilon_{0})}$ so that $\gamma_{r}=\gamma_{r^{\prime}}$. Then for each such $r^{\prime}$, 
\[d(h_{r}^{-1}s_{r}h_{r}\tilde{x}_{1} ,\tilde{x}_{1}\gamma_{r})\ll e^{(4\varpi-N_{0})t},\ \ \  d(h_{r^{\prime}}^{-1}s_{r^{\prime}}h_{r^{\prime}}\tilde{x}_{1} ,\tilde{x}_{1}\gamma_{r^{\prime}})\ll e^{(4\varpi-N_{0})t}  .\]
It follows that 
\[d(h_{r}^{-1}s_{r}h_{r}\tilde{x}_{1} ,h_{r^{\prime}}^{-1}s_{r^{\prime}}h_{r^{\prime}}\tilde{x}_{1} )\ll e^{(4\varpi-N_{0})t}. \]
In other words, we have  
\begin{equation}\label{closing2024.3.50}
  d_{G}(h_{r^{\prime}}h_{r}^{-1}s_{r}h_{r}h_{r^{\prime}}^{-1},s_{r^{\prime}})\ll e^{(20\varpi-N_{0})t}.
\end{equation} 

On the other hand, in view of, one calculates 
\begin{equation}\label{closing2024.3.51}
h_{r^{\prime}}h_{r}^{-1}s_{r}h_{r}h_{r^{\prime}}^{-1} = \begin{bmatrix}
P_{11}^{t,r}(r^{\prime}-r) & e^{5\varpi t}P_{12}^{t,r}(r^{\prime}-r)\\
e^{-5\varpi t}P_{21}^{t,r} & P_{22}^{t,r}(r^{\prime}-r)
\end{bmatrix} .
\end{equation} 
For every $r\in J^{(\epsilon_{0})}$, let 
\[J(t,r)\coloneqq\{ r^{\prime}\in J^{(\epsilon_{0})} :|P_{12}^{t,r}(r^{\prime}-r)|\leq  e^{-4\varpi t}\}\]
In view of (\ref{closing2024.3.36}), for every $r\in J^{(\epsilon_{0})}$,
 we have
\begin{equation}\label{closing2024.3.39}
\left|J(t,r)\right|\ll e^{-2\varpi t}
\end{equation}
 since by (\ref{closing2024.3.53}) and (\ref{closing2024.3.36}), at least one of the coefficients of the quadratic polynomial $P_{12}^{t,r}$ is of size $\gg 1$. 

Now if $r^{\prime}\in J^{(\epsilon_{0})}\setminus J(t,r)$, then $|e^{5\varpi t}P_{12}^{t,r}(r^{\prime}-r)|> e^{\varpi t}$. Thus, we have  
\[\|h_{r^{\prime}}h_{r}^{-1}s_{r}h_{r}h_{r^{\prime}}^{-1} \|> e^{\varpi t}>\|s_{r^{\prime}}\|,\]
  for all $r^{\prime}\in J^{(\epsilon_{0})}\setminus J(t,r)$, in contradiction to (\ref{closing2024.3.50}).

Therefore, for every $r\in J^{(\epsilon_{0})}$, we have $|\{r^{\prime}\in J^{(\epsilon_{0})}:\gamma_{r^{\prime}}=\gamma_{r}\}|\ll e^{-2\varpi t}$, and   so the set $\{\gamma_{r}\in\Gamma :r\in J^{(\epsilon_{0})}\}$ has at least $\gg e^{2\varpi t}$ distinct elements.
\end{proof}
Next, we apply the  closing lemma  (Theorem \ref{closing2024.12.2})   to the $P$-action on $\mathcal{H}_{1}(\alpha)$.
\begin{itemize}
  \item \hypertarget{2024.3.k7} 
   Let  $\kappa_{7}>0$ be as defined in Theorem \ref{closing2024.12.2}. It depends only on the underlying space $\mathcal{H}_{1}(\alpha)$.
  \item  Let $\varpi\geq  \hyperlink{2024.12.k11}{\kappa_{11}}$.
  \item  Let $\{r_{i}:1\leq i\leq l\}\subset J^{(\epsilon_{0})}$ be a sequence so that 
\[h_{r_{i}}^{-1}s_{r_{i}}h_{r_{i}}\neq h_{r_{j}}^{-1}s_{r_{j}}h_{r_{j}}\]
for $i\neq j$. By Lemma \ref{closing2024.3.54}, we may choose $l\gg  e^{2\varpi t}$.  
\item Let $g_{i}=h_{r_{i}}^{-1}s_{r_{i}}h_{r_{i}}$. Then $\{g_{i}\}\subset B_{G}( e^{20\varpi t})$ is a  collection of  $1$-separated points. Also,  by (\ref{effective2024.6.03}), for sufficiently large $N_{0}$ and $t\geq t^{\prime\prime}$, we have
    \begin{equation}\label{closing2024.3.34}
 d(g_{i}x_{1},g_{j}x_{1})< e^{(21\varpi -N_{0})t}\ \ \ \text{ and }\ \ \  \|\gamma_{r}\|\leq e^{21\varpi t}.
\end{equation}     
\end{itemize}

 Then by Proposition \ref{closing2024.3.40}, we get a point $\tilde{y}\in B(\tilde{x},e^{(\hyperlink{2024.12.k11}{\kappa_{11}}-N_{0})t})$, and  a  collection of  $\frac{1}{2}$-separated points 
 \begin{equation}\label{closing2024.3.55}
   \{g^{\ast}_{i}:0\leq i\leq l\}\subset B_{G}(2 e^{20\varpi t}),\ \ \ d_{G}(g^{\ast}_{i},g_{i})\leq e^{(3\varpi-N_{0})t}
 \end{equation} 
 such that 
 \[g^{\ast}_{i}\tilde{y}=\tilde{y}\gamma_{r_{i}}.\]  
 It follows that 
 \[ \{g^{\ast}_{0},\ldots,g^{\ast}_{l}\}\subset\SL(y)\cap B_{G}(2 e^{20\varpi t}). \]
  Moreover, recall from (\ref{closing2024.3.20}) that $\SL(y)$  has an alternate definition in terms of affine automorphisms $\Aff(y)$. Each affine automorphism determines a mapping class in $\Gamma=\Mod(M,\Sigma)$. Thus, we can view the Veech group $\SL(y)$ is isomorphic to a subgroup of $\Gamma$. In addition,  one may deduce that  $\SL(y)$ is isomorphic   to a subgroup $\Gamma_{y}$ of $\Sp(p(y))$ where $p(y)$ denotes the $2$-plane $p(y)=\mathbb{R}\Ree y\oplus \mathbb{R}\Imm y\subset H^{1}(M;\mathbb{C})$. 
  
  In what follows, we shall show that  $\SL(y)\subset G$ is non-elementary.
  \begin{prop}\label{closing2023.12.17}
     Let the notation and assumptions be as above. Then $\SL(y)\subset G$ is a non-elementary Fuchsian group.
  \end{prop}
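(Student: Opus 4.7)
The plan is to argue by contradiction using the effective lattice-point bound in Corollary \ref{closing2024.3.18}, in the spirit of the proof of Theorem \ref{closing2023.12.1} and Lemma \ref{closing2024.12.3}.

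\emph{Step 1 (collect the data).} From Lemma \ref{closing2024.3.54} together with Proposition \ref{closing2024.3.40} and \eqref{closing2024.3.55} I have the $\tfrac{1}{2}$-separated family
$$\{g_i^\ast\}_{i=0}^{l} \subset \SL(y)\cap B_G(2 e^{20\varpi t}),\qquad l\gg e^{2\varpi t}.$$
Since $\SL(y)$ is the Veech group of a translation surface, it is discrete in $G$, so a routine packing argument extracts a $1$-separated sub-collection of cardinality still $\gg e^{2\varpi t}$.

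\emph{Step 2 (contradiction hypothesis).} Suppose, towards a contradiction, that $\SL(y)$ is elementary. Elementary Fuchsian subgroups of $\SL_2(\mathbb{R})$ are virtually abelian and sit inside a proper algebraic subgroup (a conjugate of the diagonal, of the unipotent radical, or of the normalizer of the diagonal). In particular $\SL(y)$ is not Zariski dense in $G$, so Corollary \ref{closing2024.3.18} applies with $B=B_G(2 e^{20\varpi t})$ and yields
$$l \;\le\; \hyperlink{2024.3.C7}{C_{7}} \, \Vol\bigl(B_G(2 e^{20\varpi t})\bigr)^{1-\hyperlink{2024.12.k7}{\kappa_{7}}/3} \;\ll\; e^{20 D\,\varpi\,(1-\hyperlink{2024.12.k7}{\kappa_{7}}/3)\,t},$$
where $D$ is the polynomial growth exponent of $\Vol(B_G(\cdot))$ in the norm fixed at the start of Section \ref{closing2024.3.23}.

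\emph{Step 3 (derive the contradiction).} Combining Steps 1 and 2 gives
$$e^{2\varpi t} \;\ll\; e^{20 D\,\varpi\,(1-\hyperlink{2024.12.k7}{\kappa_{7}}/3)\,t},$$
which is impossible for all large $t$, provided the constant $\varpi$ has been pinned down in \eqref{closing2024.3.59} in such a way that $20 D\,(1-\hyperlink{2024.12.k7}{\kappa_{7}}/3) < 2$; this is the very reason the exponent in Lemma \ref{closing2024.3.54} is recorded as $2\varpi$ and the radius in \eqref{closing2024.3.55} as $2 e^{20\varpi t}$. The contradiction forces $\SL(y)$ to be Zariski dense in $\SL_2(\mathbb{R})$, and therefore non-elementary as a Fuchsian group.

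\emph{Main obstacle.} The only delicate point is the exponent bookkeeping between the two bounds: the lower estimate grows like $e^{2\varpi t}$, whereas the ambient ball has radius $e^{20\varpi t}$, so the spectral gap saving $\hyperlink{2024.12.k7}{\kappa_{7}}/3$ supplied by Corollary \ref{closing2024.3.18} must be strong enough to overcome the factor $10D$ in the exponent. This is exactly what the normalization \eqref{closing2024.3.59} encodes, together with the lower bound on the coefficient of the quadratic $P_{12}^{t,r}$ used in Lemma \ref{closing2024.3.54}; once these are arranged, the rest of the argument is routine.
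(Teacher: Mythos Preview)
Your counting approach cannot close: the exponents on the two sides scale identically in $\varpi$, so $\varpi$ cancels and you are left needing $20D\,(1-\kappa_7/3)<2$, i.e.\ $\kappa_7>57/20$ (with $D=2$ for $\SL_2(\mathbb{R})$). The constant $\kappa_7$ is a fixed small spectral-gap exponent coming from Proposition~\ref{closing2024.1.5} and (\ref{closing2024.2.2}); nothing in the construction lets you make it large, and (\ref{closing2024.3.59}) does not help since it \emph{defines} $\varpi$ via the polynomial growth of a unipotent image in $\GL_{2g+|\Sigma|-1}(\mathbb{Z})$ rather than leaving it free. So Step~3 never produces a contradiction, and your ``main obstacle'' paragraph is precisely where the argument breaks.

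The paper's proof avoids this numerical trap by treating the three elementary types separately and exploiting that $\SL(y)$ is realized inside $\Gamma_y\subset\Sp(p(y))\subset\GL_{2g+|\Sigma|-1}(\mathbb{Z})$. The finite case is dispatched by Minkowski's bound on finite subgroups of $\GL_n(\mathbb{Z})$; the virtually-hyperbolic case by the logarithmic growth $|\Gamma_y\cap B_{\GL}(T)|\ll(\log T)^{\kappa_{14}}$ coming from integrality of traces. The genuinely hard case is the parabolic one, and there the paper does \emph{not} use a counting bound at all: it writes $g_i^\ast\in kUk^{-1}$ and compares entrywise with the explicit matrices $h_r^{-1}s_rh_r$ from (\ref{closing2024.3.48}), using Lemma~\ref{closing2024.3.42} and Claim~\ref{closing2024.3.58} to show that the shape of these matrices for $r_0=0$ forces $|k_{11}|\ll e^{-2\varpi t}$, while for some $r_i$ with $\|\gamma_{r_i}\|>e^{3t}$ (guaranteed by (\ref{closing2024.3.59}) and Lemma~\ref{closing2024.3.54}) it forces $|k_{11}|\asymp|k_{21}|$, a contradiction. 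This structural matrix argument is the missing idea in your proposal.
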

  \begin{proof}
Suppose that $\SL(y)\subset G$ is elementary. Then by the classification of Fuchsian groups, we see that one of the following hold (a proof can be found in \cite[Proposition 3.1.2]{hubbard2006teichmuller1}):
 \begin{itemize}
   \item   $\SL(y)$ is a finite cyclic group;
   \item  $\SL(y)$ is an infinite  cyclic group generated by a single parabolic element;
   \item  $\SL(y)$ contains an infinite  cyclic group generated by a single hyperbolic element of index at most two.
 \end{itemize}
 
 First, assume  that $\SL(y)$ is finite. Then recall that Minkowski proved that $\GL_{2g+|\Sigma|-1}(\mathbb{Z})$ has only finitely many isomorphism classes of finite subgroups (see e.g. \cite{kuzmanovich2002finite}). In particular, it contradicts the fact that $|\SL(y)|=|\Gamma_{y}|\gg e^{2\varpi t}$ for sufficiently large $t\geq \hyperlink{2024.12.t5}{t_{5}}=\max\{t^{\prime},t^{\prime\prime}\}$.
 
 Now assume that $\SL(y)$ (and so $\Gamma_{y}$) contains a infinite cyclic subgroup generated by a single hyperbolic element of index at most two. Then by considering the trace of hyperbolic elements in $\GL_{2g+|\Sigma|-1}(\mathbb{Z})$, we see that there exist absolute \hypertarget{2024.12.k14} constants $C=C(g,|\Sigma|)>0$ and $\kappa_{14}=\aleph_{14}(g,|\Sigma|)>0$ such that
 \[|\Gamma_{y}\cap B_{\GL}(T)|\leq C (\log T)^{\hyperlink{2024.12.k14}{\kappa_{14}}}\]
for any $T\gg 1$. Again, it contradicts the fact that $|\Gamma_{y}\cap B_{\GL}(e^{20\varpi t})|\gg e^{2\varpi t}$ for sufficiently large $t\geq \hyperlink{2024.12.t5}{t_{5}}=\max\{t^{\prime},t^{\prime\prime}\}$.

 Thus, in the following, we assume that $\SL(y)$ is a unipotent subgroup of $G$. 
  Since $\SL(y)$ is unipotent, there exists some $k$ so that $\SL(y)\subset kUk^{-1}$. Via the Iwasawa decomposition,  $k$ can be chosen to be in $\SO(2)$ - for our purposes, we only need to know that the size of $k$ can be bounded by an absolute constant.
 
 Then since unipotent elements have a polynomial growth, we have 
 \begin{equation}\label{closing2024.3.59}
 |\{\gamma_{r}\in\Gamma_{y}:\|\gamma_{r}\|\leq e^{3 t}\}|\ll e^{\varpi t} 
 \end{equation} 
 for some $\varpi>0$ depending only on the size of $\GL$, i.e. genus $g$ and $|\Sigma|$.

 It follows that for any $1\leq i\leq l$, 
 \begin{equation}\label{closing2024.3.46}
   d_{G}( h_{r_{i}}^{-1}s_{r_{i}}h_{r_{i}}, g_{i}^{\ast})\leq   e^{(3\varpi-N_{0})t} 
 \end{equation}
 for  $g_{i}^{\ast}\in kUk^{-1}$. Recall from the beginning of the proof that $h_{0}x_{1}\in \mathcal{H}_{1}^{(\epsilon_{0})}(\alpha)$. We write $r_{0}=0$.
 
In (\ref{closing2024.3.46}), we obtain a unipotent group $kUk^{-1}$ so that the elements look like $h_{r}^{-1}s_{r}h_{r}$. However, under a detailed (but elementary) calculation of matrices, we shall show that a unipotent subgroup cannot look like that. Let us write $k=\begin{bmatrix}
k_{11} & k_{12}\\
k_{21} & k_{22}
\end{bmatrix}$, then for all $g_{i}^{\ast}$, we have 
\[g_{i}^{\ast}=k\begin{bmatrix}
1 & w_{i}\\
0 & 1
\end{bmatrix}k^{-1}=\begin{bmatrix}
1-k_{11}k_{21}w_{i} & k_{11}^{2}w_{i}\\
-k_{21}^{2}w_{i} & 1+k_{11}k_{21}w_{i}
\end{bmatrix}\]
for $w_{i}\in\mathbb{R}$.    We shall show that 
\begin{cla}\label{closing2024.3.58}
   For $r_{i}\in J^{(\epsilon_{0})}$ with $\|\gamma_{r_{i}}\|> e^{3t}$, 
   \begin{equation}\label{closing2024.3.56}
      d_{G}( h_{0}^{-1}s_{0}h_{0}, g_{0}^{\ast})\leq   e^{(3\varpi-N_{0})t},\ \ \  d_{G}( h_{r_{i}}^{-1}s_{r_{i}}h_{r_{i}}, g_{i}^{\ast})\leq   e^{(3\varpi-N_{0})t}
   \end{equation} 
   cannot hold at the same time.
\end{cla}
\begin{proof} Assume for contradiction that (\ref{closing2024.3.56}) holds. Then one calculates
\[\begin{bmatrix}
P_{11}^{t,0}(0) & P_{12}^{t,0}(0)\\
P_{21}^{t,0} & P_{22}^{t,0}(0)
\end{bmatrix} =h_{0}^{-1}s_{0}h_{0}\stackrel[]{e^{(3\varpi-N_{0})t}}{\sim} g_{0}^{\ast}=\begin{bmatrix}
1-k_{11}k_{21}w_{0} & k_{11}^{2}w_{0}\\
-k_{21}^{2}w_{0} & 1+k_{11}k_{21}w_{0}
\end{bmatrix}.\]
By (\ref{closing2024.3.41}) applied with $T=\delta_{\Gamma}$, $|P_{21}^{t,0}|\gg  \delta_{\Gamma}$. Since $\|g\|\ll 1$, comparing the $(2,1)$-entries of the matrices we get $|w_{0}|\gg 1$. On the other hand, since $|P_{12}^{t,0}(0)|=|e^{-5\varpi t}S_{12}^{(0)}|\leq  e^{(2-5\varpi) t}$, comparing the $(1,2)$-entries we conclude that $|k_{11}|\ll e^{-2\varpi t}$. Since $\det(k)=1$, it follows that $|k_{21}|\gg 1$.

Let now $r_{i}\in J^{(\epsilon_{0})}$ be so that $\|\gamma_{r_{i}}\|> e^{3t}$.  
By (\ref{closing2024.3.41}), applied this time with $T=e^{3t}$, we have that $|P_{21}^{t,r_{i}}|\geq e^{3t}$; note also that $|e^{-5\varpi t}S_{12}^{(r_{i})}|\ll e^{-4\varpi t}$. In view of (\ref{closing2024.3.56}), there exists $w_{i}\in \mathbb{R}$ so that 
\begin{equation}\label{closing2024.3.57}
  \begin{bmatrix}
P_{11}^{t,r_{i}}(r_{i}) & P_{12}^{t,r_{i}}(r_{i})\\
P_{21}^{t,r_{i}} & P_{22}^{t,r_{i}}(r_{i})
\end{bmatrix}  =h_{r_{i}}^{-1}s_{r_{i}}h_{r_{i}}\stackrel[]{e^{(3\varpi-N_{0})t}}{\sim} g_{i}^{\ast}=\begin{bmatrix}
1-k_{11}k_{21}w_{i} & k_{11}^{2}w_{i}\\
-k_{21}^{2}w_{i} & 1+k_{11}k_{21}w_{i}
\end{bmatrix}.
\end{equation} 
Now note that $|P_{21}^{t,r_{i}}|\geq e^{3t}$, $\|s_{r_{i}}\|\ll e^{2t}$, $|e^{-5\varpi t}S_{12}^{(r_{i})}|\ll e^{-4\varpi t}$, and $r_{i}\in[\frac{1}{2},1]$. Via (\ref{closing2024.3.48}), we have that 
\[  |P_{12}^{t,r_{i}}(r_{i})| \asymp  |P_{21}^{t,r_{i}}|.\]
In view of (\ref{closing2024.3.57}), it follows that 
 \[|k_{21}^{2}w_{i}|\asymp|k_{11}^{2}w_{i}|.\] 
 It contradicts the fact that $|k_{11}|\ll e^{-2\varpi t}$ and $|k_{21}|\gg 1$.
\end{proof}

Now by Lemma \ref{closing2024.3.54} and (\ref{closing2024.3.59}), there exists $r_{i}\in J^{(\epsilon_{0})}$ with $\|\gamma_{r_{i}}\|> e^{3t}$. Then we see that Claim \ref{closing2024.3.58} contradicts (\ref{closing2024.3.46}). Therefore, we conclude that $\SL(y)$ is not a unipotent subgroup of $G$, and so is not elementary. 
  \end{proof}

Now we restrict our attention to $\mathcal{H}_{1}(2)$. By Proposition \ref{closing2023.12.17}, we know that the Veech group $\SL(y)\subset G$ is a non-elementary Fuchsian group. Then by Theorem \ref{closing2024.3.19}, we conclude  that $y$ generates a Teichm\"{u}ller curve. 
 In addition, since 
 \[ \{g^{\ast}_{0},\ldots,g^{\ast}_{l}\}\subset\SL(y)\cap B_{G}(2 e^{20\varpi t}) \]
 generates $\SL(y)$,    
the discriminant of the Teichm\"{u}ller curve satisfies
\[\disc(\bar{\iota}(y))\leq  \hyperlink{2024.12.C13}{C_{13}} (2 e^{20\varpi t})^{\hyperlink{2024.12.k10}{\kappa_{10}}}.\]
Let 
     $ \hyperlink{2024.12.C18}{C_{18}} =  \hyperlink{2024.12.C13}{C_{13}} 2^{\hyperlink{2024.12.k10}{\kappa_{10}}} $ and $N_{0}\geq\max\{\epsilon_{0}^{-1}, 20\varpi\hyperlink{2024.12.k10}{\kappa_{10}}, \hyperlink{2024.12.k11}{\kappa_{11}}\}$. We establish Theorem \ref{effective2024.6.01}.

\section{Margulis functions}\label{margulis2024.3.04}
In the following, we  prove Theorem \ref{margulis2024.12.1}   by 
 improving the discrete
dimension  of  the additional direction transverse to
the $\SL_{2}(\mathbb{R})$-orbit via the technique of Margulis functions. More specifically, we show the following:   
\begin{thm}\label{margulis2024.4.30}  
    Let $\epsilon\in(0,\frac{1}{10})$, \hypertarget{2024.04.k123} $\gamma\in(0,1)$, \hypertarget{2024.04.t345} $\eta\in(0,N_{0}^{-1})$, \hypertarget{2024.04.NN789} $N\geq 2N_{0}$, $x_{0}\in \mathcal{H}_{1}(\alpha)$, where $N_{0}$ is given in Theorem    \ref{effective2024.6.01}.  Then there exist  
\begin{itemize} 
  \item $\varkappa=\varkappa(N,\gamma,\varpi)>0$,
  \item $\kappa_{1}=\kappa_{1}(N,\gamma,\alpha,\epsilon)>0$,
  \item $t_{1}=t_{1}(\gamma,\epsilon,\eta,\alpha,\ell(x_{0}))>0$,
  \end{itemize}
  such that for $\kappa\in(0,\hyperlink{2024.12.k1}{\kappa_{1}})$ and $t\geq \hyperlink{2024.12.t1}{t_{1}}$, at least one of the following holds:  
\begin{enumerate}[\ \ \ (1)]
  \item  There exists $x_{1}\in \mathcal{H}_{1}^{(\eta)}(\alpha)$,   and a finite subset $F\subset B_{H^{\perp}_{\mathbb{C}}(x_{1})}(e^{-\kappa t})$ containing $0$ with
      \[|F|\geq  e^{ \frac{1}{2}t}  \]
   such that:
      \begin{itemize}
        \item  $x_{1}+F \subset \mathsf{E}_{\hyperlink{2024.12.NN}{\varkappa}t,[0,2]}(e^{10-\kappa t}) .x_{0}$.
        \item  For all $w\in F$, we have
        \[\sum_{\substack{w^{\prime}\neq w\\ w^{\prime}\in F}} \|w-w^{\prime}\|^{-\gamma}\leq 2\cdot|F|^{1+\epsilon}.\]  
      \end{itemize}
  \item (Existence  \hypertarget{2024.3.C} of a small closed orbit near $x_{0}$)  There is $y\in \mathcal{H}_{1}(\alpha)$ such that   
          \begin{itemize}
            \item  $d(y,x_{0})\leq \hyperlink{2024.12.C18}{C_{18}} e^{(N_{0}-N)t}$,
            \item The Veech group $\SL(y)\subset G$ is a non-elementary Fuchsian group.
          \end{itemize}  
          If we restrict our attention to $\alpha=2$, then we further  have
          \begin{itemize}
            \item   $y$ generates a Teichm\"{u}ller curve of discriminant $\leq \hyperlink{2024.12.C18}{C_{18}} e^{N_{0}t}$.
          \end{itemize}
\end{enumerate}
\end{thm}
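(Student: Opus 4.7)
\section*{Proof proposal}

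The plan is to bifurcate on Theorem \ref{effective2024.6.01}. If case (2) of Theorem \ref{effective2024.6.01} occurs, then we are immediately in case (2) of Theorem \ref{margulis2024.4.30} (up to absorbing the constant $\hyperlink{2024.12.C18}{C_{18}}$ into the bound, using $N \geq 2N_0$ to ensure $e^{(N_0 - N)t} \leq \hyperlink{2024.12.C18}{C_{18}} e^{(N_0-N)t}$ with room to spare). So from now on assume case (1) of Theorem \ref{effective2024.6.01} holds, and we aim to produce a set $F$ as in part (1) of Theorem \ref{margulis2024.4.30}. By Proposition \ref{margulis2024.12.2}, for $t \geq t_2(\ell(x_0), \kappa, \eta, \alpha)$ we obtain a base point $y_0 \in \mathcal{H}^{(2\eta)}_1(\alpha)$ and a finite set $F_0 \subset B_{H^{\perp}_{\mathbb{C}}(y_0)}(e^{-\kappa t})$ of cardinality $\geq e^{3t/4}$ lying in $a_{\varkappa t}u_{[0,1]}.x_0$ with $\sum_{w' \neq w} \|w-w'\|_{y_0}^{-\gamma} \leq |F_0|^N$. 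The cardinality $e^{3t/4}$ is already larger than the required $e^{t/2}$, so the task is purely to improve the energy bound from $|F_0|^N$ to $|F|^{1+\epsilon}$.

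The core mechanism is a Margulis function bootstrap. Define the local density
\[ f_\gamma(y; F) \coloneqq \sum_{w \in F \setminus \{0\}} \|w\|_y^{-\gamma}, \]
and study how it evolves under the map $w \mapsto a_T u_r w$ for a carefully chosen $r \in [0,1]$. First I would use an exponential drift adjustment to ensure that most points of the current set have a ``large'' $H^{\perp}$-component (not just $H^{\perp}_{\mathbb{C}}$): concretely, by translating by some $u_{r_0}$ and using that the $u_r$-orbit moves the real part of $w$ across $H^{\perp}$, one finds a specific $r_0$ making the relevant ratio of real to imaginary components bounded below on a $1-O(\epsilon)$ proportion of $F_0$. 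Then, crucially, apply the averaging estimate of Lemma \ref{margulis2024.3.03}: for each $w \neq w'$ in the adjusted set, $\int_0^1 \|a_T u_r (w-w')\|_{a_Tu_rx}^{-\gamma} dr \leq e^{-2} \|w-w'\|_y^{-\gamma}$, and in fact the constant $e^{-2}$ can be strengthened to $e^{-cT}$ for a bounded unit of time $T$ that depends only on $\gamma$, $\eta$ and the quantitative Forni bound (Theorem \ref{closing2024.12.6}). Thus integrating in $r$ yields $\int_0^1 f_\gamma(a_Tu_ry; a_Tu_rF) \, dr \leq e^{-cT} f_\gamma(y; F) + |F|\cdot e^{CT}$ (the error term coming from the non-$H^\perp_{\mathbb{C}}$ component and from the distinction between AGY norms at shifted basepoints). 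By a pigeonhole/Chebyshev argument on $r$, there exists a good $r^*$ and a subset $F' \subset a_T u_{r^*} F_0$ retaining $(1-\epsilon)$ of the mass such that $f_\gamma(y_1; F') \leq e^{-cT/2} f_\gamma(y_0; F_0)$, where $y_1$ is the new basepoint inside $\mathcal{H}_1^{(\eta)}(\alpha)$ (nondivergence via Corollary \ref{closing2024.3.63} keeps us in the thick part for most $r$).

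I would then iterate: at each step, as long as the energy $f_\gamma(y_k; F_k)$ exceeds $|F_k|^{1+\epsilon}$, one application of the drift-and-flow step strictly decreases the energy by a factor $e^{-cT/2}$ at a cost of throwing away an $\epsilon/k^2$-proportion of points and advancing time by $T$. After $O(N/c\epsilon)$ iterations (giving $\varkappa \asymp NT/c\epsilon$), either we have reduced the energy below $|F|^{1+\epsilon}$, or we have lost so many points that Theorem \ref{effective2024.6.01} applied to the current basepoint forces alternative (2) to hold. The final $F$ lies in $B_{H^{\perp}_{\mathbb{C}}(x_1)}(e^{-\kappa t})$ after choosing $\kappa_1$ small enough that the accumulated drift error $e^{O(T)}$ stays below $e^{\kappa_1 t}$; this fixes $\kappa_1 = \kappa_1(N, \gamma, \alpha, \epsilon)$. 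The sample path of unipotent pieces keeps $x_1 + F$ inside $\mathsf{E}_{\varkappa t, [0,2]}(e^{10-\kappa t}).x_0$ up to the $B_G$-thickening factor (using Lemma \ref{margulis2024.4.35} to compose the intermediate $B_G$-ball elements).

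The main obstacle is the one flagged by the author in Claim \ref{margulis2025.1.5}: the drift-and-contraction mechanism only works for the density exponent $\gamma < 1$, because the worst-case configuration of points $(y_n)$ along a single line in $H^{\perp}$ already pushes the local density $f_\gamma(a_T x_1)$ to $|F|^{(1+\lambda)\gamma/2} \sum k^{-\gamma}$, which diverges for $\gamma \geq 1$. This forces us to track the drift carefully, splitting the sum over dyadic shells $J_k = \{r: e^{-(k+1)} \leq \|a+rb\|_x \leq e^{-k}\}$ as in the proof of Lemma \ref{margulis2024.3.03}, and to choose the truncation $m_1(\gamma)$ so that the geometric series $\sum e^{k(\gamma-1)}$ converges. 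A secondary subtlety is that the Lyapunov contraction rate of the Kontsevich--Zorich cocycle on $H^{\perp}$ is only controlled on average via Forni's $\Lambda^+$, so one genuinely needs to invoke Lemma \ref{effective2024.6.07} (rather than a pointwise bound) each time one runs the flow --- this is what forces the waiting time $t_4(\ell(x), \gamma)$ and hence $t_1$ to depend on all the listed parameters.
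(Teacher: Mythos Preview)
Your high-level strategy---iterate a Margulis-type contraction on the energy $\sum \|w-w'\|^{-\gamma}$ until the bound improves from $|F|^N$ to $|F|^{1+\epsilon}$---matches the paper's, and you correctly identify Lemma~\ref{margulis2024.3.03} as the engine. But the implementation has a structural gap.

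The central missing piece is \emph{re-localization}. After flowing a set $F_k \subset B_{H^\perp_\mathbb{C}(y_k)}(e^{-\kappa t})$ by $a_T u_{r^*}$, the image sits in a ball of radius $\asymp e^{2T-\kappa t}$ at the new basepoint. Since the total flow time across the iteration must be $\varkappa t$ (with $\varkappa$ depending on $N$), the naive image ends up with diameter $e^{O(\varkappa t)}$, destroying the required containment $F \subset B(e^{-\kappa t})$. Your claim that ``choosing $\kappa_1$ small'' absorbs this is backwards: small $\kappa$ only makes the target radius $e^{-\kappa t} \leq 1$, whereas the flowed set has diameter $e^{O(t)} \gg 1$. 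The paper handles this by working not with the bare finite set but with a \emph{skeleton} $\mathcal{E}(y,F,e^{-\kappa t},L(\eta)/10)$---a $G$-thickening of the spine $F$---averaging the density over both the random-walk step $h \sim \nu^{(k)}$ \emph{and} the point $z \in \mathcal{E}$ (Proposition~\ref{margulis2024.4.16}), and then at each inductive step \emph{reconstructing} a fresh spine $F_{i+1} \subset B(e^{-\kappa t})$ by intersecting the flowed skeleton $h\mathcal{E}_i$ with a small transversal box $\mathsf{Q}(y(h),e^{-2\kappa t},e^{km_\gamma})$ and reading off the local-observation set (Lemma~\ref{margulis2024.4.31}, via Lemma~\ref{margulis2024.4.33} and Example~\ref{margulis2024.4.36}). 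This intersection is what keeps the new spine in a small ball while preserving cardinality (indeed $|F_{i+1}| \geq |F_i|\, e^{(3-\kappa_{19})\kappa t}$); your ``take a subset $F'$ retaining $(1-\epsilon)$ of the mass'' has no such mechanism.

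There are two subsidiary errors. First, the single-step bound of Lemma~\ref{margulis2024.3.03} does \emph{not} strengthen to $e^{-cT}$ for large flow time $T$: the cusp-excursion term $Ce^{\gamma t}$ in Lemma~\ref{effective2024.6.07} leaves an irreducible additive constant, which is why the paper obtains an $e^{-k}$ contraction only by $k$-fold convolution $\nu^{(k)}$ with $k \asymp \epsilon t/(\gamma m_\gamma(\kappa_{16}+1))$, not by a single long flow. Consequently each of the $O(N/\epsilon)$ inductive steps flows for time $km_\gamma \asymp \epsilon t$, not a bounded $T$; your accounting (bounded $T$, $O(N/c\epsilon)$ iterations, total time constant) is inconsistent with $\varkappa t$. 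Second, the iteration never falls back to re-invoking the closing lemma at intermediate basepoints: once Theorem~\ref{effective2024.6.01}(1) holds at the outset, the dichotomy (\ref{margulis2024.4.20})/(\ref{margulis2024.4.21}) together with Lemma~\ref{margulis2024.4.31} guarantees termination in at most $\lceil (N+\gamma-\tfrac12)\cdot 150\gamma m_\gamma(\kappa_{16}+1)/\epsilon \rceil$ steps with (\ref{margulis2024.4.57}) necessarily achieved.
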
 

\subsection{Preparation}
\begin{defn}[Skeleton] A \textit{skeleton}\index{skeleton} $\mathcal{E}=\mathcal{E}(x,F,  E)\subset \mathcal{H}_{1}(\alpha)$ is a Borel subset of  $\mathcal{H}_{1}(\alpha)$ equipped with the following:
\begin{itemize}
  \item  A \textit{base point}\index{base point} $x\in\mathcal{H}_{1}(\alpha)$.
  \item A finite subset $F\subset H^{\perp}_{\mathbb{C}}(x)$ containing $0$. We refer to it as the \textit{spine}\index{spine} of $\mathcal{E}$.
  \item A function $E:F\rightarrow\mathcal{B}_{B}(G)$, $w\mapsto E_{w}$, where $\mathcal{B}_{B}(G)$ denotes the collection of Bounded Borel subsets of $G$. We refer to $E$ as the \textit{bone function}\index{bone function} of $\mathcal{E}$.
\end{itemize}
such that
\begin{itemize}
  \item  The map $h\mapsto h.(x+w)$ is injective on $E_{w}$ for all $w\in F$.
  \item For $w_{1}\neq w_{2}\in F$, $E_{w_{1}}.(x+w_{1})\cap E_{w_{2}}.(x+w_{2})=\emptyset$.
  \item $\mathcal{E}$ is the disjoint union of the bones:
\[\mathcal{E}=\bigsqcup_{w\in F}E_{w}.(x+w).\] 
\end{itemize}
There naturally is a Lebesgue measure $\Leb_{\mathcal{E}}$ on  a skeleton $\mathcal{E}$ given by   
      \begin{equation}\label{effective2022.2.39}
           \Leb_{\mathcal{E}}\coloneqq\frac{1}{\sum_{w}\Leb_{G}(E_{w})}\sum_{w}\Leb_{E_{w}}
      \end{equation} 
where $\Leb_{E_{w}}$ denotes the   pushforward of the Haar measure $\Leb_{G}|_{E_{w}}$.
\end{defn}
Roughly speaking, a skeleton a Borel set which is a disjoint finite union of local $G$-orbits. This is motivated by a local observation of the $G$-thickening of a long horocycle.
\begin{ex}[{$G$-thickening of horocycles}]\label{margulis2024.3.12} Let $r_{u},r_{G}>0$, $x\in \mathcal{H}_{1}(\alpha)$, and $F\subset H^{\perp}_{\mathbb{C}}(x)$ a finite set. For every $w\in F$, let 
\[E_{w}=\mathsf{E}_{[-r_{u},r_{u}]}(r_{G})\coloneqq B_{G}(r_{G})\cdot u_{[-r_{u},r_{u}]}. \]
Then the skeleton $\mathcal{E}(x,F,E)$ consists of the $G$-thickening of horocycles. It is also  denoted by 
   $\mathcal{E}(x,F,r_{G},r_{u})$.
\end{ex}
\begin{ex}[{$g$-shifting of spines}]  
Let $g\in G$, $\mathcal{E}=\mathcal{E}(x,F,E)$ a skeleton. Then for every $(g,z)\in G\times\mathcal{E}$, we define
  \begin{equation}\label{margulis2024.3.05}
     F_{g,z}(\mathcal{E})\coloneqq\{w\in  H^{\perp}_{\mathbb{C}}(gz):0<\|w\|_{gz}<L(gz),\ gz+w\in g\mathcal{E}\}.
  \end{equation}    
  Since $E_{w}$ is bounded for every $w\in F$ and $F$ is finite, $F_{g,z}(\mathcal{E})$ is a finite set for all $(g,z)\in G\times\mathcal{E}$. Ideally, $F_{g,z}(\mathcal{E})$ is given by picking a subset of $gF$, and adding  points $g\mathcal{E}$ newly occurred in a small cross section  $gz+H^{\perp}_{\mathbb{C}}(gz)$. In particular, we have 
  $\mathcal{E}\cap B(z,L(z))\subset F_{e,z}(\mathcal{E})$ where $L(z)$ is defined in (\ref{margulis2024.4.14}). 
  If $E_{w}$ are small enough for all $w\in F$, say e.g. $E_{w}\subset B(x,L(x))$, then we  have
  \[\mathcal{E}\cap B(z,L(z))= F_{e,z}(\mathcal{E}).\]
\end{ex}

\begin{ex}[{Local observation}]\label{margulis2024.4.36} Let $y^{\ast}\in\mathcal{H}_{1}(\alpha)$, $w_{i}^{\ast}\in B_{H^{\perp}_{\mathbb{C}}(y)}(1)$, $\mathcal{I}$ a finite index set. For  $i\in\mathcal{I}$, let $\mathsf{V}_{i}\subset   \mathsf{V}\subset G$  be connected open  neighborhoods of the identity in $G$. Suppose that $\mathcal{E}^{\ast}\subset \mathcal{H}_{1}(\alpha)$ is a Borel subset  such that 
\[\mathcal{E}^{\ast}\subset \bigsqcup_{i\in\mathcal{I}}\mathsf{V}_{i}.(y^{\ast}+w_{i}^{\ast}).\]
 (In particular, $\mathcal{E}^{\ast}$ is contained in a neighborhood.) Then there exist $y_{i}\in \mathcal{E}^{\ast}$ and $\mathsf{h}_{i}\in \mathsf{V}_{i}$ such that
\[y_{i}=\mathsf{h}_{i}(y^{\ast}+w_{i}^{\ast}).\]
Moreover, write 
\[w_{ij}\coloneqq \mathsf{h}_{j}(w_{i}^{\ast}-w_{j}^{\ast})\in H^{\perp}_{\mathbb{C}}(y_{j}). \]
Then for every $i,j\in\mathcal{I}$, we have 
   \begin{equation}\label{margulis2024.4.37}
    y_{i}=\mathsf{h}_{i}(y^{\ast}+w_{i}^{\ast})= \mathsf{h}_{i}\mathsf{h}_{j}^{-1}[(\mathsf{h}_{j}(y^{\ast}+w_{j}^{\ast}))+\mathsf{h}_{j}(w_{i}^{\ast}-w_{j}^{\ast})]= \mathsf{h}_{i}\mathsf{h}_{j}^{-1}[y_{j}+w_{ij}].
   \end{equation}
   In particular, one calculates 
   \begin{equation}\label{margulis2024.4.38}
  w_{jk}-w_{ik}=\mathsf{h}_{k}[\mathsf{h}_{j}^{-1}y_{j}-\mathsf{h}_{i}^{-1}y_{i}]=\mathsf{h}_{k}[\mathsf{h}_{i}^{-1}(y_{i}+w_{ji})-\mathsf{h}_{i}^{-1}y_{i}]=\mathsf{h}_{k}\mathsf{h}_{i}^{-1}w_{ji}
   \end{equation}
   for any $i,j,k\in\mathcal{I}$.
   Now we consider 
   \[F\coloneqq \{w_{i1}\in   H^{\perp}_{\mathbb{C}}(y_{1}):i\in\mathcal{I}\}.\]
If $\mathsf{V}\subset \mathsf{E}_{[-r_{u},r_{u}]}(r_{G})$ for $r_{u},r_{G}>0$, then the skeleton $\mathcal{E}=\mathcal{E}(y_{1},F,r_{G},r_{u})$ includes all $\{y_{i}:i\in \mathcal{I}\}$. Moreover, if $\mu$ is some measure on $\mathcal{E}$, then we have
  \begin{equation}\label{margulis2024.4.40}
  |F|=|\mathcal{I}|\geq\frac{\mu(\mathcal{E}^{\ast})}{\mu(\mathsf{V}.(y^{\ast}+w_{i}^{\ast}))}.
  \end{equation} 
\end{ex}

In application, we consider  the $g$-shifting of certain skeleton $\mathcal{E}=\mathcal{E}(x,F,r_{G},r_{u})$. After choosing   $(g,z)\in G\times\mathcal{E}$, we obtain a new skeleton $\mathcal{E}_{1}=\mathcal{E}(gz,F_{g,z}(\mathcal{E}),r_{G},r_{u})$ by a local observation.  The new skeleton $\mathcal{E}_{1}$ may also be considered as a modification of $g\mathcal{E}$.

\begin{defn}[Local density function] Let $\mathcal{E}=\mathcal{E}(x,F,E)\subset\mathcal{H}_{1}(\alpha)$ be a skeleton. Define the \textit{local density function}\index{local density function} $f_{\mathcal{E}}=f_{\mathcal{E},\gamma}:G\times\mathcal{E}\rightarrow[1,\infty)$ by  
\begin{equation}\label{margulis2024.3.06}
        f_{\mathcal{E}}:(g,z)\mapsto\left\{
    \begin{array}{ll}   \displaystyle{\sum_{w\in  F_{g,z}(\mathcal{E})}}\|w\|_{gz}^{-\gamma}  &,\text{ if }  F_{g,z}(\mathcal{E})\neq\emptyset\\
   \ell(hz)^{-\gamma} &,\text{ otherwise }
                     \end{array}\right. .
      \end{equation}
\end{defn}
Roughly speaking, $f_{\mathcal{E}}(g,z)$ observes the local density of $F_{g,z}(\mathcal{E})$. A large $f_{\mathcal{E}}(g,z)$ indicates a high density of $F_{g,z}(\mathcal{E})$ around at $gz$.   In application, given some small constant $c^{\ast}>0$,  one calculates 
\begin{align}
f_{\mathcal{E}}(g,z)=  \sum_{w\in F_{g,z}(\mathcal{E})}\|w\|^{-\gamma}=& \sum_{\|w\|\leq c^{\ast}}\|w\|^{-\gamma}+\sum_{\|w\|>c^{\ast}}\|w\|^{-\gamma} \nonumber\\
  \leq & \sum_{\|w\|\leq c^{\ast}}\|w\|^{-\gamma}+(c^{\ast})^{-\gamma}\cdot|F_{g,z}(\mathcal{E})|. \label{margulis2024.4.44}
\end{align} 
Then we can pay attention to estimate the   summation $\sum_{\|w\|\leq c^{\ast}}\|w\|^{-\gamma}$, which explains the terminology.

\subsection{Positive dimension of additional invariance}
In the following, we shall deduce that   the first situation of the closing lemma 
(Theorem   \ref{effective2024.6.01}) provides us a skeleton $\mathcal{E}$ with bounded density for all $z\in\mathcal{E}$.

Similar to (\ref{closing2024.4.01}), we define 
\[  \mathsf{E}_{t,[r_{1},r_{2}]}(\delta)\coloneqq B_{G}(\delta)\cdot a_{t}\cdot u_{[r_{1},r_{2}]}\subset G.\]
and $\mathsf{E}_{t}=\mathsf{E}_{t,[0,1]}(e^{-\kappa t})$. Also recall  (\ref{margulis2024.4.34}). We let  
\begin{align} 
 \mathsf{Q}_{G}(\delta,\tau)  &  \coloneqq \bar{u}_{[- \frac{\delta}{\tau}, \frac{\delta}{\tau}]}\cdot a_{[-\delta,\delta]}\cdot u_{[-\delta,\delta]},\;\label{effective2022.3.24}\\
 \mathsf{Q}(y,\delta,\tau)&   \coloneqq\{h(y+w)\in\mathcal{H}_{1}(\alpha):w\in B_{H^{\perp}_{\mathbb{C}}(y)}(\delta),\ h\in \mathsf{Q}_{G}(\delta,\tau) \},\;  \label{effective2022.3.25}\\
 \mathsf{B}(y,\delta)& \coloneqq \mathsf{Q}(y,\delta,1).\;  \nonumber
\end{align}
 
\begin{prop}\label{margulis2024.3.07}
   Let the \hypertarget{2024.12.t2} notation   be as above. \hypertarget{2024.12.k15}
     Suppose that Theorem   \ref{effective2024.6.01}(1) holds. 
     Thus, suppose that  
        $\eta\in(0,N_{0}^{-1}/100)$,
    $N\geq 2N_{0}$,
        $x_{0}\in  \mathcal{H}_{1}(\alpha)$, $\kappa>0$.
  Then  there exists $t_{2}=t_{2}(\ell(x_{0}),\kappa,\eta,\alpha)>0$ such that for any   $t\geq \hyperlink{2024.12.t2}{t_{2}}$,     there exist 
                \begin{itemize}
                 \item  $\kappa_{15}=\kappa_{15}(\alpha)>0$,  
                 \item a point $y\in \mathcal{H}^{(2\eta)}_{1}(\alpha)\cap \left(\mathsf{E}_{(5\varpi +2)t,[0,1]}(e^{-\kappa t}).x_{0}\right)$,
                  \item  a finite subset $F\subset B_{H^{\perp}_{\mathbb{C}}(y)}(e^{-\kappa t})$ containing $0$ with
                      \begin{equation}\label{margulis2024.4.41}
                        e^{(1-\hyperlink{2024.12.k15}{\kappa_{15}}\kappa) t}\leq |F|\leq  e^{(\hyperlink{2024.12.k15}{\kappa_{15}}-\kappa)t},
                      \end{equation} 
                  \item a skeleton  $\mathcal{E}=\mathcal{E}(y,F,e^{-\kappa t},\textstyle{\frac{L(\eta)}{100}})$,
                \end{itemize}
    such that  
             \begin{equation}\label{margulis2024.4.19}
             \mathcal{E} \subset \mathsf{E}_{(5\varpi +2)t,[0,\frac{11}{10}]}(e^{3-\kappa t}).x_{0},\ \ \ \text{ and }\ \ \ \max_{z\in\mathcal{E}} f_{\mathcal{E}}(e,z)\leq  e^{(N+\gamma)t}.
             \end{equation}    
\end{prop}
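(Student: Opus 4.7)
The plan is to extract the skeleton $\mathcal{E}$ directly from the thickened horocycle $\mathsf{E}_t.x$ that Theorem~\ref{effective2024.6.01}(1) provides. The heart of the argument is a double pigeonhole turning the bounded local density of that horocycle into a high multiplicity of near-returns to a single base point in the transversal $H^\perp_{\mathbb{C}}$-direction.

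Start by fixing $x \in \mathcal{H}_1^{(N_0^{-1})}(\alpha) \cap \mathsf{E}_{(5\varpi+1)t}.x_0$ as in Theorem~\ref{effective2024.6.01}(1), and choose $\eta$ small enough in terms of $N_0$ so that $2\eta \leq \epsilon_0$. By Corollary~\ref{closing2024.3.63} (in the form of~\eqref{effective2024.6.02}), once $t \geq t_2 = t_2(\ell(x_0),\kappa,\eta,\alpha)$ the set
\[J := \{r \in [0,1] : a_t u_r x \in \mathcal{H}_1^{(2\eta)}(\alpha)\}\]
has measure at least $9/10$. Partition $J$ into sub-intervals of length $L(\eta)/(50\,e^t)$: after applying $a_t$ each piece becomes a $u$-segment of $u$-length $L(\eta)/50$, and there are $M \asymp e^t / L(\eta)$ of them. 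Let $z_j \in \mathcal{H}_1^{(2\eta)}(\alpha)$ be the midpoint of the $j$-th segment. Since $h \mapsto hx$ is injective on $\mathsf{E}_t$ by assumption~(a), the bones $E.z_j$ with $E = B_G(e^{-\kappa t}) u_{[-L(\eta)/100,\,L(\eta)/100]}$ are pairwise disjoint and trace out all of $\mathsf{E}_t.x$.

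Now perform a double pigeonhole inside $\mathcal{H}_1^{(2\eta)}(\alpha)$. Cover the latter by period boxes of radius $L(\eta)/50$; within each period box, use Lemma~\ref{closing2023.07.1} to cut the transversal $H^\perp_{\mathbb{C}}$-direction into cells of radius $e^{-\kappa t}$. The total number of such cells is at most $C_\eta \, e^{\kappa_{15}' \kappa t}$ for some $\kappa_{15}' = \kappa_{15}'(\alpha)$ reflecting the real dimension of $H^\perp_{\mathbb{C}}$. Since the $\gtrsim M$ good midpoints must be distributed among these cells, some cell contains at least $c_\eta \, e^{(1-\kappa_{15}' \kappa)t}$ of them. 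Pick a representative $y$ among them and apply Example~\ref{margulis2024.4.36}: the other midpoints in that cell can be written as $\mathsf{h}_i(y + w_i)$ with $\mathsf{h}_i \in B_G(O(e^{-\kappa t}))$ and $w_i \in B_{H^\perp_{\mathbb{C}}(y)}(e^{-\kappa t})$. Set $F := \{w_i\} \cup \{0\}$ and $\mathcal{E} := \mathcal{E}(y, F, e^{-\kappa t}, L(\eta)/100)$. Absorbing the $\eta$-dependent constants into the exponent (legitimate for $t$ large in terms of $\kappa,\eta$) and enlarging $\kappa_{15}$ slightly past $\kappa_{15}'$ yields the lower bound in~\eqref{margulis2024.4.41}, while Lemma~\ref{closing2024.3.31} gives the matching upper bound after a further enlargement.

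Finally, the two conditions in~\eqref{margulis2024.4.19} are checked as follows. The local-density bound $f_{\mathcal{E}}(e,z) \leq e^{(N+\gamma)t}$ follows because every $w \in F_{e,z}(\mathcal{E})$ comes from some $w' \in F_z(t)$ via a small $G$-shift, with $\|w\|_z \asymp \|w'\|_z$ up to a factor $e^{\gamma t}$ from the $a_t$-distortion of the AGY norm (Theorem~\ref{effective2023.11.3}); summing and invoking assumption~(b) of Theorem~\ref{effective2024.6.01}(1) finishes this step. The containment $\mathcal{E} \subset \mathsf{E}_{(5\varpi+2)t, [0,11/10]}(e^{3-\kappa t}).x_0$ comes from writing $x = g_0 a_{(5\varpi+1)t} u_{r_0} x_0$ with $g_0 \in B_G(e^{-\kappa(5\varpi+1)t})$, commuting $g_0$ past $u_{[0,1]}$ via Lemma~\ref{margulis2024.4.35}, absorbing it through $a_t$, and then using $u_a a_s = a_s u_{ae^{-s}}$ to merge the two $a$--$u$ blocks into $a_{(5\varpi+2)t} u_{[0,\,1+e^{-(5\varpi+1)t}]} \subset a_{(5\varpi+2)t} u_{[0,11/10]}$; the generous $e^3$ factor in the thickening absorbs the resulting implicit constants. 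The main obstacle is the pigeonhole step itself, since one has to count transversal cells in the Finsler AGY metric rather than in a Euclidean chart and keep the count uniform across period boxes while genuinely landing inside $B_{H^\perp_{\mathbb{C}}(y)}(e^{-\kappa t})$; Lemma~\ref{closing2023.07.1} legitimizes the local comparison inside each period box, and once this is done the rest is a routine translation of the bounded-density hypothesis into the skeleton language.
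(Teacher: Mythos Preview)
Your overall architecture matches the paper's: pigeonhole the thickened horocycle $\mathsf{E}_t.x$ into a single cell, read off the spine $F$ via Example~\ref{margulis2024.4.36}, then verify the containment and the density bound. The gap is in the pigeonhole itself.

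You cover $\mathcal{H}_1^{(2\eta)}(\alpha)$ by period boxes of radius $L(\eta)/50$ and then cut \emph{only the transversal $H^\perp_{\mathbb{C}}$-direction} into cells of radius $e^{-\kappa t}$. Under this decomposition two midpoints $z_j,z_{j'}$ in the same cell satisfy $z_{j'}=\mathsf{h}_i(y+w_i)$ with $w_i\in B_{H^\perp_{\mathbb{C}}(y)}(O(e^{-\kappa t}))$ \emph{but} $\mathsf{h}_i\in B_G(O(L(\eta)))$, not $B_G(O(e^{-\kappa t}))$: nothing in your slab decomposition controls the $\bar u$- and $a$-components of the $G$-displacement between strands. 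Since $L(\eta)$ is a fixed constant while $e^{-\kappa t}\to 0$, your claim $\mathsf{h}_i\in B_G(O(e^{-\kappa t}))$ is unjustified. This then breaks the containment in~\eqref{margulis2024.4.19}: writing $y+w_i=\mathsf{h}_i^{-1}a_tu_{r_{j'}}x$ and decomposing $\mathsf{h}_i^{-1}=\bar u_c a_\tau u_b$ with $c,\tau=O(L(\eta))$, the factor $\bar u_c a_\tau$ stays on the left of $a_t$ and cannot be absorbed into $B_G(e^{3-\kappa t})$. The paper avoids this by pigeonholing on boxes $\mathsf{B}(\hat y,e^{-2\kappa t})$ that are small in \emph{all} directions (at the cost of $\kappa_{15}$ reflecting the full dimension $2(2g+|\Sigma|)$ rather than just $\dim H^\perp_{\mathbb{C}}$), so that $\mathsf{h}_i\in B_G(e^{3-2\kappa t})$ automatically; see Claim~\ref{margulis2024.4.09} and the derivation of~\eqref{margulis2024.3.11}.

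A secondary point: your justification of the factor $e^{\gamma t}$ in the density bound via ``$a_t$-distortion of the AGY norm'' is not what is happening. No $a_t$ is applied in comparing $w\in F_{e,z}(\mathcal{E})$ to the corresponding $w_{ji}\in F_{y_i}(t)$; the comparison is through a bounded $G$-element and gives $\|w_{ji}\|\le 2\|w\|$, so $f_{\mathcal{E}}(e,z)\le 2^\gamma f_{t,\gamma}(y_i)\le 2^\gamma e^{Nt}$, and the $e^{\gamma t}$ is pure slack absorbing the constant $2^\gamma$ (Claim~\ref{margulis2024.3.14}).
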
 
\begin{proof}
  For $N\geq 2N_{0}$, $x_{0}\in \mathcal{H}_{1}(\alpha)$, and $t\geq \hyperlink{2024.12.t5}{t_{5}}$, Theorem   \ref{effective2024.6.01}(1) says that there exists $x\in \mathcal{H}_{1}^{(N_{0}^{-1})}(\alpha)\cap\mathsf{E}_{(5\varpi +1)t}.x_{0}$ such that 
\begin{enumerate}[\ \ \ (a)]
          \item  $h\mapsto hx$ is injective on $\mathsf{E}_{t}$.
          \item We have
          \begin{equation}\label{margulis2024.4.05}
            f_{t,\gamma}(z)\leq e^{Nt}
          \end{equation} 
          for all $\gamma\in(0,1)$, $z\in \mathsf{E}_{t}.x$.
        \end{enumerate}
 Recall that $\mathsf{E}_{t}.x$ is a $G$-thickening of a long horocycle. Thus, one can locally observe a finite family of $G$-thickening of  horocycles.
 \begin{cla}\label{margulis2024.4.09} There exists $\hat{y}\in \mathcal{H}_{1}^{(4\eta)}(\alpha)$ such that  
 \begin{equation}\label{margulis2024.4.42}
    \Leb_{\mathsf{E}_{t}.x}\left[\mathsf{E}_{t,[0,1]}(e^{-\kappa t}-e^{10-2\kappa t}).x\cap \mathcal{H}_{1}^{(4\eta)}(\alpha)\cap \mathsf{B}(\hat{y},e^{-2\kappa t})\right]\geq  e^{-2(2g+|\Sigma|)\kappa t}
 \end{equation}
 for sufficiently large $t\geq \hyperlink{2024.12.t2}{t_{2}}(\ell(x_{0}),\kappa,\eta,\alpha)$.
 \end{cla}
 \begin{proof}[Proof of Claim \ref{margulis2024.4.09}]
   Let $\Leb_{\mathsf{E}_{t}.x}$ denote the pushforward of the Haar measure on $\mathsf{E}_{t}\subset G$. Then by Corollary \ref{closing2024.3.63}, for sufficiently large $t$ depending on $N_{0}$, we have 
       \begin{equation}\label{margulis2024.4.07}
         \Leb_{\mathsf{E}_{t}.x}\left[\mathsf{E}_{t,[0,1]}(e^{-\kappa t}-e^{10-2\kappa t}).x\cap \mathcal{H}_{1}^{(4\eta)}(\alpha)\right]\geq 1-O(\eta^{\frac{1}{2}}).
       \end{equation} 
   
     Let $t$ be large enough so that $e^{-\kappa t}<L(\eta)^{2}$. 
Let $\{\hat{y}_{k}\in \mathcal{H}_{1}^{(4\eta)}(\alpha):k\in \mathcal{K}\}$ be a maximal family  of $\textstyle{\frac{1}{10}e^{-2\kappa t}}$-separated points in $\mathcal{H}_{1}^{(4\eta)}(\alpha)$. Then by the definition, we have 
\[\mathsf{B}(\hat{y}_{i},\textstyle{\frac{1}{20}e^{-2\kappa t}})\neq \mathsf{B}(\hat{y}_{j},\textstyle{\frac{1}{20}e^{-2\kappa t}})\]
for any $i\neq j\in\mathcal{K}$. Moreover, one deduces that $\{\mathsf{B}(\hat{y}_{k},e^{-2\kappa t}):k\in \mathcal{K}\}$ is a covering of $\mathcal{H}_{1}^{(4\eta)}(\alpha)$. Since $\mu_{(1)}(\mathsf{B}(\hat{y}_{k},\textstyle{\frac{1}{20}e^{-2\kappa t}}))\asymp e^{-2(2g+|\Sigma|-2)\kappa t}$, we obtain 
\begin{equation}\label{margulis2024.4.06}
  |\mathcal{K}|\asymp e^{2(2g+|\Sigma|-2)\kappa t}.
\end{equation} 
Let 
$\mathcal{K}^{\prime}\subset   \mathcal{K}$ be the subset of indexes so that 
   \begin{equation}\label{margulis2024.3.08}
  \Leb_{\mathsf{E}_{t}.x}\left[\mathsf{E}_{t,[0,1]}(e^{-\kappa t}-e^{10-2\kappa t}).x\cap \mathcal{H}_{1}^{(4\eta)}(\alpha)\cap \mathsf{B}(\hat{y}_{k},e^{-2\kappa t})\right]\geq  e^{-2(2g+|\Sigma|)\kappa t}
   \end{equation}
   for any $k\in \mathcal{K}^{\prime}$.
   Then by (\ref{margulis2024.4.07}), (\ref{margulis2024.4.06}) and (\ref{margulis2024.3.08}), we also have  
   \[   \Leb_{\mathsf{E}_{t}.x}\left[\mathsf{E}_{t,[0,1]}(e^{-\kappa t}-e^{10-2\kappa t}).x \cap \bigcup_{k\in \mathcal{K}\setminus\mathcal{K}^{\prime}}\mathsf{B}(\hat{y}_{k},e^{-2\kappa t})\right]\leq e^{-4\kappa t}.\]    
  and so $\mathcal{K}^{\prime}\neq\emptyset$. 
 \end{proof}
 Let  $\hat{y}\in \mathcal{H}_{1}^{(4\eta)}(\alpha)$ be as in Claim \ref{margulis2024.4.09}. Then by the definition, there exist $w_{i}\in B_{H^{\perp}_{\mathbb{C}}(\hat{y})}(e^{-2\kappa t})$,   $h_{i}\in B_{G}(e^{-2\kappa t})$, and  $B_{i}\subset B_{G}(e^{3-2\kappa t})$ for $i\in\mathcal{I}$  such that  
   \begin{equation}\label{margulis2024.4.08}
   \mathcal{E}^{\ast}\coloneqq \mathsf{E}_{t,[0,1]}(e^{-\kappa t}-e^{10-2\kappa t}).x\cap \mathsf{B}(\hat{y},e^{-2\kappa t})=\bigsqcup_{i\in\mathcal{I}}B_{i}.(\hat{y}+w_{i}).
   \end{equation} 
  In particular, by repeatedly using  Lemma \ref{margulis2024.4.35}, we deduce that 
  \begin{equation}\label{margulis2024.4.10}
  \bigsqcup_{i\in\mathcal{I}}B_{G}(e^{7-2\kappa t}).(\hat{y}+w_{i}) \subset  \mathsf{E}_{t,[0,1]}(e^{-\kappa t}).x.
 \end{equation}

   We now define the skeleton $\mathcal{E}$ via a local observation.  Let $\mathcal{E}^{\ast}\subset  \mathcal{H}_{1}^{(3\eta)}(\alpha)$ be as in (\ref{margulis2024.4.08}), and let
   \[y^{\ast} =\hat{y},\ \ \ \ \ \ w_{i}^{\ast}=w_{i},\ \ \  \ \ \ \mathsf{V}=B_{G}(e^{3-2\kappa t}).\] 
  Applying Example \ref{margulis2024.4.36},  there exist  
   \begin{alignat*}{9}
 y_{i}&\in B_{i}.(\hat{y}+w_{i})\cap \mathcal{H}_{1}^{(2\eta)}(\alpha),& \ \ \ &  &y&\coloneqq y_{1}  \\
\mathsf{h}_{i}&\in B_{i},& \ \ \ &  & &    \\
F &\coloneqq \{w_{i1}\in   H^{\perp}_{\mathbb{C}}(y):i\in\mathcal{I}\},& \ \ \ &  &w_{ij}&  \in  H^{\perp}_{\mathbb{C}}(y_{j}), \\
 \mathcal{E} &\coloneqq \mathcal{E}(y,F,e^{-\kappa t},\textstyle{\frac{L(\eta)}{100}}),  & \ \ \ &  & &
\end{alignat*} 
 such that 
  \begin{equation}\label{margulis2024.3.10}
 y_{i}\in \mathcal{E}^{\ast} \ \ \ \text{ and }\ \ \  y_{i}= \mathsf{h}_{i}\mathsf{h}_{j}^{-1}(y_{j}+w_{ij})
   \end{equation}
 for any $i,j\in\mathcal{I}$.
   
   Next, we show that $\mathcal{E}$ satisfies the desired properties.  
   For $t$ large enough, by Lemma \ref{margulis2024.4.35}, we have $\mathsf{h}_{j}\mathsf{h}_{i}^{-1}\in B_{G}(e^{5-2\kappa t})$, for all $i,j\in\mathcal{I}$. Then by (\ref{margulis2024.4.10}), we have 
   \begin{equation}\label{margulis2024.3.11}
       y_{j}+w_{ij}=   \mathsf{h}_{j}\mathsf{h}_{i}^{-1} y_{i}\subset   B_{G}(e^{7-2\kappa t}).(\hat{y}+w_{i})\subset \mathsf{E}_{t,[0,1]}(e^{-\kappa t}).x
   \end{equation} 
   Then by (\ref{effective2022.2.36}), we conclude that
   \begin{equation}\label{margulis2024.4.04}
     w_{ij}\in F_{y_{j}}(t).
   \end{equation} 
Then note that by (\ref{margulis2024.4.04}) and Lemma \ref{closing2024.3.31}, we have 
\[|F|\leq |F_{y}(t)|\leq e^{-\kappa t} e^{(3\hyperlink{2024.12.k6}{\kappa_{6}}+1)t}\]
  where we use the factor $e^{-\aleph t}$ to absorb the implicit constant.

  For a lower bound of $|F|$, recall that $e^{-\kappa t}<L(\eta)^{2}$ and that 
   \[\Leb_{G}(\mathsf{E}_{t,[0,1]}(e^{-\kappa t}))\asymp e^{(1-2\kappa) t},\ \ \  \Leb_{G}(B_{G}(e^{3-2\kappa t}))\ll e^{-6\kappa t}.\] 
   Thus, for $i\in \mathcal{I}$,  we have  
   \[\Leb_{\mathsf{E}_{t}.x}(B_{i}.(\hat{y}+w_{i}))\ll e^{-6\kappa t}\cdot e^{-(1-2\kappa) t}=e^{-(1+4\kappa) t}.\]
   This, (\ref{margulis2024.4.08}) and (\ref{margulis2024.4.42}) (cf. (\ref{margulis2024.4.40})) imply that   for $t\geq \hyperlink{2024.12.t2}{t_{2}}$ sufficiently large,
    \begin{equation}\label{margulis2024.3.09}
|F|= |\mathcal{I}|\geq e^{-4\kappa t} \cdot \left[e^{-2(2g+|\Sigma|)\kappa t}/e^{-(1+4\kappa) t}\right]= e^{(1-2(2g+|\Sigma|)\kappa) t}
   \end{equation}
  where we use the factor $e^{-4\kappa t}$ to absorb the implicit constant. Thus, let $\hyperlink{2024.12.k15}{\kappa_{15}}=\max\{(3\hyperlink{2024.12.k6}{\kappa_{6}}+1),2(2g+|\Sigma|)\}$. 
 We obtain (\ref{margulis2024.4.41}).
 
 Next, one observes that 
\begin{cla}\label{margulis2024.3.13}
 $\mathcal{E}$ is contained in a long horocycle:
   \[ \mathcal{E}\subset \mathsf{E}_{(5\varpi +2)t,[0,\frac{11}{10}]}(e^{3-\kappa t}).x_{0}.\]
\end{cla}
\begin{proof}[Proof of Claim \ref{margulis2024.3.13}] By (\ref{margulis2024.3.11}) and $x\in  \mathsf{E}_{(5\varpi +1)t}.x_{0}$, we conclude that  
 \begin{align}
   \mathcal{E}&= \mathcal{E}(y,F,e^{-\kappa t},\textstyle{\frac{L(\eta)}{100}})\;\nonumber\\
  & =\bigsqcup_{w_{i1}\in F} B_{G}(e^{-\kappa t})u_{[-\frac{L(\eta)}{100},\frac{L(\eta)}{100}]}(y_{1}+w_{i1})\;\nonumber\\
      &\subset B_{G}(e^{-\kappa t})u_{[-\frac{L(\eta)}{100},\frac{L(\eta)}{100}]}  \cdot  \mathsf{E}_{t,[0,1]}(e^{-\kappa t}).x\;\nonumber\\
      &\subset  B_{G}(e^{-\kappa t})u_{[-\frac{L(\eta)}{100},\frac{L(\eta)}{100}]}  \cdot  \mathsf{E}_{t,[0,1]}(e^{-\kappa t})\cdot\mathsf{E}_{(5\varpi +1)t,[0,1]}(e^{-\kappa t}).x_{0}\;\nonumber\\
      &= B_{G}(e^{-\kappa t})u_{[-\frac{L(\eta)}{100},\frac{L(\eta)}{100}]}  \cdot B_{G}(e^{-\kappa t}) a_{t} u_{[0,1]} \cdot B_{G}(e^{-\kappa t}) a_{(5\varpi +1)t} u_{[0,1]}.x_{0}\;\nonumber\\
      &\subset  B_{G}(e^{3-\kappa t})\cdot a_{(5\varpi +2)t}\cdot u_{[0,\frac{11}{10}]}.x_{0}= \mathsf{E}_{(5\varpi +2)t,[0,\frac{11}{10}]}(e^{3-\kappa t}).x_{0}.\;  \nonumber
\end{align} 
This establishes the claim.
\end{proof}

Finally, we consider the density:
\begin{cla}\label{margulis2024.3.14}
 For any $z\in \mathcal{E}$, we have 
   \[ f_{\mathcal{E}}(e,z)\leq   e^{(N+\gamma)t}.\]
\end{cla}
\begin{proof}[Proof of Claim \ref{margulis2024.3.14}]  
Let $w\in F_{e,z}(\mathcal{E})$. Then $z,z+w\in\mathcal{E}$. By the definition of $\mathcal{E}$, there are $i,j$ so that 
\[z=g_{i}(y+w_{i1}),\ \ \ z+w=g_{j}(y+w_{j1})\]
for some $w_{i1},w_{j1}\in F$, and $g_{i},g_{j}\in B_{G}(\frac{L(\eta)}{10})$. It forces $g_{i}=g_{j}$ via (\ref{margulis2024.4.52}), and so by (\ref{margulis2024.3.10}) and (\ref{margulis2024.4.04}) (see also (\ref{margulis2024.4.38})), we get
\[w=g_{i}(w_{j1}-w_{i1})=g_{i}\mathsf{h}_{1}\mathsf{h}_{i}^{-1} w_{ji}\in g_{i}\mathsf{h}_{1}\mathsf{h}_{i}^{-1}\cdot F_{y_{i}}(t).\] 
It follows immediately  that $\|w_{ji}\|\leq 2\|w\|$. In addition, one may deduce that the map $F_{e,z}(\mathcal{E})\rightarrow F_{y_{i}}(t)$ given by $w\mapsto w_{ji}$ is injective.
Then by (\ref{margulis2024.4.05}), we have 
\[f_{\mathcal{E}}(e,z)=\sum_{w\in F_{e,z}(\mathcal{E})}\|w\|^{-\gamma}\leq 2^{\gamma}\sum_{v\in F_{y_{i}}(t)}\|v\|^{-\gamma} =2^{\gamma} f_{t,\gamma}(y_{i})\leq e^{(N+\gamma)t}\]
 The consequence follows.
\end{proof}
 This establishes the Proposition \ref{margulis2024.3.07}.  
\end{proof}
\subsection{Improving the dimension of additional invariance}
Next, we develop the Margulis function techniques and improve the dimension of additional invariance. See \cite{eskin2022margulis} for more details of Margulis functions.   

In view of Proposition \ref{margulis2024.3.07}, we focus on the skeletons of the form:  
\begin{align} 
 &  y \in\mathcal{H}_{1}(\alpha) ,\nonumber\\
 &   F\subset B_{H^{\perp}_{\mathbb{C}}(y)}(e^{-\kappa t}) \text{ a finite subset,}\nonumber\\
   &   \mathcal{E}=\mathcal{E}(y,F,e^{-\kappa t},\textstyle{\frac{L(\eta)}{10}})\subset\mathcal{H}_{1}^{(\eta)}(\alpha) \label{margulis2024.4.17}
\end{align}  
where  $e^{-\kappa t}<L(\eta)^{2}$, and $\mathcal{E}$ is defined as in  Example \ref{margulis2024.3.12}.

By Lemma \ref{margulis2024.3.03}, for $x\in \mathcal{H}^{(N_{0}^{-1})}_{1}(\alpha)$, there exists $m_{\gamma}=m_{\gamma}(N_{0})>0$ so that 
\begin{equation}\label{margulis2024.4.11}
  \int_{0}^{1}\|a_{m_{\gamma}}u_{r}w\|_{a_{m_{\gamma}}u_{r}x}^{-\gamma}dr\leq e^{-1}\|w\|_{x}^{-\gamma}.
\end{equation} 
Let $\nu=\nu(\gamma)$ be the probability measure on $G$ defined by
   \begin{equation}\label{effective2022.3.2}
      \nu(\varphi)=\int_{0}^{1}\varphi(a_{m_{\gamma}}u_{r})dr
   \end{equation}
   for all $\varphi\in C_{c}(G)$. Let $\nu^{(k)}$ be the $k$-fold convolution  of $\nu$. Then we  obtain a random walk on $G$.  
In the following, we  are going to show that the local density functions are Margulis functions with respect to the random walk $\nu^{(k)}$.
 
 \begin{lem}\label{effective2024.6.09}
     For $n\in\mathbb{N}$, $h\in\supp(\nu^{(n)})$, we have  \begin{equation}\label{effective2022.3.20}
     \Leb_{\mathcal{E}}\{z\in\mathcal{E}:hz\not\in  \mathcal{H}_{1}^{(2\eta)}(\alpha)\}\ll \eta^{\hyperlink{2024.12.k3}{\kappa_{3}}}.
   \end{equation}
 \end{lem}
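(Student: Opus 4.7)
The plan is to reduce the estimate to a one-variable horocycle non-divergence statement on each bone. The bound $\eta^{\kappa_{3}}$ is the decay rate coming from Theorem \ref{closing2024.3.60} and Corollary \ref{closing2024.3.63}, so morally we need to show that the bad set on each bone is essentially a bad set for a Teichm\"uller-horocycle orbit through a point of systole $\geq \eta$.

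By definition $\Leb_{\mathcal{E}}$ is a normalized sum of Haar measures on the bones $E_{w}(y+w) = B_{G}(e^{-\kappa t})\cdot u_{[-L(\eta)/10,L(\eta)/10]}(y+w)$, and since $\mathcal{E}\subset\mathcal{H}_{1}^{(\eta)}(\alpha)$ each base point satisfies $\ell(y+w)\geq\eta$. By Fubini it suffices to show, uniformly in $w\in F$ and $g\in B_{G}(e^{-\kappa t})$, that the set $\{s_{0}\in[-L(\eta)/10,L(\eta)/10]:\ell(hgu_{s_{0}}(y+w))<2\eta\}$ has relative length $\ll\eta^{\kappa_{3}}$.

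Every $h\in\supp(\nu^{(n)})$ is a product of $n$ elements of the form $a_{m_{\gamma}}u_{r_{i}}$ with $r_{i}\in[0,1]$, and iterated use of $u_{r}a_{t}=a_{t}u_{e^{-t}r}$ shows $h=a_{nm_{\gamma}}u_{s_{h}}$ with $|s_{h}|\leq(1-e^{-m_{\gamma}})^{-1}$. Writing $g=\bar{u}_{a}a_{b}u_{c}$ with $|a|,|b|,|c|\ll e^{-\kappa t}$ (Lemma \ref{margulis2024.4.35}) and absorbing past $a_{T}$ using $a_{T}\bar{u}_{a}=\bar{u}_{e^{-T}a}a_{T}$ and $u_{s}a_{T}=a_{T}u_{e^{T}s}$, a direct calculation gives
\[
hgu_{s_{0}}=\bar{u}_{\alpha}\,a_{T^{\prime}}\,u_{\beta+s_{0}},
\]
with $|\alpha|\ll e^{-nm_{\gamma}-\kappa t}$, $|T^{\prime}-nm_{\gamma}|\ll e^{-\kappa t}$ and $|\beta-s_{h}|\ll e^{-\kappa t}$. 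By Lemma \ref{closing2023.07.1} such small $\bar{u}_{\alpha}$- and $a_{T^{\prime}-nm_{\gamma}}$-perturbations change the systole by only a universal multiplicative constant, so $\ell(hgu_{s_{0}}(y+w))<2\eta$ implies $\ell(a_{T^{\prime}}u_{\beta+s_{0}}(y+w))<C\eta$ for a fixed $C$.

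For $n$ large enough that $nm_{\gamma}\geq(\kappa_{4}+\kappa_{6})|\log\eta|+O_{\alpha}(1)$, Corollary \ref{closing2024.3.63} applied to $\tilde{x}=y+w$ (with $\ell(\tilde{x})\geq\eta$) on the interval $\beta+[-L(\eta)/10,L(\eta)/10]$ of length $L(\eta)/5$ yields
\[
\bigl|\{s_{0}:\ell(a_{T^{\prime}}u_{\beta+s_{0}}(y+w))<C\eta\}\bigr|\ll\eta^{\kappa_{3}}\,L(\eta),
\]
which, divided by the length $L(\eta)/5$ of the parameter interval, gives the desired relative bound $\ll\eta^{\kappa_{3}}$. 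For the finitely many $n$ below this threshold the element $hg$ lies in a fixed compact subset of $G$; writing $hgu_{s_{0}}(y+w)=u_{\sigma(s_{0})}y^{\prime}$ with $y^{\prime}=hg(y+w)$ and $\sigma(s_{0})=e^{nm_{\gamma}}s_{0}+O(e^{-\kappa t})$, one applies Theorem \ref{closing2024.3.60} to the unipotent orbit through $y^{\prime}$ on the interval of $\sigma$-values (which contains $\sigma=0$, so $\sup_{\sigma}\ell(u_{\sigma}y^{\prime})\geq\ell(y^{\prime})\gg_{n}\eta$), again obtaining an $\eta^{\kappa_{3}}$ decay. Summing over the $|F|$ bones and dividing by $\sum_{w}\Leb_{G}(E_{w})$ produces $\Leb_{\mathcal{E}}(\text{bad})\ll\eta^{\kappa_{3}}$.

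The main obstacle is keeping the constants in the Iwasawa-type decomposition uniform and ensuring the non-divergence estimate applies: this is where the freedom $e^{-\kappa t}<L(\eta)^{2}$ and the choice of $t$ large relative to $\log(1/\eta)$ are essential, since the $n$-dependent error in the decomposition must be absorbed by the $e^{-\kappa t}$ factor before invoking Corollary \ref{closing2024.3.63}. Handling the small-$n$ case separately via Theorem \ref{closing2024.3.60} (applied in the pure horocycle direction rather than the Teichm\"uller-horocycle direction) removes the only remaining range where the lower bound on $T$ in Corollary \ref{closing2024.3.63} would fail.
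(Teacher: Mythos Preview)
Your argument is correct and follows the paper's route: write $h=a_{nm_{\gamma}}u_{\hat r}$ and, for each bone, apply Corollary~\ref{closing2024.3.63} to the base point $y+w\in\mathcal{H}_{1}^{(\eta)}(\alpha)$ on the shifted interval $\hat r+[-L(\eta)/10,L(\eta)/10]$, absorbing the $B_{G}(e^{-\kappa t})$-thickening by a perturbation estimate. The only differences are cosmetic: the paper absorbs the thickening \emph{after} the non-divergence step (simply noting that $g\in B_{G}(e^{-\kappa t})\subset B_{G}(\eta^{2})$ can at worst halve the systole, so $\ell(hu_{r}(y+w))\geq 4\eta\Rightarrow\ell(hz)\geq 2\eta$) rather than before via your Iwasawa decomposition of $hg$, and it does not treat small $n$ separately since in every application of the lemma the lower bound~(\ref{margulis2024.4.25}) on $n$ is already in force.
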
 
\begin{proof} 
   Write $h^{\prime}=a_{n m_{\gamma}} u_{\hat{r}}$  where $\hat{r}=\sum_{j=0}^{n-1}e^{-2jm_{\gamma}}r_{j+1}$ for some $r_{1},\ldots, r_{n}\in[0,1]$.  Note that by (\ref{margulis2024.4.25}), $k\geq \hyperlink{2024.12.k4}{\kappa_{4}}|\log \ell(y+w)|+|\log \frac{L(\eta)}{10}|+\hyperlink{2024.3.C6}{C_{6}}$. We may apply   Corollary \ref{closing2024.3.63}  with $y+w\in\mathcal{E}\subset \mathcal{H}_{1}^{(\eta)}(\alpha)$ and  $I=\hat{r}+[-\textstyle{\frac{L(\eta)}{10}}, \textstyle{\frac{L(\eta)}{10}}]$. Then we have 
   \begin{equation}\label{effective2022.3.22}
    \left|\left\{ r\in [-\textstyle{\frac{L(\eta)}{10}}, \textstyle{\frac{L(\eta)}{10}}]: h u_{r}(y+w)\not\in \mathcal{H}_{1}^{(4\eta)}(\alpha)\right\}\right|\leq \hyperlink{2024.3.C6}{C_{6}}(4\eta)^{\hyperlink{2024.12.k3}{\kappa_{3}}}\cdot \textstyle{\frac{L(\eta)}{5}}.
   \end{equation}
  Now (\ref{effective2022.3.21}), (\ref{effective2022.3.22}), and the definition of $\Leb_{\mathcal{E}}$ (\ref{effective2022.2.39}) imply that
\[   \Leb_{\mathcal{E}}\{z\in\mathcal{E}:hz\not\in  \mathcal{H}_{1}^{(2\eta)}(\alpha)\}\ll \eta^{\hyperlink{2024.12.k3}{\kappa_{3}}}\]
   for  $h\in\supp(\nu^{(k)})$. 
\end{proof}

 \begin{prop}[Margulis function]\label{margulis2024.4.16} Let   $\mathcal{E}$ be a skeleton as in (\ref{margulis2024.4.17}). \hypertarget{2024.12.k17} Then the local density function $f_{\mathcal{E}}$ is \hypertarget{2024.12.k16} a \textit{Margulis function}\index{Margulis function} with respect to the random walk $\nu^{(k)}$.   
More precisely, for     $\kappa>0$, $t>\hyperlink{2024.12.t2}{t_{2}}$, there exists   $\kappa_{16}= \kappa_{16}(\alpha,\gamma)>0$, $\kappa_{17} =\kappa_{17}(\alpha,\gamma
,\kappa)>0$ such that $\hyperlink{2024.12.k17}{\kappa_{17}}\rightarrow0$ as $\kappa\rightarrow0$, and that  
\begin{equation}\label{margulis2024.4.18}
  \iint f_{\mathcal{E}}(h,z)d\Leb_{\mathcal{E}}(z) d\nu^{(k)}(h)\leq e^{-k}\int f_{\mathcal{E}}(e,z)d\Leb_{\mathcal{E}}(z)+  e^{\hyperlink{2024.12.k17}{\kappa_{17}} t+\hyperlink{2024.12.k16}{\kappa_{16}}k}| F|
\end{equation}   
  for all $k\in\mathbb{N}$.
 \end{prop}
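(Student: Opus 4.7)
The plan is to first establish a single-step contraction estimate for the measure $\nu$, then iterate using the convolution structure $\nu^{(k)} = \nu * \nu^{(k-1)}$.

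\medskip

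\textbf{Single-step estimate.} Fix $z \in \mathcal{E}$ whose orbit under $a_{m_\gamma} u_{[0,1]}$ stays in $\mathcal{H}_1^{(2\eta)}(\alpha)$ for most $r$. For $h = a_{m_\gamma} u_r$, each $w' \in F_{e,z}(\mathcal{E})$ yields $h(z+w') = hz + hw' \in h\mathcal{E}$, so $hw' \in F_{h,z}(\mathcal{E})$ whenever $\|hw'\|_{hz} < L(hz)$. Conversely, any $w \in F_{h,z}(\mathcal{E})$ arises (essentially) this way. Applying the key contraction Lemma \ref{margulis2024.3.03}, which says $\int_0^1 \|a_{m_\gamma} u_r w'\|_{a_{m_\gamma} u_r z}^{-\gamma}\,dr \leq e^{-1}\|w'\|_z^{-\gamma}$ (this is precisely the choice of $m_\gamma$ in (\ref{margulis2024.4.11})), and summing over $w' \in F_{e,z}(\mathcal{E})$, we obtain
\[
\int_0^1 f_{\mathcal{E}}(a_{m_\gamma} u_r, z)\, dr \leq e^{-1} f_{\mathcal{E}}(e,z) + (\text{boundary terms}).
\]
The boundary terms come from points exiting or entering the observation window $B_{H^\perp_{\mathbb C}(hz)}(L(hz))$ and are bounded using Lemma \ref{closing2024.3.31} (which controls $|F_{h,z}(\mathcal{E})|$ by a power of $e^{t}$) times $L(hz)^{-\gamma}$, absorbed into the error $e^{\kappa_{17} t}|F|$ with $\kappa_{17} \to 0$ as $\kappa \to 0$ (since the relevant scale is $e^{-\kappa t}$).

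\medskip

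\textbf{Divergence control.} When $hz$ exits $\mathcal{H}_1^{(2\eta)}(\alpha)$, Lemma \ref{margulis2024.3.03} no longer applies. By Lemma \ref{effective2024.6.09}, $\Leb_{\mathcal{E}}\{z : hz \notin \mathcal{H}_1^{(2\eta)}\} \ll \eta^{\kappa_3}$ for $h \in \supp(\nu)$, so on this exceptional set we use the trivial bound $f_{\mathcal{E}}(h, z) \leq |F_{h,z}(\mathcal{E})| \cdot L(hz)^{-\gamma} \leq e^{\kappa_{17} t}|F|$, which is already absorbed.

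\medskip

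\textbf{Iteration.} Using Fubini and the cocycle identity $f_{\mathcal{E}}(h_1 h_2, z) = f_{h_2\mathcal{E}}(h_1, h_2 z)$ (which follows directly from the definitions in (\ref{margulis2024.3.05}), (\ref{margulis2024.3.06})),
\[
\iint f_{\mathcal{E}}(h, z)\, d\nu^{(k)}(h)\, d\Leb_{\mathcal{E}}(z) = \iint \!\left[\int f_{h_2\mathcal{E}}(h_1, h_2 z)\, d\nu(h_1)\right] d\nu^{(k-1)}(h_2)\, d\Leb_{\mathcal{E}}(z).
\]
Applying the single-step estimate to the shifted skeleton $h_2\mathcal{E}$ (at basepoint $h_2 z$) bounds the inner integral by $e^{-1} f_{\mathcal{E}}(h_2, z) + e^{\kappa_{17} t + \kappa_{16}(k-1)}|F|$, where the $e^{\kappa_{16}(k-1)}$ reflects that norms of transversal vectors can grow up to a factor of $e^{2m_\gamma(k-1)}$ under $\nu^{(k-1)}$ (Theorem \ref{effective2023.11.3}), affecting the size of the observation window at $h_2 z$. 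Induction on $k$ and summing the resulting geometric series $\sum_{j=0}^{k-1} e^{-(k-1-j)}e^{\kappa_{16} j}|F| \ll e^{\kappa_{16} k}|F|$ yields (\ref{margulis2024.4.18}).

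\medskip

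\textbf{Main obstacle.} The delicate point is the single-step estimate: carefully matching $F_{h,z}(\mathcal{E})$ with $h \cdot F_{e,z}(\mathcal{E})$, including the correct bookkeeping for points newly entering the cross-section window as $h$ acts. The cocycle identity in the iteration step requires the definition of $F_{\cdot,\cdot}(\mathcal{E})$ to behave well under composition, specifically that $F_{h_1,h_2z}(\mathcal{E})$ coincides with $F_{h_1 h_2, z}(\mathcal{E})$ modulo the change in reference point used to compute norms; this in turn forces one to track the polynomial bound on $|F_{h,z}(\mathcal{E})|$ uniformly under iteration, which is exactly the role of $\kappa_{16}$.
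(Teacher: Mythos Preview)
Your overall architecture matches the paper's: a single-step contraction via Lemma~\ref{margulis2024.3.03}, an additive boundary term from the observation window, and iteration. But two steps, as written, do not go through.

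\textbf{Divergence handling.} Your ``trivial bound'' $f_{\mathcal{E}}(h,z) \leq |F_{h,z}(\mathcal{E})|\cdot L(hz)^{-\gamma}$ is in the wrong direction: every $w\in F_{h,z}(\mathcal{E})$ satisfies $\|w\|_{hz}<L(hz)$, hence $\|w\|_{hz}^{-\gamma}>L(hz)^{-\gamma}$, so your inequality is actually a \emph{lower} bound for $f_{\mathcal{E}}(h,z)$. You therefore cannot push the bad set into the additive error. The paper treats the bad set multiplicatively: even when $h'z\notin\mathcal{H}_1^{(2\eta)}(\alpha)$ one has the crude bound $\int_0^1\|a_{m_\gamma}u_r w\|_{a_{m_\gamma}u_r h'z}^{-\gamma}\,dr\leq c(\gamma)\|w\|_{h'z}^{-\gamma}$ coming from the two-sided norm estimates of Theorem~\ref{effective2023.11.3}. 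Since $\Leb_{\mathcal{E}}\{z:h'z\notin\mathcal{H}_1^{(2\eta)}\}\ll\eta^{\kappa_3}$ by Lemma~\ref{effective2024.6.09}, this contributes at most $c(\gamma)\eta^{\kappa_3}\int f_{\mathcal{E}}(h',z)\,d\Leb_{\mathcal{E}}$, which for $\eta$ small enough is absorbed into the $e^{-1}$ contraction, not into the additive tail.

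\textbf{Bound on $|F_{h,z}(\mathcal{E})|$.} Lemma~\ref{closing2024.3.31} bounds $|F_z(t)|$ for the long horocycle $\mathsf{E}_t.x$, not $|F_{h,z}(\mathcal{E})|$ for a skeleton; citing it gives an error of size $e^{O(t)}$, which is far too large for the claimed $e^{\kappa_{17}t}|F|$ with $\kappa_{17}\to 0$ as $\kappa\to 0$. The bound actually needed, and proved in the paper, is
\[
|F_{a_{jm_\gamma}u_r,z}(\mathcal{E})|\ll e^{O(\kappa t)+jm_\gamma}\,|F|,
\]
obtained by volume packing: for each $w\in F_{a_{jm_\gamma}u_r,z}(\mathcal{E})$ the box $\mathsf{Q}_G(e^{-2\kappa t},e^{jm_\gamma}).(a_{jm_\gamma}u_r z+w)$ sits inside $a_{jm_\gamma}u_r\mathcal{E}^+$ and these boxes are disjoint, so comparing Haar measures gives the bound. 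Combined with $L(a_{jm_\gamma}u_r z)^{-\gamma}\ll e^{O(jm_\gamma)}\eta^{-O(1)}$ and the geometric sum $\sum_{j=1}^k e^{j-k}(\cdots)$, this produces precisely the exponents $\kappa_{17}t$ and $\kappa_{16}k$ in \eqref{margulis2024.4.18}.
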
 
\begin{proof} Note first that by assumption  $\mathcal{E}\subset \mathcal{H}_{1}^{(\eta)}(\alpha)$. Note that $\supp(\nu)\subset B_{G}(e^{2m_{\gamma}+1})$. In view of Theorem \ref{effective2023.11.3}, let $C=C(\gamma)\geq 1$ be so that
   \[\| h.w\|_{hz}\leq C\|w\|_{z}\ \ \ \text{ and }\ \ \ C^{-1}L(z)\leq L(hz)\leq CL(z)\]
   for all $h\in B_{G}(e^{2m_{\gamma}+1})$, $w\in H^{1}(M,\Sigma;\mathbb{C})$,  and $z\in \mathcal{H}_{1}(\alpha)$.

   Let $h=a_{m_{\gamma}}u_{r}$ for some $r\in[0,1]$. Let $z\in\mathcal{E}$, $h^{\prime}\in G$. First, let us assume that there exists some $w\in F_{hh^{\prime},z}(\mathcal{E})$ with $\|w\|_{hh^{\prime}z}<C^{-2}L(hh^{\prime}z)$. In view of the choice of $C$, this in particular implies that $v=h^{-1}w\in F_{ h^{\prime},z}(\mathcal{E})$. Hence, writing $c^{\ast}=C^{-2}L(hh^{\prime}z)$, by (\ref{margulis2024.4.44}), we have
   \begin{align}
f_{\mathcal{E}}(hh^{\prime},z) & \leq   \sum_{\|w\|_{hh^{\prime}z}<C^{-2}L (hh^{\prime}z)}\|w\|_{hh^{\prime}z}^{-\gamma}+C^{2\gamma}\cdot L (hh^{\prime}z)^{-\gamma}\cdot| F_{hh^{\prime},z}(\mathcal{E})| \;\nonumber\\
& \leq   \sum_{v\in F_{h^{\prime},z}(\mathcal{E})}\|  h.v\|_{hh^{\prime}z}^{-\gamma}+C^{2\gamma}\cdot L (hh^{\prime}z)^{-\gamma}\cdot| F_{hh^{\prime},z}(\mathcal{E})|.\;  \label{effective2024.4.1}
\end{align} 

If $F_{hh^{\prime},z}(\mathcal{E})=\emptyset$, then we obviously have
\begin{equation}\label{effective2024.4.2}
  f_{\mathcal{E}}(hh^{\prime},z) \leq   C^{2\gamma}\cdot L (hh^{\prime}z)^{-\gamma}.
\end{equation}

We now average (\ref{effective2024.4.1}) (\ref{effective2024.4.2}) over $r\in[0,1]$ and $z\in\mathcal{E}$ and conclude that
\begin{multline}
   \int_{0}^{1}f_{\mathcal{E}}(a_{m_{\gamma}}u_{r}h^{\prime},z)dr \leq   \sum_{w\in F_{h^{\prime},z}( \mathcal{E})}\int_{0}^{1}\|a_{m_{\gamma}}u_{r}w\|_{a_{m_{\gamma}}u_{r}h^{\prime}z}^{-\gamma}dr \\
  +C^{2\gamma}\int_{0}^{1}  (1+| F_{h^{\prime},z}(\mathcal{E})|)\cdot L (a_{m_{\gamma}}u_{r}h^{\prime}z)^{-\gamma} dr.\nonumber
\end{multline} 
Recall that, by (\ref{margulis2024.4.11}), Lemma \ref{margulis2024.3.03} and Lemma \ref{effective2024.6.09}, we may conclude that 
 \begin{align}
 & \int\sum_{w\in F_{h^{\prime},z}( \mathcal{E})}\int_{0}^{1}\|a_{m_{\gamma}}u_{r}w\|_{a_{m_{\gamma}}u_{r}h^{\prime}z}^{-\gamma}dr d\Leb_{\mathcal{E}}(z)\;\nonumber\\
=  &  \int_{h^{\prime}z\not\in  \mathcal{H}_{1}^{(2\eta)}(\alpha)}\sum_{w\in F_{h^{\prime},z}( \mathcal{E})}\int_{0}^{1}\|a_{m_{\gamma}}u_{r}w\|_{a_{m_{\gamma}}u_{r}h^{\prime}z}^{-\gamma}dr d\Leb_{\mathcal{E}}(z) \;\nonumber\\
  & + \int_{h^{\prime}z\in  \mathcal{H}_{1}^{(2\eta)}(\alpha)}\sum_{w\in F_{h^{\prime},z}( \mathcal{E})}\int_{0}^{1}\|a_{m_{\gamma}}u_{r}w\|_{a_{m_{\gamma}}u_{r}h^{\prime}z}^{-\gamma}dr d\Leb_{\mathcal{E}}(z) \;\nonumber\\
  \leq  &  \int_{h^{\prime}z\not\in  \mathcal{H}_{1}^{(2\eta)}(\alpha)} c(\gamma)\cdot f_{\mathcal{E}}(h^{\prime},z)  d\Leb_{\mathcal{E}}(z)   + e^{-2}\cdot f_{\mathcal{E}}(h^{\prime},z) \;\nonumber\\
    \leq  &  \frac{c(\gamma)}{100}\cdot\int f_{\mathcal{E}}(h^{\prime},z)   d\Leb_{\mathcal{E}}(z) + e^{-2}\cdot \int f_{\mathcal{E}}(h^{\prime},z)   d\Leb_{\mathcal{E}}(z)  \;\nonumber\\ 
    \leq & e^{-1}\cdot \int f_{\mathcal{E}}(h^{\prime},z)  d\Leb_{\mathcal{E}}(z).\;  \nonumber
\end{align} 
It follows that 
\begin{multline}
   \iint f_{\mathcal{E}}(hh^{\prime},z)d\nu(h) d\Leb_{\mathcal{E}}(z)  \leq e^{-1}\cdot \int f_{\mathcal{E}}(h^{\prime},z)  d\Leb_{\mathcal{E}}(z)\\
+C^{2\gamma}\iint (1+| F_{hh^{\prime},z}(\mathcal{E})|)\cdot L (hh^{\prime}z)^{-\gamma} d\nu(h)  d\Leb_{\mathcal{E}}(z)\nonumber
\end{multline}  
for all $h^{\prime}\in H$. Iterating this estimate, we have 
 \begin{multline}\label{margulis2024.4.12}
   \iint f_{\mathcal{E}}(h,z)d\nu^{(k)}(h)d\Leb_{\mathcal{E}}(z)\leq e^{-k}\int f_{\mathcal{E}}(e,z) d\Leb_{\mathcal{E}}(z) \\
   +C^{2\gamma}\sum_{j=1}^{k}e^{j-k}\iint (1+| F_{h,z}(\mathcal{E})|)\cdot L (hz)^{-\gamma}d\nu^{(j)}(h)d\Leb_{\mathcal{E}}(z). 
 \end{multline}
 
 Now we estimate $| F_{h,z}(\mathcal{E})|$. Let $\mathcal{E}^{+}\coloneqq\mathcal{E}(y,F,e^{1-\kappa t},\textstyle{\frac{L(\eta)}{10}})$, $a_{jm_{\gamma}}u_{r}\in\supp(\nu^{(j)})$.   
 In view of (\ref{effective2022.3.24}) and Lemma \ref{margulis2024.4.35}, we have
  \[\mathsf{Q}_{G}(e^{-2\kappa t},e^{jm_{\gamma}})\cdot (a_{jm_{\gamma}}u_{r}z+w)\subset a_{jm_{\gamma}}u_{r}\mathcal{E}^{+}.\] 
   Note that for sufficiently large $t$, $\mathcal{E}^{+}\subset \mathcal{H}_{1}^{(\eta)}(\alpha)$. For $w\in F_{a_{jm_{\gamma}}u_{r},z}(\mathcal{E})$ and sufficiently large $t$, we have  
    \begin{multline}
  (a_{jm_{\gamma}}u_{r})_{\ast}\Leb_{\mathcal{E}^{+}}\left(\mathsf{Q}_{G}(e^{-2\kappa t},e^{jm_{\gamma}}) \cdot(a_{jm_{\gamma}}u_{r}z+w)\right)\\
    =\frac{\Leb_{G}(\mathsf{Q}_{G}(e^{-2\kappa t},e^{jm_{\gamma}}))}{|F| \Leb_{G}(\mathsf{E}_{[-\frac{L(\eta)}{10},\frac{L(\eta)}{10}]}(e^{1-\kappa t}))}\gg\frac{(  e^{-2\kappa t})^{3}e^{-jm_{\gamma}}}{|F|}\nonumber
\end{multline} 
    where the implied constant is absolute.  Since $\mathcal{E}\subset \mathcal{E}^{+}$, we have  
    \begin{equation}\label{margulis2024.4.13}
      |F_{a_{jm_{\gamma}}u_{r},z}(\mathcal{E})|\leq |F_{a_{jm_{\gamma}}u_{r},z}(\mathcal{E}^{+})|\ll  e^{6\kappa t+jm_{\gamma}}\cdot| F|\leq  e^{7\kappa t+jm_{\gamma}}\cdot| F|
    \end{equation} 
       where we use the factor $e^{\kappa t}$ to absorb the implicit constant.  
       
 Finally, by (\ref{margulis2024.4.14}), we have
 \begin{equation}\label{margulis2024.4.15}
   L (a_{jm_{\gamma}}u_{r}z)^{-\gamma}\ll \ell(a_{jm_{\gamma}}u_{r}z)^{-\gamma(\hyperlink{2024.12.k6}{\kappa_{6}}+\hyperlink{2024.12.k5}{\kappa_{5}})}\ll (e^{-jm_{\gamma}}\eta)^{-\gamma(\hyperlink{2024.12.k6}{\kappa_{6}}+\hyperlink{2024.12.k5}{\kappa_{5}})}.
 \end{equation} 
 Write $c=\gamma(\hyperlink{2024.12.k6}{\kappa_{6}}+\hyperlink{2024.12.k5}{\kappa_{5}})$, and recall that $e^{-\kappa t}\leq L(\eta)^{2}$. Combining (\ref{margulis2024.4.12}), (\ref{margulis2024.4.13}) and (\ref{margulis2024.4.15}), we obtain  
    \begin{align}
 &  C^{2\gamma}\sum_{j=1}^{k}e^{j-k}\iint (1+| F_{h,z}(\mathcal{E})|)\cdot L (hz)^{-\gamma}d\nu^{(j)}(h)d\Leb_{\mathcal{E}}(z)\;\nonumber\\
 \ll &  C^{2\gamma}\sum_{j=1}^{k}e^{j-k}  \cdot e^{7\kappa t+jm_{\gamma}}| F| \cdot e^{jm_{\gamma}c}\eta^{-c}\;\nonumber\\
\ll &   C^{2\gamma}e^{c\kappa t} e^{7\kappa t}  | F|\sum_{j=1}^{k} e^{jm_{\gamma}(1+c)+j-k}   \;\nonumber\\
\leq &    e^{(c+8)\kappa t+2km_{\gamma}(1+c)}| F| \;  \nonumber
\end{align}
where we use the factor $e^{\kappa t}$ to absorb the implicit constant.  Let $\hyperlink{2024.12.k17}{\kappa_{17}} =(2c+8)\kappa$, and $\hyperlink{2024.12.k16}{\kappa_{16}}=2m_{\gamma}(1+c)$. Then we establish the claim.
\end{proof}
Now suppose that   $M>1$,     $\mathcal{E}$ is a skeleton as in (\ref{margulis2024.4.17}) and
\begin{equation}\label{margulis2024.4.22}
\max_{z\in\mathcal{E}} f_{\mathcal{E}}(e,z)\leq  e^{Mt}.
\end{equation}
Then by Proposition \ref{margulis2024.4.16}, we see that  for any $k\in\mathbb{N}$ at least one of the following holds:  
\begin{equation}\label{margulis2024.4.20}
\max_{z\in\mathcal{E}} f_{\mathcal{E}}(e,z)\leq e^{Mt} \leq e^{\hyperlink{2024.12.k17}{\kappa_{17}} t+(\hyperlink{2024.12.k16}{\kappa_{16}}+1)k}| F|,
\end{equation}
\begin{equation}\label{margulis2024.4.21}
\iint  f_{\mathcal{E}}(h,z)d\Leb_{\mathcal{E}}(z) d\nu^{(k)}(h)\leq 2e^{Mt-k}\leq e^{Mt-k+1}.
\end{equation}
The case (\ref{margulis2024.4.20}) is desired.  
 In the following, we  assume that case (\ref{margulis2024.4.21}) occurs, and manage to create a new skeleton  so that  (\ref{margulis2024.4.20}) is more likely to occur.

First, we shall convert (\ref{margulis2024.4.21}) into a pointwise version.  
\begin{lem}[Pointwise version of (\ref{margulis2024.4.21})]\label{margulis2024.4.29}  
   Let  $\mathcal{E}$ be a skeleton defined in (\ref{margulis2024.4.17}). Suppose that   for $M>1$, $\kappa>0$, $t> \hyperlink{2024.12.t2}{t_{2}}$ and 
   \begin{equation}\label{margulis2024.4.25}
      200 \hyperlink{2024.12.k4}{\kappa_{4}}|\log L(\eta)|+\hyperlink{2024.3.C6}{C_{6}}\leq k,
   \end{equation}
    we \hypertarget{2024.12.k18} have (\ref{margulis2024.4.22}) and  (\ref{margulis2024.4.21}).  
   Then there exists $\kappa_{18}=\kappa_{18}(\alpha)>0$,  and a subset $G_{\mathcal{E}}=G_{\mathcal{E}}(k)\subset\supp(\nu^{(k)})$ with
   \begin{equation}\label{margulis2024.4.24}
    \nu^{(k)}(G_{\mathcal{E}})\geq 1-e^{1-\frac{1}{8}k}
   \end{equation} 
   such that  for all $h\in G_{\mathcal{E}}$, there exists a sub-skeleton $\mathcal{E}(h)\subset\mathcal{E}$ with 
   \[\Leb_{\mathcal{E}}(\mathcal{E}(h))\geq 1-O(\eta^{\hyperlink{2024.12.k18}{\kappa_{18}}}),\] 
   so that for all $z\in \mathcal{E}(h)$, we have
         \begin{align}
B_{G}(e^{10-2\kappa t}).z & \subset \mathcal{E}\;\label{effective2022.3.14}\\
hz  & \in  \mathcal{H}_{1}^{(2\eta)}(\alpha)\;\label{margulis2024.4.27}\\
f_{\mathcal{E}}(h,z)  & \leq e^{Mt-\frac{3}{4}k}. \;  \label{margulis2024.4.47}
\end{align} 
\end{lem}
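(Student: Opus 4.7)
The plan is to produce $G_{\mathcal{E}}$ and $\mathcal{E}(h)$ by intersecting three ``good'' sets corresponding to the three required pointwise conditions, extracting each from the hypothesis via an application of Markov's inequality or an auxiliary geometric input. For (\ref{margulis2024.4.47}) I will apply Markov twice to (\ref{margulis2024.4.21}); for (\ref{margulis2024.4.27}) I will invoke Lemma \ref{effective2024.6.09}; and for (\ref{effective2022.3.14}) I will use the product structure of the bones together with Lemma \ref{margulis2024.4.35}.

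For the density estimate, write $F(h) := \int f_{\mathcal{E}}(h,z)\,d\Leb_{\mathcal{E}}(z)$. By Fubini the hypothesis (\ref{margulis2024.4.21}) reads $\int F\,d\nu^{(k)} \leq e^{Mt-k+1}$, so Markov applied on $(\supp(\nu^{(k)}),\nu^{(k)})$ gives that the set
\[
G_{\mathcal{E}} := \{h\in\supp(\nu^{(k)}) : F(h) \leq e^{Mt-7k/8}\}
\]
satisfies $\nu^{(k)}(G_{\mathcal{E}}^c)\leq e^{1-k/8}$, which is exactly (\ref{margulis2024.4.24}). For each $h\in G_{\mathcal{E}}$, a second Markov inequality applied to $z\mapsto f_{\mathcal{E}}(h,z)$ on $(\mathcal{E},\Leb_{\mathcal{E}})$ produces
\[
\mathcal{E}_1(h) := \{z\in\mathcal{E} : f_{\mathcal{E}}(h,z)\leq e^{Mt-3k/4}\},\qquad \Leb_{\mathcal{E}}(\mathcal{E}_1(h)^c)\leq e^{-k/8}.
\]
Using the hypothesis (\ref{margulis2024.4.25}) together with $L(\eta)=\eta^{\hyperlink{2024.12.k6}{\kappa_{6}}}$ one obtains $e^{-k/8}\leq\eta^{25\hyperlink{2024.12.k3}{\kappa_{4}}\hyperlink{2024.12.k6}{\kappa_{6}}}$, which is of the required form $\eta^{\hyperlink{2024.12.k18}{\kappa_{18}}}$.

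For the nondivergence condition (\ref{margulis2024.4.27}), Lemma \ref{effective2024.6.09} (applied with $n=k$) supplies, uniformly for all $h\in\supp(\nu^{(k)})$, a set $\mathcal{E}_2(h):=\{z\in\mathcal{E}:hz\in\mathcal{H}_1^{(2\eta)}(\alpha)\}$ with $\Leb_{\mathcal{E}}(\mathcal{E}_2(h)^c)\ll\eta^{\hyperlink{2024.12.k3}{\kappa_{3}}}$. For the cushioning condition (\ref{effective2022.3.14}), use the decomposition $\mathcal{E}=\bigsqcup_{w\in F}B_G(e^{-\kappa t})u_{[-L(\eta)/10,L(\eta)/10]}(y+w)$. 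Via Lemma \ref{margulis2024.4.35}, left multiplication by $B_G(e^{10-2\kappa t})$ stays inside the bone provided one avoids a shell of thickness $O(e^{10-2\kappa t})$ around the boundary of $B_G(e^{-\kappa t})\cdot u_{[-L(\eta)/10,L(\eta)/10]}$; a direct volume computation bounds the Haar measure of the bad shell by $O\bigl(e^{10-\kappa t}+e^{10-2\kappa t}/L(\eta)\bigr)$ times the bone volume. Enlarging $\hyperlink{2024.12.t2}{t_2}(\ell(x_0),\kappa,\eta,\alpha)$ if necessary, this ratio may be assumed $\leq\eta^{\hyperlink{2024.12.k18}{\kappa_{18}}}$, giving an $h$-independent set $\mathcal{E}_3\subset\mathcal{E}$ with $\Leb_{\mathcal{E}}(\mathcal{E}_3^c)\leq\eta^{\hyperlink{2024.12.k18}{\kappa_{18}}}$.

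To conclude, set $\mathcal{E}(h):=\mathcal{E}_1(h)\cap\mathcal{E}_2(h)\cap\mathcal{E}_3$ for $h\in G_{\mathcal{E}}$ and choose $\hyperlink{2024.12.k18}{\kappa_{18}}:=\tfrac{1}{2}\min\{\hyperlink{2024.12.k3}{\kappa_{3}},25\hyperlink{2024.12.k3}{\kappa_{4}}\hyperlink{2024.12.k6}{\kappa_{6}}\}$, small enough to absorb the implicit constants. All three properties (\ref{effective2022.3.14})--(\ref{margulis2024.4.47}) then hold on $\mathcal{E}(h)$ by construction, and a union bound gives $\Leb_{\mathcal{E}}(\mathcal{E}(h)^c)=O(\eta^{\hyperlink{2024.12.k18}{\kappa_{18}}})$. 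The only genuine (but minor) difficulty is the bookkeeping needed to convert the Markov output $e^{-k/8}$ into the polynomial form $\eta^{\hyperlink{2024.12.k18}{\kappa_{18}}}$, which is precisely why the hypothesis (\ref{margulis2024.4.25}) is formulated as $k\geq 200\hyperlink{2024.12.k3}{\kappa_{4}}|\log L(\eta)|+\hyperlink{2024.3.C6}{C_{6}}$; the shell computation in Step 3 is routine once one uses the $\mathsf{Q}_G(\delta,\tau)$ decomposition to control how $B_G(e^{10-2\kappa t})$ interacts with the $u_{[\cdot]}$ factor inside a bone.
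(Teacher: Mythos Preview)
Your proposal is correct and follows essentially the same approach as the paper: two nested Chebyshev/Markov inequalities to obtain (\ref{margulis2024.4.24}) and (\ref{margulis2024.4.47}), the nondivergence input (your citation of Lemma \ref{effective2024.6.09} is exactly what the paper reproves inline), and a shrinking/shell argument for (\ref{effective2022.3.14}). The only cosmetic differences are that the paper realizes the cushion set directly as $\mathcal{E}^{-}=\mathcal{E}(y,F,e^{-\kappa t}-e^{20-2\kappa t},\tfrac{L(\eta)}{10})$ and controls its complement via the standing hypothesis $e^{-\kappa t}<L(\eta)^{2}$ from (\ref{margulis2024.4.17}) rather than by enlarging $t_{2}$, and takes $\kappa_{18}=\min\{25\kappa_{4},2,\kappa_{3}\}$.
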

\begin{proof}
  By (\ref{margulis2024.4.21}), we have
   \[\iint  f_{\mathcal{E}}(h,z)d\Leb_{\mathcal{E}}(z) d\nu^{(k)}(h) \leq e^{Mt-k+1}.\]
   Using this estimate and \textit{Chebyshev's inequality}\index{Chebyshev's inequality}, we have
   \begin{equation}\label{effective2022.3.17}
      \nu^{(k)}\left\{h\in\supp(\nu^{(k)}):\int f(h,z)d\Leb_{\mathcal{E}}(z)> e^{Mt-\frac{7}{8}k}\right\}<e^{1-\frac{1}{8}k}.
   \end{equation}
   Now let  
   \begin{equation}\label{margulis2024.4.23}
    G_{\mathcal{E}}\coloneqq \left\{h\in\supp(\nu^{(k)}):\int f(h,z)d\Leb_{\mathcal{E}}(z)\leq e^{Mt-\frac{7}{8}k}\right\}.
   \end{equation}
   Then, (\ref{effective2022.3.17}) implies (\ref{margulis2024.4.24}).

 Let $h\in\supp(\nu^{(k)})$. Then for any $z=g_{1}u_{r}(y+w)\in\mathcal{E}$, where $g_{1}=\bar{u}_{s_{1}}a_{s_{2}}$ with $s_{1},s_{2}\in[-e^{-\kappa t},e^{-\kappa t}]$, we have
   \[hz=hg_{1}u_{r}(y+w)=g_{2}hu_{r}(y+w)\]
   where $g_{2}\in B_{G}(e^{-\kappa t})\subset B_{G}(\eta^{2})$. It follows that 
   \begin{equation}\label{effective2022.3.21}
 \ell(hu_{r}(y+w))\geq 4\eta\ \ \  \Longrightarrow \ \ \  \ell(hz) \geq 2\eta.
   \end{equation}
 Write $h=a_{k m_{\gamma}} u_{\hat{r}}$  where $\hat{r}=\sum_{j=0}^{k-1}e^{-2jm_{\gamma}}r_{j+1}$ for some $r_{1},\ldots, r_{k}\in[0,1]$.  Note that by (\ref{margulis2024.4.25}), $k\geq \hyperlink{2024.12.k4}{\kappa_{4}}|\log \ell(y+w)|+|\log \frac{L(\eta)}{10}|+\hyperlink{2024.3.C6}{C_{6}}$. We may apply   Corollary \ref{closing2024.3.63}  with $y+w\in\mathcal{E}\subset \mathcal{H}_{1}^{(\eta)}(\alpha)$ and  $I=\hat{r}+[-\textstyle{\frac{L(\eta)}{10}}, \textstyle{\frac{L(\eta)}{10}}]$. Then we have 
   \begin{equation}\label{effective2022.3.22}
    \left|\left\{ r\in [-\textstyle{\frac{L(\eta)}{10}}, \textstyle{\frac{L(\eta)}{10}}]: h u_{r}(y+w)\not\in \mathcal{H}_{1}^{(4\eta)}(\alpha)\right\}\right|\leq \hyperlink{2024.3.C6}{C_{6}}(4\eta)^{\hyperlink{2024.12.k3}{\kappa_{3}}}\cdot \textstyle{\frac{L(\eta)}{5}}.
   \end{equation}
  Now (\ref{effective2022.3.21}), (\ref{effective2022.3.22}), and the definition of $\Leb_{\mathcal{E}}$ (\ref{effective2022.2.39}) imply that
   \begin{equation}\label{effective2022.3.20}
     \Leb_{\mathcal{E}}\{z\in\mathcal{E}:hz\not\in  \mathcal{H}_{1}^{(2\eta)}(\alpha)\}\ll \eta^{\hyperlink{2024.12.k3}{\kappa_{3}}}
   \end{equation}
   for  $h\in\supp(\nu^{(k)})$. 
   
   Now set $\mathcal{E}^{-}\coloneqq \mathcal{E}(y,F,e^{-\kappa t}-e^{20-2\kappa t},\textstyle{\frac{L(\eta)}{10}})\subset\mathcal{E}$. Clearly, $\Leb_{\mathcal{E}}(\mathcal{E}^{-})\geq 1-O(e^{-\kappa t})$. Moreover, for $z\in \mathcal{E}^{-}$, we have
    \[B_{G}(e^{10-2\kappa t}).z\subset\mathcal{E}.\] 
    Next, let  $h\in G_{\mathcal{E}}$ and $\mathcal{E}^{\prime}(h)\coloneqq \mathcal{E}^{-}\cap\{z\in\mathcal{E}:hz\not\in  \mathcal{H}_{1}^{(2\eta)}(\alpha)\}$.
    Then (\ref{effective2022.3.20})  implies that $\Leb_{\mathcal{E}}(\mathcal{E}^{\prime}(h))\geq 1-O(\eta^{\hyperlink{2024.12.k3}{\kappa_{3}}}) $. Moreover, for   $z\in \mathcal{E}^{\prime}(h)$, we have 
    \[hz\in \mathcal{H}_{1}^{(2\eta)}(\alpha).\]  
Finally, let  $\mathcal{E}_{c}\coloneqq\{z\in\mathcal{E}^{\prime}(h): f_{\mathcal{E}}(h,z)>e^{Mt-\frac{3}{4}k}\}$.
   Then by (\ref{margulis2024.4.23}), we have
   \[ \Leb_{\mathcal{E}}(\mathcal{E}_{c})e^{Mt-\frac{3}{4}k}\leq   \int_{\mathcal{E}_{c}}f(h,z)d\Leb_{\mathcal{E}}(z) \leq    \int_{\mathcal{E}}f(h,z)d\Leb_{\mathcal{E}}(z)   \leq e^{Mt-\frac{7}{8}k}.\] 
We conclude from the above that $\Leb_{\mathcal{E}}(\mathcal{E}_{c})\ll e^{-\frac{1}{8}k}$. Thus, by (\ref{margulis2024.4.25}), we conclude that $\Leb_{\mathcal{E}}(\mathcal{E}_{c})\ll\eta^{25 \hyperlink{2024.12.k4}{\kappa_{4}}}$.
Put $\mathcal{E}(h)\coloneqq \mathcal{E}^{\prime}(h)\setminus \mathcal{E}_{c}$. Then let $\hyperlink{2024.12.k18}{\kappa_{18}}=\min\{25 \hyperlink{2024.12.k4}{\kappa_{4}},2,\hyperlink{2024.12.k3}{\kappa_{3}}\}$. We obtain $\Leb_{\mathcal{E}}(\mathcal{E}(h))\geq 1-O(\eta^{\hyperlink{2024.12.k18}{\kappa_{18}}})$ and 
\[f_{\mathcal{E}}(h,z)<e^{Mt-\frac{3}{4}k}\]
  for every $z\in\mathcal{E}(h)$. We establish the claim. 
\end{proof}

The following lemma gives an effective estimate of the recurrence of $h\mathcal{E}(h)$. 
\begin{lem}[{Recurrence of $h\mathcal{E}(h)$}]\label{margulis2024.4.33}
     Let the notation and assumptions be as in Lemma \ref{margulis2024.4.29}. Then there \hypertarget{2024.12.k19} exists $y(h)\in \mathcal{H}_{1}^{(\eta)}(\alpha)$, and $\kappa_{19}=\kappa_{19}(\alpha)>0$ such that
   \[h_{\ast}\Leb_{\mathcal{E}}(h\mathcal{E}(h)\cap\mathsf{Q}(y(h), e^{-2\kappa t},e^{km_{\gamma}}))\geq e^{-\hyperlink{2024.12.k19}{\kappa_{19}}\kappa t}e^{-km_{\gamma}} .\]
\end{lem}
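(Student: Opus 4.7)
The plan is to combine the bound $\Leb_{\mathcal{E}}(\mathcal{E}(h))\geq 1-O(\eta^{\hyperlink{2024.12.k18}{\kappa_{18}}})\geq 1/2$ from Lemma \ref{margulis2024.4.29} with a simple pigeonhole argument on a covering of $\mathcal{H}_1^{(2\eta)}(\alpha)$. Since $hz\in \mathcal{H}_1^{(2\eta)}(\alpha)$ for every $z\in\mathcal{E}(h)$ by (\ref{margulis2024.4.27}), the push-forward $h_{\ast}\Leb_{\mathcal{E}}$ restricted to $h\mathcal{E}(h)$ is a measure of mass $\geq 1/2$ supported on the compact set $\mathcal{H}_1^{(2\eta)}(\alpha)$. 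I will cover this compact set by boxes of the form $\mathsf{Q}(y_i,e^{-2\kappa t},e^{km_\gamma})$ with centers $y_i\in \mathcal{H}_1^{(2\eta)}(\alpha)\subset \mathcal{H}_1^{(\eta)}(\alpha)$ and of bounded multiplicity, and the box receiving the largest share of mass under $h_{\ast}\Leb_{\mathcal{E}}$ will provide the desired point $y(h)$.

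The key numerics is to estimate the $\mu_{(1)}$-volume of a single box. By (\ref{effective2022.3.25}), $\mathsf{Q}(y,\delta,\tau)$ is the image of $\mathsf{Q}_G(\delta,\tau)\times B_{H^\perp_\mathbb{C}(y)}(\delta)$ under $(h,w)\mapsto h(y+w)$. The Haar volume of $\mathsf{Q}_G(\delta,\tau)=\bar{u}_{[-\delta/\tau,\delta/\tau]}\cdot a_{[-\delta,\delta]}\cdot u_{[-\delta,\delta]}$ is $\asymp \delta^3/\tau$, while the Lebesgue volume of the transverse ball is $\asymp \delta^{D-3}$, where $D\coloneqq \dim_{\mathbb{R}}\mathcal{H}_1(\alpha)$. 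Using Lemma \ref{closing2023.07.1} together with the injectivity statement (\ref{margulis2024.4.52}), the defining map is bi-Lipschitz at scales $\ll L(y)$; since $\delta=e^{-2\kappa t}\ll L(\eta)\leq L(y)$ for large $t$, this gives
\[
\mu_{(1)}\bigl(\mathsf{Q}(y,e^{-2\kappa t},e^{km_\gamma})\bigr)\asymp_{\alpha} e^{-2D\kappa t-km_\gamma}
\]
uniformly over $y\in \mathcal{H}_1^{(\eta)}(\alpha)$. A standard Vitali/Besicovitch-type covering argument, adapted to this anisotropic box shape, then yields a covering $\{\mathsf{Q}(y_i,e^{-2\kappa t},e^{km_\gamma})\}_{i\in\mathcal{I}}$ of $\mathcal{H}_1^{(2\eta)}(\alpha)$ with multiplicity depending only on $D$ and cardinality
\[
|\mathcal{I}|\ll_{\alpha} \mu_{(1)}\bigl(\mathcal{H}_1^{(\eta)}(\alpha)\bigr)\cdot e^{2D\kappa t+km_\gamma}\ll_{\alpha} e^{2D\kappa t+km_\gamma}.
\]

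Applying pigeonhole to the $\geq 1/2$-mass of $h_{\ast}\Leb_{\mathcal{E}}|_{h\mathcal{E}(h)}$ distributed over these $|\mathcal{I}|$ boxes produces some index $i$ with
\[
h_{\ast}\Leb_{\mathcal{E}}\bigl(h\mathcal{E}(h)\cap \mathsf{Q}(y_i,e^{-2\kappa t},e^{km_\gamma})\bigr)\geq \frac{1/2}{|\mathcal{I}|}\gg_{\alpha} e^{-2D\kappa t-km_\gamma}.
\]
Setting $y(h)\coloneqq y_i\in \mathcal{H}_1^{(\eta)}(\alpha)$ and choosing $\hyperlink{2024.12.k19}{\kappa_{19}}=\hyperlink{2024.12.k19}{\kappa_{19}}(\alpha)$ sufficiently large (depending on $D$ and the $\alpha$-dependent implicit constant, with one extra $\kappa t$ of slack to absorb the factor $1/2$ for $t$ large) gives the claimed inequality. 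The only genuinely technical point is the uniform volume comparison together with the bounded-multiplicity cover for the anisotropic boxes $\mathsf{Q}(y,\delta,\tau)$; this is where one must combine the bi-Lipschitz behavior on scale $L(\eta)$ with the finite Masur--Veech volume of $\mathcal{H}_1(\alpha)$, but it is routine once the shape of $\mathsf{Q}_G(\delta,\tau)$ is understood.
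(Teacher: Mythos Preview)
Your proposal is correct and follows essentially the same approach as the paper: cover $\mathcal{H}_1^{(2\eta)}(\alpha)$ by anisotropic boxes $\mathsf{Q}(y_j,e^{-2\kappa t},e^{km_\gamma})$, bound the cardinality of the cover, and apply pigeonhole to the mass $h_\ast\Leb_{\mathcal{E}}(h\mathcal{E}(h))$. The only difference is cosmetic: the paper constructs the cover in two explicit steps --- first a maximal $\tfrac{1}{10}e^{-2\kappa t}$-separated net $\{\hat y_k\}$ in $\mathcal{H}_1^{(2\eta)}(\alpha)$ giving $|\mathcal K|\asymp e^{2(2g+|\Sigma|-2)\kappa t}$, then a subdivision of each $B_G(\tfrac12 e^{-2\kappa t})$ into $|\mathcal S|\asymp e^{km_\gamma}$ translates of $\mathsf{Q}_G(\cdot,e^{km_\gamma})$ --- whereas you appeal directly to a Vitali/Besicovitch-type covering for the anisotropic boxes; the paper's two-step construction makes the cardinality bound and the bounded multiplicity completely explicit and avoids any discussion of a covering lemma for non-round shapes, but the content is the same and leads to the identical value $\kappa_{19}=3(2g+|\Sigma|-2)$ (up to the exact dimension count).
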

\begin{proof}
Let $\{\hat{y}_{k}\in \mathcal{H}_{1}^{(2\eta)}(\alpha):k\in \mathcal{K}\}$ be a maximal family  of $\textstyle{\frac{1}{10}e^{-2\kappa t}}$-separated points in $\mathcal{H}_{1}^{(2\eta)}(\alpha)$. Then by the definition, we have 
\[\mathsf{B}(\hat{y}_{i},\textstyle{\frac{1}{20}e^{-2\kappa t}})\neq \mathsf{B}(\hat{y}_{j},\textstyle{\frac{1}{20}e^{-2\kappa t}}).\]
for any $i\neq j\in\mathcal{K}$. Moreover, one deduces that $\{\mathsf{B}(\hat{y}_{k},\textstyle{\frac{1}{2}e^{-2\kappa t}}):k\in \mathcal{K}\}$ is a covering of $\mathcal{H}_{1}^{(2\eta)}(\alpha)$. Since $\mu_{(1)}(\mathsf{B}(\hat{y}_{k},\textstyle{\frac{1}{20}e^{-2\kappa t}}))\asymp e^{-2(2g+|\Sigma|-2)\kappa t}$, we obtain 
\[ |\mathcal{K}|\asymp e^{2(2g+|\Sigma|-2)\kappa t}.\]

Next, for a ball $B_{G}(\textstyle{\frac{1}{2}e^{-2\kappa t}})\subset G$, let    $\{h_{s}\in B_{G}(\textstyle{\frac{1}{2}e^{-2\kappa t}}):s\in \mathcal{S}\}$ be a maximal family  of  points such that
\[\mathsf{Q}_{G}(\textstyle{\frac{1}{100}e^{-2\kappa t}},e^{km_{\gamma}})   h_{s_{1}}\cap \mathsf{Q}_{G}(\textstyle{\frac{1}{100}e^{-2\kappa t}},e^{km_{\gamma}})  h_{s_{2}}=\emptyset\]
for  any $s_{1}\neq s_{2}\in\mathcal{S}$. Moreover, one deduces that $\{\mathsf{Q}_{G}(\textstyle{\frac{1}{2}e^{-2\kappa t}},e^{km_{\gamma}})   h_{s}:s\in \mathcal{S}\}$ is a covering of $B_{G}(\textstyle{\frac{1}{2}e^{-2\kappa t}})$. Since $\Leb_{G}(\mathsf{Q}_{G}(\textstyle{\frac{1}{100}e^{-2\kappa t}},e^{km_{\gamma}})h_{s})\asymp (e^{-2\kappa t})^{3}(e^{km_{\gamma}})^{-1}$, and    Lemma \ref{margulis2024.4.35}, we obtain 
\[ |\mathcal{S}|\asymp e^{km_{\gamma}}.\]

   Combining these two coverings, we obtain a covering $\{\mathsf{Q}(h_{s}\hat{y}_{k}, e^{-2\kappa t},e^{km_{\gamma}}):s \in \mathcal{S},\ k\in \mathcal{K}\}$
   of $\mathcal{H}_{1}^{(2\eta)}(\alpha)$.   For any $s \in \mathcal{S}$,  $k\in \mathcal{K}$, we write $y_{j}=h_{s}\hat{y}_{k}$ and $\mathcal{J}$ for the  set of indexes. Hence, we obtain  a covering 
   \begin{equation}\label{margulis2024.4.28}
     \{\mathsf{Q}(y_{j}, e^{-2\kappa t},e^{km_{\gamma}}):j\in \mathcal{J}\}
   \end{equation} 
 of $ \mathcal{H}_{1}^{(2\eta)}(\alpha)$ where 
\begin{equation}\label{margulis2024.4.26}
  \#\mathcal{J}\ll e^{2(2g+|\Sigma|-2)\kappa t}e^{km_{\gamma}}.
\end{equation} 
In particular, we have $y_{j}\in\mathcal{H}_{1}^{(\eta)}(\alpha)$.
 
 Let $h\in L_{\mathcal{E}}$, and define
  \[ \mathcal{J}_{c}(h)\coloneqq\left\{j\in\mathcal{J}:h_{\ast}\Leb_{\mathcal{E}}(h\mathcal{E}(h)\cap\mathsf{Q}(y_{j}, e^{-2\kappa t},e^{km_{\gamma}}))< e^{-3(2g+|\Sigma|-2)\kappa t}e^{-km_{\gamma}}\right\}.\]
   Then by (\ref{margulis2024.4.26}), we have 
 \[h_{\ast}\Leb_{\mathcal{E}}\left[h\mathcal{E}(h)\cap\bigcup_{j\in \mathcal{J}_{c}(h)}\mathsf{Q}(y_{j}, e^{-2\kappa t},e^{km_{\gamma}})\right]<e^{-(2g+|\Sigma|-2)\kappa t}<1.\]
 Then by  (\ref{margulis2024.4.27}) and (\ref{margulis2024.4.28}), we conclude that there exists $j\in \mathcal{J}\setminus \mathcal{J}_{c}(h)$, i.e. 
 \[h_{\ast}\Leb_{\mathcal{E}}(h\mathcal{E}(h)\cap\mathsf{Q}(y_{j}, e^{-2\kappa t},e^{km_{\gamma}}))\geq e^{-3(2g+|\Sigma|-2)\kappa t}e^{-km_{\gamma}} .\]
 Let $y(h)\coloneqq y_{j}$, $\hyperlink{2024.12.k19}{\kappa_{19}}\coloneqq 3(2g+|\Sigma|-2)$.  We establish the claim.
\end{proof}

The following lemma yields a new skeleton $\mathcal{E}_{1}$ from $\mathcal{E}$, with a better bound for $f_{\mathcal{E}_{1}}(e,z)$. It will serve as our main tool to increase the dimension of additional invariance in the proof of Theorem  \ref{margulis2024.4.30}.

\begin{lem}[Induction]\label{margulis2024.4.31}  
Let the  notation and assumptions be as in Lemma \ref{margulis2024.4.29}.  
      Suppose that $\kappa>0$,  $t\geq \hyperlink{2024.12.t2}{t_{2}}$, and 
 \begin{equation}\label{margulis2024.4.46}
  |F|\geq e^{\frac{1}{2}t}.
\end{equation} 
Let $h\in G_{\mathcal{E}}$. Then there exist
\begin{itemize}
  \item  a point $y_{1}\in h\mathcal{E}(h)\cap    \mathsf{Q}(y(h), e^{-2\kappa t},e^{km_{\gamma}})$,
  \item  a finite subset $F_{1}\subset B_{H_{\mathbb{C}}^{\perp}(y_{1})}(e^{-\kappa t})$ containing $0$ with
         \[ |F_{1}|\geq|F|\cdot e^{(3-\hyperlink{2024.12.k19}{\kappa_{19}})\kappa t},\]
  \item a skeleton $\mathcal{E}_{1}=\mathcal{E}(y_{1},F_{1}, e^{-\kappa t}, \frac{L(\eta)}{10})$,
\end{itemize}  
         such that both of the following are satisfied:
         \begin{equation}\label{margulis2024.4.51}
            y_{1}+F_{1}\subset B_{G}(e^{5-2\kappa t})h\mathcal{E}(h),
         \end{equation}
          \begin{equation}\label{margulis2024.4.50}
         \max_{y\in\mathcal{E}_{1}}  f_{\mathcal{E}_{1}}(e,y) \leq \max\left\{  e^{Mt-\frac{2}{3}k}, 2|F_{1}|^{1+\frac{k\gamma m_{\gamma}+2\gamma\kappa t}{[\frac{1}{2}+(3-\hyperlink{2024.12.k19}{\kappa_{19}})\kappa ]t}} \right\}.
         \end{equation}  
\end{lem}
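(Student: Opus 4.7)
The plan is to construct $\mathcal{E}_1$ by locally observing the strands of $h\mathcal{E}(h)$ that enter the small region $\mathsf{Q}(y(h), e^{-2\kappa t}, e^{km_\gamma})$ provided by Lemma \ref{margulis2024.4.33}, and then to control the new density function $f_{\mathcal{E}_1}(e,\cdot)$ via a dichotomy between the inherited Margulis-function decay of Lemma \ref{margulis2024.4.29} and a pure cardinality bound. First, I would fix any base point $y_1 \in h\mathcal{E}(h) \cap \mathsf{Q}(y(h), e^{-2\kappa t}, e^{km_\gamma})$, which is nonempty by Lemma \ref{margulis2024.4.33}, and let $F_1 \subset B_{H^{\perp}_{\mathbb{C}}(y_1)}(e^{-\kappa t})$ be the set of transverse offsets in $H^{\perp}_{\mathbb{C}}(y_1)$ of the other strands of $h\mathcal{E}(h)$ that cross the same region; this is exactly the local observation of Example \ref{margulis2024.4.36}. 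The containment (\ref{margulis2024.4.51}) then follows from the identity (\ref{margulis2024.4.37}), while repeated use of Lemma \ref{margulis2024.4.35} confirms that the bones of $\mathcal{E}_1 = \mathcal{E}(y_1, F_1, e^{-\kappa t}, \frac{L(\eta)}{10})$ are pairwise disjoint, so that $\mathcal{E}_1$ is a valid skeleton.

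Next, the lower bound $|F_1| \geq |F| \cdot e^{(3-\kappa_{19})\kappa t}$ will follow from a volume comparison. A single strand of $h\mathcal{E}(h)$ contributes to the intersection a $G$-volume of at most that of $\mathsf{Q}_G(e^{-2\kappa t}, e^{km_\gamma})$, namely $\asymp e^{-6\kappa t - km_\gamma}$. On the other hand, the $h_\ast\Leb_\mathcal{E}$-mass bound of Lemma \ref{margulis2024.4.33}, combined with the total $G$-volume of the bones of $h\mathcal{E}(h)$ (equal to that of $\mathcal{E}(h)$, of order $|F| \cdot L(\eta) \cdot e^{-3\kappa t}$), yields a total intersection $G$-volume of order $|F| \cdot L(\eta) \cdot e^{-3\kappa t} \cdot e^{-\kappa_{19}\kappa t - km_\gamma}$. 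Dividing these quantities and absorbing $L(\eta)$ into the implicit constant (valid for $t \geq \hyperlink{2024.12.t2}{t_{2}}$ large) gives the claimed cardinality.

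The main obstacle is the density estimate (\ref{margulis2024.4.50}). For any $y \in \mathcal{E}_1$, Example \ref{margulis2024.4.36} produces $z \in \mathcal{E}(h)$ with $y = hz$ up to a $G$-shift of size $O(e^{-\kappa t})$, together with a bijection matching each return $w \in F_{e,y}(\mathcal{E}_1)$ with a unique $w' \in F_{h,z}(\mathcal{E})$ satisfying $w = h_\ast w'$ modulo the same shift, where $h_\ast$ denotes the KZ cocycle of $h$ on $H^\perp_{\mathbb{C}}$. I would then split $F_{e,y}(\mathcal{E}_1)$ at the threshold $c^\ast = e^{-(km_\gamma + 2\kappa t)}$. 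Returns with $\|w\|_y \geq c^\ast$ contribute at most $|F_1| \cdot (c^\ast)^{-\gamma}$, and the exponent $\alpha$ in the statement is arranged precisely so that $|F_1|^\alpha \geq e^{\gamma(km_\gamma + 2\kappa t)} = (c^\ast)^{-\gamma}$, yielding the bound $|F_1|^{1+\alpha}$. Returns with $\|w\|_y < c^\ast$ are compared to their counterparts $w' \in F_{h,z}(\mathcal{E})$, and the AGY stretching bound of Theorem \ref{effective2023.11.3} applied to $h_\ast$ controls their total contribution by a fixed multiple of $f_\mathcal{E}(h,z) \leq e^{Mt - \frac{3}{4}k}$. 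The hardest part will be tracking the constants precisely enough that this multiplicative stretch is absorbed into the passage from $-\frac{3}{4}k$ to $-\frac{2}{3}k$, which is where the choice (\ref{margulis2024.4.11}) of $m_\gamma$ and the lower bound (\ref{margulis2024.4.25}) on $k$ relative to $|\log L(\eta)|$ enter.
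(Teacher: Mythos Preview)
Your overall architecture matches the paper's proof closely: local observation via Example \ref{margulis2024.4.36} to build $\mathcal{E}_1$, a volume comparison using Lemma \ref{margulis2024.4.33} for $|F_1|$, and a split of $f_{\mathcal{E}_1}(e,y)$ at the threshold $c^\ast=e^{-(km_\gamma+2\kappa t)}$. The containment (\ref{margulis2024.4.51}) and the cardinality bound are essentially as in the paper (with the minor caveat that the paper absorbs $L(\eta)$ via the standing inequality $e^{-\kappa t}\le L(\eta)^2$, not into an implicit constant; your bone volume $|F|\cdot L(\eta)\cdot e^{-3\kappa t}$ also has the wrong power of $e^{-\kappa t}$, but this is easily repaired).

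The genuine gap is in your treatment of the small-$w$ part of the density. You write that each $w\in F_{e,y}(\mathcal{E}_1)$ corresponds to some $w'\in F_{h,z}(\mathcal{E})$ with $w=h_\ast w'$ up to a small shift, and you plan to invoke Theorem \ref{effective2023.11.3} to control the stretch by $h_\ast$. But $F_{h,z}(\mathcal{E})$ is, by definition (\ref{margulis2024.3.05}), a subset of $H^\perp_{\mathbb{C}}(hz)$, not of $H^\perp_{\mathbb{C}}(z)$: these vectors already live at the time slice $hz$, and $f_{\mathcal{E}}(h,z)$ is computed with the norm $\|\cdot\|_{hz}$. The correspondence the paper actually uses (its Claim \ref{effective2022.4.14}) is $w=\mathsf{h}'w_{ji}$ with $\mathsf{h}'=\mathsf{h}\mathsf{h}_1\mathsf{h}_i^{-1}\in B_G(L(\eta)/5)$, a \emph{bounded} group element coming from the local-observation identities (\ref{margulis2024.4.37}) and (\ref{margulis2024.4.38}); this gives $\|w_{ji}\|\le 2\|w\|$, i.e.\ only a factor $2^\gamma$ in the density, and the sole remaining work is to check $w_{ji}\in F_{h,z_i}(\mathcal{E})$ via (\ref{effective2022.3.14}). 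There is no KZ-cocycle stretch of size $e^{O(km_\gamma)}$ to absorb, and the ``hardest part'' you identify does not arise. If you were to push your version through, the factor $e^{2\gamma km_\gamma}$ from Theorem \ref{effective2023.11.3} would have to fit inside the gap $\frac{3}{4}k-\frac{2}{3}k=\frac{k}{12}$, forcing $m_\gamma\le\frac{1}{24\gamma}$, which is not guaranteed by (\ref{margulis2024.4.11}).
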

\begin{proof} The argument is similar to Proposition \ref{margulis2024.3.07}.
   Let $h\in G_{\mathcal{E}}$.
Then the set $h\mathcal{E}(h)\cap \mathsf{Q}(y(h), e^{-2\kappa t},e^{km_{\gamma}})$ is contained in a finite union of local $G$-orbits. Thus, we have
   \begin{equation}\label{margulis2024.4.32}
     h\mathcal{E}(h)\cap \mathsf{Q}(y(h), e^{-2\kappa t},e^{km_{\gamma}})\subset \bigsqcup_{i\in\mathcal{I}}\mathsf{Q}_{G}(e^{-2\kappa t},e^{km_{\gamma}}).(y(h)+w_{i}) \subset \mathcal{H}^{(\eta/2)}_{1}(\alpha)
   \end{equation}
   where $w_{i}\in B_{H_{\mathbb{C}}^{\perp}(y(h))}(e^{-2\kappa t})$.

   We now define the skeleton $\mathcal{E}_{1}$ via a local observation.  Let
   \begin{align}
 y^{\ast}& =y(h),\ \ \ \ \ \ w_{i}^{\ast}=w_{i},\ \ \  \ \ \ \mathsf{V}=\mathsf{Q}_{G}(e^{-2\kappa t},e^{km_{\gamma}}),\;\nonumber\\
\mathcal{E}^{\ast}& =h\mathcal{E}(h)\cap \mathsf{Q}(y(h), e^{-2\kappa t},e^{km_{\gamma}}).\;  \nonumber
\end{align}
  Applying Example \ref{margulis2024.4.36},  there exist  
   \begin{alignat*}{9}
 y_{i}&\in \mathsf{Q}_{G}(e^{-2\kappa t},e^{km_{\gamma}}).(y(h)+w_{i})& \ \ \ &  & &   \\
\mathsf{h}_{i}&\in \mathsf{Q}_{G}(e^{-2\kappa t},e^{km_{\gamma}}),& \ \ \ &  & &    \\
F_{1} &\coloneqq \{w_{i1}\in   H^{\perp}_{\mathbb{C}}(y):i\in\mathcal{I}\},& \ \ \ &  &w_{ij}&  \in  H^{\perp}_{\mathbb{C}}(y_{j}), \\
 \mathcal{E}_{1} &\coloneqq \mathcal{E}(y_{1},F_{1},e^{-\kappa t},\textstyle{\frac{L(\eta)}{10}}),  & \ \ \ &  & &
\end{alignat*} 
 such that 
  \begin{equation}\label{margulis2024.4.39}
 y_{i}\in \mathcal{E}^{\ast} \ \ \ \text{ and }\ \ \  y_{i}= \mathsf{h}_{i}\mathsf{h}_{j}^{-1}(y_{j}+w_{ij})
   \end{equation}
 for any $i,j\in\mathcal{I}$. Write $z_{i}\coloneqq h^{-1}y_{i} \in\mathcal{E}(h)$.

     For a lower bound of $|F_{1}|$,
since $\Leb_{G}(\mathsf{E}_{[-\frac{L(\eta)}{10},\frac{L(\eta)}{10}]}(e^{-\kappa t}))\asymp (e^{-\kappa t})^{2}L(\eta)$, we have $\Leb_{G}(\mathcal{E})\asymp |F|\cdot e^{-2\kappa t}L(\eta)$. In view of the definition of $\Leb_{\mathcal{E}}$, we conclude that
\[h_{\ast}\Leb_{\mathcal{E}}(\mathsf{Q}_{G}(e^{-2\kappa t},e^{km_{\gamma}}).(y(h)+w_{i}))\leq\frac{e^{-6\kappa t}e^{-k m_{\gamma}}}{|F|\cdot e^{-2\kappa t}L(\eta)}\leq e^{-3\kappa t}e^{-k m_{\gamma}}\cdot |F|^{-1}.\]
(Recall that $e^{-\kappa t}\leq L(\eta)^{2}$.) 
Using (\ref{margulis2024.4.32}) and Lemma \ref{margulis2024.4.33} (c.f. (\ref{margulis2024.4.40})), we deduce  
\begin{equation}\label{effective2022.4.7}
 |F_{1}| =|\mathcal{I}|\geq|F|\cdot e^{(3-\hyperlink{2024.12.k19}{\kappa_{19}})\kappa t}.
\end{equation}  
 
 Next, We shall show that Lemma \ref{margulis2024.4.31} holds with $\mathcal{E}_{1}$. First, by (\ref{margulis2024.4.39}), we have
\[y_{i}= \mathsf{h}_{i}\mathsf{h}_{1}^{-1}(y_{1}+w_{i1})\in B_{G}(e^{3-2\kappa t}).(y_{1}+w_{i1} )\cap h\mathcal{E}(h).\]
Therefore, by Lemma \ref{margulis2024.4.35} and (\ref{margulis2024.4.39}), we have
\[ y_{1}+w_{i1}=\mathsf{h}_{1}\mathsf{h}_{i}^{-1}y_{i} \in  \mathsf{h}_{1}\mathsf{h}_{i}^{-1}h\mathcal{E}(h)\subset B_{G}(e^{5-2\kappa t})h\mathcal{E}(h)\]
 for any $w_{i1}\in F$. This establishes the first claim in Lemma \ref{margulis2024.4.31}(1).

For the proof of  Lemma \ref{margulis2024.4.31}(2), we need the following:
\begin{cla}\label{effective2022.4.14} Let $y\in \mathsf{h} (y_{1}+w_{i1})\subset\mathcal{E}_{1}$  where $\mathsf{h}\in\mathsf{E}_{[-\frac{L(\eta)}{10},\frac{L(\eta)}{10}]}(e^{-\kappa t})$ and $w_{i1}\in F_{1}$.
Then we have
\[f_{\mathcal{E}_{1}}(e,y)\leq 2f_{\mathcal{E}}(h,z_{i})+e^{k\gamma m_{\gamma}}e^{2\gamma\kappa t}\cdot | F_{1}|.\] 
\end{cla}
\begin{proof}[Proof of Claim \ref{effective2022.4.14}]  
Let $y\in\mathcal{E}_{1}$. Then 
\begin{equation}\label{margulis2024.4.45}
 y   =\mathsf{h}(y_{1}+w_{i1})=\mathsf{h} \mathsf{h}_{1}\mathsf{h}_{i}^{-1}y_{i}=\mathsf{h}^{\prime}y_{i}
\end{equation} 
 for some  $i\in\mathcal{I}$ and $\mathsf{h}^{\prime}=\mathsf{h}\mathsf{h}_{1}\mathsf{h}_{i}^{-1}\in B_{G}(\frac{L(\eta)}{5})$.
Writing $c^{\ast}=e^{-k m_{\gamma}}e^{-2\kappa t}$, by (\ref{margulis2024.4.44}), we have
\begin{equation}\label{polynomial2022.10.12}
f_{\mathcal{E}_{1}}(e,y)\leq   \sum_{\|w\|\leq e^{-k m_{\gamma}}e^{-2\kappa t}}\|w\|^{-\gamma}+e^{k\gamma m_{\gamma}}e^{2\gamma\kappa t}\cdot|F_{1}|. 
\end{equation} 
In consequence, we need to investigate the first summation in (\ref{polynomial2022.10.12}).

 Let $w\in F_{e,y}(\mathcal{E}_{1})$, then $y+w\in\mathcal{E}_{1}$. Similar to (\ref{margulis2024.4.45}), we may write $ y+w    =\mathsf{h}^{\prime\prime}y_{j}$  for some  $j\in\mathcal{I}$ and $\mathsf{h}^{\prime\prime}\in B_{G}(\frac{L(\eta)}{5})$. 
Recall   from (\ref{margulis2024.4.39}) that
\[\mathsf{h}^{\prime}y_{i}+w=y+w    =\mathsf{h}^{\prime\prime}y_{j}= \mathsf{h}^{\prime\prime}\mathsf{h}_{j}\mathsf{h}_{i}^{-1}   (y_{i}+w_{ji}). \]
 Hence, by (\ref{margulis2024.4.52}), we have $\mathsf{h}^{\prime}= \mathsf{h}^{\prime\prime}\mathsf{h}_{j}\mathsf{h}_{i}^{-1}$ and so
\begin{equation}\label{margulis2024.4.43}
   \|w_{ji}\|=\|(\mathsf{h}^{\prime})^{-1}w\|\leq 2\|w\|\leq 2e^{-k m_{\gamma}}e^{-2\kappa t} \leq L(2\eta)\leq L(hz_{i}).
\end{equation}
where the last inequality follows from (\ref{margulis2024.4.27}).
Clearly, the map $F_{e,y}(\mathcal{E}_{1})\rightarrow H_{\mathbb{C}}^{\perp}(y_{j})$ given by $w\mapsto w_{ji}$ is well-defined and one-to-one. 
  
Recall also that $\mathsf{h}_{i},\mathsf{h}_{j}\in\mathsf{Q}_{G}(e^{-2\kappa t},e^{km_{\gamma}})$ and that (\ref{effective2022.3.14}) holds for $z_{i} \in\mathcal{E}(h)$. Therefore, as $h\in\supp(\nu^{(k)})$, in particular, it is of the form $h=a_{k m_{\gamma}}u_{r}$ for $|r|<2$, we have by Lemma \ref{margulis2024.4.35} and (\ref{effective2022.3.14})  that  
\[hz_{i}+w_{ji}=   \mathsf{h}_{i}\mathsf{h}_{j}^{-1} hz_{j}=   h\cdot B_{G}(e^{10-2\kappa t}).z_{j}\in h\mathcal{E}\]
Together with (\ref{margulis2024.4.43}), we conclude that $w_{ji}\in F_{h,z_{i}}(\mathcal{E})$. Moreover, (\ref{margulis2024.4.43}) and the fact that $w\mapsto w_{ji}$ is one-to-one imply that
\[\sum_{\|w\|\leq e^{-k m_{\gamma}}e^{-2\kappa t}}\|w\|^{-\gamma}\leq 2^{\gamma} f_{\mathcal{E}}(h,z_{i}).\]
The consequence follows.
\end{proof} 
Now by (\ref{margulis2024.4.46}) and (\ref{effective2022.4.7}), we have
\begin{equation}\label{margulis2024.4.48}
   |F_{1}|\geq|F|\cdot e^{(3-\hyperlink{2024.12.k19}{\kappa_{19}})\kappa t}\geq e^{[\frac{1}{2}+(3-\hyperlink{2024.12.k19}{\kappa_{19}})\kappa ]t}.
\end{equation} 
 
Let $y\in\mathcal{E}_{1}$, and let $z_{i}\in\mathcal{E}(h)$ be as in   Claim \ref{effective2022.4.14}. Then by (\ref{margulis2024.4.47}) we have
\[f_{\mathcal{E}}(h,z_{i})\leq   e^{Mt-\frac{3}{4}k}.\]
Thus, using Claim \ref{effective2022.4.14} and (\ref{margulis2024.4.48}), we deduce that
\begin{align}
\max_{y\in\mathcal{E}_{1}}  f_{\mathcal{E}_{1}}(e,y) & \leq 2 e^{Mt-\frac{3}{4}k}+e^{k\gamma m_{\gamma}+2\gamma\kappa t}\cdot | F_{1}|  \nonumber\\
   & \leq 2 e^{Mt-\frac{3}{4}k}+ |F_{1}|^{\frac{k\gamma m_{\gamma}+2\gamma\kappa t}{[\frac{1}{2}+(3-\hyperlink{2024.12.k19}{\kappa_{19}})\kappa ]t}}\cdot |F_{1}|\nonumber\\
     & \leq \max\left\{  e^{Mt-\frac{2}{3}k}, 2|F_{1}|^{1+\frac{k\gamma m_{\gamma}+2\gamma\kappa t}{[\frac{1}{2}+(3-\hyperlink{2024.12.k19}{\kappa_{19}})\kappa ]t}} \right\}. \nonumber
\end{align} 
This establishes (\ref{margulis2024.4.50}). 
\end{proof}

\subsection{Proof of Theorem \ref{margulis2024.4.30}}
We are now in the position to prove Theorem \ref{margulis2024.4.30}.  

\begin{proof}[Proof of Theorem \ref{margulis2024.4.30}] 

In \hypertarget{2024.04.k123}
  view  \hypertarget{2024.04.t345}  of Proposition \ref{margulis2024.3.07} and Lemma \ref{margulis2024.4.31}, let
\begin{itemize}
  \item   $\eta\in(0,N_{0}^{-1})$,
  \item $N\geq 2N_{0}$,
  \item  $x_{0}\in  \mathcal{H}_{1}(\alpha)$, 
  \item $\epsilon \in(0,\frac{1}{10})$,
  \item $\gamma \in(0,1)$,
  \item   $\hyperlink{2024.12.NN}{\varkappa}=\hyperlink{2024.12.NN}{\varkappa}(N,\gamma,\varpi)=[\frac{3}{2}(N+\gamma-\frac{1}{2}) m_{\gamma}+(5\varpi+2)]$, 
  \item $\hyperlink{2024.12.k1}{\kappa_{1}}=\hyperlink{2024.12.k1}{\kappa_{1}}(N,\gamma,\alpha,\epsilon)$ be small enough so that for any $0<\kappa\leq\hyperlink{2024.12.k1}{\kappa_{1}}$,
             \begin{equation}\label{margulis2024.4.55}
  2\left(\hyperlink{2024.12.k17}{\kappa_{17}}(\alpha,\gamma
,\kappa) +\frac{\epsilon }{10\gamma m_{\gamma}}\right)<\epsilon,
        \end{equation}
        \begin{equation}\label{margulis2024.4.62}
         \frac{\frac{\epsilon }{10 (\hyperlink{2024.12.k16}{\kappa_{16}}+1)}+2\gamma\kappa }{\frac{1}{2}+(3-\hyperlink{2024.12.k19}{\kappa_{19}})\kappa }<\epsilon,
        \end{equation}
        \begin{equation}\label{margulis2024.4.63}
          1-\left(\hyperlink{2024.12.k15}{\kappa_{15}}+  \frac{2(N+\gamma-\frac{1}{2})(150 \gamma m_{\gamma}(\hyperlink{2024.12.k16}{\kappa_{16}}+1))}{\epsilon}(\hyperlink{2024.12.k19}{\kappa_{19}}-3)\right)\kappa\geq \frac{2}{3}.
        \end{equation} 
      \item     $\hyperlink{2024.12.t1}{t_{1}}= \hyperlink{2024.12.t1}{t_{1}}(\gamma,\epsilon,\eta,\alpha,\ell(x_{0}))\geq \hyperlink{2024.12.t2}{t_{2}}$ be large enough so that  for any $t\geq \hyperlink{2024.12.t1}{t_{1}}$,
          \begin{equation}\label{margulis2024.4.61}
          \frac{\epsilon t}{100 \gamma  m_{\gamma}(\hyperlink{2024.12.k16}{\kappa_{16}}+1)} >  200 \hyperlink{2024.12.k4}{\kappa_{4}}|\log L(\eta)|+\hyperlink{2024.3.C6}{C_{6}}.
          \end{equation} 
            In particular, we have $e^{-\kappa t}\leq L(\eta)^{2}$.
    \item $k=\left\lceil \frac{\epsilon t}{100 \gamma m_{\gamma}(\hyperlink{2024.12.k16}{\kappa_{16}}+1)}\right\rceil$. (In particular, $k$ satisfies (\ref{margulis2024.4.25})  via (\ref{margulis2024.4.61}).)
      \end{itemize}    
Suppose that Theorem \ref{margulis2024.4.30}(2) does not hold. In the following, we shall show that 
there exists   $C>0$, a skeleton $\mathcal{E}$ with spine $F$ such that the local density function is bounded by 
\begin{equation}\label{margulis2024.4.57}
  \max_{z\in\mathcal{E}} f_{\mathcal{E}} (e,z)\leq C| F|^{1+\epsilon}.
\end{equation} 

\begin{enumerate}[$\bullet$] 
\item  \textbf{Construction of $\mathcal{E}_{0}$.}   
Apply Proposition \ref{margulis2024.3.07}  with $\eta, N,x_{0}, \kappa,t$ as above. 
We obtain a skeleton $\mathcal{E}_{0}$ of the form (\ref{margulis2024.4.17}):
\begin{align}
& \mathcal{E}_{0}   =\mathcal{E}(y_{0},F_{0},e^{-\kappa t},\textstyle{\frac{L(\eta)}{10}})\subset \mathsf{E}_{(5\varpi +2)t,[0,\frac{11}{10}]}(e^{3-\kappa t}).x_{0}, \nonumber\\
 &   F_{0} \subset B_{H^{\perp}_{\mathbb{C}}(y_{0})}(e^{-\kappa t}),\ \ \ \ \ \  e^{\frac{1}{2}t}\leq e^{(1-\hyperlink{2024.12.k15}{\kappa_{15}}\kappa) t}\leq |F_{0}|\leq  e^{(\hyperlink{2024.12.k15}{\kappa_{15}}-\kappa)t},\label{margulis2024.4.56}\\
 &  y_{0} \in\mathcal{H}_{1}^{(2\eta)}(\alpha) ,\nonumber\\
   &  \max_{z\in\mathcal{E}_{0}} f_{\mathcal{E}_{0}} (e,z)\leq  e^{(N+\gamma)t}  ,\label{margulis2024.4.53}
\end{align}  
where the first inequality in (\ref{margulis2024.4.56}) follows from (\ref{margulis2024.4.63}). In particular, we have $\mathcal{E}_{0}\subset\mathcal{H}_{1}^{(\eta)}(\alpha)$. Apply Proposition \ref{margulis2024.4.16} with the skeleton $\mathcal{E}_{0}$, $M=N+\gamma$ and $k$. If (\ref{margulis2024.4.20}) holds, then together with (\ref{margulis2024.4.55})(\ref{margulis2024.4.56}), we get
\begin{equation}\label{margulis2024.4.58}
    \max_{z\in\mathcal{E}_{0}} f_{\mathcal{E}_{0}} (e,z)\leq e^{\hyperlink{2024.12.k17}{\kappa_{17}} t+\frac{\epsilon }{10\gamma m_{\gamma}}t}| F_{0}|\leq | F_{0}|^{1+\frac{\hyperlink{2024.12.k17}{\kappa_{17}} t+\frac{\epsilon }{10\gamma m_{\gamma}}t}{\frac{1}{2}t}}\leq | F_{0}|^{1+\epsilon}.
\end{equation} 
Thus, we establish (\ref{margulis2024.4.57}).
\item  \textbf{Construction of $\mathcal{E}_{1}$.}    Hence, in the remaining, we suppose (\ref{margulis2024.4.21}) holds. Apply Lemma \ref{margulis2024.4.29}  with the skeleton $\mathcal{E}_{0}$. We obtain some $h_{0}\in G_{\mathcal{E}_{0}}\subset\supp(\nu^{(k)})$. Then since by (\ref{margulis2024.4.56}),
 \[|F_{0}|\geq e^{(1-\hyperlink{2024.12.k15}{\kappa_{15}}\kappa) t}\geq e^{\frac{1}{2}t}\]
 and so (\ref{margulis2024.4.46}) is satisfied. We further apply 
Lemma \ref{margulis2024.4.31} with $\mathcal{E}_{0}$ and $h_{0}$. We obtain a new skeleton $\mathcal{E}_{1}$ of the form (\ref{margulis2024.4.17}):
\begin{align}
& \mathcal{E}_{1}   =\mathcal{E}(y_{1},F_{1},e^{-\kappa t},\textstyle{\frac{L(\eta)}{10}})\subset \mathcal{H}_{1}^{(\eta)}(\alpha), \nonumber\\
 &   F_{1} \subset B_{H^{\perp}_{\mathbb{C}}(y_{1})}(e^{-\kappa t}),\ \ \ \ \ \  e^{\frac{1}{2}t}\leq e^{[1-(\hyperlink{2024.12.k15}{\kappa_{15}}+\hyperlink{2024.12.k19}{\kappa_{19}}-3)\kappa ]t} \leq |F_{1}|,\label{margulis2024.4.64}\\
 & y_{1}+F_{1}\subset B_{G}(e^{5-2\kappa t})h_{0}\mathcal{E}_{0},\nonumber\\
   &  \max_{z\in\mathcal{E}_{1}} f_{\mathcal{E}_{1}} (e,z)\leq     \max\left\{  e^{(N+\gamma-\frac{\epsilon }{150 \gamma m_{\gamma}(\hyperlink{2024.12.k16}{\kappa_{16}}+1)})t}, 2|F_{1}|^{1+\frac{k\gamma m_{\gamma}+2\gamma\kappa t}{[\frac{1}{2}+(3-\hyperlink{2024.12.k19}{\kappa_{19}})\kappa ]t}} \right\}.\nonumber
\end{align}  
where the first inequality in (\ref{margulis2024.4.64}) follows from (\ref{margulis2024.4.63}). Clearly, by the choices of $\kappa$ (\ref{margulis2024.4.62}) and $k$, if 
\[\max_{z\in\mathcal{E}_{1}} f_{\mathcal{E}_{1}} (e,z)\leq 2|F_{1}|^{1+\frac{k\gamma m_{\gamma}+2\gamma\kappa t}{[\frac{1}{2}+(3-\hyperlink{2024.12.k19}{\kappa_{19}})\kappa ]t}}\leq 2|F_{1}|^{1+\frac{\frac{\epsilon t}{10 (\hyperlink{2024.12.k16}{\kappa_{16}}+1)}+2\gamma\kappa t}{[\frac{1}{2}+(3-\hyperlink{2024.12.k19}{\kappa_{19}})\kappa ]t}} \leq 2|F_{1}|^{1+\epsilon},\] 
then we establish (\ref{margulis2024.4.57}) with $\mathcal{E}_{1}$. Thus, we suppose that 
\[ \max_{z\in\mathcal{E}_{1}} f_{\mathcal{E}_{1}} (e,z)\leq     e^{(N+\gamma-\frac{\epsilon }{150 \gamma m_{\gamma}(\hyperlink{2024.12.k16}{\kappa_{16}}+1)})t}.\]
Then we establish (\ref{margulis2024.4.22}) with $M=N+\gamma-\frac{\epsilon }{150 \gamma m_{\gamma}(\hyperlink{2024.12.k16}{\kappa_{16}}+1)}$ and $\mathcal{E}_{1}$. Repeating as above, we apply Proposition \ref{margulis2024.4.16} with  $k$ again. 
If   (\ref{margulis2024.4.20}) holds, we return to (\ref{margulis2024.4.58}) again:
 \[   \max_{z\in\mathcal{E}_{1}} f_{\mathcal{E}_{1}} (e,z)\leq e^{\hyperlink{2024.12.k17}{\kappa_{17}} t+\frac{\epsilon }{10\gamma m_{\gamma}}t}| F_{1}|\leq | F_{1}|^{1+\frac{\hyperlink{2024.12.k17}{\kappa_{17}} t+\frac{\epsilon }{10\gamma m_{\gamma}}t}{\frac{1}{2}t}}\leq | F_{1}|^{1+\epsilon}\] 
 as desired. Therefore, we meet (\ref{margulis2024.4.21}) again.
 \item  \textbf{Construction of $\mathcal{E}_{i}$.}  
 By repeating the above argument, we obtain a sequence  of elements $h_{i-1}\in \supp(\nu^{(k)})$, and a sequence of skeletons $\mathcal{E}_{i}$ of the form (\ref{margulis2024.4.17}):
\begin{align}
& \mathcal{E}_{i}   =\mathcal{E}(y_{i},F_{i},e^{-\kappa t},\textstyle{\frac{L(\eta)}{10}})\subset \mathcal{H}_{1}^{(\eta)}(\alpha), \nonumber\\
 &   F_{i} \subset B_{H^{\perp}_{\mathbb{C}}(y_{i})}(e^{-\kappa t}),\ \ \ \ \ \  e^{[1-(\hyperlink{2024.12.k15}{\kappa_{15}}+i(\hyperlink{2024.12.k19}{\kappa_{19}}-3))\kappa ]t} \leq |F_{i}|,\label{margulis2024.4.60}\\
 &  y_{i}+F_{i} \subset B_{G}(e^{5-2\kappa t})h_{i-1}\mathcal{E}_{i-1},\label{margulis2024.4.68}\\
   &  \max_{z\in\mathcal{E}_{i}} f_{\mathcal{E}_{i}} (e,z)\leq     \max\left\{  e^{(N+\gamma-\frac{i\epsilon }{150 \gamma m_{\gamma}(\hyperlink{2024.12.k16}{\kappa_{16}}+1)})t}, 2|F_{i}|^{1+\epsilon} \right\}.\label{margulis2024.4.59}
\end{align}  
By comparing (\ref{margulis2024.4.60}) and (\ref{margulis2024.4.63}), we see that if 
\begin{equation}\label{margulis2024.4.65}
  i\leq \left\lceil\frac{(N+\gamma-\frac{1}{2})(150 \gamma m_{\gamma}(\hyperlink{2024.12.k16}{\kappa_{16}}+1))}{\epsilon}\right\rceil,
\end{equation} 
then 
\[   |F_{i}|\geq e^{[1-(\hyperlink{2024.12.k15}{\kappa_{15}}+i(\hyperlink{2024.12.k19}{\kappa_{19}}-3))\kappa ]t} \geq e^{\frac{2}{3}t}>e^{\frac{1}{2}t}.\]
and so the skeletons $\mathcal{E}_{i}$ are guaranteed to apply Lemma \ref{margulis2024.4.31} again.
Moreover, together with (\ref{margulis2024.4.59}), we eventually meet (\ref{margulis2024.4.57}) for some $i$ satisfying (\ref{margulis2024.4.65}); otherwise, we must have
\[  e^{\frac{2}{3}t} \leq |F_{i}|\leq 2|F_{i}|^{1+\epsilon}\leq e^{(N+\gamma-\frac{i\epsilon }{150 \gamma m_{\gamma}(\hyperlink{2024.12.k16}{\kappa_{16}}+1)})t}   \]
but if we take the maximum of $i$ in (\ref{margulis2024.4.65}), we get 
 \[N+\gamma-\frac{i\epsilon }{150 \gamma m_{\gamma}(\hyperlink{2024.12.k16}{\kappa_{16}}+1)}\leq \frac{1}{2} \]
 which leads to a contradiction. Finally, by repeatedly using (\ref{margulis2024.4.68}), one may calculate that  
 \begin{align}
\mathcal{E}_{i}& \subset B_{G}(e^{10-\kappa t})a_{ik m_{\gamma}+(5\varpi+2)t}u_{[0,2]}x_{0}\;\nonumber\\
 & \subset B_{G}(e^{10-\kappa t})a_{[\frac{3}{2}(N+\gamma-\frac{1}{2}) m_{\gamma}+(5\varpi+2)]t}u_{[0,2]}x_{0}.\;  \nonumber
\end{align}
\end{enumerate} 
  Therefore, since we have defined $\hyperlink{2024.12.NN}{\varkappa}=\frac{3}{2}(N+\gamma-\frac{1}{2}) m_{\gamma}+(5\varpi+2)$, we conclude that there exists a skeleton $\mathcal{E}$ satisfying:
  \begin{align}
& \mathcal{E}  =\mathcal{E}(x_{1},F,e^{-\kappa t},\textstyle{\frac{L(\eta)}{10}})\subset \mathcal{H}_{1}^{(\eta)}(\alpha)\cap \mathsf{E}_{\hyperlink{2024.12.NN}{\varkappa}t,[0,2]}(e^{10-\kappa t}) .x_{0},\nonumber\\
 &   F \subset B_{H^{\perp}_{\mathbb{C}}(y)}(e^{-\kappa t}),\ \ \ \ \ \  e^{ \frac{1}{2}t} \leq |F|,\label{margulis2024.4.67}\\
   &  \max_{z\in\mathcal{E}} f_{\mathcal{E}} (e,z)\leq     2|F|^{1+\epsilon}  .\label{margulis2024.4.66}
\end{align}   
Finally, 
let $w^{\prime}\in F$, $z=y+w^{\prime}\in\mathcal{E}$. Then one calculates   \[ z+w-w^{\prime}= y+w^{\prime}+w-w^{\prime}=y+w \in\mathcal{E}.\]  
  It follows that $w-w^{\prime}\in F_{e,z}(\mathcal{E})$. Thus, we have
  \[\sum_{\substack{w\neq w^{\prime}\\ w\in F}}\|w-w^{\prime}\|^{-\gamma}\leq \sum_{w- w^{\prime}\in F_{e,z}(\mathcal{E})}\|w-w^{\prime}\|^{-\gamma}= f_{\mathcal{E}}(e,z).\]
  Together with (\ref{margulis2024.4.66}), we conclude that  
  \[\max_{w^{\prime}\in F}\sum_{\substack{w\neq w^{\prime}\\ w\in F}}\|w-w^{\prime}\|^{-\gamma}\leq  2|F|^{1+\epsilon}  .\]
  This completes the proof.
\end{proof}

\bibliographystyle{alpha}
 \bibliography{text}
\end{document}